\newtheorem{thm}{Theorem}
\newtheorem*{thm*}{Theorem}
\newtheorem{theorem}{Theorem}[subsection]
\newtheorem{lemma}[theorem]{Lemma}
\newtheorem{proposition}[theorem]{Proposition}
\newtheorem{corollary}[theorem]{Corollary}
\theoremstyle{definition}
\newtheorem{question}[theorem]{Question}
\newtheorem{definition}[theorem]{Definition}
\newtheorem{construction}[theorem]{Construction}
\newtheorem{recollection}[theorem]{Recollection}
\newtheorem{convention}[theorem]{Convention}
\newtheorem{example}[theorem]{Example}
\newtheorem*{conj}{Conjecture}
\newtheorem*{cor}{Corollary}
\newtheorem{notation}[theorem]{Notation}
\newtheorem{warning}[theorem]{Warning}
\newtheorem{remark}[theorem]{Remark}
\newtheorem{variant}[theorem]{Variant}
\DeclareMathOperator*{\holim}{holim}
\DeclareMathOperator*{\colim}{colim}
\DeclareMathOperator*{\K}{\mathrm{K}}
\begin{document}

\title{Redshift and multiplication for truncated Brown-Peterson spectra}
\author{Jeremy Hahn and Dylan Wilson}
\date{}
\maketitle

\begin{abstract}
We equip $\mathrm{BP} \langle n \rangle$ with an $\mathbb{E}_3$-$\mathrm{BP}$-algebra structure, for each prime $p$ and height $n$.  The algebraic $K$-theory of this ring is of chromatic
height exactly $n+1$, and the map $\mathrm{K}(\mathrm{BP}\langle n \rangle)_{(p)} \to \mathrm{L}_{n+1}^{f} \mathrm{K}(\mathrm{BP}\langle n\rangle)_{(p)}$ has bounded above fiber.
\end{abstract}

\tableofcontents

\section{Introduction}

Our main aim here is to prove the following:
\begin{thm*}
For each prime $p$ and height $n$, there exists an $\mathbb{E}_3$-$\mathrm{BP}$-algebra structure on $\mathrm{BP}\langle n \rangle$.  
The
algebraic $K$-theory of the $p$-completion of
this ring has finitely presented cohomology over the mod $p$ Steenrod algebra, and is of fp-type $n+1$ after $p$-completion.
\end{thm*}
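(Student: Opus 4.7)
The proof naturally splits into two fairly independent tasks: first, building an $\mathbb{E}_3$-BP-algebra structure on $\mathrm{BP}\langle n \rangle$; second, using this structure to analyze the chromatic complexity of $K(\mathrm{BP}\langle n \rangle)$. Once the $\mathbb{E}_3$-structure is secured, the second part becomes accessible to the trace-method machinery of Nikolaus-Scholze and Dundas-Goodwillie-McCarthy, so it is natural to tackle the algebraic existence problem first and then leverage it.

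For the $\mathbb{E}_3$-structure, my plan is to construct $\mathrm{BP}\langle n \rangle$ as a successive square-zero extension in the category of $\mathbb{E}_3$-$\mathrm{BP}$-algebras, starting from $\mathrm{BP}$ (which already carries at least an $\mathbb{E}_4$-structure) and iteratively ``killing'' the classes $v_{n+1}, v_{n+2}, \ldots$ one at a time. Each such step is governed by an obstruction class in a topological Andr\'e-Quillen / $\mathrm{TAQ}$ cohomology group, and I would attempt to identify these obstruction groups with computable Ext groups over the $\mathrm{BP}$-Hopf algebroid, using connectivity of $v_i$ to force them to vanish in the relevant bidegree. The number $3$ is critical: the game is to exploit just enough commutativity to make $\mathrm{THH}$ computations sensible (which requires $\mathbb{E}_2$) and to guarantee the Nikolaus-Scholze cyclotomic structure, while stopping short of the higher multiplicative structures that Lawson and Senger have shown are genuinely obstructed.

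For the chromatic analysis, I would proceed along the following lines. The finite presentation of $H^*(K(\mathrm{BP}\langle n \rangle);\mathbb{F}_p)$ over the Steenrod algebra should come from a descent argument, reducing to the analogous statement for $\mathrm{TC}(\mathrm{BP}\langle n \rangle)$ via DGM and then controlling the cells of $\mathrm{TC}$ range by range using the Tate-valued Frobenius and the cyclotomic fiber sequence. The fp-type computation splits: for the upper bound of $n+1$, I would show that the $T(m)$-localization of $K(\mathrm{BP}\langle n \rangle)$ vanishes for $m \geq n+2$ via an Ausoni-style descent argument from $\mathrm{THH}$; for the lower bound of $n+1$, I would detect a nonzero $v_{n+1}$-self map using a concrete class in $\mathrm{TC}(\mathrm{BP}\langle n\rangle)/p$, constructed from the Bockstein of the canonical element and propagated through the cyclotomic trace to witness nontrivial $K(n+1)$-local content.

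The principal obstacle is undoubtedly the $\mathbb{E}_3$-construction: the relevant $\mathrm{TAQ}$ obstruction groups are not small, and the delicate dividing line between $\mathbb{E}_3$ (possible) and $\mathbb{E}_4$ or higher (typically obstructed) means that the calculation cannot be performed by a purely connectivity-based argument but must use some specific structural input about the $\mathrm{BP}$-cooperations or about $v_i$ in the $\mathbb{E}_3$-tangent complex of $\mathrm{BP}$. After this, the secondary but still substantial obstacle is the lower bound on fp-type, where one must produce and identify a nontrivial periodic class in $\mathrm{TC}$; by contrast, the upper bound and the finite presentation statement follow from relatively formal descent and Postnikov arguments, once the multiplicative foundations are laid.
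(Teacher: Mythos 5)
Your outline leaves the central difficulty of the multiplicative half unresolved, and you in effect acknowledge this: you propose to kill $v_{n+1}, v_{n+2},\ldots$ one at a time in $\mathbb{E}_3$-$\mathrm{BP}$-algebras, with obstructions in TAQ groups to be identified with Ext over the $\mathrm{BP}$-Hopf algebroid and forced to vanish ``by connectivity,'' and then concede that connectivity cannot suffice and that ``some specific structural input'' is needed. That unspecified input is precisely the content of the actual argument, and it is not of Hopf-algebroid/comodule type. The paper works relative to $\mathrm{MU}$ (passing to $\mathrm{BP}$ only at the end via Basterra--Mandell), computes the $\mathbb{E}_k$-$\mathrm{MU}$-enveloping algebras $\mathcal{U}^{(k)}_{\mathrm{MU}}(\mathrm{BP}\langle n\rangle)$ using Steinberger's Dyer--Lashof calculations in the dual Steenrod algebra and Kochman's calculations in $H_*(\mathrm{BU})$, and deduces that $\mathrm{THH}(\mathrm{BP}\langle n\rangle/\mathrm{MU})=\mathcal{U}^{(2)}$ is even polynomial and $\mathcal{U}^{(3)}$ is exterior; this evenness, not connectivity, is what kills the obstructions, because the relevant groups are homotopy groups of $\mathrm{map}_{\mathcal{U}^{(3)}}(\mathbf{L}^{\mathbb{E}_3}_{\mathrm{BP}\langle n\rangle/\mathrm{MU}},\mathrm{BP}\langle n\rangle)$. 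Moreover, your induction ``kill the generators successively'' runs into exactly the problem the paper restructures its induction to avoid: after each kill you would need the $\mathbb{E}_3$-cotangent complex of the new, more complicated quotient. The paper instead inducts on the height $n$, strengthens the inductive hypothesis by building $\mathrm{BP}\langle n+1\rangle/(v_{n+1}^k)$ as an $\mathbb{E}_3$-$\mathrm{MU}[y]/(y^k)$-algebra along the square-zero tower of the \emph{base} $\mathrm{MU}[y]/(y^k)$, so that every obstruction is pulled back (via the augmentation) to a map out of the single, computed object $\mathbf{L}^{\mathbb{E}_3}_{\mathrm{BP}\langle n\rangle/\mathrm{MU}}$. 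Also, killing a homotopy element is not itself a square-zero extension, so the deformation-theoretic framing of your tower needs to be reorganized along these lines in any case.

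On the $K$-theoretic half there are two further gaps. First, fp-type $\le n+1$ is a finiteness statement ($\pi_*(F\otimes\mathrm{K})$ finite for $F$ of type $n+2$), which is strictly stronger than the vanishing of $L_{T(m)}\mathrm{K}$ for $m\ge n+2$ that your ``Ausoni-style descent'' would address; by Mahowald--Rezk the finite-presentation claim over the Steenrod algebra is equivalent to this finiteness, so it is not a separate formal Postnikov/descent point. The paper obtains it by proving $F\otimes\mathrm{TR}(\mathrm{BP}\langle n\rangle)$ is bounded, which requires two independent inputs: the Segal conjecture for $\mathrm{THH}(\mathrm{BP}\langle n\rangle)$ (Frobenius truncated after smashing with a type $n+1$ complex, proved via the d\'ecalage of the Adams filtration and a graded Segal conjecture for polynomial $\mathbb{F}_p$-algebras) and canonical vanishing (the canonical map $\mathrm{THH}^{hC_{p^k}}\to\mathrm{THH}^{tC_{p^k}}$ eventually zero on homotopy after smashing with a type $n+2$ complex). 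Your proposal addresses neither, and canonical vanishing in particular has no analogue in your sketch; it rests on the Detection Theorem for $\mathrm{THH}(\mathrm{BP}\langle n\rangle/\mathrm{MU})^{hS^1}$, descent along $\mathrm{THH}(\mathrm{BP}\langle n\rangle)\to\mathrm{THH}(\mathrm{BP}\langle n\rangle/\mathrm{MU})$, and a Smith complex with a central high-Adams-filtration $v_{n+1}$-self map. Second, your lower-bound plan (a Bockstein class in $\mathrm{TC}/p$ propagated through the trace) is not substantiated; the working mechanism is again relative $\mathrm{THH}$ over $\mathrm{MU}$ --- available only because of the $\mathbb{E}_3$-$\mathrm{MU}$-algebra structure --- where $v_{n+1}$ is shown to map to $t\,\sigma^2 v_{n+1}$ in $\pi_*\mathrm{THH}(\mathrm{BP}\langle n\rangle/\mathrm{MU})^{hS^1}$, and nonvanishing of the $K(n+1)$-localization is then read off algebraically from this $\mathrm{MU}$-module. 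Without the relative-$\mathrm{THH}$ computation and the suspension/circle-action detection lemma, the detection step in your outline has no concrete footing.
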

The principal connective theories in the chromatic approach to stable homotopy theory are thus more structured than previously known, and they satisfy higher height analogs of the Lichtenbaum--Quillen conjecture.   The $\mathbb{E}_3$ forms of $\mathrm{BP}\langle n\rangle$ constructed here give the first known examples, for $n>1$, of chromatic height $n$ theories with algebraic $K$-theory provably of height $n+1$.

\subsection*{The Redshift Philosophy}

In his 1974 ICM address, Quillen \cite{quillen} stated
as a `hope' the now proven
Lichtenbaum-Quillen conjecture \cite{voeI,voeII}.
His hope was that the algebraic
$K$-theory of regular noetherian rings could be well approximated by
\'etale cohomology, at least in large degrees. Ten years later,
Waldhausen
\cite{waldhausen} investigated interactions between
his $K$-theory of spaces and the chromatic filtration.
He observed
that, in the presence of
a descent theorem of Thomason \cite{thomason}, the Lichtenbaum-Quillen conjecture could be restated in terms of
localization at complex $K$-theory. Let $L_1^f$ denote the localization
that annihilates those finite
spectra with vanishing $p$-adic complex $K$-theory; for suitable rings $R$,
the Lichtenbaum-Quillen conjecture is equivalent to the
statement that
	\[
	\pi_*\mathrm{K}(R)_{(p)} \to \pi_*L_1^f\mathrm{K}(R)_{(p)}
	\]
is an isomorphism for $\ast\gg 0$.

Algebraic $K$-theory is defined not only on rings, but (crucially for applications to smooth manifold theory) on ring spectra.  One of the deepest computations of the algebraic $K$-theory of ring spectra to date is by Ausoni and Rognes \cite{ausoni-rognes-redshift}, who for primes $p \ge 5$ computed the mod $(p,v_1)$ $K$-theory of the $p$-completed Adams summand $\ell^{\wedge}_p$.  
Their computations imply that
\[\mathrm{K}(\ell^{\wedge}_p)_{(p)} \to L_2^{f} \mathrm{K}(\ell^{\wedge}_p)_{(p)}\]
is a $\pi_*$-isomorphism for $* \gg 0$.  Here $L_2^f$
is the next localization in a hierarchy of chromatic localizations
$L_n^f$ for each $n\ge 0$ (at an implicit prime $p$).
This of course suggests a higher height analog of the Lichtenbaum--Quillen conjecture.  In the Oberwolfach lecture \cite{rognes-oberwolfach}, Rognes laid out a far-reaching vision of how this higher height analog might go, which is now known as the \emph{chromatic redshift} philosophy.  The name \emph{redshift} refers to the hypothesis that algebraic $K$-theory should raise the chromatic height of ring spectra by exactly $1$.

To give a more
precise statement, we will need the notion of fp-type, due
to Mahowald--Rezk \cite{mahowald-rezk}:
A $p$-complete, bounded below
spectrum $X$ is of fp-type $n$ if the thick subcategory
of $p$-local 
finite complexes $F$ such that $|\pi_*(F\otimes X)|<\infty$
is generated by a type $(n+1)$ complex (i.e., a complex
with a $v_{n+1}$ self-map).

With this definition, Ausoni-Rognes conjecture that:

\begin{conj} For suitable $\mathbb{E}_1$-rings $R$ of
fp-type $n$, $\mathrm{K}(R)^{\wedge}_p$ is of fp-type $n+1$.
\end{conj}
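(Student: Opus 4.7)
The plan is to attack the conjecture for the specific ring $R = \mathrm{BP}\langle n\rangle$, where the methods are most tractable, by trace-method machinery. Specifically, I would reduce the statement about $\mathrm{K}(R)^{\wedge}_p$ to a computation of $\mathrm{TC}(R)$ via the Dundas--Goodwillie--McCarthy theorem, and then analyze $\mathrm{TC}(R)$ using the Nikolaus--Scholze formula $\mathrm{TC}(R) = \mathrm{fib}(\mathrm{TC}^-(R) \to \mathrm{TP}(R))$. The word ``suitable'' in the conjecture must encode a rich multiplicative structure: while $\mathrm{THH}$ makes sense for any $\mathbb{E}_1$-ring, the iterative bar-style arguments and Frobenius-like operators needed to control $\mathrm{TP}$ and to run descent cleanly call for at least an $\mathbb{E}_3$ structure.

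Phase one is therefore to equip $\mathrm{BP}\langle n\rangle$ with an $\mathbb{E}_3$-$\mathrm{BP}$-algebra structure. Starting from $\mathrm{BP}$, which by Basterra--Mandell is at least $\mathbb{E}_4$, I would construct $\mathrm{BP}\langle n\rangle$ as a quotient inside $\mathbb{E}_3$-$\mathrm{BP}$-algebras by successively killing the regular sequence $v_{n+1}, v_{n+2}, \ldots$. The obstructions live in Andr\'e--Quillen cohomology for $\mathbb{E}_3$-algebras, and establishing their vanishing through the relevant range is the principal multiplicative difficulty. A likely tool is a deformation-theoretic argument, perhaps using synthetic or filtered $\mathbb{E}_3$-algebras, to reduce the obstruction calculation to something tractable on $\mathrm{MU}_*$-homology.

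Phase two is the chromatic-height computation. By Dundas--Goodwillie--McCarthy, the fiber of $\mathrm{K}(\mathrm{BP}\langle n\rangle)^{\wedge}_p \to \mathrm{K}(\mathbb{F}_p)^{\wedge}_p$ is equivalent to the fiber of $\mathrm{TC}(\mathrm{BP}\langle n\rangle)^{\wedge}_p \to \mathrm{TC}(\mathbb{F}_p)^{\wedge}_p$, and $\mathrm{K}(\mathbb{F}_p)^{\wedge}_p$ is classically of fp-type $0$; so the conjecture reduces to showing that $\mathrm{TC}(\mathrm{BP}\langle n\rangle)^{\wedge}_p$ is of fp-type $n+1$. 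For the lower bound (chromatic height at least $n+1$), I would extract a $v_{n+1}$-self map on $\mathrm{TC}(\mathrm{BP}\langle n\rangle)/(p, v_1, \ldots, v_n)$ using the template of Ausoni--Rognes \cite{ausoni-rognes-redshift}, running a detection argument along the tower $\mathrm{BP}\langle n\rangle \to \mathrm{BP}\langle n-1\rangle \to \cdots \to \mathrm{H}\mathbb{F}_p$.

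For the upper bound --- fp-type exactly $n+1$ rather than merely chromatic height at least $n+1$ --- I would verify the Mahowald--Rezk criterion by computing the mod-$p$ cohomology of $\mathrm{K}(\mathrm{BP}\langle n\rangle)^{\wedge}_p$ via a descent spectral sequence from the above $\mathrm{TC}$ computation, and checking that it is finitely presented over the Steenrod algebra. The hardest step will be the $\mathbb{E}_3$-structure construction, which has resisted direct attack for $n\geq 2$ for decades; the novelty of the paper is presumably a new obstruction-theoretic device to resolve the relevant vanishing. Once the $\mathbb{E}_3$ structure is in hand, the remaining $\mathrm{TC}$ computation and Steenrod-algebra finiteness check should proceed along now-standard trace-method lines, though each individual step still demands nontrivial work.
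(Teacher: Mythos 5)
Your overall skeleton (an $\mathbb{E}_3$ structure on $\mathrm{BP}\langle n\rangle$, then trace methods plus the Mahowald--Rezk criterion) is the right one, and reading ``suitable'' as ``prove it for $R=\mathrm{BP}\langle n\rangle^{\wedge}_p$'' is exactly what the paper does; but two of your load-bearing steps have genuine gaps. First, your Dundas--Goodwillie--McCarthy reduction is wrong as stated: DGM compares relative $\K$ and relative $\mathrm{TC}$ only for maps that are surjective on $\pi_0$ with \emph{nilpotent} kernel, and for $\mathrm{BP}\langle n\rangle \to \mathbb{F}_p$ the kernel on $\pi_0$ is $(p)\subset \mathbb{Z}_{(p)}$, which is not nilpotent. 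The correct comparison is with $\pi_0\mathrm{BP}\langle n\rangle=\mathbb{Z}_{(p)}$, and one must then input the Lichtenbaum--Quillen theorem for $\mathbb{Z}_{(p)}$ (Voevodsky--Rost), together with boundedness of $F\otimes\mathrm{TR}(\mathbb{Z}_{(p)})$, to close the argument; the fact that $\K(\mathbb{F}_p)^{\wedge}_p$ has fp-type $0$ never enters in the way you use it. Second, for the multiplication theorem, directly killing $v_{n+1},v_{n+2},\dots$ inside $\mathbb{E}_3$-$\mathrm{BP}$-algebras is precisely the approach the paper explains is uncontrollable: the $\mathbb{E}_3$-cotangent complexes of the successive quotients are not computable. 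The actual construction inducts on the height $n$, making the stronger inductive hypothesis that $\mathrm{BP}\langle n+1\rangle/(v_{n+1}^k)$ is an $\mathbb{E}_3$-algebra over the square-zero tower $\mathrm{MU}[y]/(y^k)$ with $|y|=2p^{n+1}-2$, and passes to the limit; the obstruction groups vanish because $\mathcal{U}^{(3)}_{\mathrm{MU}}(\mathrm{BP}\langle n\rangle)$ is exterior on odd generators (resting on Steinberger's and Kochman's Dyer--Lashof computations), so the relevant mapping spectra are even. Your sketch names ``a deformation-theoretic argument reducing to $\mathrm{MU}_*$-homology'' but supplies no device that makes the obstructions computable, and that device is the theorem.

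For the redshift half, the paper never computes $\mathrm{TC}(\mathrm{BP}\langle n\rangle)$ nor the Steenrod module structure of $H^*(\K(\mathrm{BP}\langle n\rangle)^{\wedge}_p;\mathbb{F}_p)$ directly (identifying the homotopy type of $\K(\mathrm{BP}\langle n\rangle)$ is listed as an open problem). Instead it proves that $F\otimes\mathrm{TR}(\mathrm{BP}\langle n\rangle)$ is bounded for a type $n+2$ complex $F$ by establishing two logically independent properties of $\mathrm{THH}(\mathrm{BP}\langle n\rangle)$: the Segal conjecture (about the cyclotomic Frobenius alone, proved via the d\'ecalage of the Adams filtration, reducing to graded polynomial $\mathbb{F}_p$-algebras), and weak canonical vanishing (about the circle action alone, proved via the polynomial computation of $\mathrm{THH}(\mathrm{BP}\langle n\rangle/\mathrm{MU})$, the Detection theorem sending $v_{n+1}\mapsto t\,\sigma^2 v_{n+1}$ under $\pi_*\mathrm{MU}^{hS^1}\to\pi_*\mathrm{THH}(\mathrm{BP}\langle n\rangle/\mathrm{MU})^{hS^1}$, and descent along $\mathrm{THH}(\mathrm{BP}\langle n\rangle)\to\mathrm{THH}(\mathrm{BP}\langle n\rangle/\mathrm{MU})$); Mahowald--Rezk then gives fp-type at most $n+1$, and the Detection theorem gives the lower bound. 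Your plan to run the Ausoni--Rognes template, extracting a $v_{n+1}$-self map on $\mathrm{TC}/(p,\dots,v_n)$ along the tower $\mathrm{BP}\langle n\rangle\to\mathrm{BP}\langle n-1\rangle\to\cdots\to\mathbb{F}_p$, founders at general heights on the nonexistence of Smith--Toda complexes and of the needed ring structures --- exactly the obstacle the paper circumvents with generic Hopkins--Smith complexes and the $\mathrm{MU}$-relative detection argument --- so the remaining work is not ``now-standard trace-method lines''; it is where the paper's main new ideas (Segal conjecture plus canonical vanishing implying bounded $\mathrm{TR}$) reside.
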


As we review below (see Theorem \ref{fp-implies-LQ}),
this statement also implies that $\mathrm{K}(R) \to L_{n+1}^f\mathrm{K}(R)$
is a $p$-local equivalence in large degrees, so we can think of
it as a higher height analog of the Lichtenbaum-Quillen conjecture.

In the years since the Ausoni--Rognes computations, redshift has been verified for additional height $1$ ring spectra, including $\mathrm{ku}^{\wedge}_{p}$, $\mathrm{KU}^{\wedge}_{p}$, and $\mathrm{ku}/p$ at primes $p \ge 5$ \cite{blumberg-mandell, ausoni-topk, ausoni-rognes-height1}, and evidence for redshift has accumulated in general \cite{2-vect, rognesmsri, westerland, veen, angeliniknollbeta, angeliniknoll-quigley,carmelischlankyanovski}.  Recent conceptual advances show that the algebraic $K$-theories of many height $n$ rings are of height \emph{at most} $n+1$ \cite{lmmt,cmnn}.  Here, we give the first arbitrary height examples of ring spectra for which redshift provably occurs.

\subsection*{Main Results}

The truncated Brown--Peterson spectra, $\mathrm{BP} \langle n \rangle$,  are among the simplest and most important cohomology theories in algebraic topology.
There is one such spectrum for every prime $p$ and height $n \ge 0$,
 though we will follow tradition by localizing at the prime and omitting it from notation.\footnote{At each prime $p$ and height $n \ge 0$, $\mathrm{BP} \langle n \rangle$ is conjectured to be unique as a $p$-local spectrum.  For $n>1$, uniqueness is only proved up to $p$-completion, by work of Angeltveit and Lind  \cite{angeltveit-lind}.}
The height $1$ spectrum $\mathrm{BP} \langle 1 \rangle$ is the Adams summand $\ell$, while $\mathrm{BP} \langle  2 \rangle$ is a summand of either topological modular forms (at $p \ge 5$), or topological modular
forms with level structure (at $p=2,3$).

%The redshift philosophy of Ausoni and Rognes, first laid out in the Oberwolfach lecture \cite{rognes-oberwolfach}, predicts that the algebraic $K$-theory of a chromatic height $n$ ring spectrum is of chromatic height exactly $n+1$.
%Motivated by the Quillen--Lichtenbaum conjecture \cite{voeI,voeII}, this was verified for low values of $n$ \cite{suslin, ausoni-rognes-redshift, blumberg-mandell, ausoni-rognes-height1}, and evidence for redshift has accumulated in general \cite{rognesmsri, westerland, veen, angeliniknollbeta, carmelischlankyanovski}.  Recent conceptual advances show that the algebraic $K$-theories of many height $n$ rings are of height \emph{at most} $n+1$ \cite{lmmt,cmnn}.  Here, we give the first arbitrary height examples of ring spectra for which redshift provably occurs, along with an analogue
%of the Lichtenbaum--Quillen conjectures.

Both $\ell$ and $\mathrm{tmf}$ are extraordinarily structured: they are $\mathbb{E}_\infty$-ring spectra, inducing power operations on the cohomology of spaces. Our first main result, proven in \S\ref{sec:mult},
is a construction of part of this structure at an arbitrary height $n$. To make sense of the statement, we remind the
reader that $\mathrm{BP}$ admits the structure of
an $\mathbb{E}_4$-ring by \cite{bm}.

\begin{thm}[Multiplication]  \label{thm:intro-main}
For an appropriate choice of indecomposable generators 
\[v_{n+1},v_{n+2},\cdots \in \pi_* \mathrm{BP},\] 
the quotient map
\[\mathrm{BP} \to \mathrm{BP}/(v_{n+1},\cdots) = \mathrm{BP} \langle n \rangle\]
is the unit of an $\mathbb{E}_3$-$\mathrm{BP}$-algebra structure on $\mathrm{BP} \langle n \rangle$.
\end{thm}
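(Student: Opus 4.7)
The plan is to realize $\mathrm{BP}\langle n\rangle$ as a sequential colimit
\[
\mathrm{BP} = A_{n} \to A_{n+1} \to A_{n+2} \to \cdots
\]
of $\mathbb{E}_3$-$\mathrm{BP}$-algebras, where $A_k$ is obtained from $A_{k-1}$ by coning off, in the $\mathbb{E}_3$-$\mathrm{BP}$-algebra sense, a carefully chosen class $v_k \in \pi_{2(p^k-1)}A_{k-1}$ lifting an indecomposable generator of $\pi_*\mathrm{BP}$ in that degree. Coning off $v \in \pi_d A$ in the $\mathbb{E}_3$ setting means forming the relative tensor product
\[
A/\!\!/v \;:=\; A \otimes_{\mathrm{Free}^{\mathbb{E}_3}_{A}(\Sigma^{d} A)} A,
\]
where one factor is the augmentation and the other classifies $v$; this is the universal $\mathbb{E}_3$-$A$-algebra in which $v$ becomes nullhomotopic, and the construction is available because $\mathrm{BP}$ is $\mathbb{E}_4$ by \cite{bm}.

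The main obstacle is that $A/\!\!/v$ is far larger than the naive module-theoretic quotient $A/v$: the free $\mathbb{E}_3$-cell used to kill $v$ carries iterated Dyer--Lashof operations and Browder brackets, each of which contributes new classes to $\pi_* A/\!\!/v$ beyond those of $A/v$. At each stage one must therefore control what gets created and verify that it does not obstruct killing later generators. The strategy is to set up the induction so that, at stage $k$, all extra classes introduced by the $\mathbb{E}_3$-pushout lie in degrees $\geq 2(p^{k+1}-1)$, i.e.\ in or above the degree of the next generator. Then $v_{k+1}$ may be chosen inside $\pi_* A_k$ as an indecomposable lift of some generator of $\pi_*\mathrm{BP}$, modifying earlier choices by decomposables as needed; the theorem statement explicitly permits such adaptive choices of the $v_j$.

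The hardest step is a sufficiently detailed computation of $\pi_* \mathrm{Free}^{\mathbb{E}_3}_{\mathrm{BP}}(\Sigma^{m}\mathrm{BP})$ in a range, to establish the above range claim on extra classes. This requires genuine understanding of the $\mathbb{E}_3$ Dyer--Lashof operations and Browder brackets acting on $\pi_*\mathrm{BP}$; in contrast to the $\mathbb{E}_\infty$ case, the connectivity of the non-identity $\mathbb{E}_3$-operations (together with the sparseness of $\pi_*\mathrm{BP}$ in low degrees mod $v_j$) should push the extra classes high enough. Once this inductive step is in place, the colimit $A_\infty$ is, as a $\mathrm{BP}$-module, the quotient $\mathrm{BP}/(v_{n+1},v_{n+2},\dots) = \mathrm{BP}\langle n\rangle$ because the collective effect of the pushouts is precisely to kill the chosen regular sequence; thus $A_\infty$ realizes the desired $\mathbb{E}_3$-$\mathrm{BP}$-algebra structure on $\mathrm{BP}\langle n\rangle$.
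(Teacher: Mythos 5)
There is a genuine gap, and it sits exactly at your last step. The $\mathbb{E}_3$-quotient $A/\!\!/v := A \otimes_{\mathrm{Free}^{\mathbb{E}_3}_{A}(\Sigma^{|v|}A)} A$ is not the module quotient $A/v$, and your colimit $A_\infty$ is not $\mathrm{BP}\langle n\rangle$ as a $\mathrm{BP}$-module: the extra classes created by the free $\mathbb{E}_3$-cell at stage $k$ are never removed, since each later stage only cones off a lift of the next $v_j$, so all of that junk survives to the colimit (colimits of spectra are computed degreewise, so ``the junk is highly connected at each stage'' does not make it disappear) and $\pi_*A_\infty$ is strictly larger than $\mathbb{Z}_{(p)}[v_1,\ldots,v_n]$. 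The connectivity estimate you propose is itself doubtful: the arity-$2$ part of the free $\mathbb{E}_3$-cell on $\sigma v_k$ (degree $2p^k-1$) contributes cells beginning in degree $2(2p^k-1)=4p^k-2$, which for odd $p$ is strictly below the degree $2p^{k+1}-2$ of the next generator (and equal to it at $p=2$); moreover $\pi_*\mathrm{Free}^{\mathbb{E}_3}_{\mathrm{BP}}(\Sigma^m\mathrm{BP})$ is not governed by Dyer--Lashof operations and Browder brackets, which only compute $\mathbb{F}_p$-homology. As a sanity check, in the range of degrees your argument would need, the $\mathbb{E}_3$- and $\mathbb{E}_\infty$-free cells agree, so a purely connectivity-driven interleaving of cell attachments would just as well produce an $\mathbb{E}_\infty$-form of $\mathrm{BP}\langle n\rangle$, contradicting the Lawson--Senger nonexistence results for $n\ge 4$; any correct argument must therefore use finer structure than a vanishing range.

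For comparison, the paper never attaches $\mathbb{E}_3$-cells and never has to control junk: it inducts on the height $n$ and keeps the underlying module correct at every stage. Given $B=\mathrm{BP}\langle n\rangle$ as an $\mathbb{E}_3$-$\mathrm{MU}$-algebra, it lifts $B$ step by step along the tower of square-zero extensions $\mathrm{MU}[y]/(y^k)\to\mathrm{MU}[y]/(y^{k-1})$ with $|y|=2p^{n+1}-2$, producing $B_k\simeq \mathrm{BP}\langle n+1\rangle/(v_{n+1}^k)$; the obstruction to each lift is a map $\mathbf{L}^{\mathbb{E}_3}_{B/\mathrm{MU}}\to \Sigma^{(k+1)|y|+2}B$ of $\mathbb{E}_3$-$B$-modules, and these groups vanish because the $\mathbb{E}_3$-$\mathrm{MU}$-enveloping algebra of $B$ is exterior on odd generators — a computation resting on bar spectral sequences together with Steinberger's and Kochman's Dyer--Lashof calculations to resolve the multiplicative extensions, which is also where the class $\delta v_{n+1}$ needed to start the tower is produced. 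Then $\lim_k B_k$ is $\mathrm{BP}\langle n+1\rangle$ as an $\mathbb{E}_3$-$\mathrm{MU}[y]$-algebra with $y$ acting by $v_{n+1}$, and Basterra--Mandell converts $\mathbb{E}_3$-$\mathrm{MU}_{(p)}$-algebras into $\mathbb{E}_3$-$\mathrm{BP}$-algebras. In other words, working relative to $\mathrm{MU}$, where the relevant enveloping algebras and cotangent complexes are small and evenly concentrated, is the paper's substitute for the connectivity bound your proposal would need and does not have.
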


Our second main theorem establishes the above conjecture
for $R=\mathrm{BP}\langle n\rangle^{\wedge}_p$.

\begin{thm}[Redshift]\label{thm:intro-fpshift} Let $\mathrm{BP}\langle n \rangle$ denote any $\mathbb{E}_3$-$\mathrm{BP}$-algebra such that the unit $\mathrm{BP} \to \mathrm{BP} \langle n \rangle$ is obtained by modding out a sequence of indecomposable generators $v_{n+1},v_{n+2}, \cdots$.
Then $\mathrm{K}(\mathrm{BP}\langle n\rangle^{\wedge}_p)_p^{\wedge}$
is of $fp$-type $n+1$.
\end{thm}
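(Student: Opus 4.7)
The target statement splits into two inequalities: an upper bound (fp-type $\le n+1$, i.e., $V \otimes \mathrm{K}(\mathrm{BP}\langle n\rangle^\wedge_p)^\wedge_p$ is $\pi_*$-finite for some type $(n+2)$ complex $V$) and a lower bound (fp-type $\ge n+1$, i.e., no type $(n+1)$ complex has this property). These are essentially independent, and I would treat them separately.

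For the upper bound, I would appeal to the recent descent and nilpotence results for algebraic $K$-theory of Clausen--Mathew--Naumann--Noel and Land--Mathew--Meier--Tamme. Since $\mathrm{BP}\langle n\rangle$ is itself of chromatic height $n$ (its telescopic localizations $L_{T(m)}\mathrm{BP}\langle n\rangle$ vanish for $m>n$) and has $\pi_0 = \mathbb{Z}_{(p)}$, those results should yield $L_{T(m)} \mathrm{K}(\mathrm{BP}\langle n\rangle)^\wedge_p = 0$ for $m\ge n+2$. Combining this telescopic vanishing with the bounded-below structure of $\mathrm{K}$ and a thick subcategory argument produces the required type $(n+2)$ complex $V$; equivalently, the Mahowald--Rezk criterion shows the fiber of $\mathrm{K}(\mathrm{BP}\langle n\rangle)^\wedge_p \to L_{n+1}^{f}\mathrm{K}(\mathrm{BP}\langle n\rangle)^\wedge_p$ is bounded above.

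For the lower bound, I would first reduce from $\mathrm{K}$ to $\mathrm{TC}$ via Dundas--Goodwillie--McCarthy: since $\pi_0 \mathrm{BP}\langle n\rangle^\wedge_p = \mathbb{Z}_p$, after $p$-completion the fiber of the cyclotomic trace is controlled by the fiber of $\mathrm{K}(\mathbb{Z}_p) \to \mathrm{TC}(\mathbb{Z}_p)$, which is bounded by Hesselholt--Madsen. Hence it suffices to show that $\mathrm{TC}(\mathrm{BP}\langle n\rangle)^\wedge_p$ has unbounded homotopy after smashing with any type $(n+1)$ complex --- equivalently, that a $v_{n+1}$-periodic family of classes survives in mod $(p, v_1, \ldots, v_n)$ homotopy.

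The main obstacle, and the place where the $\mathbb{E}_3$-$\mathrm{BP}$-algebra structure of Theorem~\ref{thm:intro-main} becomes crucial, is this detection problem inside $\mathrm{TC}$. I would attack it by placing $\mathrm{THH}(\mathrm{BP}\langle n\rangle)$ inside a bar/May-type filtration coming from the presentation $\mathrm{BP}\langle n\rangle = \mathrm{BP}/(v_{n+1}, v_{n+2}, \ldots)$; the $\mathbb{E}_3$-structure is exactly what equips such a filtration with the multiplicative structure needed to run a Hopkins--Mahowald-style canonical-vanishing argument. This should produce, in $\pi_* \mathrm{THH}(\mathrm{BP}\langle n\rangle)$ mod $(p, v_1, \ldots, v_n)$, a $v_{n+1}$-periodic family built from $v_{n+1}$ together with its Bökstedt class $\sigma v_{n+1}$. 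Via the Nikolaus--Scholze formula $\mathrm{TC} = \mathrm{fib}\bigl(\varphi_p - \mathrm{can}\colon \mathrm{THH}^{hS^1} \to \mathrm{THH}^{tC_p}\bigr)$, the final step is to transport this family to $\mathrm{TC}$, which requires controlling the Tate construction on the polynomial generators. This Tate computation --- showing that the canonical-vanishing survives Frobenius and detects a $v_{n+1}$-periodic family in $\mathrm{TC}$ --- is, I expect, the hardest and most delicate part of the proof.
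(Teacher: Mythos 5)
The central gap is in your upper bound. The descent/purity results of \cite{cmnn,lmmt} give at most the telescopic vanishing $L_{T(m)}\mathrm{K}(\mathrm{BP}\langle n\rangle^{\wedge}_p)^{\wedge}_p=0$ for $m\ge n+2$, and that does \emph{not} imply the fp-statement you need, namely that $\pi_*(V\otimes \mathrm{K}(\mathrm{BP}\langle n\rangle^{\wedge}_p)^{\wedge}_p)$ is finite for some type $(n+2)$ complex $V$: the implication runs the other way (fp-type $\le n+1$ forces the higher telescopes to vanish, but, e.g., $\bigoplus_{i\ge 0}\Sigma^{i}H\mathbb{F}_p$ is bounded below with $L_{T(m)}=0$ for all $m\ge 1$ and is not fp). Mahowald--Rezk converts the finiteness of $\pi_*(V\otimes X)$ into truncatedness of $X\to L^f_{n+1}X$; it cannot manufacture that finiteness from chromatic vanishing plus a thick subcategory argument. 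Producing this finiteness is precisely the hard content of the theorem, and the paper obtains it by trace methods: Dundas--Goodwillie--McCarthy reduces to $\mathrm{TC}$, and the key claim is that $F\otimes\mathrm{TR}(\mathrm{BP}\langle n\rangle)$ is bounded for $F$ of type $n+2$ (Theorem \ref{thm:intro-bounded-tr}), which is deduced from two independent inputs, the Segal conjecture for $\mathrm{THH}(\mathrm{BP}\langle n\rangle)$ (Theorem \ref{thm:intro-segal}) and weak canonical vanishing (Theorem \ref{thm:intro-can-van}), combined through the formal implications of Theorem \ref{thm:abstract-implications}. Your proposal never engages the Segal conjecture at all, and it misplaces ``canonical vanishing'' inside the detection step, whereas in the paper it is half of the boundedness (upper-bound) argument and has nothing to do with producing periodic families.

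On the lower bound your instincts are closer, but the paper's route is both more precise and easier than what you sketch. Rather than building a $v_{n+1}$-periodic family in $\mathrm{TC}$ and ``controlling the Tate construction,'' the paper uses the $\mathbb{E}_3$-$\mathrm{MU}$-algebra structure to compute $\mathrm{THH}(\mathrm{BP}\langle n\rangle/\mathrm{MU})_*$ as a polynomial algebra with a generator $\sigma^2v_{n+1}$, proves the Detection Theorem \ref{thm:intro-detection} that under the unit map $v_{n+1}\mapsto t\,\sigma^2v_{n+1}$ in $\pi_*\mathrm{THH}(\mathrm{BP}\langle n\rangle/\mathrm{MU})^{hS^1}$, and then exploits the composite of $\mathbb{E}_2$-ring maps $\mathrm{K}\to\mathrm{TC}\to\mathrm{THH}^{hS^1}\to\mathrm{THH}(-/\mathrm{MU})^{hS^1}$: the target is an $\mathrm{MU}$-module whose $K(n+1)$-localization is computed algebraically and is nonzero, so $L_{K(n+1)}\mathrm{K}(\mathrm{BP}\langle n\rangle)\neq 0$ (Corollary \ref{cor:weak-redshift}), which is exactly the lower bound on fp-type. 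No computation of $\mathrm{TC}$, of $\varphi^{hS^1}-\mathrm{can}$, or of any Tate construction is required for this half; your bar/May-filtration-and-transport plan would be substantially harder and is not what the $\mathbb{E}_3$ structure is used for here.
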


\begin{cor} For $\mathrm{BP}\langle n\rangle$ any
$\mathbb{E}_3$-$\mathrm{BP}$-algebra as above, both maps
	\begin{align*}
	\mathrm{K}(\mathrm{BP}\langle n\rangle^{\wedge}_p)_{(p)}
	&\to L^f_{n+1} 
	\mathrm{K}(\mathrm{BP}\langle n\rangle^{\wedge}_p)_{(p)},\\
	\mathrm{K}(\mathrm{BP}\langle n\rangle)_{(p)}
	&\to L^f_{n+1}\mathrm{K}(\mathrm{BP}\langle n\rangle)_{(p)}
	\end{align*}
induce isomorphisms on $\pi_*$ for $*\gg 0$.
\end{cor}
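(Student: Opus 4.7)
The plan is to derive both halves of the corollary from Theorem~B (Redshift) combined with the Mahowald--Rezk criterion (labelled Theorem~\ref{fp-implies-LQ} above), which converts the fp-type condition on a $p$-complete bounded-below spectrum into the assertion that its $p$-localization maps by a $\pi_*$-iso in large degrees to its $L^f_{n+1}$-localization.

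The first map is immediate. Theorem~B asserts fp-type $n+1$ for $\mathrm{K}(\mathrm{BP}\langle n\rangle^{\wedge}_p)^{\wedge}_p$, and Mahowald--Rezk then supplies exactly the large-degree isomorphism $\mathrm{K}(\mathrm{BP}\langle n\rangle^{\wedge}_p)_{(p)} \to L^f_{n+1}\mathrm{K}(\mathrm{BP}\langle n\rangle^{\wedge}_p)_{(p)}$ that we want.

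For the second map I would aim to show that $\mathrm{K}(\mathrm{BP}\langle n\rangle)^{\wedge}_p$ also satisfies the fp-type bound needed to apply Mahowald--Rezk. The key idea is that because $\mathrm{BP}\langle n\rangle$ is already $p$-local, the cofiber of the completion map $\mathrm{BP}\langle n\rangle \to \mathrm{BP}\langle n\rangle^{\wedge}_p$ is a rational spectrum. Upon smashing with any type-$(n+2)$ finite complex $F$ (which is $p$-power torsion), rational contributions vanish, so the $F$-homotopy of $\mathrm{K}(\mathrm{BP}\langle n\rangle)^{\wedge}_p$ should agree with that of $\mathrm{K}(\mathrm{BP}\langle n\rangle^{\wedge}_p)^{\wedge}_p$ up to bounded error. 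Boundedness of the latter (Theorem~B) then transfers to the former, yielding the Lichtenbaum--Quillen statement for $\mathrm{BP}\langle n\rangle$ itself.

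The main obstacle is making the slogan ``the fiber of $\mathrm{K}(\mathrm{BP}\langle n\rangle) \to \mathrm{K}(\mathrm{BP}\langle n\rangle^{\wedge}_p)$ is effectively rational'' precise: $K$-theory does not in general commute with $p$-adic completion of the input ring (not even for $\mathbb{Z}_{(p)} \to \mathbb{Z}_p$), so one cannot simply identify the two $K$-theories after $p$-completion. The correct input is a fracture-type result stating that $K$-theory is insensitive to $p$-completion of the input after smashing with a type-$(n+2)$ complex. Precisely this kind of statement is produced by the Clausen--Mathew--Naumann--Noel \cite{cmnn} and Land--Mathew--Meier--Tamme \cite{lmmt} machinery cited elsewhere in the paper, and my plan would be to invoke it as a black box to reduce the uncompleted case to the completed case already handled by Theorem~B.
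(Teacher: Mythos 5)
Your handling of the first map is essentially the paper's: Theorem~\ref{thm:intro-fpshift} plus the Mahowald--Rezk criterion (Theorem~\ref{fp-implies-LQ}), with the only elided step being the passage from the $p$-complete spectrum (which is what fp-type and Theorem~\ref{fp-implies-LQ} speak about) to the $p$-localization via the arithmetic fracture square; since the rational corners are already $L^f_{n+1}$-local, that step is harmless. So far so good.

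For the second map there is a genuine gap. Your stated goal is to show that $\mathrm{K}(\mathrm{BP}\langle n\rangle)^{\wedge}_p$ ``satisfies the fp-type bound needed to apply Mahowald--Rezk,'' but this spectrum is not fp at all. Since $\mathrm{BP}\langle n\rangle \to \pi_0\mathrm{BP}\langle n\rangle = \mathbb{Z}_{(p)}$ is $1$-connected, one has $K_1(\mathrm{BP}\langle n\rangle)\cong \mathbb{Z}_{(p)}^{\times}$, which is not finitely generated mod $p$; more structurally, by Dundas--Goodwillie--McCarthy the difference between $\mathrm{K}(\mathrm{BP}\langle n\rangle)^{\wedge}_p$ and $\mathrm{TC}$ is controlled by $\mathrm{K}(\mathbb{Z}_{(p)})^{\wedge}_p$, which fails to have degreewise finite mod $p$ homotopy (the localization sequence brings in $\bigoplus_{\ell\neq p}\mathrm{K}(\mathbb{F}_\ell)$). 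This is exactly why Theorem~\ref{thm:intro-fpshift} is stated for the $p$-completed ring, and why for the uncompleted ring the paper proves only the weaker assertion that the map to $L^f_{n+1}$ is truncated, not fp-ness.

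Relatedly, the black box you want to invoke --- ``$\mathrm{K}$-theory is insensitive to $p$-completion of the input after smashing with a type $n+2$ complex'' --- is not what \cite{cmnn,lmmt} provide, and as an equivalence statement it is false. Since $p$-complete $\mathrm{TC}$ of a connective ring is insensitive to $p$-completion, the DGM square identifies the fiber of $\mathrm{K}(\mathrm{BP}\langle n\rangle)^{\wedge}_p \to \mathrm{K}(\mathrm{BP}\langle n\rangle^{\wedge}_p)^{\wedge}_p$ with the fiber of $\mathrm{K}(\mathbb{Z}_{(p)})^{\wedge}_p \to \mathrm{K}(\mathbb{Z}_p)^{\wedge}_p$, and this discrepancy does not go away after smashing with a type $n+2$ complex (already $T(1)$-locally the $K$-theories of $\mathbb{Q}$ and $\mathbb{Q}_p$ differ). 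What is true, and what you actually need, is that this fiber becomes bounded above after smashing with $F$ --- but that boundedness amounts to the classical Lichtenbaum--Quillen theorem for $\mathbb{Z}_{(p)}$, i.e.\ Voevodsky--Rost \cite{voeI,voeII} via Waldhausen's reformulation, an ingredient your proposal never invokes and which the purity/descent results of \cite{cmnn,lmmt} do not replace. The paper's route is accordingly different: the DGM pullback square, the fact that $\mathrm{TC}(\mathrm{BP}\langle n\rangle)$ is fp of type at most $n+1$ (from bounded $\mathrm{TR}$), closure of the class of spectra with truncated $L^f_{n+1}$-comparison map under fiber sequences together with the arithmetic square, and Voevodsky--Rost for $\mathrm{K}(\mathbb{Z}_{(p)})$. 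Any correct proof of the second map has to bring in that last input (or an equivalent), and your plan as written does not.
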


To prove Theorem \ref{thm:intro-fpshift} by trace methods, the critical thing to show is that
$\pi_*(V\otimes \mathrm{TC}(\mathrm{BP}\langle n\rangle))$ is
bounded above for some
type $(n+2)$ complex $V$. We recall \cite{nikolaus-scholze}
that the $p$-completed topological cyclic
homology of $\mathrm{BP}\langle n\rangle$
can be computed as the fiber:
	\[
	\mathrm{TC}(\mathrm{BP}\langle n\rangle)
	\simeq
	\mathrm{fib}\left(
	\left(\mathrm{THH}(
	\mathrm{BP}\langle n\rangle)^{hS^1}\right)^{\wedge}_p
	\stackrel{\varphi^{hS^1} - \mathrm{can}}{\xrightarrow{\hspace*{2.5cm}}}
	\left(\mathrm{THH}(\mathrm{BP}\langle n\rangle)^{tS^1}\right)^{\wedge}_p
	\right),
	\]
where the map
	\[
	\varphi:
	\mathrm{THH}(\mathrm{BP}\langle n\rangle)
	\to
	\mathrm{THH}(\mathrm{BP}\langle n\rangle)^{tC_p}
	\]
is the cyclotomic Frobenius. (See \S\ref{sec:cyclotomic-conventions}
for our conventions on cyclotomic spectra.)

One would like to argue that $(\varphi^{hS^1}-\mathrm{can})$
is an equivalence in large degrees after tensoring with
a type $(n+2)$ complex. We will deduce this from
the following two theorems:

\begin{thm}[Segal Conjecture]\label{thm:intro-segal} Let $F$ be any type $n+1$ complex.
Then the cyclotomic Frobenius $\mathrm{THH}(\mathrm{BP}\langle n \rangle) \to \mathrm{THH}(\mathrm{BP}\langle n \rangle)^{tC_p}$ induces an isomorphism
\[F_*\mathrm{THH}(\mathrm{BP}\langle n \rangle) \cong F_*(\mathrm{THH}(\mathrm{BP}\langle n \rangle)^{tC_p})\]
in all sufficiently large degrees $* \gg 0$.
\end{thm}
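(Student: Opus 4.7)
The plan is to reduce to the classical Segal conjecture for $\mathrm{THH}(\mathbb{F}_p)$ due to B\"okstedt--Hsiang--Madsen, using the $\mathbb{E}_3$-$\mathrm{BP}$-algebra structure on $\mathrm{BP}\langle n\rangle$ provided by Theorem \ref{thm:intro-main} to build a Koszul-style filtration of $\mathrm{BP}\langle n\rangle$ by the regular sequence $(p,v_1,\dots,v_n)$.

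First, by a thick-subcategory argument, the conclusion is independent of the choice of type $n{+}1$ complex $F$: the class of $F$ for which the cofiber of the Frobenius $\mathrm{THH}(\mathrm{BP}\langle n\rangle) \to \mathrm{THH}(\mathrm{BP}\langle n\rangle)^{tC_p}$ becomes bounded above after smashing with $F$ is closed under cofibers, shifts and retracts. It therefore suffices to prove the statement for a convenient generalized Moore spectrum $F = \mathbb{S}/(p^{i_0},v_1^{i_1},\dots,v_n^{i_n})$, with exponents taken large enough that the relevant $v_j^{i_j}$ self-maps exist (Devinatz--Hopkins--Smith). Using the $\mathbb{E}_3$-$\mathrm{BP}$-structure, I would realize $\mathrm{BP}\langle n\rangle/(p,v_1,\dots,v_n)$ as an iterated cofiber built from $\mathrm{BP}\langle n\rangle$ by killing the $v_j$'s, producing a finite filtration of $F \otimes \mathrm{THH}(\mathrm{BP}\langle n\rangle)$ whose associated graded is controlled by $\mathrm{THH}$ of the bottom quotient.

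Second, I would analyze the bottom layer. The $\mathbb{E}_1$-ring $\mathrm{BP}\langle n\rangle/(p,v_1,\dots,v_n)$ is connective with $\pi_0 = \mathbb{F}_p$, and one expects its $\mathrm{THH}$ to be computable (and even, concentrated in even degrees, via a B\"okstedt-type calculation). The Segal conjecture for $\mathrm{THH}(\mathbb{F}_p)$ then propagates up the Postnikov/cell tower of this quotient: the cofiber of the Frobenius on each successive $\mathrm{THH}$-layer is bounded above, with uniform bound depending only on $n$. Multiplicativity of the Tate-valued Frobenius with respect to the $\mathbb{E}_3$-structure is crucial here, because it identifies the differentials and extensions in the relevant Tate spectral sequences in terms of the already-understood mod-$p$ input.

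Third, I would ascend the Koszul filtration. Running the resulting spectral sequence for both $\mathrm{THH}(\mathrm{BP}\langle n\rangle)\otimes F$ and $\mathrm{THH}(\mathrm{BP}\langle n\rangle)^{tC_p}\otimes F$, the cofiber of $\varphi$ acquires a filtration whose associated graded pieces are all bounded above, with uniform bound across filtration degrees. Since the filtration has finitely many steps (one per $v_j$, $j \le n$), this gives a uniform bound on the cofiber itself, proving the theorem.

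The main obstacle is the third step: the Tate construction does not commute with sequential colimits or limits, and verifying that the filtration spectral sequence for the Tate side converges strongly --- and that the associated-graded bounds assemble into a genuine bound on the cofiber of $\varphi$ --- requires a careful boundedness/completion argument. This is where the $\mathbb{E}_3$-$\mathrm{BP}$-structure, together with a quantitative B\"okstedt-style computation of $\mathrm{THH}(\mathrm{BP}\langle n\rangle/(p,v_1,\dots,v_n))$ as a module over the Tate-valued Frobenius, must be combined to pin down a uniform bound independent of the step of the filtration.
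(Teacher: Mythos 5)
There is a genuine gap at the heart of your reduction, in the second step. You propose to filter $F \otimes \mathrm{THH}(\mathrm{BP}\langle n\rangle)$ by a ``Koszul-style filtration'' coming from the regular sequence $(p, v_1, \dots, v_n)$, with associated graded ``controlled by $\mathrm{THH}$ of the bottom quotient'' $\mathrm{BP}\langle n\rangle/(p,v_1,\dots,v_n)\simeq \mathbb{F}_p$. No such filtration exists: $\mathrm{THH}$ is not exact in the ring variable, so killing elements of $\pi_*\mathrm{BP}\langle n\rangle$ does not produce layers of the form $\mathrm{THH}(\text{quotient ring})$. Concretely, the two objects you are conflating are very different: base-changing $\mathrm{THH}(\mathrm{BP}\langle n\rangle)$ along $\mathrm{BP}\langle n\rangle \to \mathbb{F}_p$ (which is what a finite cell filtration of the Moore spectrum actually computes in its top layer) gives $\mathrm{THH}(\mathrm{BP}\langle n\rangle;\mathbb{F}_p)$, whose homotopy is $\mathbb{F}_p[\sigma^2 v_{n+1}]\otimes \Lambda(\sigma t_1,\dots,\sigma t_{n+1})$, whereas $\mathrm{THH}(\mathbb{F}_p)_*=\mathbb{F}_p[x]$. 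And if instead you filter by the cells of $F$, the layers are shifted copies of $\mathrm{THH}(\mathrm{BP}\langle n\rangle)$ itself, so the argument is circular. There is simply no finite filtration whose layers reduce the Frobenius on $\mathrm{THH}(\mathrm{BP}\langle n\rangle)$ to the Frobenius on $\mathrm{THH}(\mathbb{F}_p)$, and consequently your step three (uniform bounds on finitely many layers) has nothing to run on.

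The intended input is also too weak. The way to make a reduction of this flavor work is to filter the ring \emph{multiplicatively} — the paper uses the d\'ecalage of the Adams filtration, $\mathrm{desc}^{\ge\bullet}_{\mathbb{F}_p}(\mathrm{BP}\langle n\rangle)$ — so that $\mathrm{THH}$ becomes a filtered cyclotomic object whose associated graded is $\mathrm{THH}$ of the associated graded ring. That graded ring is not $\mathbb{F}_p$ but the graded polynomial algebra $\mathbb{F}_p[v_0,v_1,\dots,v_n]$ on $n+1$ generators, and the Segal conjecture one needs for it is not a consequence of B\"okstedt--Hsiang--Madsen alone: decomposing $\mathbb{F}_p[v_0,\dots,v_n]$ as $\mathbb{F}_p\otimes S^0[a_0]\otimes\cdots\otimes S^0[a_n]$, the Frobenius on each $\mathrm{THH}(S^0[a_i])$ is identified with a Tate diagonal on spheres, and one must invoke the classical Segal conjecture for $C_p$ (Lin, Gunawardena) to see it is a $p$-adic equivalence, together with a finiteness argument to commute the Tate construction past the tensor product. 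The $\mathbb{E}_3$-structure does not by itself ``identify the differentials and extensions''; its role in the paper's argument is only to make $\varphi$ a ring map so that the quotient by $(p,v_1,\dots,v_n)$ can be formed compatibly on source and target. Finally, because the multiplicative filtration is infinite, the convergence issue you flag cannot be dismissed by finiteness of the filtration: it is handled by conditional convergence after $v_0$-completion, using that the filtered $\mathrm{THH}$ is uniformly bounded below so that the Tate construction commutes with the relevant limit.
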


\begin{thm}[Canonical Vanishing] \label{thm:intro-can-van}
Let $F$ be any type
$n+2$ complex. There exists an integer $d\ge 0$
(depending on $F$) such that, for all $0\le k\le \infty$,
the composite
	\[
	\tau_{\ge d}(F 
	\otimes \mathrm{THH}(\mathrm{BP}\langle n\rangle)^{hC_{p^k}})
	\to
	F 
	\otimes \mathrm{THH}(\mathrm{BP}\langle n\rangle)^{hC_{p^k}}
	\stackrel{\mathrm{can}}{\longrightarrow}
	F 
	\otimes \mathrm{THH}(\mathrm{BP}\langle n\rangle)^{tC_{p^k}}
	\]
is nullhomotopic.
\end{thm}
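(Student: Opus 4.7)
My strategy is to prove the following stronger statement: there is an integer $D$ (depending only on $F$) such that $\pi_*(F \otimes \mathrm{THH}(\mathrm{BP}\langle n \rangle)^{tC_{p^k}}) = 0$ for all $* \ge D$ and all $0 \le k \le \infty$. From this the theorem follows formally with $d = D$: the source $\tau_{\ge D}(F \otimes \mathrm{THH}(\mathrm{BP}\langle n\rangle)^{hC_{p^k}})$ is $(D{-}1)$-connective while the target becomes $(D{-}1)$-truncated, and any map from a $(D{-}1)$-connective spectrum to a $(D{-}1)$-truncated one is nullhomotopic.

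To establish uniform Tate vanishing, I would exploit the $\mathbb{E}_3$-$\mathrm{BP}$-algebra structure on $\mathrm{BP}\langle n \rangle$ provided by Theorem \ref{thm:intro-main} and pass to relative topological Hochschild homology. Since $\mathrm{BP} \to \mathrm{BP}\langle n \rangle$ is the quotient by a regular sequence of $\mathbb{E}_3$ elements, $\mathrm{THH}(\mathrm{BP}\langle n\rangle/\mathrm{BP})$ is very constrained: its homotopy should be $\pi_*\mathrm{BP}\langle n\rangle$ tensored with an exterior algebra on odd-degree generators corresponding to $v_{n+1}, v_{n+2}, \ldots$. The $S^1$-equivariant structure on this relative theory is accessible, and absolute $\mathrm{THH}(\mathrm{BP}\langle n\rangle)$ can be recovered from it by descent along $\mathrm{THH}(\mathrm{BP})$, whose equivariant behavior in the relevant range is controlled.

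The heart of the argument is then a Tate spectral sequence analysis for the relative theory, tensored with $F$. Because $F$ annihilates $v_{n+1}$, the lowest-degree generator of the exterior factor, the candidate ``Bott element'' in the $E_2$-page becomes nilpotent, yielding a horizontal vanishing line on $E_\infty$ for the $C_p$-Tate construction, with constant depending only on $F$. A Tsalidis-style induction using the Tate orbit lemma of Nikolaus--Scholze then propagates the vanishing from $k=1$ to all finite $k$ uniformly, and passage to the limit handles $k = \infty$. Feeding in the Segal conjecture (Theorem \ref{thm:intro-segal}) ensures that the transition between adjacent levels of the cyclotomic tower does not ruin the uniform bound.

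The main obstacle I anticipate is the descent step. While relative THH is accessible, transferring the relative Tate vanishing to the absolute statement requires control over a spectral sequence whose associated graded involves $\mathrm{THH}(\mathrm{BP})^{tC_{p^k}}$, and one must verify that the vanishing line survives with uniform constants as $k$ varies. I expect this to require an additional structural input, most naturally an even or motivic filtration on $\mathrm{THH}(\mathrm{BP}\langle n\rangle)$ whose graded pieces are sufficiently rigid that nilpotence can be lifted from graded to filtered. Bootstrapping this alongside the relative Tate analysis is where the technical heart of the proof should lie.
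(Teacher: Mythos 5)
There is a genuine gap, and it is at the very first step: the ``stronger statement'' you propose to prove is false. The spectrum $F \otimes \mathrm{THH}(\mathrm{BP}\langle n\rangle)^{tC_{p^k}}$ is \emph{not} bounded above. Already in the prototype case $n=-1$, $F=S^0/p$, one has $\pi_*\bigl(\mathrm{THH}(\mathbb{F}_p)^{tC_p}\bigr)\cong\mathbb{F}_p[x^{\pm 1}]$ and $\pi_*\bigl(\mathrm{THH}(\mathbb{F}_p)^{tS^1}/p\bigr)\cong\mathbb{F}_p[t^{\pm 1}]$, nonzero in infinitely many positive degrees; the paper's own prototype discussion at the start of \S\ref{sec:canvan} asserts only that the \emph{map} $\mathrm{can}/p$ is zero in large degrees, not that its target vanishes there. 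For general $n$ your claim contradicts the Segal conjecture that you yourself invoke: by Theorem \ref{thm:segal-conjecture-main} (and the remark following it, which covers type $n+2$ complexes), $\varphi$ identifies $F\otimes\mathrm{THH}(\mathrm{BP}\langle n\rangle)^{tC_p}$ with $F\otimes\mathrm{THH}(\mathrm{BP}\langle n\rangle)$ in all large degrees, and the latter is unbounded above (e.g.\ $\pi_*\mathrm{THH}(\mathrm{BP}\langle n\rangle;\mathbb{F}_p)\cong\mathbb{F}_p[\sigma^2v_{n+1}]\otimes\Lambda(\sigma t_1,\dots,\sigma t_{n+1})$, as in Proposition \ref{prop:descent-fp}). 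Killing $v_{n+1}$ does not help: since $t$ is invertible in the Tate spectral sequence and $v_{n+1}\doteq t\,\sigma^2v_{n+1}$, reducing mod $v_{n+1}$ only removes the polynomial generator and leaves $t^{\pm1}$-periodic classes in arbitrarily high degrees, exactly as in $\mathrm{TP}(\mathbb{F}_p)/p$. Moreover, even a horizontal vanishing line on the $E_\infty$-page of a Tate spectral sequence would not yield vanishing of homotopy above a fixed degree, since a fixed total degree receives contributions from arbitrarily negative filtration. (A secondary slip: $\mathrm{THH}(\mathrm{BP}\langle n\rangle/\mathrm{BP})$ is not exterior on odd generators --- that describes $\pi_*(\mathrm{BP}\langle n\rangle\otimes_{\mathrm{BP}}\mathrm{BP}\langle n\rangle)$; the relative $\mathrm{THH}$ is polynomial on even double-suspension classes, cf.\ Theorem \ref{thm:poly-thh}.)

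The actual content of Canonical Vanishing is a filtration-shift statement about the map $\mathrm{can}$, not a vanishing statement about its target, and this is how the paper proceeds. One chooses a type $n+1$ ring $M$ whose $v_{n+1}$-element $v$ has maximal Adams filtration (Proposition \ref{recall:smiths}), uses the Detection Theorem \ref{thm:body-detection} together with descent from $\mathrm{THH}(\mathrm{BP}\langle n\rangle/\mathrm{MU})$ (Proposition \ref{prop:descent-fp}, Lemma \ref{lem:nice-z}) to produce a central class $z$ such that $t^m z$ detects $v$ in filtration exactly $2m$ of the homotopy fixed point spectral sequence, and then kills a filtered lift $\tilde v$. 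The resulting filtered model of $M/v\otimes\mathrm{THH}(\mathrm{BP}\langle n\rangle)^{hC_{p^k}}$ is concentrated in nonnegative filtration, while on the Tate side multiplication by $z$ is eventually an isomorphism on $\pi_*\mathrm{gr}(X)^{tC_{p^k}}$ (finite generation over $\mathbb{Z}_{(p)}[z]$), so the cofiber by $\tilde v$ is eventually concentrated in negative filtration, uniformly in $k$; hence $\pi_*(M/v\otimes\mathrm{can})$ is zero in large degrees (Theorem \ref{thm:canvan-new}). The strong form stated in Theorem \ref{thm:intro-can-van} is then deduced from this weak form plus the Segal conjecture through the abstract implications of Theorem \ref{thm:abstract-implications}. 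If you want to salvage your outline, you must aim at a statement of this shape about the canonical map itself rather than at boundedness of the Tate construction.
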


We note that the first theorem involves only
the cyclotomic Frobenius map, and the second theorem only
the canonical map. We use different techniques to 
analyze each one.

In order to prove Theorem \ref{thm:intro-segal}, we use a filtration
on $\mathrm{BP}\langle n\rangle$ to reduce the statement to
a graded version of the Segal conjecture for polynomial algebras over
$\mathbb{F}_p$, which we then prove directly. This is
done in \S\ref{sec:segal}.

In order to prove Theorem \ref{thm:intro-can-van}, we
first investigate the $S^1$-spectrum $\mathrm{THH}(\mathrm{BP}\langle n
\rangle/\mathrm{MU})$ of Hochschild homology
\emph{relative} to $\mathrm{MU}$. This spectrum is much simpler
to understand because of the following analog of
B\"okstedt's periodicity theorem:

\begin{thm}[Polynomial $\mathrm{THH}$]
%,Theorem \ref{thm:poly-thh}] 
The ring $\mathrm{THH}(\mathrm{BP}\langle n\rangle/\mathrm{MU})_*$
is polynomial over $\pi_*\mathrm{BP}\langle n \rangle$ on even-degree generators, one of which can
be chosen to be the double-suspension class $\sigma^2v_{n+1}$. (For the definition of double-suspension classes, see \ref{exm:double-suspension}.)
\end{thm}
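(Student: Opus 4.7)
The plan is to identify $\mathrm{BP}\langle n \rangle$ as an iterated $\mathbb{E}_2$-quotient of $\mathrm{MU}$ by a regular sequence of even-degree indecomposables, and then to compute $\mathrm{THH}(\mathrm{BP}\langle n\rangle/\mathrm{MU})$ via the associated bar spectral sequence. Combining the $\mathbb{E}_4$-$\mathrm{MU}$-algebra structure on $\mathrm{BP}$ with the hypothesized $\mathbb{E}_3$-$\mathrm{BP}$-algebra structure makes $\mathrm{BP}\langle n\rangle$ into at least an $\mathbb{E}_3$-$\mathrm{MU}$-algebra, whose underlying $\mathrm{MU}$-module is a quotient of $\mathrm{MU}$ by a regular sequence $y_1, y_2, \ldots$ of even-degree polynomial generators of $\pi_*\mathrm{MU}$. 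By the assumption that the unit $\mathrm{BP} \to \mathrm{BP}\langle n\rangle$ kills indecomposables $v_{n+1}, v_{n+2}, \ldots$, we may arrange $v_{n+1}$ to occur as one of the $y_k$.

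The crucial intermediate step is the Koszul-duality identification
\[
\mathrm{BP}\langle n\rangle \otimes_{\mathrm{MU}} \mathrm{BP}\langle n\rangle \simeq \mathrm{BP}\langle n\rangle \otimes \Lambda(\sigma y_1, \sigma y_2, \ldots),
\]
an exterior $\mathrm{BP}\langle n\rangle$-algebra on suspension classes of degree $|y_k|+1$. Feeding this into the base-change identity
\[
\mathrm{THH}(\mathrm{BP}\langle n\rangle/\mathrm{MU}) \simeq \mathrm{BP}\langle n\rangle \otimes_{\mathrm{BP}\langle n\rangle \otimes_{\mathrm{MU}} \mathrm{BP}\langle n\rangle} \mathrm{BP}\langle n\rangle
\]
reduces the calculation to $\mathrm{Tor}$ over an exterior algebra, yielding a polynomial $\pi_*\mathrm{BP}\langle n\rangle$-algebra on classes $\sigma^2 y_k$ in degree $|y_k|+2$. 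Because every generator lives in even internal degree, the bar spectral sequence collapses at $E_2$ with no room for additive or multiplicative extensions, and the class $\sigma^2 v_{n+1}$ appears as one of the resulting polynomial generators. A direct chase through the bar construction identifies this with the double-suspension class of \ref{exm:double-suspension}.

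The main obstacle lies in making the Koszul-duality step rigorous and functorial. For a single $\mathbb{E}_2$-quotient of $R$ by a central class $y$, the identification $R/y \otimes_R R/y \simeq R/y \otimes \Lambda(\sigma y)$ is a standard bar calculation, but here we must iterate through a countable regular sequence while keeping track of algebra structures. One natural tactic is to filter by finite subcollections $\{y_1, \ldots, y_N\}$ and pass to the colimit, verifying at each stage that the resulting self-smash product remains exterior and that no new multiplicative interactions appear. This is precisely where the $\mathbb{E}_3$-$\mathrm{BP}$-algebra hypothesis is used: it ensures that $\mathrm{BP}\langle n\rangle \otimes_\mathrm{MU} \mathrm{BP}\langle n\rangle$ inherits a coherent $\mathbb{E}_1$-algebra structure in $\mathrm{BP}\langle n\rangle$-modules, so that the bar spectral sequence for relative $\mathrm{THH}$ is multiplicative and the even-degree collapse argument suffices to pin down the ring structure.
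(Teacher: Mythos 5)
Your reduction to the bar spectral sequence matches the paper's starting point: the computation of $\pi_*\mathcal{U}^{(1)}_{\mathrm{MU}}(\mathrm{BP}\langle n\rangle)$ as an exterior algebra on the classes $\sigma v_i$ ($i\ge n+1$) and $\sigma x_j$ ($j\ne p^k-1$) is exactly the first lemma of \S\ref{ssec:inductive-step} (via \cite[Proposition 3.6]{angel}), and evenness does force collapse at $E_2$. But there is a genuine gap at the final step. Tor of $\mathrm{BP}\langle n\rangle_*$ over an exterior algebra is a \emph{divided power} algebra, not a polynomial algebra, so the $E_2=E_\infty$ page is $\Gamma_{\mathrm{BP}\langle n\rangle_*}(\sigma^2 v_i)\otimes\Gamma_{\mathrm{BP}\langle n\rangle_*}(\sigma^2 x_j)$. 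Your assertion that evenness leaves ``no room for additive or multiplicative extensions'' is precisely backwards: evenness kills differentials, but it says nothing about whether products of chosen representatives agree with the $E_\infty$-products, and here the theorem is true only \emph{because} there are nontrivial multiplicative extensions. If the extensions were trivial, the homotopy ring would be an honest divided power algebra over $\mathbb{Z}_{(p)}[v_1,\dots,v_n]$, in which $(\sigma^2 v_{n+1})^p = p!\,\gamma_p(\sigma^2 v_{n+1})$ is divisible by $p$ --- manifestly not polynomial. So your argument, taken literally, proves the wrong ring.

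Resolving these extensions is the non-formal heart of the paper's proof and cannot be waved through. The paper first treats the ground case $\mathcal{U}^{(2)}_{\mathrm{MU}}(\mathbb{F}_p)=\mathrm{THH}(\mathbb{F}_p/\mathrm{MU})$: using the Dyer--Lashof action computed by Steinberger on the dual Steenrod algebra \cite{hinfty} and by Kochman on $H_*(\mathrm{BU})$ \cite{kochman}, together with \cite[Theorem 3.6]{bm} applied to the standard bar representatives of divided powers, it shows (Proposition \ref{prop:e2-env-algebra-of-fp}) that $w^{p^j}$ is detected by $\gamma_{p^j}$ of the corresponding class, i.e.\ the hidden extensions are exactly those needed to make the ring polynomial. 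The general case (Proposition \ref{prop:e2-mu-envelope}) then follows by mapping a candidate polynomial algebra in, reducing modulo $(p,v_1,\dots,v_n)$, and using injectivity into $\pi_*\mathcal{U}^{(2)}_{\mathrm{MU}}(\mathbb{F}_p)$ plus a dimension count. Note also that the role of the $\mathbb{E}_3$-structure is not merely to make the bar spectral sequence multiplicative: it is what puts an $\mathbb{E}_2$-structure on the enveloping algebra (Remark \ref{rmk:enveloping-algebra-monoidal}) and, in the ground case, the $\mathbb{E}_\infty$-$\mathbb{F}_p$-algebra structures that make the Dyer--Lashof input available. To repair your proof you would need to supply this extension analysis (or some substitute for it); the iterated-quotient/colimit bookkeeping you flag as the main obstacle is comparatively minor.
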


We may take advantage of the circle action on
$\mathrm{THH}$ to shift the class $\sigma^2v_{n+1}$ down to a class detecting $v_{n+1}$. More precisely, we prove:

\begin{thm}[Detection]  \label{thm:intro-detection}
There is an isomorphism of $\mathbb{Z}_{(p)}[v_1, ..., v_n]$-algebras
	\[
	\pi_*(\mathrm{THH}(\mathrm{BP}\langle n\rangle/\mathrm{MU})^{hS^1})
 	\cong \left(\pi_*\mathrm{THH}(\mathrm{BP}\langle n \rangle 
	/\mathrm{MU})\right)\llbracket t \rrbracket, 
	\]
where $|t|=-2$.
This isomorphism can be chosen such that, under the unit map
\[\pi_*(\mathrm{MU}_{(p)}^{hS^1}) \to \pi_*(\mathrm{THH}(\mathrm{BP}\langle n\rangle/\mathrm{MU})^{hS^1}),\] 
the canonical complex orientation maps to $t$ and $v_{n+1}$ is sent to
$t(\sigma^2v_{n+1})$.
\end{thm}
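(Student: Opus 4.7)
The plan is to analyze the $S^1$-homotopy fixed-point spectral sequence
\[
E_2^{s,t} = H^s(BS^1;\pi_t\mathrm{THH}(\mathrm{BP}\langle n\rangle/\mathrm{MU})) \Longrightarrow \pi_{t-s}\mathrm{THH}(\mathrm{BP}\langle n\rangle/\mathrm{MU})^{hS^1}
\]
and to compare it, via the $S^1$-equivariant unit map $\mathrm{MU}\to\mathrm{THH}(\mathrm{BP}\langle n\rangle/\mathrm{MU})$ (with $\mathrm{MU}$ given the trivial $S^1$-action), to the HFPSS computing $\pi_*\mathrm{MU}^{hS^1}=\pi_*\mathrm{MU}\llbracket T\rrbracket$, where $T\in\pi_{-2}\mathrm{MU}^{hS^1}$ is the canonical complex orientation class.

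First I would establish the additive and multiplicative structure of $\pi_*\mathrm{THH}(\mathrm{BP}\langle n\rangle/\mathrm{MU})^{hS^1}$ by showing that the HFPSS collapses at $E_2$ with no multiplicative extensions. By the Polynomial $\mathrm{THH}$ theorem, $\pi_*\mathrm{THH}(\mathrm{BP}\langle n\rangle/\mathrm{MU})$ is polynomial over $\pi_*\mathrm{BP}\langle n\rangle=\mathbb{Z}_{(p)}[v_1,\dots,v_n]$ on even-degree generators, hence concentrated in even degrees. Together with $H^*(BS^1;\mathbb{Z})=\mathbb{Z}[t]$ in even cohomological degree, the $E_2$-page is supported in bidegrees with both $s$ and $t$ even. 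Since a $d_r$ differential shifts total degree by $-1$, every differential vanishes on parity grounds, and the same parity consideration rules out multiplicative extensions. As $\mathrm{THH}(\mathrm{BP}\langle n\rangle/\mathrm{MU})$ is connective, the HFPSS converges strongly, and choosing lifts of the polynomial generators yields an isomorphism with $\pi_*\mathrm{THH}(\mathrm{BP}\langle n\rangle/\mathrm{MU})\llbracket t\rrbracket$ with $|t|=-2$.

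The second step is to pin down the two distinguished classes. The comparison of HFPSSs sends $T$ to a class of HFPSS filtration $2$ detecting the generator of $E_\infty^{2,0}$ in the target, so we may normalize our lift $t$ to be the image of $T$. For $v_{n+1}$, the crucial observation is that $v_{n+1}$ maps to zero in $\pi_*\mathrm{BP}\langle n\rangle$ and hence to zero in $\pi_*\mathrm{THH}(\mathrm{BP}\langle n\rangle/\mathrm{MU})$; its image in the homotopy fixed points therefore lies in HFPSS filtration at least $2$ and is thus divisible by $t$. Writing this image as $t\cdot y$, the class $y\bmod t$ lies in $\pi_{|v_{n+1}|+2}\mathrm{THH}(\mathrm{BP}\langle n\rangle/\mathrm{MU})$ and agrees with $\sigma^2v_{n+1}$ upon unwinding the construction in Example~\ref{exm:double-suspension}. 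Using the freedom in our identification with the power series ring, we choose the lift of $\sigma^2v_{n+1}$ to be $y$ itself, so that $v_{n+1}\mapsto t\cdot\sigma^2v_{n+1}$ on the nose.

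The main obstacle is the final matching: one must verify that the first-order term of $v_{n+1}$ in the $t$-adic filtration agrees with the double-suspension class as constructed in the paper. Concretely, this amounts to recognizing $\sigma^2v_{n+1}$ as the first-order obstruction, in the $\mathrm{MU}^{hS^1}$-module structure on $\mathrm{THH}(\mathrm{BP}\langle n\rangle/\mathrm{MU})^{hS^1}$, to the canonical nullhomotopy of $v_{n+1}$ witnessed by the unit $\mathrm{MU}\to\mathrm{THH}(\mathrm{BP}\langle n\rangle/\mathrm{MU})$; confirming this identification is sensitive to the precise definition of the double-suspension operator of Example~\ref{exm:double-suspension}.
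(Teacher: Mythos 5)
Your outline follows the same route as the paper: collapse of the homotopy fixed point spectral sequence by evenness of $\pi_*\mathrm{THH}(\mathrm{BP}\langle n\rangle/\mathrm{MU})$ (Theorem \ref{thm:poly-thh}, Proposition \ref{prop:gr-of-tc}), normalization of $t$ as the image of the complex orientation, the observation that $v_{n+1}$ dies in $\pi_*\mathrm{THH}(\mathrm{BP}\langle n\rangle/\mathrm{MU})$ and hence its image in the fixed points is divisible by $t$, and a final adjustment of the lift of $\sigma^2 v_{n+1}$ to kill the $t^2$-correction. Those steps are fine. But there is a genuine gap exactly where you flag it: the identification of the $t$-linear term of the image of $v_{n+1}$ with the double-suspension class $\sigma^2 v_{n+1}$ of Example \ref{exm:double-suspension} is asserted ("upon unwinding the construction") rather than proved, and your closing paragraph concedes it is unresolved. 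Without it you only know that $v_{n+1}\mapsto t\cdot y$ for \emph{some} class $y$ reducing to an element of $\pi_{2p^{n+1}}\mathrm{THH}(\mathrm{BP}\langle n\rangle/\mathrm{MU})$; nothing so far forces that element to be $\sigma^2 v_{n+1}$ rather than, say, a different polynomial generator or a decomposable in the same degree, and the whole point of the Detection Theorem is this specific identification.

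In the paper this step is precisely the Detection Lemma (Lemma \ref{lem:detection}), proved in Appendix \ref{sec:suspension} by Kan-extending the square $\mathbf{1}\to A\to \mathrm{HH}(A)$ to produce a functorial commutative square comparing $\sigma^{2}\colon \Sigma^{-1}(A/\mathbf{1})\to \Sigma^{-2}\mathrm{HH}(A)$ with the connecting map $t\colon \Sigma^{-2}\mathrm{HH}(A)\to \lim_{\mathbb{C}P^{1}}\mathrm{HH}(A)$ and the map induced by the unit $\mathbf{1}\to \mathrm{HH}(A)$; applied to $A=\mathrm{BP}\langle n\rangle$ in $\mathrm{MU}$-modules and to the canonical lift of $v_{n+1}$ to the fiber of $\mathrm{MU}_{(p)}\to\mathrm{BP}\langle n\rangle$, it says that the image of $v_{n+1}$ in $\pi_*\lim_{\mathbb{C}P^1}\mathrm{THH}(\mathrm{BP}\langle n\rangle/\mathrm{MU})$ is $t\cdot\sigma^2 v_{n+1}$ on the nose (the lemma is also where the sign ambiguity in $\sigma^2$ is pinned down). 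Your "first-order obstruction to the nullhomotopy of $v_{n+1}$" reformulation is the right heuristic, but turning it into a proof compatible with the factorization-homology definition of $\sigma^2$ is exactly the content of that lemma, which the authors note they could not find in the literature; it is the one non-formal input beyond the spectral sequence collapse, and your proposal stops short of supplying it.
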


Theorem \ref{thm:intro-detection} already implies the following
weak form of redshift:

\begin{cor} (Corollary \ref{cor:weak-redshift})
$L_{K(n+1)}\mathrm{K}(\mathrm{BP}\langle n\rangle)$ is nonzero.
\end{cor}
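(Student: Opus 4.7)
The plan is to use the cyclotomic trace, combined with Theorem \ref{thm:intro-detection}, to construct an $\mathbb{E}_1$-ring map from $\mathrm{K}(\mathrm{BP}\langle n\rangle)$ into a target whose $K(n+1)$-localization is visibly nonzero; unit preservation will then force the source to have nontrivial $K(n+1)$-localization. Concretely, I set
\[
B := \mathrm{THH}(\mathrm{BP}\langle n\rangle/\mathrm{MU})^{hS^1}
\]
and form the composite
\[
\phi\colon \mathrm{K}(\mathrm{BP}\langle n\rangle) \xrightarrow{\mathrm{tr}} \mathrm{TC}(\mathrm{BP}\langle n\rangle) \to \mathrm{THH}(\mathrm{BP}\langle n\rangle)^{hS^1} \to B,
\]
where the first arrow is the cyclotomic trace, the second is the canonical Nikolaus-Scholze map into $\mathrm{TC}^-$, and the third comes from the natural base-change map $\mathrm{THH}(\mathrm{BP}\langle n\rangle)\to\mathrm{THH}(\mathrm{BP}\langle n\rangle/\mathrm{MU})$. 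Each arrow is at least an $\mathbb{E}_1$-ring map, so $\phi(1)=1$.

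The heart of the argument is to show $L_{K(n+1)} B \ne 0$. By Theorem \ref{thm:intro-detection},
\[
\pi_* B \cong \pi_* \mathrm{THH}(\mathrm{BP}\langle n\rangle/\mathrm{MU})\llbracket t \rrbracket,
\]
which is polynomial over $\pi_*\mathrm{BP}\langle n\rangle = \mathbb{Z}_{(p)}[v_1,\ldots,v_n]$ on $t$ together with the even-degree generators of $\pi_*\mathrm{THH}(\mathrm{BP}\langle n\rangle/\mathrm{MU})$ (one of which is $\sigma^2 v_{n+1}$); moreover, under the unit $\pi_*\mathrm{MU}_{(p)}^{hS^1} \to \pi_* B$, the class $v_{n+1}$ is sent to the product $t \cdot \sigma^2 v_{n+1}$ of two distinct polynomial generators, and hence is a non-nilpotent non-zero-divisor. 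Because $B$ is canonically an $\mathrm{MU}_{(p)}$-algebra and is therefore complex-orientable, its $K(n+1)$-localization is controlled by $v_{n+1}$-inversion followed by completion at $(p, v_1, \ldots, v_n)$; the explicit description of $\pi_* B$ makes it clear that neither operation annihilates the ring.

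To conclude, applying $L_{K(n+1)}$ to the unital composite $\mathbb{S} \to \mathrm{K}(\mathrm{BP}\langle n\rangle) \xrightarrow{\phi} B$ produces a chain of $\mathbb{E}_1$-ring maps
\[
L_{K(n+1)} \mathbb{S} \to L_{K(n+1)} \mathrm{K}(\mathrm{BP}\langle n\rangle) \to L_{K(n+1)} B.
\]
Since $L_{K(n+1)} B$ is a nonzero $\mathbb{E}_1$-ring, $1 \ne 0$ in its $\pi_0$; the composite therefore sends $1$ to a nonzero element, and in particular cannot factor through a zero spectrum.

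I expect the main obstacle to be the middle step: rigorously translating the polynomial structure of $\pi_* B$ given by Theorem \ref{thm:intro-detection} into a proof that $L_{K(n+1)} B$ is nonzero. One needs to justify carefully that $K(n+1)$-localization of a complex-oriented ring spectrum is computed by $v_{n+1}$-periodization followed by $(p, v_1, \ldots, v_n)$-completion, and then to verify that the detection image $v_{n+1} \mapsto t \cdot \sigma^2 v_{n+1}$ survives both operations in the polynomial-plus-power-series ring $\pi_* B$.
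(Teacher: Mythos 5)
Your argument is essentially the paper's own proof: the same composite of ring maps $\mathrm{K}(\mathrm{BP}\langle n\rangle)\to\mathrm{TC}(\mathrm{BP}\langle n\rangle)\to\mathrm{THH}(\mathrm{BP}\langle n\rangle)^{hS^1}\to\mathrm{THH}(\mathrm{BP}\langle n\rangle/\mathrm{MU})^{hS^1}$, with the Detection and Polynomial $\mathrm{THH}$ theorems showing the target has nonzero $K(n+1)$-localization and unitality of the ring map then forcing $L_{K(n+1)}\mathrm{K}(\mathrm{BP}\langle n\rangle)\neq 0$. The step you flag as the main obstacle --- identifying $L_{K(n+1)}$ of the $\mathrm{MU}$-module $B$ with $v_{n+1}$-inversion followed by $(p,v_1,\ldots,v_n)$-completion --- is exactly what the paper supplies by citing Hovey's results on localizations of $\mathrm{MU}$-modules, so your outline matches the published argument.
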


Finally, in order to prove Theorem \ref{thm:intro-can-van},
we must descend information along the 
$S^1$-equivariant map
	\[
	\mathrm{THH}(\mathrm{BP}\langle n\rangle)
	\to \mathrm{THH}(\mathrm{BP}\langle n\rangle/\mathrm{MU}).
	\]
In \S\ref{sec:canvan},
we study this descent spectral sequence after tensoring
with a type $n+1$ complex. Using this information
we are able to understand enough about
the homotopy and Tate fixed point spectral sequences
associated to $\mathrm{THH}(\mathrm{BP}\langle n\rangle)$
to prove a weak form of the Canonical Vanishing Theorem.
As explained to us by an anonymous referee, and proven
in \S\ref{sec:abstract-ql}, this weak form of canonical vanishing
together with the Segal conjecture is enough to prove
the strong form of canonical vanishing as well as the main theorem.
In fact, we establish
the following result, which is equivalent to the combination
of Theorems \ref{thm:intro-segal} and \ref{thm:intro-can-van}
and also directly implies Theorem \ref{thm:intro-fpshift}.

\begin{thm}[Bounded $\mathrm{TR}$] \label{thm:intro-bounded-tr}
For any type $n+2$ complex $F$,
the spectrum $F \otimes \mathrm{TR}(\mathrm{BP}\langle n\rangle)$
is bounded.
\end{thm}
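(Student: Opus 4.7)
The plan is to control the tower $\{\mathrm{THH}(\mathrm{BP}\langle n\rangle)^{C_{p^k}}\}_k$ whose homotopy limit under cyclotomic restriction defines $\mathrm{TR}(\mathrm{BP}\langle n\rangle)$. Writing $X = \mathrm{THH}(\mathrm{BP}\langle n\rangle)^{\wedge}_p$, the Nikolaus--Scholze machinery provides, for each $k\ge 0$, a natural pullback square
\[
\begin{tikzcd}
X^{C_{p^{k+1}}} \ar[r, "R"] \ar[d, "\pi_{k+1}"'] & X^{C_{p^k}} \ar[d, "\varphi_k"] \\
X^{hC_{p^{k+1}}} \ar[r, "\mathrm{can}"'] & X^{tC_{p^{k+1}}}
\end{tikzcd}
\]
in which $R$ is the cyclotomic restriction, $\pi_{k+1}$ is the natural map from genuine to homotopy fixed points, $\mathrm{can}$ is the canonical map, and $\varphi_k$ is the iterated Frobenius obtained from $\varphi\colon X \to X^{tC_p}$ by applying $(-)^{C_{p^k}}$ and invoking the Tate orbit identification $(X^{tC_p})^{hC_{p^k}} \simeq X^{tC_{p^{k+1}}}$; equivalently, $\varphi_k = \varphi^{hC_{p^k}} \circ \pi_k$. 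My plan is to show that, for any type $n+2$ complex $F$, the map $F \otimes R$ is null on $\pi_n$ for all $n \ge D$ with $D$ independent of $k$. Once this is known, the Milnor exact sequence forces $\pi_n(F \otimes \mathrm{TR}(\mathrm{BP}\langle n\rangle)) = 0$ for $n \ge D$, and connectivity of $\mathrm{THH}(\mathrm{BP}\langle n\rangle)$ supplies the lower bound.

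The first step is an \emph{iterated Segal} statement: there is $d_0$ depending only on $F$ such that $F \otimes \varphi_k$ induces an isomorphism on $\pi_*$ in degrees $* \ge d_0$ for every $k \ge 0$. I would prove this by induction on $k$. The base case $k = 0$ is Theorem~\ref{thm:intro-segal} (noting that $\varphi_0 = \varphi$). For the inductive step, the pullback square above, read at level $k$, identifies $\mathrm{fib}(\pi_k)$ with $\mathrm{fib}(\varphi_{k-1})$; by the inductive hypothesis, $F \otimes \mathrm{fib}(\varphi_{k-1})$ is bounded above by $d_0$, so $F \otimes \pi_k$ is an equivalence in degrees $> d_0$. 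Independently, Segal says $F \otimes \mathrm{fib}(\varphi)$ is bounded above by $d_0$, and this property is preserved under $(-)^{hC_{p^k}}$ since $H^s(C_{p^k}; -) = 0$ for $s < 0$ forces the homotopy fixed point spectral sequence to respect upper bounds; hence $F \otimes \varphi^{hC_{p^k}}$ is iso in degrees $> d_0$. Composing, $F \otimes \varphi_k = (F \otimes \varphi^{hC_{p^k}}) \circ (F \otimes \pi_k)$ is iso in the same range, so $d_0$ remains a uniform bound.

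With iterated Segal in hand, commutativity of the pullback square gives the identity $\varphi_k \circ R = \mathrm{can} \circ \pi_{k+1}$. Canonical Vanishing (Theorem~\ref{thm:intro-can-van}) says $F \otimes \mathrm{can}$ is null on $\tau_{\ge d}$ for some $d$ independent of $k$, so $F \otimes (\varphi_k \circ R)$ is null on $\tau_{\ge d}$. Because $F \otimes \varphi_k$ is an isomorphism on $\pi_n$ for $n \ge d_0$, cancellation forces $F \otimes R$ to be null on $\pi_n$ for $n \ge D := \max(d, d_0)$, uniformly in $k$. An inverse system of abelian groups with zero transition maps has vanishing $\lim$ and $\lim^1$, so the Milnor exact sequence yields $\pi_n(F \otimes \mathrm{TR}(\mathrm{BP}\langle n\rangle)) = 0$ for $n \ge D$, completing the proof.

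The main technical challenge I anticipate is setting up the induction carefully within the Nikolaus--Scholze formalism: verifying that $\varphi_k$ really does factor as $\varphi^{hC_{p^k}} \circ \pi_k$ through the Tate orbit identification, and confirming that the bound $d_0$ is genuinely uniform in $k$ (which it is, because the base-case $d_0$ from Segal already dominates the inductive bound at every level). Once these foundational points are established, the cancellation step and the $\lim^1$ argument are essentially formal.
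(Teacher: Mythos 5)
Your proposal is correct and follows essentially the same route as the paper: your pullback squares, the uniform-in-$k$ ``iterated Segal'' statement, the nullity of $\mathrm{can}$ in large degrees, and the concluding $\lim/\lim^1$ step are precisely the paper's proof of Theorem \ref{thm:abstract-implications}(f) applied to $F\otimes\mathrm{THH}(\mathrm{BP}\langle n\rangle)$, with your induction on $k$ (coconnectivity of $\mathrm{fib}(\varphi)$ being preserved by $(-)^{hC_{p^k}}$, plus the identification of $\mathrm{fib}(\pi_k)$ with $\mathrm{fib}(\varphi_{k-1})$) simply re-proving the Tsalidis-type uniformity that the paper cites from \cite[II.4.9]{nikolaus-scholze}. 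The one adjustment needed concerns your canonical-vanishing input: in this paper the strong form, Theorem \ref{thm:intro-can-van}, is itself deduced from Bounded $\mathrm{TR}$ (via Theorem \ref{thm:abstract-implications}(b),(c),(e)), so quoting it here is circular; the statement proved independently is the weak form, Theorem \ref{thm:canvan-new}, which gives $\pi_*(F\otimes\mathrm{can})=0$ in uniformly large degrees for one well-chosen type $n+2$ complex $M/v$ --- and since your cancellation step only uses vanishing on homotopy groups, and Bounded $\mathrm{TR}$ for a single type $n+2$ complex implies it for all by the thick subcategory theorem, this substitution repairs the dependency without changing your argument.
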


For a review of the functor $\mathrm{TR}$,
see \S\ref{sec:cyclotomic-conventions}.

\subsection*{Remarks on the Multiplication Theorem}

As pointed out by Morava, $\mathrm{BP} \langle n \rangle$ may be equipped with different homotopy ring structures \cite{moravaforms}.
Our redshift arguments only apply to forms of $\mathrm{BP} \langle n \rangle$ that are $\mathbb{E}_3$-$\mathrm{BP}$-algebras, which we guarantee to exist by Theorem \ref{thm:intro-main}. 
To check whether previously studied forms of $\mathrm{BP} \langle n \rangle$ admit such structure, there is a convenient criterion due to Basterra and Mandell:

\begin{remark}
Suppose that a form of $\mathrm{BP} \langle n \rangle$ is equipped with an $\mathbb{E}_4$-algebra structure.  Then there are no obstructions to producing an $\mathbb{E}_4$-ring map from $\mathrm{BP}$ to $\mathrm{BP} \langle n \rangle$ \cite[Corollary 4.4 and Lemma 5.1]{bm}.  Any such $\mathbb{E}_4$-map is the unit of an $\mathbb{E}_3$-$\mathrm{BP}$-algebra structure, allowing us to apply Theorem \ref{thm:intro-fpshift}.
\end{remark}

\begin{example}
At $p=2$ connective complex $K$-theory is an $\mathbb{E}_\infty$ form of $\mathrm{BP} \langle 1 \rangle$, and it follows that 
$\K(\mathrm{ku}^{\wedge}_2)_2^{\wedge}$ is of fp-type $2$.  Even the non-vanishing of $L_{K(2)} \K(\mathrm{ku})$ was previously known only for $p \ge 5$ \cite{ausoni-rognes-redshift}.

Similarly, we can deduce at $p=2$ that 
$\K(\mathrm{tmf}_1(3)^{\wedge}_2)^{\wedge}_2$ is of fp-type $3$, since $\mathrm{tmf}_1(3)$ is the Lawson--Naumann $\mathbb{E}_\infty$ form of $\mathrm{BP} \langle 2 \rangle$ \cite{lawson-naumann}.  Applying algebraic $\K$-theory to the $\mathrm{E}_\infty$-ring map $\mathrm{tmf} \to \mathrm{tmf}_1(3)$, we conclude that $L_{K(3)} \K(\mathrm{tmf}) \ne 0$.  
\end{example}

\begin{remark}
Our methods may help to prove that the algebraic $\K$-theories of many other height $n$ rings are not $\K(n+1)$-acyclic, especially when combined with the descent and purity results of \cite{cmnn,lmmt}.  For example, at the prime $3$ these results imply that the non-vanishing of $L_{K(2)} \K(\mathrm{ku})$ is equivalent to the non-vanishing of $L_{K(2)} \K(\mathrm{ko})$ (cf. \cite{ausonidescent}), and the latter follows from the fact that $3$-localized $\mathrm{ko}$ is an $\mathbb{E}_\infty$ form of $\mathrm{BP} \langle 1 \rangle$.
\end{remark}

To give context to Theorem \ref{thm:intro-main}, the question of whether 
$\mathrm{BP} \langle n \rangle$ can be made $\mathbb{E}_\infty$ was 
once a major open problem in algebraic topology \cite{mayproblems}.  
In breakthrough work, Tyler Lawson \cite{LawsonBP} and Andrew 
Senger \cite{sengerBP} showed this to be impossible whenever $n \ge 4$.

While the nonexistence of structure is of great theoretical interest, it is the presence of structure that powers additional computations.  For example, in this work 
we use the $\mathbb{E}_3$-algebra structure guaranteed by Theorem \ref{thm:intro-main} in order to prove the
Polynomial $\mathrm{THH}$ Theorem 
(\ref{prop:e2-mu-envelope}), which is the key computational
input to many of the remaining results of the paper. 
Our proof of Theorem \ref{thm:intro-main} relies on a number of ideas that we have not discussed so far: 
 see \S\ref{subsec:outline} for an outline of the proof of Theorem \ref{thm:intro-main}.

\begin{remark}
Prior to our work, other authors had succeeded in equipping $\mathrm{BP} \langle n \rangle$ with additional structure.  Notably, Baker and Jeanneret produced $\mathbb{E}_1$-ring structures \cite{baker-jeanneret} (cf. \cite{lazarev, angel}), and Richter produced Robinson $(2p-1)$-stage structures on related Johnson--Wilson theories \cite{richterstage}. Lawson and Naumann equipped $\mathrm{BP}\langle 2 \rangle$ with $\mathbb{E}_\infty$-structure at the prime $2$ \cite{lawson-naumann}, and Hill and Lawson produced an $\mathbb{E}_\infty$ form of $\mathrm{BP}\langle 2 \rangle$ at $p=3$ \cite{hill-lawson}.
\end{remark}

\begin{remark}\label{rmk:basterra-mandell-idempotent}
Basterra and Mandell proved that $\mathrm{BP}$ admits a unique $\mathbb{E}_4$-algebra structure, a fact which is necessary to make sense of $\mathbb{E}_3$-$\mathrm{BP}$-algebras \cite{bm}.  They also show that $\mathrm{BP}$ is an $\mathbb{E}_4$-algebra retract of $\mathrm{MU}_{(p)}$, so a $p$-local $\mathbb{E}_3$-$\mathrm{MU}$-algebra inherits an $\mathbb{E}_3$-$\mathrm{BP}$-algebra structure.  Our proof of Theorem \ref{thm:intro-main} most naturally produces an $\mathbb{E}_3$-$\mathrm{MU}$-algebra structure on $\mathrm{BP}\langle n \rangle$.  In fact, if one also formulates Theorem 
\ref{thm:intro-fpshift} in terms of $\mathbb{E}_3$-$\mathrm{MU}$-algebra structures, then none of the statements or proofs in this paper rely on \cite{bm}.
\end{remark}

\begin{remark}
It is not surprising that $\mathbb{E}_3$-algebra structure on $\mathrm{BP} \langle n \rangle$ is useful in the proof of redshift.  As far back as $2000$, Ausoni and Rognes observed that redshift could be proved whenever $\mathrm{BP} \langle n \rangle$ is $\mathbb{E}_\infty$ and the Smith--Toda complex $V(n)$ exists as a homotopy ring spectrum \cite{rognes-oberwolfach}.  Unfortunately, both of these hypotheses are known to generically fail \cite{Nave, LawsonBP}.
\end{remark}

\subsection*{Open Questions}

Our work leaves open many natural questions, chief of which is to determine the homotopy type of $\mathrm{K}(\mathrm{BP} \langle n \rangle)$.  
Since we show this homotopy type to be closely related to its localization $L_{n+1}^{f} \mathrm{K}(\mathrm{BP} \langle n \rangle)$, one might hope to assemble an understanding via chromatic fracture squares (c.f. \cite{ausoni-rognes-rational}).
We would also like to highlight:

\begin{question}
For what ring spectra $R$, other than $R=\mathrm{BP} \langle n \rangle$, is it possible to prove a version of the Segal conjecture?
\end{question}

\begin{question}
For what ring spectra $R$, other than $R=\mathrm{BP} \langle n \rangle$, is it possible to prove a version of the Canonical Vanishing theorem?
\end{question}

While variants of the Segal conjecture have received much study (see Section \ref{sec:segal} for some history), the Canonical Vanishing result does not seem as widely analyzed.
It seems plausible that a ring $R$ might satisfy Canonical Vanishing but not the Segal conjecture, or vice versa.

\begin{question}
What ring spectra $R$, other than $R=\mathrm{BP} \langle n \rangle$, satisfy redshift, or various less precise forms of the Lichtenbaum--Quillen conjecture?
\end{question}

For an arbitrary $\mathrm{BP} \langle n \rangle$-algebra $R$ satisfying the Segal conjecture, Akhil Mathew has deduced (given our work here) various Lichtenbaum--Quillen statements.  He has graciously allowed us to reproduce his results at the end of \S\ref{sec:bound-tr-etc}.

\begin{remark}
Redshift for $\mathbb{E}_1$-rings that are far
from complex oriented remains mysterious. For some
intriguing results in this direction, see the work of
Angelini-Knoll and Quigley \cite{angeliniknoll-quigley}
on the family of spectra $y(n)$.
\end{remark}

One would also like to make many of the above results \emph{effective}, rather than asserting an isomorphism in degrees above an unspecified dimension.  Especially the following question is interesting, since it does not depend on a choice of a finite complex:

\begin{question}
In precisely what range of degrees is the map
\[\mathrm{K}(\mathrm{BP}\langle n\rangle)_{(p)} \to L^{f}_{n+1} \mathrm{K}(\mathrm{BP}\langle n\rangle)_{(p)}\]
a $\pi_*$-isomorphism?
\end{question}

Theorem \ref{thm:intro-main} proves
that \emph{some} form of $\mathrm{BP} \langle n \rangle$ admits an $\mathbb{E}_3$-$\mathrm{BP}$-algebra structure, and it remains an interesting open question to determine exactly which forms admit such structure.

\begin{question} \label{qst:form}
Which forms of $\mathrm{BP}\langle n\rangle$
admit an $\mathbb{E}_3$-$\mathrm{BP}$-algebra structure?
Which of these can be built by the procedure in
\S\ref{sec:mult}?
\end{question}

The subtleties behind Question \ref{qst:form} are indicated by work of Strickland \cite[Remark 6.5]{strickland}, who observed at $p=2$ that neither the Hazewinkel or Araki generators may be used as generators in Theorem \ref{thm:intro-main}.

\begin{remark}
We suspect that our $\mathbb{E}_3$-algebra structure will be of use in additional computations.  For example, Ausoni and Richter give an elegant formula for the $\mathrm{THH}$ of a height $2$ Johnson--Wilson theory, under the assumption that the theory can be made $\mathbb{E}_3$ \cite{ausoni-richter}.  Our result does not directly feed into their work, for the simple reason that they use a form of $\mathrm{BP} \langle 2 \rangle[v_2^{-1}]$ specified by the Honda formal group.  It seems unlikely that their theorem relies essentially on this choice.
\end{remark}

\begin{remark}
By imitating our construction of an $\mathbb{E}_3$-$\mathrm{MU}$-algebra structure on $\mathrm{BP}\langle n \rangle$, we suspect one could produce an $\mathbb{E}_{2\sigma+1}$-$\mathrm{MU}_{\mathbb{R}}$-algebra structure on $\mathrm{BP} \langle n \rangle_{\mathbb{R}}$.  As a result, the fixed points $\mathrm{BP} \langle n \rangle_{\mathbb{R}}^{C_2}$ would acquire an $\mathbb{E}_1$-ring structure.  At the moment, these fixed points are not even known to be homotopy associative \cite{kitchloo-lorman-wilson}.
\end{remark}

%%%%look up citations and complete the below remarks

%\begin{remark} Versions of the ($\mathrm{THH}$-)Segal conjecture
%have been studied and proved in many other contexts. 
%The original Segal conjecture for the group $C_p$ is the statement that
%	\[
%	S^0 = \mathrm{THH}(S^0) \to \mathrm{THH}(S^0)^{tC_p} =
%	(S^0)^{tC_p}
%	\]
%is an equivalence mod $p$. Hesselholt proved that, when $R$
%is a regular Noetherian $\mathbb{F}_p$-algebra of dimension $d$,
%then the Frobenius $\varphi$ is an equivalence mod in degrees
%higher than $d$, which is a sort of
%height $-1$ version of the Segal conjecture.
%In the course of their computations of
%$\mathrm{TC}(\ell)$ at $p\ge 5$, Ausoni-Rognes prove that
%$\varphi$ is an equivalence mod $(p,v_1)$ in degrees larger than
%. Lun{\o}e-Nielsen-Rognes prove that $\varphi$ is an equivalence
%for $R=\mathrm{MU}$; in fact, our proof works in this case as well
%to give an alternative argument. The Segal conjecture for the spectra 
%$X(n)$ was proved by Angelini-Knoll-Quigley.
%\end{remark}
%
%\begin{remark} The Canonical Vanishing phenomenon does
%not seem to have been singled out in the literature, but we
%note that the results of 
%imply that the map $\mathrm{can}: \mathrm{THH}(R)^{hS^1}
%\to \mathrm{THH}(R)^{tS^1}$ vanishes mod $p$ 
%when $R$ is a regular $\mathbb{F}_p$-algebra. 
%\end{remark}

\subsubsection*{Acknowledgements}
We are extremely grateful to the anonymous referees for their
careful reading and many helpful comments.
The first referee's suggestion led to the much simpler
proof of the Multiplication Theorem now given in \S\ref{sec:mult}.
The second referee's
comments inspired us to separate the use of descent
and homotopy fixed point spectral sequences in our
argument for the Canonical Vanishing Theorem; we hope
that our present argument is easier to follow. The
third referee's comments included many of the results
in \S\ref{sec:abstract-ql} and led to simplifications of several proofs
in \S\ref{sec:sseq}.
The authors are also very grateful to
Christian Ausoni, Tyler Lawson, and John Rognes for
their comments on an earlier draft. We thank
Gabriel Angelini-Knoll,
Thomas Nikolaus, Oscar Randal-Williams, and Andrew Salch
for useful conversations related to the paper.
We are very grateful to Akhil Mathew for stimulating conversations
and for permission to include his results in \S\ref{sec:abstract-ql}.
We would also like to thank the participants and speakers
in Harvard's Thursday Seminar in Spring of 2021 for
comments, questions, and discussions that helped improve the paper.
The first author was supported by NSF grant DMS-$1803273$, and
the second author was supported by NSF grant DMS-$1902669$.

\subsubsection*{Conventions and notation}
\begin{itemize}
\item We work in the setting of $\infty$-categories 
as used in \cite{ha}. We
will say \emph{category}
and \emph{groupoid} instead of $\infty$-category and $\infty$-groupoid.
We will denote by $\mathrm{Map}_{\mathcal{C}}(X,Y)$ the mapping
space between objects $X, Y \in \mathcal{C}$.
\item We let $\mathsf{Sp}$ denote the category of spectra.
\item If $\mathcal{C}$ is enriched over a category $\mathcal{A}$
we denote by $\mathrm{map}_{\mathcal{C}}(X,Y)$ the morphism
object associated to a pair of objects $X, Y \in \mathcal{C}$.
For example, if $\mathcal{C}$ is stable then it is canonically enriched
over $\mathsf{Sp}$, and we use $\mathrm{map}_{\mathcal{C}}(X,Y)$
to denote the internal mapping spectrum. In cases below where
$\mathcal{C}$ is stable and canonically enriched over 
$\mathsf{Mod}_R$ for some $\mathbb{E}_{\infty}$-ring no
confusion should arise because the underlying spectrum
of the morphism object in $R$-modules will agree
with the morphism object in spectra.
\item We do not distinguish between a space and its corresponding
groupoid; in particular, we will speak about functors
$X \to \mathcal{C}$ where $X$ is a space and $\mathcal{C}$
is a category.
\item If $M$ is a (discrete) module over a (discrete) ring $R$
with elements $x,y \in M$, then we write $x\doteq y$ to mean
that $x=\lambda y$ where $\lambda$ is a unit in $R$.
\item If $\mathcal{C}$ is a category and
$G$ is a (possibly topological) group then the category
of \textbf{objects
of $\mathcal{C}$ with $G$-action} is the functor category
$\mathsf{Fun}(\mathrm{B}G, \mathcal{C})$. 
When $\mathcal{C}=\mathsf{Sp}$ we will sometimes refer
to these objects as $G$-spectra.
The theory of `genuine $G$-spectra'
is not used in this paper, so there should be no confusion.
\item Our conventions on grading spectral sequences
associated to towers differs from the usual one, since
we prefer to begin our spectral sequences at the second
page.
See \S\ref{ssec:towers}.
\item If
$\mathcal{C}$ is a stable category
equipped with a $t$-structure, we say that
an object $X \in \mathcal{C}$ is \textbf{bounded above}
if $X = \tau_{\le d}X$ for some $d \in \mathbb{Z}$. We say that 
$X$ is \textbf{bounded below} if $X = \tau_{\ge d}X$
for some $d \in \mathbb{Z}$. We say that a map $f: X \to Y$
is \textbf{truncated} if the fiber of $f$ is bounded above.
\item If $A$ is an $\mathbb{E}_1$-$R$-algebra
where $R$ is an $\mathbb{E}_{\infty}$-ring, we denote
by $\mathrm{THH}(A/R)$ the $R$-module
$A \otimes_{A \otimes A^{op}} A$. 
\item Our conventions on cyclotomic spectra differ somewhat
from those in \cite{nikolaus-scholze} since we are only
interested in constructions with $p$-complete spectra.
See \S\ref{sec:cyclotomic-conventions} for a discussion.
\end{itemize}

\section{The Multiplication Theorem}\label{sec:mult}

We begin by giving a more precise formulation of
Theorem \ref{thm:intro-main}. Recall
that there is a canonical inclusion \cite{quillen-fgl},
	\[
	\mathrm{BP}_* \to (\mathrm{MU}_{(p)})_*,
	\]
classifying the $p$-typification of the universal formal group law.
We will write  $\{x_i\}_{i\ge 0}$ for a choice
of indecomposable polynomial generators of
$(\mathrm{MU}_{(p)})_*$, with $|x_i|=2i$, such that the classes
$\{x_{p^j-1}\}_{j\ge 1}$ form
polynomial generators for $\mathrm{BP}_*$ over $\mathbb{Z}_{(p)}$.
It will be convenient, at times, to change these generators,
so we do not fix such a choice.
We write
$v_j$ for the generator $x_{p^j-1}$. By convention we agree
that $v_0 = p$. The following definition may be compared with \cite[3.1 \& 3.2]{lawson-naumann}.
\begin{definition} Let $1 \le k \le \infty$ and $n\ge 0$.
Let $B$ be a $p$-local
$\mathbb{E}_k$-$\mathrm{MU}$-algebra.
We say that $B$ is an \textbf{$\mathbb{E}_k$-$\mathrm{MU}$-algebra
form of $\mathrm{BP}\langle n\rangle$} if the composite
	\[
	\mathbb{Z}_{(p)}[v_1, ..., v_n] \subseteq
	\mathrm{BP}_* \subseteq (\mathrm{MU}_{(p)})_*
	\to B_*
	\]
is an isomorphism. By convention, we consider $\mathbb{F}_p$
as the unique $\mathbb{E}_k$-$\mathrm{MU}$-algebra
form of $\mathrm{BP}\langle -1\rangle$.
\end{definition}

\begin{remark} The subring
	\[
	\mathbb{Z}_{(p)}[v_1, ..., v_n] \subseteq \mathrm{BP}_*
	\]
is equal to the subring generated by all elements of degree
at most $2p^n-2$, and hence is independent of our choice
of polynomial generators. It follows that the definition
of an $\mathbb{E}_k$-$\mathrm{MU}$-algebra form
of $\mathrm{BP}\langle n\rangle$ also does not depend on
this choice.
\end{remark}

\begin{example}
For any $k \ge 1$, there is a unique $\mathbb{E}_k$-$\mathrm{MU}$-algebra form of $\mathrm{BP} \langle 0 \rangle$, which is the $p$-local integers $\mathbb{Z}_{(p)}$.
The Adams summand $\ell$ of $ku_{(p)}$ can be equipped with an $\mathbb{E}_\infty$-$\mathrm{MU}$-algebra structure, which makes $\ell$ into an $\mathbb{E}_\infty$-$\mathrm{MU}$-algebra form of $\mathrm{BP}\langle 1 \rangle$.  
\end{example}

We will now relate the notion of a form of $\mathrm{BP}\langle n\rangle$
to the quotients in Theorem \ref{thm:intro-main}.

\begin{notation} Let $J \subseteq \mathbb{Z}_{\ge 0}$ be
an indexing set, and $\{z_j\}_{j \in J}$ a sequence of elements
in $\pi_*\mathrm{MU}_{(p)}$. 
Define
	\[
	\mathrm{MU}_{(p)}/(z_j : j \in J)
	:=
	\colim_m \bigotimes_{j \in J, j \le m} \mathrm{MU}_{(p)}/z_j,
	\]
where the tensor product is taken over $\mathrm{MU}_{(p)}$,
and $\mathrm{MU}_{(p)}/z_j$ is defined by the cofiber sequence
	\[
	\Sigma^{|z_j|}\mathrm{MU}_{(p)} \stackrel{z_j}{\to} \mathrm{MU}_{(p)}
	\to \mathrm{MU}_{(p)}/z_j.
	\]
\end{notation}

\begin{lemma} If $B$ is an $\mathbb{E}_1$-$\mathrm{MU}$-algebra
form of $\mathrm{BP}\langle n\rangle$, then there is a choice
of indecomposable generators $x_j \in \pi_{2j}\mathrm{MU}_{(p)}$,
$j\ge 1$,
and an extension of the unit map
$\iota: \mathrm{MU}_{(p)} \to B$ to an
equivalence of $\mathrm{MU}$-modules
	\[
	\left(\mathrm{MU}_{(p)}/(x_j: j \ne p^i-1, 1\le i \le n)\right) \simeq B.
	\]
\end{lemma}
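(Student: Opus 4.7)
The plan is to modify the choice of polynomial generators so that the unit $\iota\colon \mathrm{MU}_{(p)} \to B$ kills each $x_j$ for $j$ not of the form $p^i - 1$ with $1 \le i \le n$, then to produce the desired equivalence via an obstruction-theoretic factorization, and finally to verify the resulting map is an equivalence by a regular-sequence computation. Let $J := \{j \ge 1 : j \ne p^i - 1 \text{ for all } 1 \le i \le n\}$. Begin with any initial polynomial generators as in the paper's setup, and set $v_i := \iota_*(x_{p^i-1})$. By the definition of a form, $B_* = \mathbb{Z}_{(p)}[v_1, \ldots, v_n]$ as a graded ring, so for each $j \in J$ the image $\iota_*(x_j) \in B_{2j}$ is a polynomial in the $v_i$'s; since $2j$ is not the degree of any single $v_i$, this polynomial is decomposable. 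Lifting it to some $P_j \in (\mathrm{MU}_{(p)})_{2j}$ expressed in the $x_{p^i-1}$'s and setting $x_j' := x_j - P_j$ produces new indecomposable polynomial generators (subtracting a decomposable element preserves indecomposability) with $\iota_*(x_j') = 0$ for $j \in J$. Relabel so $x_j = x_j'$ henceforth.

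Next, I would construct a map $f\colon \mathrm{MU}_{(p)}/(x_j : j \in J) \to B$ of $\mathrm{MU}_{(p)}$-modules extending $\iota$. By definition $\mathrm{MU}_{(p)}/(x_j : j \in J) = \colim_m \bigotimes_{j \in J,\, j \le m} \mathrm{MU}_{(p)}/x_j$, so it suffices to build compatible maps out of each finite tensor product. I would argue by induction on $|J'|$ that for any finite $J' \subseteq J$ the space of $\mathrm{MU}_{(p)}$-module maps $\bigotimes_{j \in J'} \mathrm{MU}_{(p)}/x_j \to B$ extending $\iota$ is nonempty and connected. The obstruction to extending past one more factor $\mathrm{MU}_{(p)}/x_j$ is the image of $x_j$ in $\pi_{2j}(B) = B_{2j}$, which vanishes by construction; the indeterminacy in the extension lies in $\pi_{2j+1}(B)$, which vanishes because $B_*$ is concentrated in even degrees. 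Connectedness of each mapping space ensures the pieces assemble uniquely into a map out of the colimit.

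Finally, the sequence $\{x_j : j \in J\}$ is a subset of polynomial generators of $\pi_*\mathrm{MU}_{(p)} = \mathbb{Z}_{(p)}[x_1, x_2, \ldots]$ and is therefore regular; consequently the Koszul spectral sequence collapses and $\pi_*(\mathrm{MU}_{(p)}/(x_j : j \in J)) = \mathbb{Z}_{(p)}[v_1, \ldots, v_n]$. On $\pi_*$, the map $f_*$ sends $v_i$ (the residue of $x_{p^i-1}$) to $\iota_*(x_{p^i-1}) = v_i$, hence is an isomorphism, and $f$ is an equivalence of $\mathrm{MU}_{(p)}$-modules. The main obstacle is the middle step, namely guaranteeing that the extensions cohere across a potentially infinite sequence of cell-attachments, but this is handled cleanly by the vanishing of all relevant obstruction and indeterminacy groups, which is in turn a consequence of the even-degree concentration of $B_*$.
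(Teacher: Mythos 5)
Your proof is correct, and its first half (replacing each $x_j$ with $j\ne p^i-1$, $1\le i\le n$, by $x_j$ minus a lift of $\iota_*(x_j)$ expressed in $x_{p-1},\dots,x_{p^n-1}$, which is decomposable for degree reasons) is exactly the paper's argument. Where you diverge is in producing the extension of $\iota$ over the quotient: the paper chooses an extension $\mathrm{MU}_{(p)}/x_j\to B$ for each $j$ separately (possible since $\iota_*(x_j)=0$), tensors these over $\mathrm{MU}_{(p)}$, and composes with the iterated multiplication $B^{\otimes_{\mathrm{MU}_{(p)}} m}\to B$ coming from the $\mathbb{E}_1$-structure, then passes to the colimit; you instead attach the cofibers one at a time by obstruction theory, using only the $\mathrm{MU}_{(p)}$-module structure and the evenness of $B_*$, and you handle the passage to the colimit explicitly via nonemptiness and connectedness of the extension spaces. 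Your route buys independence from the multiplication on $B$ and a more careful treatment of coherence over the infinite colimit (which the paper leaves implicit), while the paper's route is shorter because the algebra structure packages the compatibilities automatically; you also spell out the regular-sequence computation of $\pi_*(\mathrm{MU}_{(p)}/(x_j:j\in J))$ and the identification of $f_*$ with the form isomorphism, which the paper takes for granted. One small imprecision to fix: at a stage with partial quotient $X_{J'}=\bigotimes_{j'\in J'}\mathrm{MU}_{(p)}/x_{j'}$, the obstruction and indeterminacy live in $[\Sigma^{2j}X_{J'},B]_{\mathrm{MU}_{(p)}}$ and $[\Sigma^{2j+1}X_{J'},B]_{\mathrm{MU}_{(p)}}$ rather than literally in $\pi_{2j}B$ and $\pi_{2j+1}B$; the vanishing still holds because the obstruction is $\iota_*(x_j)$ acting on an $\mathrm{MU}_{(p)}$-linear map (hence zero) and because $X_{J'}$ is built from even $\mathrm{MU}_{(p)}$-cells while $B_*$ is even, so the odd mapping groups vanish. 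Similarly, to conclude that $f_*$ is an isomorphism, invoke $\mathrm{MU}_{(p)*}$-linearity (the source is cyclic on $1$), since $f$ is a priori only a module map.
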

\begin{proof} Let $x'_j$ be any choice of indecomposable
generators such that $(x'_{p-1}, x'_{p^2 - 1}, ..., x'_{p^n - 1})
= (v_1, ..., v_n)$. By definition, if $j \ne p^i-1$ for $1\le i \le n$,
then $\iota(x'_j) = f_j(v_1, ..., v_n)$ for some polynomial $f_j$
with coefficients in $\mathbb{Z}_{(p)}$. Define
	\[
	x_j := \begin{cases}
	x'_j & j = p^i -1, 1\le i \le n,\\
	x'_j - f_j(x_{p-1}, ..., x_{p^n-1}) & \text{else}.
	\end{cases}
	\]
Then $\{x_j\}$ gives a new set of indecomposable generators
for $\pi_*\mathrm{MU}_{(p)}$ with the property that $\iota(x_j) = 0$
when $j \ne p^i -1$ for some $1\le i \le n$. 
We may then construct maps
	\[
	\bigotimes_{j \le m, j \ne p^i-1, 1\le i \le n}
	\mathrm{MU}_{(p)}/x_j \to
	\bigotimes_{j \le m, j \ne p^i-1, 1\le i \le n}
	B
	\to B
	\]
where the second map is the multiplication on $B$.
Passing to the colimit gives the desired equivalence
	\[
	\mathrm{MU}_{(p)}/(x_j: j \ne p^i -1, 1\le i \le n) \to B.
	\]
\end{proof}

It follows from the above lemma and Remark \ref{rmk:basterra-mandell-idempotent} that Theorem \ref{thm:intro-main} is a consequence
of the following theorem, which will be the main result of this
section:

\begin{theorem}\label{thm:multn-form} For all $n\ge -1$,
there exists an $\mathbb{E}_3$-$\mathrm{MU}$-algebra
form of $\mathrm{BP}\langle n\rangle$.
\end{theorem}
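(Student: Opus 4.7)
The plan is to build $\mathrm{BP}\langle n\rangle$ as an iterated $\mathbb{E}_3$-$\mathrm{MU}_{(p)}$-algebra quotient of $\mathrm{MU}_{(p)}$. This is the natural target, since by Remark \ref{rmk:basterra-mandell-idempotent} an $\mathbb{E}_3$-$\mathrm{MU}$-algebra form of $\mathrm{BP}\langle n\rangle$ automatically produces the $\mathbb{E}_3$-$\mathrm{BP}$-algebra structure needed in Theorem \ref{thm:intro-main}. The case $n=-1$ is immediate: $\mathbb{F}_p$ is an $\mathbb{E}_\infty$-$\mathrm{MU}$-algebra via the composite $\mathrm{MU}\to\mathbb{Z}\to\mathbb{F}_p$, whence in particular an $\mathbb{E}_3$-$\mathrm{MU}$-algebra form of $\mathrm{BP}\langle -1\rangle$.

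For the general case, enumerate the indices to be killed as $J=\{j_1<j_2<\cdots\}=\{j\ge 1: j\ne p^i-1, 1\le i\le n\}$. Setting $B_0:=\mathrm{MU}_{(p)}$, I would inductively construct an $\mathbb{E}_3$-$\mathrm{MU}_{(p)}$-algebra $B_k$ together with indecomposable generators $\{x_j\}_{j\ge 1}$ of $\pi_*\mathrm{MU}_{(p)}$ such that $\pi_* B_k$ is the quotient of $\pi_*\mathrm{MU}_{(p)}$ by $(x_{j_1},\ldots,x_{j_k})$. Given $B_{k-1}$, the image of $x_{j_k}$ in $\pi_{2j_k}B_{k-1}$ is adjoint to an $\mathbb{E}_3$-$\mathrm{MU}_{(p)}$-algebra map from the free algebra, and I would define
\[
B_k \;:=\; B_{k-1}\otimes_{\mathrm{Free}_{\mathbb{E}_3/\mathrm{MU}_{(p)}}(\Sigma^{2j_k}\mathrm{MU}_{(p)})}\mathrm{MU}_{(p)},
\]
where the second factor is the augmentation. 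This pushout of $\mathbb{E}_3$-$\mathrm{MU}_{(p)}$-algebras carries the desired structure by construction, and the final form is $\colim_k B_k$.

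The main obstacle is to verify the inductive hypothesis on homotopy groups: the free $\mathbb{E}_3$-$\mathrm{MU}_{(p)}$-algebra on a single class is much larger than the corresponding free $\mathbb{E}_1$-algebra (it contains all iterated Browder and Dyer--Lashof--type operations coming from the $\mathbb{E}_3$-operad), so the pushout above is a priori far larger than the naive $\mathrm{MU}$-module quotient $B_{k-1}/x_{j_k}$. Controlling the extra contributions will require a careful choice of the indecomposable generators $x_j$ at each stage---as foreshadowed by Strickland's observation recalled after Question \ref{qst:form}, the Hazewinkel and Araki generators are not always permissible---together with an analysis of the bar-construction spectral sequence converging to $\pi_* B_k$. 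The hope is that by choosing representatives whose image in $\pi_* B_{k-1}$ is ``sufficiently central,'' the potential obstructions land in odd degrees, where $\pi_*\mathrm{MU}_{(p)}$ vanishes, forcing the spectral sequence to collapse to the claimed quotient.

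Once this homotopy identification is established stage by stage, the colimit produces an $\mathbb{E}_3$-$\mathrm{MU}_{(p)}$-algebra with homotopy $\mathbb{Z}_{(p)}[v_1,\ldots,v_n]$, and it is automatically an $\mathbb{E}_3$-$\mathrm{MU}$-algebra form of $\mathrm{BP}\langle n\rangle$ in the sense of the definition preceding the theorem. This completes the reduction of Theorem \ref{thm:multn-form} to the homotopy-level computation, which is the genuine technical heart of the matter.
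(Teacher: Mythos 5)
There is a genuine gap, and it sits exactly where you locate ``the technical heart of the matter'': the claim that the $\mathbb{E}_3$-algebra pushout
\[
B_k \;=\; B_{k-1}\otimes_{\mathrm{Free}_{\mathbb{E}_3/\mathrm{MU}_{(p)}}(\Sigma^{2j_k}\mathrm{MU}_{(p)})}\mathrm{MU}_{(p)}
\]
has homotopy equal to the naive quotient $\pi_*B_{k-1}/(x_{j_k})$ is not something you can arrange by a clever choice of generators, and the ``obstructions land in odd degrees'' hope fails. The bar spectral sequence for this pushout has $E_2$-term given by $\mathrm{Tor}$ over the homotopy of the free $\mathbb{E}_3$-$\mathrm{MU}_{(p)}$-algebra on an even class, and that homotopy is not an exterior-times-polynomial algebra on the single class $x_{j_k}$: it contains infinitely many additional \emph{even-degree} families coming from $\mathbb{E}_3$ power operations (visible already after base change to $\mathbb{F}_p$ as Dyer--Lashof classes). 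These classes are not killed by any choice of representative, central or otherwise, so the pushout is strictly larger than $B_{k-1}/x_{j_k}$ as an $\mathrm{MU}$-module, and no appeal to the evenness of $\pi_*\mathrm{MU}_{(p)}$ collapses the spectral sequence to the claimed answer. This is precisely the phenomenon that makes the Multiplication Theorem delicate (and is why, e.g., $\mathbb{E}_\infty$ quotients of $\mathrm{MU}$ by regular sequences are known to have very large homotopy); reducing the theorem to this computation is reducing it to a statement that is false as stated, not merely unproven.

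By contrast, the paper avoids structured quotients altogether. It inducts on the height $n$ rather than on a list of generators: given an $\mathbb{E}_3$-$\mathrm{MU}$-algebra $\mathrm{BP}\langle n\rangle$, it builds $\mathrm{BP}\langle n+1\rangle/(v_{n+1}^k)$ as $\mathbb{E}_3$-algebras over the square-zero tower $\mathrm{MU}[y]/(y^k)$ (with $|y|=2p^{n+1}-2$), using the deformation theory of \S\ref{ssec:deform}: the obstruction to lifting along each square-zero extension is a map $\mathbf{L}^{\mathbb{E}_3}_{B/\mathrm{MU}}\to \Sigma^{(k+1)|y|+2}B$ of $\mathbb{E}_3$-modules, and the nontrivial input is the computation of the enveloping algebras $\mathcal{U}^{(2)}_{\mathrm{MU}}$ and $\mathcal{U}^{(3)}_{\mathrm{MU}}$ of $\mathrm{BP}\langle n\rangle$ (resting on Steinberger's and Kochman's Dyer--Lashof calculations), which shows the relevant mapping spectra are concentrated in even degrees, so the obstructions vanish (Proposition \ref{prop:bpn-obstrn-zero}); the limit $\lim_k B_k$ is then an $\mathbb{E}_3$-$\mathrm{MU}[y]$-algebra on which $y$ acts by $v_{n+1}$, and (ii) of Proposition \ref{prop:muy-algebra} is what guarantees the homotopy ring is $\mathbb{Z}_{(p)}[v_1,\dots,v_{n+1}]$. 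If you want to salvage your outline, you would need to replace the free-$\mathbb{E}_3$ pushout step with an argument of this deformation-theoretic kind (or otherwise control the $\mathbb{E}_3$ power-operation contributions), which is essentially the content of the paper's proof rather than a detail to be checked.
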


\begin{remark}
There are only a few results from Section \ref{sec:mult} which will be needed later in the paper.  In addition to Theorem \ref{thm:multn-form}, the reader interested in redshift need only understand Proposition \ref{prop:e2-mu-envelope} and Theorem \ref{thm:poly-thh}.
\end{remark}

\subsection{Outline
of the Proof}\label{subsec:outline}

For ease of exposition, we will not take care in this outline to distinguish between different forms of $\mathrm{BP}\langle n\rangle$. 
Our proof of Theorem \ref{thm:multn-form} proceeds by induction on $n$: assuming that $\mathrm{BP}\langle n \rangle$ is an $\mathbb{E}_3$-$\mathrm{MU}$-algebra, we will construct $\mathrm{BP}\langle n+1 \rangle$ as an $\mathbb{E}_3$-$\mathrm{MU}$-algebra.

Consider the tower of $\mathrm{MU}$-modules:
	\begin{align*}
	\mathrm{BP}\langle n+1\rangle
	\to \cdots
	\to \mathrm{BP}\langle n+1\rangle/(v_{n+1}^{k})
	\to \cdots \to \mathrm{BP}\langle n\rangle.
	\end{align*}
By our inductive hypothesis, the base of the tower,
$\mathrm{BP}\langle n\rangle$, has been refined
to an $\mathbb{E}_3$-$\mathrm{MU}$-algebra.
One possible way to proceed would be to inductively
equip each $\mathrm{BP}\langle n+1\rangle/(v_{n+1}^k)$ with
an $\mathbb{E}_3$-$\mathrm{MU}$-algebra structure.
Unfortunately, this would involve understanding the
$\mathbb{E}_3$-$\mathrm{MU}$-algebra cotangent
complex of $\mathrm{BP}\langle n+1\rangle/(v_{n+1}^k)$,
which becomes increasingly difficult to control as $k$ grows. 

Instead, we will make a stronger inductive hypothesis.
As we review in \S\ref{ssec:prove-mult}, 
there are $\mathbb{E}_{\infty}$-$\mathrm{MU}$-algebras
$\mathrm{MU}[y]/(y^k)$
refining the truncated polynomial algebras
$\mathrm{MU}_*[y]/(y^k)$, where
$|y|=2p^{n+1}-2$. We will induct on $k$ to build
$\mathrm{BP}\langle n+1\rangle/(v_{n+1}^k)$ as
an $\mathbb{E}_3$-$\mathrm{MU}[y]/(y^k)$-algebra. 
Taking a limit, we produce $\mathrm{BP}\langle n+1\rangle$ as
an $\mathbb{E}_3$-$\mathrm{MU}[y]$-algebra, where
$y$ acts by $v_{n+1}$. 

In \S\ref{ssec:envelopes} and \S\ref{ssec:deform}
we review some background in deformation theory.
In \S\ref{ssec:base-case} and \S\ref{ssec:inductive-step}
we make the key non-formal computations of
enveloping algebras and cotangent complexes, which
ultimately rest on Steinberger's computation of the
action of Dyer-Lashof operations on
the dual Steenrod algebra and on
Kochman's computation of the action of Dyer-Lashof
operations on the homology of $\mathrm{BU}$.
Finally, in \S\ref{ssec:prove-mult}, we put the pieces together
and prove Theorem \ref{thm:multn-form}.

\begin{remark} Our original argument, appearing in
a preprint version of this paper, relied on the theory of
centers and some manipulations with Koszul duality.
We are extremely grateful to the first referee for explaining how
our two uses of Koszul duality `cancel each other out,' suggesting the more intuitive argument sketched above.
\end{remark}

\begin{remark}
Our argument constructs $\mathrm{BP} \langle n+1 \rangle$ as an $\mathbb{E}_3$-$\mathrm{MU}[y]$-algebra, where $y$ acts by $v_{n+1}$, but we remember $\mathrm{BP}\langle n +1 \rangle$ only as an $\mathbb{E}_3$-$\mathrm{MU}$-algebra when constructing $\mathrm{BP}\langle n+2 \rangle$.  One might wonder whether, with more care, it is possible to construct $\mathrm{BP}\langle n \rangle$ as an $\mathbb{E}_3$-$\mathrm{MU}[y_0,y_1,\cdots,y_n]$-algebra, where $y_i$ acts as $v_i$.  In fact, this is not possible, even when $n=1$.
If $\ell$ were an $\mathbb{E}_3$-$\mathrm{MU}[y_0]$-algebra,  tensoring over $\mathrm{MU}[y_0]$ with the augmentation $\mathrm{MU}[y_0] \to \mathrm{MU}$ would construct $\ell / p = k(1)$ as an $\mathbb{E}_3$-algebra.  However, any $\mathbb{E}_2$-algebra with $p=0$ in its homotopy groups must be an $\mathbb{F}_p$-module \cite[Theorem 4.18]{may-nilpotence}.
\end{remark}

\subsection{Background: Operadic Modules and Enveloping Algebras}\label{ssec:envelopes}

Fix an $\mathbb{E}_{n+1}$-algebra $k$
and let $\mathcal{C}=\mathsf{LMod}_k$.
If $A \in \mathsf{Alg}_{\mathbb{E}_n}(\mathcal{C})$
is an $\mathbb{E}_n$-algebra, then we can define an
$\mathbb{E}_n$-monoidal category,
$\mathsf{Mod}^{\mathbb{E}_n}_{A}(\mathcal{C})$,
of $\mathbb{E}_n$-$A$-modules
(\cite[3.3.3.9]{ha}). The relevance of this category in our case
is the equivalence of 
$\mathsf{Mod}_A^{\mathbb{E}_n}(\mathcal{C})$ 
(\cite[7.3.4.14]{ha}) with the tangent category
$\mathsf{Sp}(\mathsf{Alg}_{\mathbb{E}_n}(\mathcal{C})_{/A})$
controlling deformations of $A$
(see Recollection \ref{recall-cotangent}).

It follows from \cite[7.1.2.1]{ha} that we have an equivalence
	\[
	\mathsf{Mod}_A^{\mathbb{E}_n}(\mathcal{C}) \simeq
	\mathsf{LMod}_{\mathcal{U}^{(n)}(A)},
	\]
where $\mathcal{U}^{(n)}(A)$, the $\mathbb{E}_n$-$k$-enveloping algebra
of $A$, is the endomorphism algebra
spectrum of the free $\mathbb{E}_n$-$A$-module on $k$.

\begin{remark}\label{rmk:enveloping-algebra-monoidal}
It follows from \cite[4.8.5.11]{ha}
that the assignment $B \mapsto \mathcal{U}^{(n-j)}(B)$
is a lax $\mathbb{E}_j$-monoidal functor of $B$.
In particular, if $A$ is an
$\mathbb{E}_n$-algebra in $\mathcal{C}$,
then $\mathcal{U}^{(n-j)}(A)$ has a canonical
$\mathbb{E}_{j+1}$-algebra structure. 
\end{remark}

We will need the following standard fact:

\begin{proposition} There is a canonical equivalence
of algebras:
	\[
	\mathcal{U}^{(n)}(A) \simeq
	A \otimes_{\mathcal{U}^{(n-1)}(A)} A^{\mathrm{op}},
	\]
where $A^{\mathrm{op}}$ denotes $A$ regarded as
an $\mathbb{E}_1$-$\mathcal{U}^{(n-1)}(A)$-algebra
with its opposite multiplication.
\end{proposition}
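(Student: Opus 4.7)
The plan is to prove both sides are canonically equivalent as $\mathbb{E}_1$-algebras by presenting them as endomorphism algebras in a common module category, after splitting off one $\mathbb{E}_1$-factor from the $\mathbb{E}_n$-structure on $A$ via Dunn--Lurie additivity.

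First, using the additivity equivalence $\mathbb{E}_n \simeq \mathbb{E}_1 \otimes \mathbb{E}_{n-1}$, I would view $A$ as an $\mathbb{E}_1$-algebra object of $\mathsf{Alg}_{\mathbb{E}_{n-1}}(\mathcal{C})$. Correspondingly, there is a tautological identification
$$\mathsf{Mod}_A^{\mathbb{E}_n}(\mathcal{C}) \simeq {}_A\mathsf{BMod}_A\bigl(\mathsf{Mod}_A^{\mathbb{E}_{n-1}}(\mathcal{C})\bigr),$$
where the right-hand side denotes $A$-bimodule objects in $\mathsf{Mod}_A^{\mathbb{E}_{n-1}}(\mathcal{C})$, equipped with the $\mathbb{E}_1$-monoidal structure induced by the extra $\mathbb{E}_1$-direction on $A$. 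Under the equivalence $\mathsf{Mod}_A^{\mathbb{E}_{n-1}}(\mathcal{C}) \simeq \mathsf{LMod}_{\mathcal{U}^{(n-1)}(A)}(\mathcal{C})$, combined with Remark \ref{rmk:enveloping-algebra-monoidal} (which promotes $\mathcal{U}^{(n-1)}(A)$ to an $\mathbb{E}_2$-algebra), this $\mathbb{E}_1$-monoidal structure is identified with the relative tensor product over $\mathcal{U}^{(n-1)}(A)$, whose monoidal unit is $\mathcal{U}^{(n-1)}(A)$ itself.

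Second, I would invoke the standard presentation of bimodules as left modules over the tensor-product enveloping algebra: for any $\mathbb{E}_1$-algebra $B$ in a monoidal $\infty$-category $\mathcal{D}$ with unit $\mathbf{1}_{\mathcal{D}}$, there is an equivalence ${}_B\mathsf{BMod}_B(\mathcal{D}) \simeq \mathsf{LMod}_{B \otimes_{\mathbf{1}_{\mathcal{D}}} B^{\mathrm{op}}}(\mathcal{D})$. Applying this to $B = A$ and $\mathcal{D} = \mathsf{LMod}_{\mathcal{U}^{(n-1)}(A)}(\mathcal{C})$ yields
$$\mathsf{Mod}_A^{\mathbb{E}_n}(\mathcal{C}) \simeq \mathsf{LMod}_{A \otimes_{\mathcal{U}^{(n-1)}(A)} A^{\mathrm{op}}}(\mathcal{C}).$$
Comparing with the equivalence $\mathsf{Mod}_A^{\mathbb{E}_n}(\mathcal{C}) \simeq \mathsf{LMod}_{\mathcal{U}^{(n)}(A)}(\mathcal{C})$ recalled above in the excerpt, and tracking the free $\mathbb{E}_n$-$A$-module generated by the unit --- which corresponds to $A$ itself as the regular bimodule on both sides --- identifies $\mathcal{U}^{(n)}(A)$ with $A \otimes_{\mathcal{U}^{(n-1)}(A)} A^{\mathrm{op}}$ as $\mathbb{E}_1$-algebras.

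The main obstacle I foresee is not conceptual but bookkeeping: one must check that the $\mathbb{E}_1$-monoidal structure on $\mathsf{Mod}_A^{\mathbb{E}_{n-1}}(\mathcal{C})$ entering the first step --- the one whose $A$-bimodules recover $\mathbb{E}_n$-$A$-modules --- coincides with the one used in the second step to form the bimodule enveloping algebra, namely relative tensor product over $\mathcal{U}^{(n-1)}(A)$. Equivalently, one must confirm that the $\mathbb{E}_2$-structure on $\mathcal{U}^{(n-1)}(A)$ supplied by Remark \ref{rmk:enveloping-algebra-monoidal} agrees with the one coming from viewing $A$ as an $\mathbb{E}_1$-algebra in $\mathsf{Alg}_{\mathbb{E}_{n-1}}(\mathcal{C})$. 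This compatibility is precisely what the additivity machinery of \cite{ha} was designed to furnish, so the essential technical content has already been packaged there; what remains is to chase the identifications carefully.
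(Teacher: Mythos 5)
Your outline is in substance the paper's own proof: both split off an $\mathbb{E}_1$-direction via Dunn additivity, identify $\mathbb{E}_n$-$A$-modules with $A$-bimodules in $\mathbb{E}_{n-1}$-$A$-modules, and then recognize $A \otimes_{\mathcal{U}^{(n-1)}(A)} A^{\mathrm{op}}$ as the algebra acting on that bimodule category by comparing endomorphisms of the canonical generator. The one genuine gap is your first step, which you dismiss as a ``tautological identification'': the equivalence $\mathsf{Mod}_A^{\mathbb{E}_n}(\mathcal{C}) \simeq \mathsf{BMod}_A\bigl(\mathsf{Mod}_A^{\mathbb{E}_{n-1}}(\mathcal{C})\bigr)$ is precisely the substantive content of the proposition. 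Dunn additivity \cite[5.1.2.2]{ha} is an equivalence of categories of \emph{algebras}, $\mathsf{Alg}_{\mathbb{E}_n}(\mathcal{C}) \simeq \mathsf{Alg}_{\mathbb{E}_1}(\mathsf{Alg}_{\mathbb{E}_{n-1}}(\mathcal{C}))$; by itself it says nothing about the operadic module categories of \cite[3.3.3.9]{ha}, whose objects live in $\mathcal{C}$ rather than in $\mathsf{Alg}_{\mathbb{E}_{n-1}}(\mathcal{C})$, so the bimodule description cannot simply be read off. To transport the statement one needs (i) the identification of $\mathsf{Mod}_A^{\mathbb{E}_m}(\mathcal{C})$ with the tangent category $\mathsf{Sp}(\mathsf{Alg}_{\mathbb{E}_m}(\mathcal{C})_{/A})$ \cite[7.3.4.14]{ha}, and (ii) the computation of the tangent category of $\mathsf{Alg}_{\mathbb{E}_1}$ of a monoidal category at $A$ in terms of $A$-bimodules in the tangent category one level down; the paper obtains (ii) by citing \cite[1.0.4]{harpaz-nuiten-prasma} and then taking tangent categories at $A$ on both sides of the additivity equivalence. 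Your proposal needs this input (or an equivalent argument) at exactly that point; without it the step does not follow.

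Once that identification is supplied, the remainder of your argument matches the paper's, run in the opposite direction: you pass from bimodules to $\mathsf{LMod}_{A \otimes_{\mathcal{U}^{(n-1)}(A)} A^{\mathrm{op}}}$ via the standard equivalence with left modules over the bimodule enveloping algebra and compare endomorphism algebras of the generator, whereas the paper reduces via \cite[4.8.5.11, 4.8.5.16]{ha} to exhibiting $\mathsf{Mod}_A^{\mathbb{E}_n}(\mathcal{C})$ as the relative tensor product $\mathsf{LMod}_A(\mathcal{C}) \otimes_{\mathsf{Mod}_A^{\mathbb{E}_{n-1}}(\mathcal{C})} \mathsf{RMod}_A(\mathcal{C})$ and identifies that with bimodules using \cite[4.8.4.6, 4.3.2.7]{ha}. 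Your closing concern about which $\mathbb{E}_1$-monoidal structure on $\mathsf{Mod}_A^{\mathbb{E}_{n-1}}(\mathcal{C})$ (equivalently, which $\mathbb{E}_2$-structure on $\mathcal{U}^{(n-1)}(A)$) is in play is legitimate but secondary; it is handled by the same references once the tangent-category identification above is established together with its monoidal structure.
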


\begin{proof} The enveloping algebra is obtained by
taking the endomorphism algebra of a free object.
So it suffices, by \cite[4.8.5.11, 4.8.5.16]{ha}, to provide an equivalence:
	\[
	\mathsf{Mod}_A^{\mathbb{E}_n}(\mathcal{C})
	\simeq \mathsf{LMod}_A(\mathcal{C}) \otimes_{
	\mathsf{Mod}^{\mathbb{E}_{n-1}}_A(\mathcal{C})} 
	\mathsf{RMod}_A(\mathcal{C}).
	\]
By \cite[4.8.4.6,4.3.2.7]{ha}, we may identify the right hand side as
a category of bimodules:
	\[
	\mathsf{LMod}_A(\mathcal{C}) \otimes_{
	\mathsf{Mod}^{\mathbb{E}_{n-1}}_A(\mathcal{C})} 
	\mathsf{RMod}_A(\mathcal{C})
	\simeq
	\mathsf{BMod}_A(\mathsf{Mod}_A^{\mathbb{E}_{n-1}}(\mathcal{C})).
	\]
The result now follows from
\cite[1.0.4]{harpaz-nuiten-prasma}
by taking tangent categories at $A$ of the equivalence
(\cite[5.1.2.2]{ha}):
	\[
	\mathsf{Alg}_{\mathbb{E}_n}(\mathcal{C})
	\simeq
	\mathsf{Alg}_{\mathbb{E}_1}(\mathsf{Alg}_{\mathbb{E}_{n-1}}(
	\mathcal{C})).
	\]
\end{proof}

\begin{remark} One can use this result and induction on $n$
to prove that there is an equivalence
	\[
	\mathcal{U}^{(n)}(A) \simeq \int_{\mathbb{R}^n - \{0\}} A
	\]
of the enveloping algebra with the factorization homology
of $A$ over $\mathbb{R}^n -\{0\}$.
\end{remark}

\subsection{Background: Deformation Theory}\label{ssec:deform}

In this section we will review the obstruction theory for
deforming an algebra over a square-zero extension.
Throughout, if $f: S \to S'$ is a map of
$\mathbb{E}_{\infty}$-rings, we will denote by
$f^*$ the induced functor $S'\otimes_S(-)$ and
by $f_*$ the restriction of scalars along $f$, to emphasize
the dependence on $f$.

In this section we will be using the cotangent
complex formalism as in \cite[\S 7.3]{ha}, which we
briefly review now.

\begin{recollection}\label{recall-cotangent}
If $\mathcal{C}$ is a presentable
category, then there is a cocartesian fibration
$T_{\mathcal{C}} \to \mathcal{C}$ whose fiber
over $A \in \mathcal{C}$ is given by the 
stabilization $\mathsf{Sp}(\mathcal{C}_{/A})$.  
If $M \in \mathsf{Sp}(\mathcal{C}_{/A})$ then
we will denote by $A\oplus M$ the image of $M$
under the functor
$\Omega^{\infty}:
\mathsf{Sp}(\mathcal{C}_{/A}) \to \mathcal{C}_{/A}$,
and we will refer to this object as the \textbf{trivial square-zero
extension} of $A$ by $M$. The \textbf{cotangent complex
of $A$}, denoted
$\mathbf{L}_A$, is the image of $\mathrm{id}_A$
under the adjoint
$\Sigma^{\infty}_+: \mathcal{C}_{/A} \to \mathsf{Sp}(\mathcal{C}_{/A})$,
so that
	\[
	\mathrm{Map}_{\mathsf{Sp}(\mathcal{C}_{/A})}(\mathbf{L}_A, M)
	\simeq
	\mathrm{Map}_{\mathcal{C}_{/A}}(A, A\oplus M).
	\]
Given $\eta: \mathbf{L}_A \to M$ we will refer to the adjoint map
$d_{\eta}: A \to A\oplus M$ as the \textbf{derivation classified
by $\eta$}. Given such a derivation, we may
form the pullback
	\[
	\xymatrix{
	A^{\eta} \ar[r]\ar[d] & A \ar[d]^{d_0}\\
	A \ar[r]_{d_{\eta}} & A \oplus M
	}
	\]
where $d_0$ is classified by the zero map $0: \mathbf{L}_A \to M$.
We refer to $A^{\eta}$ as the \textbf{square-zero extension
classified by $\eta$}. 

In the special case $\mathcal{C} = 
\mathsf{Alg}_{\mathbb{E}_m}(\mathcal{D})$, where
$\mathcal{D}$ is a stable and presentably $\mathbb{E}_m$-monoidal
category, there is a canonical
equivalence \cite[Theorem 7.3.4.13]{ha} 
$\mathsf{Sp}(\mathsf{Alg}_{\mathbb{E}_m}(\mathcal{D})_{/A})
\simeq \mathsf{Mod}^{\mathbb{E}_m}_A(\mathcal{D})$
with the category of $\mathbb{E}_m$-$A$-modules.
(When $m=\infty$ this is equivalent to the ordinary
category of $A$-modules.)
We denote $\mathbf{L}_A$ by $\mathbf{L}^{\mathbb{E}_m}_A$,
and further decorate it as $\mathbf{L}^{\mathbb{E}_m}_{A/R}$ in the
setting where $\mathcal{D} = \mathsf{Mod}_R$ for an 
$\mathbb{E}_{\infty}$-ring $R$.
\end{recollection}

We now turn to the problem of classifying algebras over
square-zero extensions.
Let $R$ be a connective $\mathbb{E}_{\infty}$-ring and
$I$ a connective $R$-module. Let $\eta: \mathbf{L}_R \to
\Sigma I$ be a map of $R$-modules
from the $\mathbb{E}_{\infty}$-cotangent
complex of $R$ to $\Sigma I$, and denote by $R^{\eta}$
the corresponding square-zero extension. By definition,
this sits in a pullback diagram
	\[
	\xymatrix{
	R^{\eta}\ar[r]\ar[d] & R\ar[d]^-{d_0}\\
	R\ar[r]_-d & R \oplus \Sigma I
	}
	\]
where $d$ is adjoint to $\eta$ and $d_0$ is the trivial
derivation.

\begin{recollection}\label{recall:modules-over-sqzero}
If $S$ is a connective $\mathbb{E}_{\infty}$-ring, we will
denote by $\mathsf{Mod}_S^{\mathrm{cn}}$ the
category of connective $S$-modules.
By \cite[Theorem 16.2.0.2]{sag}, the pullback diagram above
induces a symmetric monoidal equivalence
	\[
	\mathsf{Mod}^{\mathrm{cn}}_{R^{\eta}}
	\stackrel{\simeq}{\longrightarrow}
	\mathsf{Mod}^{\mathrm{cn}}_R \times_{
	\mathsf{Mod}^{\mathrm{cn}}_{R \oplus \Sigma I}}
	\mathsf{Mod}^{\mathrm{cn}}_R,
	\]
and hence an equivalence upon taking categories
of $\mathbb{E}_m$-algebras:
	\[
	\mathsf{Alg}_{\mathbb{E}_m}(
	\mathsf{Mod}^{\mathrm{cn}}_{R^{\eta}})
	\stackrel{\simeq}{\longrightarrow}
	\mathsf{Alg}_{\mathbb{E}_m}(\mathsf{Mod}^{\mathrm{cn}}_R)
	\times_{
	\mathsf{Alg}_{\mathbb{E}_m}(
	\mathsf{Mod}^{\mathrm{cn}}_{R \oplus \Sigma I})}
	\mathsf{Alg}_{\mathbb{E}_m}(\mathsf{Mod}^{\mathrm{cn}}_R).
	\]
Denoting an element in the target by $(A, B, \alpha:
d^*A \simeq d_0^*B)$, the inverse to this
equivalence is implemented by
the functor $(A,B,\alpha) \mapsto A \times_{d^*A}B$. 
\end{recollection}

\begin{lemma}\label{lem:alg-over-triv-sq-zero} Suppose $\eta =0$ classifies the trivial
derivation, so that $R^{\eta} = R \oplus I$ and $d=d_0$. Then
$\mathsf{Alg}_{\mathbb{E}_m}(\mathsf{Mod}^{\mathrm{cn}}_{R^{\eta}})$
is equivalent to the category of pairs 
$(A, \rho: \mathbf{L}^{\mathbb{E}_m}_{A/R} \to A\otimes_R\Sigma I)$
where $A$ is a connective $\mathbb{E}_m$-$R$-algebra
and $\rho$ is a map of $\mathbb{E}_m$-$A$-modules.
Under this equivalence, the $\mathbb{E}_m$-$R^{\eta}$-algebra
corresponding to $(A,\rho)$ has underlying $\mathbb{E}_m$-$R$-algebra
given by the square-zero extension classified by $\rho$. 
\end{lemma}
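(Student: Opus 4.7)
The plan is to apply Recollection \ref{recall:modules-over-sqzero} with $\eta = 0$ and then exploit the fact that the trivial square-zero extension $R \oplus \Sigma I$ is split, reorganizing the resulting pullback category as the category of pairs $(A, \rho)$.

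First, I would apply Recollection \ref{recall:modules-over-sqzero} directly. When $\eta = 0$, both functors in the pullback coincide with base change along the trivial derivation $d_0: R \to R \oplus \Sigma I$. Thus an $\mathbb{E}_m$-algebra over $R^\eta = R \oplus I$ is equivalent to a triple $(A, B, \alpha)$ consisting of connective $\mathbb{E}_m$-$R$-algebras $A, B$ together with an equivalence $\alpha: d_0^* A \simeq d_0^* B$ of $\mathbb{E}_m$-$(R \oplus \Sigma I)$-algebras.

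Next, I would use the augmentation $\epsilon: R \oplus \Sigma I \to R$, which retracts $d_0$; the composite $\epsilon^* \circ d_0^*$ is naturally equivalent to the identity on $\mathbb{E}_m$-$R$-algebras. Applying $\epsilon^*$ to $\alpha$ yields an equivalence $A \simeq B$, which reduces the pullback, up to equivalence, to the full subcategory where $B = A$ and $\epsilon^* \alpha = \mathrm{id}_A$. For such an $\alpha$, viewed as a self-equivalence of $d_0^* A \simeq A \oplus (A \otimes_R \Sigma I)$ over $R \oplus \Sigma I$, the adjunction $d_0^* \dashv (d_0)_*$ converts the data of $\alpha$ into that of a map $A \to A \oplus (A \otimes_R \Sigma I)$ of $\mathbb{E}_m$-$R$-algebras projecting to $\mathrm{id}_A$; that is, a derivation $d_\rho$ classified by some $\rho: \mathbf{L}^{\mathbb{E}_m}_{A/R} \to A \otimes_R \Sigma I$ in the sense of Recollection \ref{recall-cotangent}. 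Invertibility of $\alpha$ is automatic because, in the splitting, the corresponding $(R \oplus \Sigma I)$-module map is upper triangular with identities on the diagonal.

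Finally, to identify the underlying $\mathbb{E}_m$-$R$-algebra, I would trace through the inverse functor of Recollection \ref{recall:modules-over-sqzero}: the output is $A \times_{d_0^* A} A$, with one leg the unit $A \to d_0^* A$ (the trivial derivation $d_0$) and the other the unit twisted by $\alpha$ (the derivation $d_\rho$, up to a choice of sign). By construction, this pullback is the square-zero extension $A^\rho$. The main obstacle I anticipate is not any single computation but bookkeeping: promoting the object-level correspondence to an equivalence of $\infty$-categories and tracking $\mathbb{E}_m$-algebra structures through the adjunction. Both should follow essentially formally from Recollection \ref{recall:modules-over-sqzero} together with the adjunction $d_0^* \dashv (d_0)_*$, with care needed when one makes the identification of self-equivalences with derivations functorial in $A$.
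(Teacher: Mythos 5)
Your proposal is correct and follows essentially the same route as the paper: apply Recollection \ref{recall:modules-over-sqzero}, use the retraction of $d_0$ to reduce to the subcategory with $A=B$, then convert the remaining equivalence data via the adjunction $d_0^* \dashv (d_0)_*$ into a section of $A\otimes_R(R\oplus\Sigma I)\to A$, classified by a map $\mathbf{L}^{\mathbb{E}_m}_{A/R}\to A\otimes_R\Sigma I$. The only point the paper spells out that you leave implicit is why $A\otimes_R(R\oplus\Sigma I)$ is the trivial square-zero extension of $A$ by the $\mathbb{E}_m$-$A$-module $A\otimes_R\Sigma I$ (it has canonical deloopings $A\otimes_R(R\oplus\Sigma^{j+1}I)$), which is what licenses the appeal to the universal property of the cotangent complex.
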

\begin{proof} By Recollection \ref{recall:modules-over-sqzero},
the category of connective $\mathbb{E}_m$-$R^{\eta}$-algebras
is equivalent to the category of triples $(A, B, \alpha: 
A\otimes_R(R\oplus \Sigma I)\simeq B\otimes_R(R\oplus \Sigma I))$,
where $A$ and $B$ are connective 
$\mathbb{E}_m$-$R$-algebras and
$\alpha$ is an equivalence. Since $d_0$ is a section of
the projection $p: R\oplus \Sigma I \to R$, we have
a canonical equivalence $p^*d_0^*A = A$. It follows
that this category of triples is equivalent to
the subcategory where $A=B$, so we are left with understanding
automorphisms of the $\mathbb{E}_m$-$(R\oplus \Sigma I)$-algebra
$A\otimes_R(R \oplus \Sigma I)=d_0^*A$. 
By adjunction,
this is equivalent to understanding
the space of $\mathbb{E}_m$-$R$-algebra
sections of the projection 
\[(d_0)_*d_0^*A=A\otimes_R(R\oplus\Sigma I)
\to (d_0)_*p_*p^*d_0^*A = A.\] 

If $A$ is an $\mathbb{E}_m$-$R$-algebra,
then $A\otimes_R(R \oplus \Sigma I)$ is canonically
a spectrum object in $\mathsf{Alg}_{\mathbb{E}_m}(
\mathsf{Mod}_R)_{/ A}$ with deloopings given by
$A\otimes_R(R \oplus \Sigma^{j+1} I)$. In other words,
$A\otimes_R(R\oplus \Sigma I)$ is a trivial square-zero extension
of $A$ by $A\otimes_R\Sigma I$. The result now follows
by the universal property of the cotangent complex. 
\end{proof}

\begin{construction} Returning to the case of
a general square-zero extension $R^{\eta}$, suppose
that $A$ is an $\mathbb{E}_m$-$R$-algebra. Then
$d^*A$ is an $\mathbb{E}_m$-$(R\oplus \Sigma I)$-algebra.
Moreover, since $d$ is a section of the projection map,
base changing $d^*A$ along $R\oplus \Sigma I \to R$ 
recovers $A$. By the previous lemma (applied
to the trivial square-zero extension $R\oplus \Sigma I$
rather than $R\oplus I$), the
$\mathbb{E}_m$-$(R\oplus\Sigma I)$-algebra $d^*A$
is determined by a pair $(A, o(A):
\mathbf{L}^{\mathbb{E}_m}_{A/R} \to A \otimes_R\Sigma^2 I)$.
We refer to $o(A)$ as the \textbf{obstruction class for deforming
$A$}. Though it is not indicated in the notation, this class
also depends on $\eta$. 
\end{construction}

\begin{definition} Let $R$ and $R^{\eta}$ be as above, and
let $A$ be a connective $\mathbb{E}_m$-$R$-algebra.
Define
the \textbf{category of lifts of $A$} by the pullback:
	\[
	\xymatrix{
	\mathsf{Lifts}(A) \ar[r]\ar[d] & \{A\}\ar[d]\\
	\mathsf{Alg}_{\mathbb{E}_m}(
	\mathsf{Mod}^{\mathrm{cn}}_{R^{\eta}})
	\ar[r] &  \mathsf{Alg}_{\mathbb{E}_m}(
	\mathsf{Mod}^{\mathrm{cn}}_{R})
	}
	\]
\end{definition}

\begin{proposition}\label{prop:obstrn-theory} 
The category $\mathsf{Lifts}(A)$ is a groupoid
equivalent to the space of nullhomotopies of
the $\mathbb{E}_m$-$A$-module map
$o(A): \mathbf{L}^{\mathbb{E}_m}_{A/R} \to A\otimes_R\Sigma^2I$.
In particular:
	\begin{enumerate}[{\rm(i)}]
	\item  There exists an $\mathbb{E}_m$-$R^{\eta}$-algebra
$\widetilde{A}$ such that $\widetilde{A}\otimes_{R^{\eta}}R \simeq
A$ if and only if $o(A)$ is nullhomotopic.
	\item If $o(A)$ is nullhomotopic then the space
	$\mathsf{Lifts}(A)$ is equivalent to
	\[\mathrm{Map}_{\mathsf{Mod}^{\mathbb{E}_m}_A(
	\mathsf{Mod}_R)}(\mathbf{L}^{\mathbb{E}_m}_{A/R},
	A\otimes_R\Sigma I).\]
	\end{enumerate}
\end{proposition}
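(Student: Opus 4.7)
The plan is to use Recollection \ref{recall:modules-over-sqzero} together with Lemma \ref{lem:alg-over-triv-sq-zero} to translate the question about lifts into a question about nullhomotopies of a single map of $\mathbb{E}_m$-$A$-modules.

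First, apply Recollection \ref{recall:modules-over-sqzero} to categories of $\mathbb{E}_m$-algebras: connective $\mathbb{E}_m$-$R^{\eta}$-algebras form the pullback of two copies of $\mathsf{Alg}_{\mathbb{E}_m}(\mathsf{Mod}_R^{\mathrm{cn}})$ along $d^*$ and $d_0^*$. Taking the fiber over $A$ in both factors, $\mathsf{Lifts}(A)$ identifies with the space of equivalences $\alpha: d^*A \simeq d_0^*A$ of $\mathbb{E}_m$-$(R\oplus \Sigma I)$-algebras whose base change along either projection $R \oplus \Sigma I \to R$ is the identity of $A$.

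Next, invoke Lemma \ref{lem:alg-over-triv-sq-zero} with $\Sigma I$ in place of $I$: connective $\mathbb{E}_m$-$(R \oplus \Sigma I)$-algebras correspond to pairs $(B, \rho: \mathbf{L}^{\mathbb{E}_m}_{B/R} \to B \otimes_R \Sigma^2 I)$. Under this equivalence, $d_0^*A$ maps to $(A, 0)$ because $d_0$ is the trivial derivation, while $d^*A$ maps to $(A, o(A))$ by the very construction of the obstruction class preceding the proposition. Morphisms $(A, o(A)) \to (A, 0)$ that base change to $\mathrm{id}_A$ are therefore exactly paths from $o(A)$ to $0$ in the mapping space $\mathrm{Map}_{\mathsf{Mod}^{\mathbb{E}_m}_A(\mathsf{Mod}_R)}(\mathbf{L}^{\mathbb{E}_m}_{A/R}, A \otimes_R \Sigma^2 I)$, and any such morphism is automatically an equivalence. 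Hence $\mathsf{Lifts}(A)$ is equivalent to the space of nullhomotopies of $o(A)$, proving the main claim.

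Consequences (i) and (ii) then follow formally: a nullhomotopy exists precisely when $o(A) \simeq 0$, giving (i); and when one is chosen, the space of nullhomotopies becomes a torsor under the based loop space, yielding a canonical equivalence
\[
\mathsf{Lifts}(A) \;\simeq\; \Omega_0\, \mathrm{Map}_{\mathsf{Mod}^{\mathbb{E}_m}_A(\mathsf{Mod}_R)}\!\bigl(\mathbf{L}^{\mathbb{E}_m}_{A/R},\, A\otimes_R \Sigma^2 I\bigr) \;\simeq\; \mathrm{Map}_{\mathsf{Mod}^{\mathbb{E}_m}_A(\mathsf{Mod}_R)}\!\bigl(\mathbf{L}^{\mathbb{E}_m}_{A/R},\, A\otimes_R \Sigma I\bigr),
\]
as desired in (ii). The main technical point requiring care is verifying that the equivalence of Lemma \ref{lem:alg-over-triv-sq-zero}, on mapping spaces lying over a fixed underlying algebra $A$, really does compute homotopies of the structure map $\rho$ rather than something more elaborate; this follows by unwinding the proof of that lemma, where such morphisms correspond to $\mathbb{E}_m$-$R$-algebra sections of a projection that are in turn classified by the cotangent complex.
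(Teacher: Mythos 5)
Your proposal is correct and follows essentially the same route as the paper: both use Recollection \ref{recall:modules-over-sqzero} to identify $\mathsf{Lifts}(A)$ with equivalences $d^*A \simeq d_0^*A$ (after forcing the second leg to be $A$ via the projection $p$), and then Lemma \ref{lem:alg-over-triv-sq-zero} applied to $R\oplus\Sigma I$ to convert such an equivalence into a homotopy between $o(A)$ and $0$ in $\mathrm{Map}_{\mathsf{Mod}^{\mathbb{E}_m}_A(\mathsf{Mod}_R)}(\mathbf{L}^{\mathbb{E}_m}_{A/R}, A\otimes_R\Sigma^2 I)$. The only addition beyond the paper's argument is your explicit loop-space identification deducing (ii), which the paper leaves implicit, and your phrase ``fiber over $A$ in both factors'' should more precisely be ``fiber over $A$ in one factor, with the other forced to agree via $p^*$,'' exactly as in the paper's reduction to the subcategory with $A=B$.
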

\begin{proof} By Recollection \ref{recall:modules-over-sqzero}
and the definition of the category of lifts,
each square in the rectangle below
	\[
	\xymatrix{
	\mathsf{Lifts}(A) \ar[r]\ar[d] & \{A\}\ar[d]\\
	\mathsf{Alg}_{\mathbb{E}_m}(
	\mathsf{Mod}^{\mathrm{cn}}_{R^{\eta}})
	\ar[r]\ar[d] &  \mathsf{Alg}_{\mathbb{E}_m}(
	\mathsf{Mod}^{\mathrm{cn}}_{R})\ar[d]\\
	\mathsf{Alg}_{\mathbb{E}_m}(
	\mathsf{Mod}^{\mathrm{cn}}_{R}) \ar[r]&
	\mathsf{Alg}_{\mathbb{E}_m}(
	\mathsf{Mod}^{\mathrm{cn}}_{R\oplus\Sigma I})
	}
	\]
is a pullback. Thus $\mathsf{Lifts}(A)$ is equivalent
to the category of pairs $(B, d^*B \simeq d_0^*A)$,
where $B$ is a connective $\mathbb{E}_m$-$R$-algebra,
$d$ is the derivation classified by $\eta$, and $d_0$ is
the trivial derivation. Let $p: R\oplus \Sigma I\to R$ be the projection
so that $p\circ d = p \circ d_0 = \mathrm{id}_R$. 
By Lemma \ref{lem:alg-over-triv-sq-zero}, an equivalence
$d^*B \simeq d_0^*A$ corresponds to an equivalence
$B=p^*d^*B \simeq p^*d_0^*A =A$ together with 
a homotopy between the two resulting maps 
$\mathbf{L}^{\mathbb{E}_m}_{A/R} \to  A\otimes_R\Sigma I$. 
Again, we may restrict to the equivalent subcategory with
$A =B$, and we have arrived at an equivalence between
$\mathsf{Lifts}(A)$ and the category of homotopies between
$o(A)$, which yields $d^*A$, and the zero map,
which yields $d_0^*A$. This completes the proof.
\end{proof}

\subsection{Grounding the Induction}\label{ssec:base-case}

The purpose of this section is to compute the higher 
$\mathrm{MU}$-enveloping algebras of $\mathbb{F}_p$. This
will allow us to resolve extension problems when computing the
$\mathbb{E}_3$-$\mathrm{MU}$-cotangent complex 
of $\mathrm{BP}\langle n\rangle$ during the inductive step.

In the course of this computation we will make use of the
Kudo-Araki-Dyer-Lashof operations \cite{hinfty}, which are natural
maps of spectra (see, e.g., \cite{glasman-lawson}) for any 
$\mathbb{E}_{\infty}$-$\mathbb{F}_p$-algebra, $A$,
	\[
	\begin{cases}
	Q^i: A \to \Sigma^{-2i(p-1)}A & p>2\\
	Q^i: A \to \Sigma^{-i}A & p=2
	\end{cases}
	\]
We will also use the suspension operation $\sigma$ discussed
in \S\ref{sec:suspension}.

\begin{lemma} The $\mathbb{E}_1$-$\mathrm{MU}$-enveloping
algebra of $\mathbb{F}_p$ has homotopy given by
	\[
	\pi_*\mathcal{U}^{(1)}_{\mathrm{MU}}(\mathbb{F}_p) \simeq
	\Lambda(\sigma v_i : i\ge 0) \otimes_{\mathbb{F}_p}
	\Lambda(\sigma x_j: j\ne p^k-1).
	\]
When regarded as an $\mathbb{E}_{\infty}$-$\mathbb{F}_p$-algebra
via the map\footnote{We make this choice for definiteness, but it does not have a significant effect on our computations. Indeed, once $k\ge 2$, there is a canonical $\mathbb{E}_{\infty}$-$\mathbb{F}_p$-algebra structure on $\mathcal{U}^{(k)}_{\mathrm{MU}}(\mathbb{F}_p) = \int_{\mathbb{R}^k-\{0\}}\mathbb{F}_p$ up to equivalence, since $\mathbb{R}^k-\{0\}$ is connected.} $\mathrm{id}_{\mathbb{F}_p}\otimes 1: \mathbb{F}_p
\to \mathbb{F}_p \otimes_{\mathrm{MU}}\mathbb{F}_p$,
we have the identities:
	\begin{align*}
	Q^{p^i}\sigma v_i &\doteq \sigma v_{i+1}, & p>2
	\\
	Q^{2^{i+1}}\sigma v_i &\doteq \sigma v_{i+1} & p=2\\
	Q^{j+1}
	\sigma x_j &\doteq
	\sigma x_{jp+p-1}, \bmod \textup{ decomposables} &
	p>2\\
	Q^{2j+2}
	\sigma x_j &
	\doteq \sigma x_{jp+p-1}, \bmod \textup{ decomposables}
	&p=2
	\end{align*}
\end{lemma}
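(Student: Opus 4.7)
The plan is to identify $\mathcal{U}^{(1)}_{\mathrm{MU}}(\mathbb{F}_p)$ with $\mathbb{F}_p \otimes_{\mathrm{MU}} \mathbb{F}_p$ by the previous proposition (noting that $\mathbb{F}_p^{\mathrm{op}} = \mathbb{F}_p$), to compute its underlying homotopy ring from the Koszul presentation of $\mathbb{F}_p$ as a regular quotient of $\mathrm{MU}$, and then to extract the Dyer--Lashof identities by combining Steinberger's computation on the dual Steenrod algebra with Kochman's computation on $H_*(\mathrm{BU};\mathbb{F}_p)$.

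\textbf{Ring structure.} Choose polynomial generators $x_i \in \pi_{2i}\mathrm{MU}_{(p)}$ for $i \ge 0$ with $x_0 = p$ and $x_{p^k - 1} = v_k$. The sequence $(x_i)_{i\ge 0}$ is regular in $\pi_*\mathrm{MU}_{(p)}$, so
\[
\mathbb{F}_p \simeq \bigotimes_{i \ge 0}^{\mathrm{MU}} \mathrm{MU}/x_i, \qquad \mathbb{F}_p \otimes_{\mathrm{MU}} \mathbb{F}_p \simeq \bigotimes_{i \ge 0}^{\mathbb{F}_p} (\mathbb{F}_p \oplus \Sigma^{2i+1}\mathbb{F}_p) \simeq \Lambda_{\mathbb{F}_p}(\sigma x_i : i \ge 0),
\]
where $|\sigma x_i| = 2i + 1$. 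Separating out the indices $i = p^k - 1$ and renaming $\sigma x_{p^k - 1} = \sigma v_k$ produces the stated tensor decomposition.

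\textbf{Dyer--Lashof on $\sigma v_i$.} The unit $S \to \mathrm{MU}$ induces an $\mathbb{E}_\infty$-$\mathbb{F}_p$-algebra map $\mathcal{A}_* = \pi_*(\mathbb{F}_p \otimes_S \mathbb{F}_p) \to \pi_*(\mathbb{F}_p \otimes_{\mathrm{MU}} \mathbb{F}_p)$. Matching indecomposables in degree $|\sigma v_i|$ (with nontriviality of the leading coefficient traced through the Hurewicz map, using that $v_i$ is detected in mod-$p$ homology), one finds $\tau_i \mapsto \sigma v_i$ at odd primes and $\xi_{i+1} \mapsto \sigma v_i$ at $p=2$, each up to a unit. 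Pushing forward Steinberger's identities $Q^{p^i}\tau_i \doteq \tau_{i+1}$ (odd primes) and $Q^{2^{i+1}}\xi_{i+1} \doteq \xi_{i+2}$ ($p=2$) then yields the claimed formulas $Q^{p^i}\sigma v_i \doteq \sigma v_{i+1}$ and $Q^{2^{i+1}}\sigma v_i \doteq \sigma v_{i+1}$.

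\textbf{Dyer--Lashof on $\sigma x_j$ for $j \ne p^k - 1$.} For these classes, I would interpret $\sigma x_j$ as the Kudo transgression of the Hurewicz image $h(x_j) \in H_*(\mathrm{MU}; \mathbb{F}_p)$ in the bar spectral sequence computing $\pi_*(\mathbb{F}_p \otimes_{\mathrm{MU}} \mathbb{F}_p)$. Under the Thom isomorphism $H_*(\mathrm{MU}; \mathbb{F}_p) \cong H_*(\mathrm{BU}; \mathbb{F}_p)$, Kochman's calculation gives $Q^{j+1}h(x_j) \equiv h(x_{jp + p - 1})$ modulo decomposables at odd primes, and the analogous formula with $Q^{2j+2}$ at $p = 2$. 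Invoking Kudo's transgression theorem that $Q^i$ commutes with transgression then descends these identities to the stated relations for $\sigma x_j$ modulo decomposables in $\Lambda_{\mathbb{F}_p}(\sigma x_\bullet)$.

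\textbf{Main obstacle.} The most delicate points are the unit-level identification of the Milnor primitives $\tau_i$ (respectively $\xi_{i+1}$) with $\sigma v_i$, and the application of Kudo transgression in the relative setting of a bar spectral sequence over $\mathrm{MU}$ rather than over $S$. Once these identifications are in place, the Steinberger and Kochman inputs deliver the full collection of relations.
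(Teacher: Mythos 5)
Your proposal is correct in outline and follows essentially the same route as the paper: the ring structure is the standard Koszul/Tor computation (the paper cites Angeltveit for this), the $\sigma v_i$ identities are obtained by pushing Steinberger's formulas along the $\mathbb{E}_\infty$-map $f\colon \mathbb{F}_p\otimes\mathbb{F}_p\to\mathbb{F}_p\otimes_{\mathrm{MU}}\mathbb{F}_p$, and the $\sigma x_j$ identities come from Kochman's formulas transported through a suspension. Your ``bar spectral sequence over $\mathrm{MU}$'' step is, once unwound, exactly the paper's second map $g\colon \mathbb{F}_p\otimes_{\mathrm{MU}}\mathbb{F}_p\to\mathbb{F}_p\otimes_{\mathbb{F}_p\otimes\mathrm{MU}}\mathbb{F}_p\simeq \mathbb{F}_p\otimes \mathrm{B}^2\mathrm{U}_+$ (Thom isomorphism), with $g(\sigma x_j)=\sigma b_j$ for $b_j$ the Hurewicz image; the statement you attribute to ``Kudo transgression'' is supplied in the paper by the observation that $\Sigma^\infty_+\Omega X\to\Omega\Sigma^\infty_+X$ is a map of nonunital $\mathbb{E}_\infty$-algebras, so the homology suspension $H_*(\mathrm{BU};\mathbb{F}_p)\to H_{*+1}(\mathrm{B}^2\mathrm{U};\mathbb{F}_p)$ commutes with Dyer--Lashof operations. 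So the mechanism is the same; the paper is just more explicit about which intermediate $\mathbb{E}_\infty$-algebra the computation happens in and why the operations are compatible with the suspension there.

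One concrete flaw: your justification that $\overline{\tau}_i\mapsto\sigma v_i$ (resp.\ $\zeta_{i+1}\mapsto\sigma v_i$ at $p=2$) by ``tracing the leading coefficient through the Hurewicz map, using that $v_i$ is detected in mod-$p$ homology'' does not work, because $v_i$ is \emph{not} detected in mod-$p$ homology: by Milnor's criterion the coefficient of $b_{p^i-1}$ in its Hurewicz image is divisible by $p$, so the mod-$p$ image is decomposable (indeed the paper later insists on choosing $v_{n+1}$ with trivial mod-$p$ Hurewicz image). The identification $f(\overline{\tau}_i)\doteq\sigma v_i$ is true and standard (the paper asserts it without proof, noting it is independent of the choice of $v_i$), but it requires a different argument, e.g.\ the change-of-rings identification of $\pi_*(\mathbb{F}_p\otimes_{\mathrm{BP}}\mathbb{F}_p)$ with the exterior quotient of $\mathcal{A}_*$ by its polynomial part $H_*(\mathrm{BP};\mathbb{F}_p)$, under which the residual exterior generators are the $\overline{\tau}_i$ and are identified with the bar classes $\sigma v_i$ by an indecomposability count. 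With that repair the rest of your argument goes through as in the paper.
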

\begin{proof} The algebra structure follows from \cite[Proposition 3.6]{angel}. To compute the action of the operations we use the two 
$\mathbb{E}_{\infty}$-maps:
	\[
	\mathbb{F}_p \otimes \mathbb{F}_p \stackrel{f}{\to}
	\mathbb{F}_p\otimes_{\mathrm{MU}}\mathbb{F}_p
	\stackrel{g}{\to}
	\mathbb{F}_p\otimes_{\mathbb{F}_p\otimes \mathrm{MU}}\mathbb{F}_p
	\]
We have $f(\overline{\tau}_i)
\doteq \sigma v_i$ (independently of our choice
of $v_i$) and $g(\sigma x_j) = \sigma b_j$,
where $b_j$ is the Hurewicz image of $x_j$ (for $j\ne p^k-1$). 
The first identity now follows from
Steinberger's calculation \cite[III.2]{hinfty} that 
	\[
	Q^{p^i}\overline{\tau}_i = \overline{\tau}_{i+1},
	\]
(and the analogous result at $p=2$).
For the second identity, first recall that the Thom isomorphism
is an equivalence
	\[
	\mathbb{F}_p \otimes \mathrm{MU} \simeq
	\mathbb{F}_p \otimes \mathrm{BU}_+
	\]
of $\mathbb{E}_{\infty}$-$\mathbb{F}_p$-algebras, and hence we have
an equivalence
	\[
	\mathbb{F}_p\otimes_{\mathbb{F}_p\otimes \mathrm{MU}}\mathbb{F}_p
	\simeq
	\mathbb{F}_p\otimes_{\mathbb{F}_p\otimes \mathrm{BU}_+}\mathbb{F}_p
	\simeq \mathbb{F}_p \otimes \mathrm{B}^2\mathrm{U}_+.
	\]
of $\mathbb{E}_{\infty}$-$\mathbb{F}_p$-algebras. 

Since $\Sigma^{\infty}_+$ is symmetric monoidal,
the canonical map $\Sigma^{\infty}_+(\Omega X) \to
\Omega \Sigma^{\infty}_+X$ is a map of
nonunital $\mathbb{E}_{\infty}$-algebras for any
$\mathbb{E}_{\infty}$-space $X$. In particular, 
taking $X = \mathrm{B}^2U$, we see that 
the homology suspension 
$H_*(\mathrm{BU};\mathbb{F}_p) \to H_{*+1}(\mathrm{B}^2U;\mathbb{F}_p)$
preserves Dyer-Lashof operations.
The result now follows from 
Kochman's computation \cite[Theorem 6]{kochman} of the action
of Dyer-Lashof operations on $H_*(\mathrm{BU})$.
\end{proof}

The previous lemma implies that the K\"unneth spectral sequence for $\mathcal{U}^{(2)}_{\mathrm{MU}}(\mathbb{F}_p)$ collapses
at the $E^2$ term, which is a divided power algebra:
	\[
	E^2=E^{\infty} = \Gamma\{\sigma^2v_i, \sigma^2 x_j: i\ge 0, j\ne p^k-1\}
	\Rightarrow \pi_*(\mathbb{F}_p\otimes_{\mathcal{U}^{(1)}_{\mathrm{MU}}(\mathbb{F}_p)}\mathbb{F}_p)=\pi_*\mathcal{U}^{(2)}_{\mathrm{MU}}(\mathbb{F}_p).
	\]

Here we recall that, in the bar complex computing 
$\mathrm{Tor}^{\Lambda(z)}(\mathbb{F}_p, \mathbb{F}_p)$,
the class
$\gamma_{p^i}(\sigma z)$, where $i\ge 0$,
is represented by the element
	\[
	z \otimes z\otimes \cdots \otimes z \in \Lambda(z)^{\otimes p^i}.
	\]

\begin{proposition}\label{prop:e2-env-algebra-of-fp} 
There are nontrivial multiplicative extensions
in the bar spectral sequence for 
$\mathcal{U}^{(2)}_{\mathrm{MU}}(\mathbb{F}_p)$ as follows:
	\begin{enumerate}[{\rm (i)}]
	\item If $w_{0, i} \in 
	\pi_*\mathcal{U}^{(2)}_{\mathrm{MU}}(\mathbb{F}_p)$ is detected by 	the divided power
	$\gamma_{p^i}(\sigma^2v_0)$, then
	$w_{0,i}^{p^j}$ is detected by $\gamma_{p^i}(\sigma^2v_j)$,
	up to a unit.
	\item
	If $y_{j, i} \in 
	\pi_*\mathcal{U}^{(2)}_{\mathrm{MU}}(\mathbb{F}_p)$ is detected by 	the divided power
	$\gamma_{p^i}(\sigma^2x_j)$, then
	$y_{j,i}^{p}$ is detected by $\gamma_{p^i}(\sigma^2x_{jp+p-1})$,
	up to a unit.
	\end{enumerate}

In particular, the homotopy groups of $\mathcal{U}^{(2)}_{\mathrm{MU}}(\mathbb{F}_p)$ are polynomial on even dimensional classes,
one of which can be chosen to be $\sigma^2v_0$.
\end{proposition}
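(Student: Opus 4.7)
The plan is to exploit the $\mathbb{E}_\infty$-$\mathbb{F}_p$-algebra structure on $\mathcal{U}^{(2)}_{\mathrm{MU}}(\mathbb{F}_p)$ (available up to equivalence because $\mathbb{R}^2\setminus\{0\}$ is connected, as noted in the footnote to the preceding lemma) to define Dyer--Lashof operations on its homotopy and to extract the multiplicative extensions using the top-operation identity $Q^{|x|/2}x = x^p$ for $x$ of even degree at odd $p$ (respectively $Q^{|x|}x = x^2$ at $p = 2$).

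First I would transport the Dyer--Lashof calculations of the previous lemma from $\mathcal{U}^{(1)}_{\mathrm{MU}}(\mathbb{F}_p)$ to the $E^\infty$-page of the bar spectral sequence for $\mathcal{U}^{(2)}_{\mathrm{MU}}(\mathbb{F}_p)$, using that the bar suspension is a nonunital $\mathbb{E}_\infty$-map and therefore commutes with Dyer--Lashof operations up to a sign. This yields
\[
Q^{p^i}(\sigma^2 v_i) \doteq \sigma^2 v_{i+1}, \qquad Q^{j+1}(\sigma^2 x_j) \doteq \sigma^2 x_{jp+p-1} \bmod \text{decomposables}
\]
on $E^\infty$ (with the evident modification at $p = 2$). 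Since $\sigma^2 v_i$ sits in even degree $2p^i$ and $\sigma^2 x_j$ in even degree $2j+2$, these identify with $p$-th powers by the top-operation relation, so $(\sigma^2 v_i)^p \doteq \sigma^2 v_{i+1}$ and $(\sigma^2 x_j)^p \doteq \sigma^2 x_{jp+p-1}$ modulo decomposables on $E^\infty$. Iterating settles the $i = 0$ subcases of (i) and (ii).

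To promote these to arbitrary $i$, the key is to establish, on the $E^\infty$-page, the Frobenius--divided-power identity
\[
Q^{p^{k+i}}\gamma_{p^i}(\sigma^2 v_k) \doteq \gamma_{p^i}(Q^{p^k}\sigma^2 v_k) \doteq \gamma_{p^i}(\sigma^2 v_{k+1}),
\]
along with its $\sigma^2 x$-analogue. The plan is to apply the Cartan formula to the symmetric bar representative of $\gamma_{p^i}(\sigma^2 v_k)$ and invoke instability of Dyer--Lashof operations ($Q^a$ annihilates a class of degree $d$ whenever $2a < d$) to rule out every term except the top one. Granted this, the top-operation identity in $\pi_*$ reads $\gamma_{p^i}(\sigma^2 v_k)^p \doteq \gamma_{p^i}(\sigma^2 v_{k+1})$ on $E^\infty$, and induction on $j$ then delivers both families of multiplicative extensions.

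The main obstacle I anticipate is the Cartan-formula bookkeeping on divided powers and ruling out hidden contributions from other classes sharing the same total degree but lying in strictly smaller bar filtration (such as products of divided powers of earlier $\sigma^2 v$-classes). Once these extensions are verified, the indecomposables $\{w_{0,i}\} \cup \{y_{j,i}\}$ are forced to generate $\pi_*\mathcal{U}^{(2)}_{\mathrm{MU}}(\mathbb{F}_p)$ freely as a polynomial algebra over $\mathbb{F}_p$ on even-degree classes, with $w_{0,0} = \sigma^2 v_0$ among them, yielding the final conclusion of the proposition.
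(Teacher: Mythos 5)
Your overall route is the same as the paper's: use the $\mathbb{E}_\infty$-$\mathbb{F}_p$-algebra structure on $\mathcal{U}^{(2)}_{\mathrm{MU}}(\mathbb{F}_p)$, the top-operation identity $Q^{|x|/2}x=x^p$, and the standard bar-complex representatives of divided powers, fed by the Steinberger/Kochman computation from the previous lemma. The part of your argument concerning the classes $\sigma^2 v_k$ and $\sigma^2 x_j$ themselves (the $i=0$ cases) is sound in spirit, since these are genuine homotopy classes, the suspension is compatible with Dyer--Lashof operations, and the relevant operation is the top one. But the step you yourself flag as the "main obstacle" --- the identity $Q^{p^{k+i}}\gamma_{p^i}(\sigma^2 v_k)\doteq\gamma_{p^i}(\sigma^2 v_{k+1})$ on detecting classes for $i\ge 1$ --- is precisely the hard content, and your proposed derivation via the Cartan formula plus instability does not go through as stated. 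The class $\gamma_{p^i}(\sigma^2 v_k)$ is \emph{not} a product in the $E_2$-term (its standard representative $z|z|\cdots|z$ is a divided power, not the $p^i$-fold power of $[z]$), so the Cartan formula gives no direct handle on it; and Dyer--Lashof operations act on $\pi_*$ of the abutment, not on the pages of the bar spectral sequence, so one must control how applying an operation to a homotopy class interacts with the bar filtration (operations can shift filtration, and "detected by" statements can jump). Even the passage from $(\sigma^2v_k)^p\doteq\sigma^2v_{k+1}$ to the claim that $w_{0,i}^{p}$ is detected by $\gamma_{p^i}(\sigma^2v_{1})$ for $i\ge1$ requires this cycle-level control.

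This is exactly what the paper imports wholesale: its proof consists of applying \cite[Theorem 3.6]{bm} to the standard representatives of divided powers in the bar complex, which computes the effect of power operations on those representatives and hence pins down the hidden multiplicative extensions, filtration shifts included. So your proposal is not a different argument but a sketch of the same one with its key technical input (a Basterra--Mandell/Steinberger-type computation of operations on bar-complex representatives) asserted rather than proved. To close the gap you would either cite that result, or actually carry out the cycle-level computation of $Q^s$ on the symmetric representative of $\gamma_{p^i}(\sigma^2 v_k)$, showing the answer is a representative of $\gamma_{p^i}(\sigma^2 v_{k+1})$ modulo lower filtration; a degreewise Cartan-plus-instability argument on the $E_\infty$-page is not a substitute. (A minor additional point: in your final paragraph the polynomial generators $y_{j,i}$ should be indexed only by $j\nequiv -1\bmod p$, the remaining divided-power classes being accounted for as $p$-th powers via extension (ii).)
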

\begin{proof} 
This follows from the computation
of power operations in the previous lemma
by applying \cite[Theorem 3.6]{bm} 
to the standard representatives of divided powers in
the bar complex.
\end{proof}

\begin{remark}\label{rmk:boksedt-implies-polynomial} From the equivalence
$\mathcal{U}^{(2)}_{\mathrm{MU}}(\mathbb{F}_p)
\simeq \mathrm{THH}(\mathbb{F}_p/\mathrm{MU})$,
we deduce the crucial fact that the homotopy groups
of $\mathrm{THH}(\mathbb{F}_p/\mathrm{MU})$
are polynomial. An anonymous referee points out that
one may also deduce this from B\"okstedt's perioidicity
theorem, as we now sketch. 
We have $\mathrm{THH}(\mathbb{F}_p/\mathrm{MU})
\simeq \mathrm{THH}(\mathbb{F}_p) \otimes_{\mathrm{THH}(\mathrm{MU})}
\mathrm{MU}$. By B\"okstedt's theorem, $\pi_*\mathrm{THH}(\mathbb{F}_p)$
is polynomial, and one is reduced to proving that
$\mathbb{F}_p \otimes_{\mathrm{THH}(\mathrm{MU})}\mathrm{MU}$
has polynomial homotopy. By a Thom spectrum argument
\cite{blumberg-cohen-schlichtkrull}, this
spectrum is equivalent to $\mathbb{F}_p \otimes \mathrm{BSU}_+$,
which is known to have polynomial homotopy groups.
\end{remark}

\begin{proposition} The $\mathbb{E}_3$-$\mathrm{MU}$-enveloping
algebra of $\mathbb{F}_p$ has homotopy given by
an exterior algebra on odd dimensional generators,
one of which can be chosen to be $\sigma^3v_0$.
\end{proposition}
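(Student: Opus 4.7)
The plan is to iterate the bar spectral sequence one more time, using Proposition~\ref{prop:e2-env-algebra-of-fp} as input. That proposition identifies $\pi_* \mathcal{U}^{(2)}_{\mathrm{MU}}(\mathbb{F}_p)$ as a polynomial ring $\mathbb{F}_p[z_\alpha]$ on even-degree generators, with $\sigma^2 v_0$ among the $z_\alpha$. Using the iterative formula $\mathcal{U}^{(3)}(A) \simeq A\otimes_{\mathcal{U}^{(2)}(A)} A^{\mathrm{op}}$ from \S\ref{ssec:envelopes}, we rewrite
\[
\mathcal{U}^{(3)}_{\mathrm{MU}}(\mathbb{F}_p) \simeq \mathbb{F}_p \otimes_{\mathcal{U}^{(2)}_{\mathrm{MU}}(\mathbb{F}_p)} \mathbb{F}_p,
\]
so the associated bar spectral sequence opens on
\[
E^2 \;=\; \mathrm{Tor}^{\mathbb{F}_p[z_\alpha]}_{*,*}(\mathbb{F}_p,\mathbb{F}_p) \;=\; \Lambda_{\mathbb{F}_p}(\sigma z_\alpha),
\]
an exterior algebra whose generators $\sigma z_\alpha$ lie in odd degrees and bar-filtration $1$, including the distinguished class $\sigma^3 v_0 = \sigma(\sigma^2 v_0)$.

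Collapse is then immediate: each $\sigma z_\alpha$ lives in bar-filtration $1$, so any differential $d_r$ with $r\ge 2$ targets non-positive filtration and vanishes. Since the spectral sequence is multiplicative and the $d_r$ are derivations, every class is a permanent cycle and $E^\infty = \Lambda(\sigma z_\alpha)$.

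The remaining task is to eliminate hidden multiplicative extensions, that is, to show $(\sigma z_\alpha)^2 = 0$ genuinely in $\pi_* \mathcal{U}^{(3)}_{\mathrm{MU}}(\mathbb{F}_p)$ and not merely in the associated graded. At odd primes this is automatic from graded commutativity, supplied by the $\mathbb{E}_2$-structure of Remark~\ref{rmk:enveloping-algebra-monoidal}, since $|\sigma z_\alpha|$ is odd. At $p=2$ I would instead argue at the chain level: the $\mathbb{E}_\infty$-refinement of $\mathcal{U}^{(2)}_{\mathrm{MU}}(\mathbb{F}_p)$ (again from Remark~\ref{rmk:enveloping-algebra-monoidal}) endows the bar construction with a shuffle product, under which $[z_\alpha]\cdot[z_\alpha] = 2[z_\alpha | z_\alpha] = 0$ on the nose in characteristic two. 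Hence $(\sigma z_\alpha)^2 = 0$ in $\pi_*$, and $\pi_* \mathcal{U}^{(3)}_{\mathrm{MU}}(\mathbb{F}_p)$ is the exterior algebra on the odd-degree generators $\sigma z_\alpha$, one of which is $\sigma^3 v_0$.

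The main obstacle is the extension problem at $p = 2$: parity on $E^\infty$ alone rules out extensions only into odd bar-filtration and leaves the even ones open, so one genuinely must lift the computation to the chain level and exploit the commutative shuffle structure inherited from the higher-algebra refinement of $\mathcal{U}^{(2)}_{\mathrm{MU}}(\mathbb{F}_p)$.
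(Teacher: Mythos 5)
Your setup (rewrite $\mathcal{U}^{(3)}_{\mathrm{MU}}(\mathbb{F}_p)\simeq \mathbb{F}_p\otimes_{\mathcal{U}^{(2)}_{\mathrm{MU}}(\mathbb{F}_p)}\mathbb{F}_p$, run the bar spectral sequence with $E^2=\mathrm{Tor}^{\mathbb{F}_p[z_\alpha]}(\mathbb{F}_p,\mathbb{F}_p)=\Lambda(\sigma z_\alpha)$, and observe collapse because the algebra generators sit in filtration $1$) is sound, and your odd-primary extension argument via graded commutativity is fine. Note, though, that the paper does not argue this way at all: its proof is a one-line citation of \cite[Proposition 3.6]{angel}, the same reference it uses for the $\mathbb{E}_1$-enveloping algebras, which packages the exterior-algebra multiplicative structure for such quotient constructions at all primes.

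The genuine gap is your $p=2$ argument. The shuffle product on the bar complex of $\pi_*\mathcal{U}^{(2)}_{\mathrm{MU}}(\mathbb{F}_p)$ is an algebraic construction on the $E^2$-page; it only ever computes the associated graded of the filtration on $\pi_*\mathcal{U}^{(3)}_{\mathrm{MU}}(\mathbb{F}_p)$, so a chain-level identity $[z_\alpha]\cdot[z_\alpha]=0$ tells you nothing beyond what you already know from $E^\infty=\Lambda(\sigma z_\alpha)$, namely that $(\sigma z_\alpha)^2$ vanishes in the associated graded. It cannot exclude a hidden extension landing in even filtration $\geq 2$, which, as you yourself note, parity does not rule out. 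The danger is not hypothetical: one level down, exactly this bar spectral sequence for $\mathcal{U}^{(2)}_{\mathrm{MU}}(\mathbb{F}_p)$ collapses to a divided power algebra on $E^\infty$, yet Proposition~\ref{prop:e2-env-algebra-of-fp} shows the actual homotopy is polynomial because of hidden multiplicative extensions; the same naive chain-level reasoning applied there would give the wrong answer. Resolving the $p=2$ extension requires genuinely topological input. Either quote \cite[Proposition 3.6]{angel} as the paper does, or argue with power operations in the $\mathbb{E}_\infty$-$\mathbb{F}_p$-algebra $\mathcal{U}^{(3)}_{\mathrm{MU}}(\mathbb{F}_p)$: at $p=2$ one has $(\sigma z_\alpha)^2=Q^{|\sigma z_\alpha|}(\sigma z_\alpha)=\sigma\bigl(Q^{|\sigma z_\alpha|}z_\alpha\bigr)$ since the suspension commutes with Dyer--Lashof operations, and $Q^{|\sigma z_\alpha|}z_\alpha$ lies in odd degree, where $\pi_*\mathcal{U}^{(2)}_{\mathrm{MU}}(\mathbb{F}_p)$ vanishes; this is the same mechanism (Dyer--Lashof input via \cite{bm}) that the paper uses to settle the extensions in Proposition~\ref{prop:e2-env-algebra-of-fp}. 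With such a replacement for your shuffle argument, your route becomes a correct, more self-contained alternative to the paper's citation.
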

\begin{proof} Immediate from \cite[Proposition 3.6]{angel}.
\end{proof}

The spectral sequence
	\[
	\mathrm{Ext}_{\pi_*\mathcal{U}_{\mathrm{MU}}^{(3)}(\mathbb{F}_p)}(\mathbb{F}_p,\mathbb{F}_p)
	\Rightarrow \pi_*\mathrm{map}_{\mathcal{U}^{(3)}_{\mathrm{MU}}(
	\mathbb{F}_p)}(\mathbb{F}_p, \mathbb{F}_p)
	\]
then immediately collapses with no possible
$\mathbb{F}_p$-algebra extensions, and so proves:

\begin{theorem} The spectrum
$\mathrm{map}_{\mathcal{U}^{(3)}_{\mathrm{MU}}(
\mathbb{F}_p)}(\mathbb{F}_p, \mathbb{F}_p)$
has homotopy given by
a polynomial algebra on even degree generators.
\end{theorem}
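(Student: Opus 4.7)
The plan is to run the standard convergent spectral sequence
\[
E_2^{s,t} = \mathrm{Ext}^{s,t}_{\pi_*\mathcal{U}^{(3)}_{\mathrm{MU}}(\mathbb{F}_p)}(\mathbb{F}_p,\mathbb{F}_p) \Rightarrow \pi_{t-s}\mathrm{map}_{\mathcal{U}^{(3)}_{\mathrm{MU}}(\mathbb{F}_p)}(\mathbb{F}_p,\mathbb{F}_p)
\]
and to show that it collapses at $E_2$ with no hidden multiplicative extensions. By the preceding proposition, the ring $A := \pi_*\mathcal{U}^{(3)}_{\mathrm{MU}}(\mathbb{F}_p)$ is an exterior $\mathbb{F}_p$-algebra on a family of odd-degree generators $\{\xi_\alpha\}$. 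For a single exterior generator $\Lambda(\xi)$ with $|\xi|$ odd, the minimal free resolution $\cdots \to \Lambda(\xi) \xrightarrow{\xi} \Lambda(\xi) \xrightarrow{\xi} \Lambda(\xi) \to \mathbb{F}_p$ immediately yields $\mathrm{Ext}^{*,*}_{\Lambda(\xi)}(\mathbb{F}_p,\mathbb{F}_p) = \mathbb{F}_p[\eta]$ with $\eta$ in bidegree $(1, |\xi|)$, and hence in even total degree $|\xi|-1$. A K\"unneth argument over the ground field then identifies $E_2$ with a polynomial algebra on the classes $\eta_\alpha$ of even total degree $|\xi_\alpha|-1$.

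Next, I would observe that the differentials $d_r \colon E_r^{s,t} \to E_r^{s+r,t+r-1}$ shift total degree $t-s$ by $-1$. Since $E_2$ is concentrated in even total degree, every differential lands in a zero group, so $E_2 = E_\infty$.

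Finally, I would rule out multiplicative extensions. Since the $E_\infty$-page is concentrated in even degrees and is a polynomial algebra over $\mathbb{F}_p$, the filtration on $\pi_*\mathrm{map}_{\mathcal{U}^{(3)}_{\mathrm{MU}}(\mathbb{F}_p)}(\mathbb{F}_p,\mathbb{F}_p)$ has polynomial associated graded; pick any homotopy classes lifting the polynomial generators $\eta_\alpha$. The resulting map from a polynomial $\mathbb{F}_p$-algebra on these lifts to $\pi_*$ induces an isomorphism on associated gradeds, hence is an isomorphism. (A priori, the $p$-th power of a lift of $\eta_\alpha$ might differ from a chosen lift of $\eta_\alpha^p$ by a class of strictly higher filtration in the same degree, but such a discrepancy can be absorbed by successively correcting the lift; this process converges because the filtration is exhaustive and Hausdorff in each degree.) There is no real obstacle here: once the exterior description of $\pi_*\mathcal{U}^{(3)}_{\mathrm{MU}}(\mathbb{F}_p)$ is granted, the entire argument is formal and driven by degree parity.
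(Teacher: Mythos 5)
Your proposal is correct and is essentially the paper's own argument: the paper likewise runs the spectral sequence $\mathrm{Ext}_{\pi_*\mathcal{U}^{(3)}_{\mathrm{MU}}(\mathbb{F}_p)}(\mathbb{F}_p,\mathbb{F}_p)\Rightarrow \pi_*\mathrm{map}_{\mathcal{U}^{(3)}_{\mathrm{MU}}(\mathbb{F}_p)}(\mathbb{F}_p,\mathbb{F}_p)$, using the preceding proposition that $\pi_*\mathcal{U}^{(3)}_{\mathrm{MU}}(\mathbb{F}_p)$ is exterior on odd generators to see that $E_2$ is polynomial on even total-degree classes, and concludes collapse with no multiplicative extensions by parity. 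Your additional details (the Koszul computation of $\mathrm{Ext}$ over an exterior algebra and the lifting/correction of generators) simply flesh out what the paper leaves implicit.
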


\subsection{Computation at the inductive step}\label{ssec:inductive-step}

In this section, we will assume that we have
constructed an $\mathbb{E}_3$-$\mathrm{MU}$-algebra form 
of $\mathrm{BP}\langle n\rangle$, and simply denote it by
$\mathrm{BP}\langle n\rangle$. We will also choose our
polynomial generators for $\pi_*(\mathrm{MU}_{(p)})$ in such a way that
$\mathrm{ker}((\mathrm{MU}_*)_{(p)} \to \mathrm{BP}\langle n\rangle_*)$
is generated by the $v_i$ with $i\ge n+1$ and by the $x_j$ with
$j \ne p^k-1$. 

\begin{remark} If $R$ is a $p$-local 
$\mathbb{E}_k$-$\mathrm{MU}$-algebra,
then, with notation as in \S\ref{sec:suspension},
$\int_MR$, for any nonempty $k$-manifold $M$, can be computed
in $\mathrm{MU}_{(p)}$-modules instead of $\mathrm{MU}$-modules.
We may therefore make sense of the suspension operations
from \S\ref{sec:suspension} for elements
in $\pi_*(\mathrm{cofib}(\mathrm{MU}_{(p)} \to R))$,
rather than just elements of $\pi_*(\mathrm{cofib}(\mathrm{MU} \to R))$.
\end{remark}

\begin{lemma} The $\mathbb{E}_1$-$\mathrm{MU}$-enveloping
algebra of $\mathrm{BP}\langle n\rangle$ has homotopy given,
as a $\mathrm{BP}\langle n\rangle_*$-algebra, by:
	\[
	\pi_*\mathcal{U}^{(1)}_{\mathrm{MU}}(\mathrm{BP}\langle n\rangle)
	\simeq
	\Lambda_{\mathrm{BP}\langle n\rangle_*}(\sigma v_i: i\ge n+1)
	\otimes_{\mathrm{BP}\langle n\rangle_*}
	\Lambda_{\mathrm{BP}\langle n\rangle_*}(\sigma x_j : j\ne p^k-1).
	\]
\end{lemma}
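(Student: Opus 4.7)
The plan is first to rewrite $\mathcal{U}^{(1)}_{\mathrm{MU}}(\mathrm{BP}\langle n\rangle)$ using the identification from \S\ref{ssec:envelopes}:
\[
\mathcal{U}^{(1)}_{\mathrm{MU}}(\mathrm{BP}\langle n\rangle) \simeq \mathrm{BP}\langle n\rangle \otimes_{\mathrm{MU}} \mathrm{BP}\langle n\rangle^{\mathrm{op}} \simeq \mathrm{BP}\langle n\rangle \otimes_{\mathrm{MU}_{(p)}} \mathrm{BP}\langle n\rangle,
\]
the second equivalence using that $\mathrm{BP}\langle n\rangle$ is $p$-local. By the $\mathrm{MU}$-module splitting lemma earlier in this section, combined with our choice of generators,
\[
\mathrm{BP}\langle n\rangle \simeq \mathrm{MU}_{(p)}/(Z), \qquad Z := \{v_i : i \ge n+1\} \cup \{x_j : j \ne p^k-1\},
\]
and $Z$ is a subset of the polynomial generators of $\pi_*\mathrm{MU}_{(p)}$, hence a regular sequence.

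I would next run the K\"unneth spectral sequence. Regularity of $Z$ lets its $E^2$-page be computed by the Koszul complex:
\[
E^2 \;=\; \mathrm{Tor}^{\pi_*\mathrm{MU}_{(p)}}\!\bigl(\pi_*\mathrm{BP}\langle n\rangle,\ \pi_*\mathrm{BP}\langle n\rangle\bigr) \;\cong\; \Lambda_{\mathrm{BP}\langle n\rangle_*}\!\bigl(\epsilon_z : z \in Z\bigr),
\]
with $\epsilon_z$ in bidegree $(1,|z|)$. The underlying $\mathrm{BP}\langle n\rangle$-module can be computed more concretely by iterating the splittings $\mathrm{BP}\langle n\rangle \otimes_{\mathrm{MU}_{(p)}} \mathrm{MU}_{(p)}/z \simeq \mathrm{BP}\langle n\rangle \oplus \Sigma^{|z|+1}\mathrm{BP}\langle n\rangle$, which hold because $z \in Z$ acts by zero on $\mathrm{BP}\langle n\rangle$. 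Matching total ranks between the two computations forces the spectral sequence to collapse at $E^2$.

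Finally, to upgrade from the associated graded to an algebra identification, each $\sigma z$ has odd total degree $|z|+1$, so graded-commutativity gives $2(\sigma z)^2 = 0$; since $\pi_*\mathrm{BP}\langle n\rangle = \mathbb{Z}_{(p)}[v_1,\ldots,v_n]$ is torsion-free, $(\sigma z)^2 = 0$ holds on the nose. The universal property of the exterior algebra then furnishes a $\mathrm{BP}\langle n\rangle_*$-algebra map from the claimed exterior algebra to $\pi_*\mathcal{U}^{(1)}_{\mathrm{MU}}(\mathrm{BP}\langle n\rangle)$ which recovers the identity on associated gradeds, and hence is an isomorphism. The main step requiring care is matching each Koszul generator $\epsilon_z$ with the suspension class $\sigma z$ of \S\ref{sec:suspension}; this is verified by tracing the bar construction defining $\sigma$ against the Koszul resolution computing Tor.
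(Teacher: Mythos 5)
Your proof is correct, but it takes a genuinely different route from the paper: the paper disposes of this lemma in one line by citing the computation of homotopy of quotient-ring smash products in \cite[Proposition 3.6]{angel}, whereas you give a self-contained argument via the identification $\mathcal{U}^{(1)}_{\mathrm{MU}}(\mathrm{BP}\langle n\rangle)\simeq \mathrm{BP}\langle n\rangle\otimes_{\mathrm{MU}_{(p)}}\mathrm{BP}\langle n\rangle^{\mathrm{op}}$, the K\"unneth spectral sequence with its Koszul-complex $E^2$-page for the regular sequence $Z$, and the appendix-style identification of the $\mathrm{Tor}_1$ Koszul generators with the suspension classes $\sigma z$ (this last step is exactly Lemma \ref{lem:compatible-with-kunneth}, so you need not re-derive it by hand). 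What the paper's citation buys is brevity; what your argument buys is an explicit, internal verification, including the naming of the exterior generators as $\sigma$-classes, which is what the later bar-spectral-sequence and Dyer--Lashof arguments actually use. Two small refinements: first, the collapse of the K\"unneth spectral sequence follows even more directly from multiplicativity, since the $E^2$-page is generated over $\mathrm{BP}\langle n\rangle_*$ by classes in Tor filtration $\le 1$, which are automatically permanent cycles; your rank-count via the iterated splittings $\mathrm{BP}\langle n\rangle/z\simeq \mathrm{BP}\langle n\rangle\oplus\Sigma^{|z|+1}\mathrm{BP}\langle n\rangle$ also works, but is not needed beyond identifying the abutment additively. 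Second, your appeal to graded-commutativity of $\pi_*\mathcal{U}^{(1)}_{\mathrm{MU}}(\mathrm{BP}\langle n\rangle)$ (to get $2(\sigma z)^2=0$ and to invoke the universal property of the exterior algebra over a commutative base) is not automatic for the enveloping algebra of a bare $\mathbb{E}_1$-algebra; it holds here because $\mathrm{BP}\langle n\rangle$ carries an $\mathbb{E}_3$-$\mathrm{MU}$-algebra structure, so $\mathcal{U}^{(1)}$ is at least $\mathbb{E}_2$ by Remark \ref{rmk:enveloping-algebra-monoidal}; you should say this explicitly, since it is the only place the higher structure enters your argument.
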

\begin{proof} Immediate from \cite[Proposition 3.6]{angel}.
\end{proof}

It follows that the bar spectral sequence for
$\pi_*\mathcal{U}^{(2)}_{\mathrm{MU}}(\mathrm{BP}\langle n
	\rangle)$
collapses to
a divided power algebra on even classes:
	\[
	E^2 = E^{\infty}=
	\Gamma_{\mathrm{BP}\langle n\rangle_*}(\sigma^2v_i :
	i\ge n+1) \otimes_{\mathrm{BP}\langle n \rangle_*}
	\Gamma_{\mathrm{BP}\langle n\rangle_*}(\sigma^2x_j:
	j\ne p^k-1).
	\]

\begin{proposition}\label{prop:e2-mu-envelope} 
For $i\ge 0$, choose any lift $w_{n+1, i}$ of the class
$\gamma_{p^i}(\sigma^2v_{n+1})$. For 
$j \nequiv -1 \bmod p$ and $i\ge 0$, choose any lift 
$y_{j,i}$ of $\gamma_{p^i}(\sigma^2x_j)$. 
The $\mathbb{E}_2$-$\mathrm{MU}$-enveloping
algebra of $\mathrm{BP}\langle n\rangle$ has homotopy given,
as a $\mathrm{BP}\langle n\rangle_*$-algebra, by
	\[
	\pi_*\mathcal{U}^{(2)}_{\mathrm{MU}}(\mathrm{BP}\langle n
	\rangle) \simeq
	\mathrm{BP}\langle n\rangle_*[w_{n+1, i} : i\ge 0]
	\otimes_{\mathrm{BP}\langle n\rangle_*}
	\mathrm{BP}\langle n\rangle_*[y_{j,i}: j \nequiv -1 
	\bmod p, i\ge 0, j\ge 1].
	\]
Moreover, we may choose $w_{n+1, 0} = \sigma^2v_{n+1}$. 
\end{proposition}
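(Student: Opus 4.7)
The plan is to lift the divided-power generators on the $E^\infty$ page to $\pi_*$, establish their algebraic independence by comparison with the $\mathbb{F}_p$ case (Proposition \ref{prop:e2-env-algebra-of-fp}), and finally match ranks to conclude the resulting homomorphism is an isomorphism of $\mathrm{BP}\langle n\rangle_*$-algebras.

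To begin, choose lifts $w_{n+1,i}$ and $y_{j,i}$ as in the statement; the choice $w_{n+1,0}=\sigma^2 v_{n+1}$ is valid since $\sigma^2 v_{n+1}$ itself lifts $\gamma_1(\sigma^2 v_{n+1})$ on $E^\infty$. Because the $E^\infty$ page is a free $\mathrm{BP}\langle n\rangle_*$-module on standard monomial divided powers, $\pi_*\mathcal{U}^{(2)}_{\mathrm{MU}}(\mathrm{BP}\langle n\rangle)$ is itself free of the same rank in each internal degree. Next, the Postnikov truncation furnishes an $\mathbb{E}_3$-$\mathrm{MU}$-algebra map $\mathrm{BP}\langle n\rangle \to \tau_{\leq 0}\mathrm{BP}\langle n\rangle = \mathbb{Z}_{(p)} \to \mathbb{F}_p$, so by Remark \ref{rmk:enveloping-algebra-monoidal} we obtain an $\mathbb{E}_1$-algebra map
\[
\phi : \mathcal{U}^{(2)}_{\mathrm{MU}}(\mathrm{BP}\langle n\rangle) \to \mathcal{U}^{(2)}_{\mathrm{MU}}(\mathbb{F}_p)
\]
compatible with bar filtrations. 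By Proposition \ref{prop:e2-env-algebra-of-fp}, $\phi_*(w_{n+1,i})$ is a unit multiple of $w_{0,i}^{p^{n+1}}$ (both lift $\gamma_{p^i}(\sigma^2 v_{n+1})$ in the target), and $\phi_*(y_{j,i})$ is a unit multiple of the corresponding $y_{j,i}$ in the target.

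Since $\pi_*\mathcal{U}^{(2)}_{\mathrm{MU}}(\mathbb{F}_p) = \mathbb{F}_p[w_{0,i}, y_{j,i}]$ is polynomial, the subring $\mathbb{F}_p[w_{0,i}^{p^{n+1}}, y_{j,i}]$ it receives is itself polynomial, so the images $\{\phi_*(w_{n+1,i}), \phi_*(y_{j,i})\}$ are algebraically independent. Any hypothetical polynomial relation on $\{w_{n+1,i}, y_{j,i}\}$ over $\mathrm{BP}\langle n\rangle_*$ in the source must then have all coefficients in $(p, v_1, \ldots, v_n)$; freeness of $\pi_*$ together with the fact that $p, v_1, \ldots, v_n$ acts by non-zerodivisors then forces every such coefficient to vanish by a standard inductive argument. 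Finally, the polynomial ring $\mathrm{BP}\langle n\rangle_*[w_{n+1,i}, y_{j,i}]$ has the same $\mathrm{BP}\langle n\rangle_*$-rank in each internal degree as $E^\infty = \Gamma_{\mathrm{BP}\langle n\rangle_*}(\sigma^2 v_i, \sigma^2 x_j)$ --- both enumerations reduce to counting nonnegative $p$-adic digit representations of the exponents on $\sigma^2 v_i$ and $\sigma^2 x_j$. Combining algebraic independence with this rank match, the natural homomorphism is an isomorphism.

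The principal obstacle is the absence of Dyer-Lashof operations on $\mathcal{U}^{(2)}_{\mathrm{MU}}(\mathrm{BP}\langle n\rangle)$: because $\mathrm{BP}\langle n\rangle$ is only $\mathbb{E}_3$, its $\mathrm{MU}$-enveloping algebra is merely $\mathbb{E}_2$, lacking the $\mathbb{E}_\infty$-structure that powered the analogous Steinberger/Kochman input in the $\mathbb{F}_p$ case. The comparison map $\phi$ circumvents this by transferring the polynomial structure of $\pi_*\mathcal{U}^{(2)}_{\mathrm{MU}}(\mathbb{F}_p)$ back to $\mathrm{BP}\langle n\rangle$ after reduction modulo the regular sequence $(p, v_1, \ldots, v_n)$.
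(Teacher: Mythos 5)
Your proposal follows essentially the same route as the paper: compare with the computed $\pi_*\mathcal{U}^{(2)}_{\mathrm{MU}}(\mathbb{F}_p)$ via naturality of the bar/K\"unneth spectral sequence under a map $\mathrm{BP}\langle n\rangle\to\mathbb{F}_p$, use freeness of $\pi_*\mathcal{U}^{(2)}_{\mathrm{MU}}(\mathrm{BP}\langle n\rangle)$ over $\mathrm{BP}\langle n\rangle_*$, and close with a degreewise rank count; this is exactly the paper's argument, which reduces to injectivity of $f$ modulo $(p,v_1,\ldots,v_n)$ and then invokes Proposition \ref{prop:e2-env-algebra-of-fp}. Two small points to tighten. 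First, knowing that $\phi_*(w_{n+1,i})$ and $w_{0,i}^{p^{n+1}}$ are detected by the same class $\gamma_{p^i}(\sigma^2 v_{n+1})$ only identifies them modulo lower bar filtration, not up to a unit on the nose, so the algebraic-independence step needs slightly more care than ``the subring on these powers is polynomial'' (the paper is comparably terse here, so this is not a gap you introduced). Second, your intermediate ``standard inductive argument'' using that $p,v_1,\ldots,v_n$ act by non-zerodivisors does not work as stated, since the ideal $(p,v_1,\ldots,v_n)$ is not principal: a relation $\sum_i v_i x_i=0$ in a free module does not force the $x_i$ to vanish. Fortunately that step is unnecessary: once you have injectivity of $f$ modulo $(p,v_1,\ldots,v_n)$ together with the equality of degreewise ranks (your $p$-adic digit count), graded Nakayama gives surjectivity, and a surjection between degreewise finitely generated free $\mathrm{BP}\langle n\rangle_*$-modules of equal rank is an isomorphism --- which is precisely the paper's ``dimension count'' and is what your final sentence already gestures at.
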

\begin{proof} 
Since $\mathrm{BP}\langle n\rangle$ is an 
$\mathbb{E}_3$-$\mathrm{MU}$-algebra, the enveloping algebra
$\mathcal{U}^{(2)}_{\mathrm{MU}}(\mathrm{BP}\langle n \rangle)$
is an $\mathbb{E}_2$-algebra
(see Remark \ref{rmk:enveloping-algebra-monoidal})
and, in particular, its homotopy
groups are a graded commutative algebra. 
Thus, our choice of elements $w_{n+1,i}$ and $y_{j,i}$ extend
to a map 
	\[
	f: \mathrm{BP}\langle n\rangle_*[w_{n+1, i} : i\ge 0]
	\otimes_{\mathrm{BP}\langle n\rangle_*}
	\mathrm{BP}\langle n\rangle_*[y_{j,i}: j \nequiv -1 
	\bmod p, i\ge 0, j\ge 1]
	\to \pi_*\mathcal{U}^{(2)}_{\mathrm{MU}}(\mathrm{BP}\langle n\rangle)
	\]
which we would like to be an isomorphism.
From the bar spectral sequence we already know that
$\pi_*\mathcal{U}^{(2)}_{\mathrm{MU}}(\mathrm{BP}\langle n\rangle)$
is a connective, free $\mathrm{BP}\langle n\rangle_*$-module with
finitely many generators in each degree. It
suffices from this and a dimension count
to prove that $f$ is injective
modulo $(p, v_1, ..., v_n)$.

But now observe that
the map
	\[
	\pi_*\mathcal{U}^{(2)}_{\mathrm{MU}}(\mathrm{BP}\langle n\rangle)/(p, v_1, ..., v_n)
	\longrightarrow 
	\pi_*\mathcal{U}^{(2)}_{\mathrm{MU}}(\mathbb{F}_p)
	\]
is injective by our previous calculation of the target and naturality
of the bar spectral sequence, since it is so on the $E^{\infty}$-page
of the bar spectral sequence. The result now follows
by Proposition \ref{prop:e2-env-algebra-of-fp}.
\end{proof}

Since 
$\mathcal{U}^{(2)}_{\mathrm{MU}}(
\mathrm{BP}\langle n\rangle)$ coincides with
$\mathrm{THH}(\mathrm{BP}\langle n\rangle/\mathrm{MU})$
as an $\mathbb{E}_1$-algebra, this is also the computation
of Hochschild homology given in the introduction:

\begin{theorem}[Polynomial THH]\label{thm:poly-thh}
There is an isomorphism of 
$\mathrm{BP}\langle n\rangle_*$-algebras
	\[
	\mathrm{THH}(\mathrm{BP}\langle n\rangle/\mathrm{MU})_*
	\simeq
	\mathrm{BP}\langle n\rangle_*[w_{n+1, i} : i\ge 0]
	\otimes_{\mathrm{BP}\langle n\rangle_*}
	\mathrm{BP}\langle n\rangle_*[y_{j,i}: j \nequiv -1 
	\bmod p, i\ge 0, j\ge 1].
	\]
Moreover, we may take $w_{n+1,0} = \sigma^2v_{n+1}$.
\end{theorem}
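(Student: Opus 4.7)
The plan is to deduce Theorem \ref{thm:poly-thh} as an immediate corollary of Proposition \ref{prop:e2-mu-envelope}, by identifying the $\mathbb{E}_2$-$\mathrm{MU}$-enveloping algebra $\mathcal{U}^{(2)}_{\mathrm{MU}}(\mathrm{BP}\langle n\rangle)$ with the relative Hochschild homology $\mathrm{THH}(\mathrm{BP}\langle n\rangle/\mathrm{MU})$ as $\mathbb{E}_1$-algebras. Since the heavy lifting has already been done, this is essentially just a matter of bookkeeping; the genuine content sits in the Dyer--Lashof computations of Steinberger and Kochman that went into establishing the enveloping algebra calculation.

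Concretely, I would first invoke the general formula from \S\ref{ssec:envelopes}, namely
\[
\mathcal{U}^{(m)}(A) \simeq A \otimes_{\mathcal{U}^{(m-1)}(A)} A^{\mathrm{op}},
\]
specialized to $m=2$ and base $\mathrm{MU}$. The $\mathbb{E}_1$-$\mathrm{MU}$-enveloping algebra is the classical one, $\mathcal{U}^{(1)}_{\mathrm{MU}}(A) \simeq A \otimes_{\mathrm{MU}} A^{\mathrm{op}}$, so this produces an equivalence of $\mathbb{E}_1$-algebras
\[
\mathcal{U}^{(2)}_{\mathrm{MU}}(A) \;\simeq\; A \otimes_{A \otimes_{\mathrm{MU}} A^{\mathrm{op}}} A \;=\; \mathrm{THH}(A/\mathrm{MU}),
\]
the last equality being the definition recalled in the conventions. (Equivalently, one can think of this as the homotopy equivalence $\mathbb{R}^2 - \{0\} \simeq S^1$ under the factorization homology description $\mathcal{U}^{(2)}(A) \simeq \int_{\mathbb{R}^2-\{0\}} A$.)

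Applied to $A = \mathrm{BP}\langle n\rangle$, which is an $\mathbb{E}_3$-$\mathrm{MU}$-algebra by the inductive hypothesis and hence in particular an $\mathbb{E}_2$-$\mathrm{MU}$-algebra, this identifies $\mathrm{THH}(\mathrm{BP}\langle n\rangle/\mathrm{MU})$ with $\mathcal{U}^{(2)}_{\mathrm{MU}}(\mathrm{BP}\langle n\rangle)$ as $\mathbb{E}_1$-$\mathrm{MU}$-algebras, and hence in particular induces an isomorphism of $\mathrm{BP}\langle n\rangle_*$-algebras on homotopy. The polynomial formula now follows verbatim from Proposition \ref{prop:e2-mu-envelope}, and the final assertion that $w_{n+1,0}$ may be taken to be $\sigma^2 v_{n+1}$ transports directly from the corresponding statement there.

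The only thing one might worry about is whether the identification $\mathcal{U}^{(2)}_{\mathrm{MU}}(A) \simeq \mathrm{THH}(A/\mathrm{MU})$ is compatible with the suspension class $\sigma^2 v_{n+1}$ appearing in Proposition \ref{prop:e2-mu-envelope} and the class of the same name appearing in the statement of Theorem \ref{thm:poly-thh}. This is a question of matching conventions: the suspension operations of \S\ref{sec:suspension} are defined via the circle action (or equivalently via the bar construction iteratively applied), and under the identification above they correspond on both sides to the standard bar-complex representative, so no discrepancy arises.
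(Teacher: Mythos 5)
Your proposal is correct and is essentially the paper's own argument: the paper likewise deduces Theorem \ref{thm:poly-thh} immediately from Proposition \ref{prop:e2-mu-envelope} via the identification of $\mathcal{U}^{(2)}_{\mathrm{MU}}(\mathrm{BP}\langle n\rangle)$ with $\mathrm{THH}(\mathrm{BP}\langle n\rangle/\mathrm{MU})$ as an $\mathbb{E}_1$-algebra, which is exactly the $A \otimes_{A\otimes_{\mathrm{MU}}A^{\mathrm{op}}} A$ identification you spell out. Your extra remark on matching the $\sigma^2 v_{n+1}$ conventions is a fine (and harmless) elaboration of what the paper leaves implicit.
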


%\begin{warning} We have not
%ruled out nontrivial extensions as an $\mathrm{MU}_*$-module.
%\end{warning}

Again, it follows from
\cite[Proposition 3.6]{angel} that the $\mathbb{E}_3$-$\mathrm{MU}$-enveloping 
algebra has
homotopy given by an exterior algebra, and hence that the
spectral sequence
	\[
	\mathrm{Ext}_{\pi_*\mathcal{U}^{(3)}_{\mathrm{MU}}(\mathrm{BP}\langle
	n\rangle)}(\mathrm{BP}\langle n\rangle_*, \mathrm{BP}\langle n\rangle_*)
	\Rightarrow
	\pi_*\mathrm{map}_{\mathcal{U}^{(3)}_{\mathrm{MU}}(\mathrm{BP}\langle
	n\rangle)}
	(\mathrm{BP}\langle n\rangle, \mathrm{BP}\langle n\rangle)
	\]
collapses at the $E_2$ page. This proves:

\begin{theorem}\label{thm:e3-mu-center-bpn} The
spectrum
$\mathrm{map}_{\mathcal{U}^{(3)}_{\mathrm{MU}}(\mathrm{BP}\langle
	n\rangle)}
	(\mathrm{BP}\langle n\rangle, \mathrm{BP}\langle n\rangle)$
has homotopy groups
isomorphic to a polynomial algebra over $\mathbb{Z}_{(p)}[v_1, ..., v_n]$
on even degree generators. In particular, the homotopy groups
are concentrated in even degrees.
\end{theorem}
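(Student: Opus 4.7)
The plan is to combine the description of $\mathcal{U}^{(3)}_{\mathrm{MU}}(\mathrm{BP}\langle n\rangle)$ from \cite[Proposition 3.6]{angel} with a Koszul-style calculation of Ext, then argue that the resulting spectral sequence collapses and has no extensions, essentially for degree reasons.

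First I would write down the $\mathbb{E}_3$-$\mathrm{MU}$-enveloping algebra. By \cite[Proposition 3.6]{angel}, applied to our $\mathbb{E}_3$-$\mathrm{MU}$-algebra $\mathrm{BP}\langle n\rangle$, the homotopy of $\mathcal{U}^{(3)}_{\mathrm{MU}}(\mathrm{BP}\langle n\rangle)$ is an exterior algebra over $\mathrm{BP}\langle n\rangle_*$ on odd-degree suspension classes $\sigma^3 v_i$ (for $i\ge n+1$) and $\sigma^3 x_j$ (for $j\ne p^k-1$). The key point is that all generators sit in odd total degree.

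Next I would run the Ext spectral sequence
\[
E_2^{s,t}=\mathrm{Ext}^{s,t}_{\pi_*\mathcal{U}^{(3)}_{\mathrm{MU}}(\mathrm{BP}\langle n\rangle)}(\mathrm{BP}\langle n\rangle_*,\mathrm{BP}\langle n\rangle_*)\Rightarrow \pi_{t-s}\mathrm{map}_{\mathcal{U}^{(3)}_{\mathrm{MU}}(\mathrm{BP}\langle n\rangle)}(\mathrm{BP}\langle n\rangle,\mathrm{BP}\langle n\rangle).
\]
For an exterior algebra on odd-degree generators over a graded commutative base, the Koszul resolution shows the $E_2$-page is a polynomial algebra over $\mathbb{Z}_{(p)}[v_1,\dots,v_n]$ with one even-degree generator dual to each $\sigma^3 v_i$ or $\sigma^3 x_j$; explicitly each generator lies in $\mathrm{Ext}^{1,\mathrm{odd}}$ and contributes to an even homotopy group of the mapping spectrum. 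The product on $E_2$ is the evident polynomial structure.

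The main step then is the collapse and extension analysis. Since every class on $E_2$ has even total degree $t-s$, every differential $d_r$ lands in a group of opposite parity and therefore vanishes, giving $E_2=E_\infty$. Finally, to upgrade the isomorphism of $E_\infty$ to an isomorphism of $\mathbb{Z}_{(p)}[v_1,\dots,v_n]$-algebras on $\pi_*$, I would pick any lift in $\pi_*$ of each polynomial generator; these lifts commute strictly (no sign trouble) because they are in even degrees, and the absence of polynomial relations on $E_\infty$, together with the concentration in even total degrees, forbids any exotic multiplicative extensions. So the induced algebra map from the polynomial algebra on chosen lifts to $\pi_*\mathrm{map}_{\mathcal{U}^{(3)}_{\mathrm{MU}}(\mathrm{BP}\langle n\rangle)}(\mathrm{BP}\langle n\rangle,\mathrm{BP}\langle n\rangle)$ is an isomorphism, and the conclusion follows. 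The only real subtlety is keeping track that the generators of the enveloping algebra are genuinely odd (so that Ext is polynomial, not divided-power), which is precisely the content of \cite[Proposition 3.6]{angel}; once that is in hand the rest is formal.
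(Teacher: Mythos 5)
Your proposal is correct and follows essentially the same route as the paper: cite \cite[Proposition 3.6]{angel} to see that $\pi_*\mathcal{U}^{(3)}_{\mathrm{MU}}(\mathrm{BP}\langle n\rangle)$ is exterior on odd-degree generators, note that the resulting $\mathrm{Ext}$-term is polynomial on even-degree generators over $\mathbb{Z}_{(p)}[v_1,\dots,v_n]$, and conclude that the spectral sequence collapses with no multiplicative extensions for parity reasons. Your extra care about choosing lifts of the polynomial generators only makes explicit what the paper leaves implicit.
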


For the purposes of our obstruction theory argument, we will
require the following closely related statement:

\begin{proposition}\label{prop:bpn-obstrn-zero} Let $\mathbf{L}^{\mathbb{E}_3}_{\mathrm{BP}\langle n\rangle/\mathrm{MU}}$ denote the $\mathbb{E}_3$-$\mathrm{MU}$-algebra cotangent complex of $\mathrm{BP}\langle n\rangle$. Let $I$ denote the fiber
$\mathrm{fib}(\mathrm{MU_{(p)} \to \mathrm{BP}\langle n\rangle)}$.
	\begin{enumerate}[{\rm (i)}]
	\item The groups
	$\pi_{-2k}\mathrm{map}_{\mathcal{U}^{(3)}_{\mathrm{MU}}(
		\mathrm{BP}\langle n\rangle)}
		(\mathbf{L}^{\mathbb{E}_3}_{\mathrm{BP}\langle n
		\rangle/\mathrm{MU}},
		\mathrm{BP}\langle n\rangle)$
	vanish for $k\ge 0$. 
	\item Let $\delta v_{n+1} \in
	\pi_*\mathrm{map}_{\mathrm{MU}}(\mathrm{BP}\langle n\rangle,
	\mathrm{BP}\langle n\rangle)$
	denote the $\mathrm{BP}\langle n\rangle_*$-linear dual
	of the element $\sigma v_{n+1}$ with respect to the standard
	monomial basis of
		\[
		\pi_*(\mathrm{BP}\langle n\rangle
		\otimes_{\mathrm{MU}} \mathrm{BP}\langle 
		n\rangle) \simeq
		\Lambda(\sigma v_i: i\ge n+1) \otimes_{\mathbb{F}_p}
		\Lambda(\sigma x_j : j \ne p^k-1).
		\]
	Identifying $\pi_*\mathrm{map}_{
	\mathrm{MU}}(\Sigma I,
	\mathrm{BP}\langle n\rangle)$ with the 
	$\mathrm{BP}\langle n\rangle_*$-module summand
	of \\ $\pi_*\mathrm{map}_{\mathrm{MU}}(\mathrm{BP}\langle n\rangle,
	\mathrm{BP}\langle n\rangle)$ complementary to
	the unit, the class $\delta v_{n+1}$ lies in the image of the
	forgetful map
		\[
		\pi_*\mathrm{map}_{\mathcal{U}^{(3)}_{\mathrm{MU}}(
		\mathrm{BP}\langle n\rangle)}
		(\mathbf{L}^{\mathbb{E}_3}_{\mathrm{BP}\langle n
		\rangle/\mathrm{MU}}, \mathrm{BP}\langle n\rangle)
		\longrightarrow
		\pi_*\mathrm{map}_{
		\mathrm{MU}}(\Sigma I,
		\mathrm{BP}\langle n\rangle).
		\]
	\end{enumerate}
\end{proposition}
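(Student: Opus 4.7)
The plan is to build $\mathrm{BP}\langle n\rangle$ as an iterated $\mathbb{E}_3$-cell complex over $\mathrm{MU}_{(p)}$ and then read off a concrete description of $\mathbf{L}^{\mathbb{E}_3}_{\mathrm{BP}\langle n\rangle/\mathrm{MU}}$ from this structure. First, I would realize our chosen $\mathbb{E}_3$-$\mathrm{MU}$-algebra form of $\mathrm{BP}\langle n\rangle$ (after possibly replacing it by an equivalent form) as obtained from $\mathrm{MU}_{(p)}$ by successively attaching $\mathbb{E}_3$-$\mathrm{MU}$-cells that kill the generators $\{v_i\}_{i\ge n+1}$ and $\{x_j\}_{j\ne p^k-1}$. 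Since the underlying $\mathrm{MU}$-module of $\mathrm{BP}\langle n\rangle$ already is this quotient and the classes to be killed live in strictly increasing positive even degrees, an obstruction-theoretic induction can produce such a cell structure. The standard computation that the relative cotangent complex of $\mathrm{Free}^{\mathbb{E}_3}_{\mathrm{MU}}(\Sigma^d\mathrm{MU})\to \mathrm{MU}$ is free on a generator in degree $d+1$, combined with the pushout cofiber sequence for cotangent complexes, then shows that the induced cell filtration on $\mathbf{L}^{\mathbb{E}_3}_{\mathrm{BP}\langle n\rangle/\mathrm{MU}}$ has associated graded a free $\mathcal{U}^{(3)}_{\mathrm{MU}}(\mathrm{BP}\langle n\rangle)$-module on generators of degree $2p^i-1$ (for $i\ge n+1$) and $2j+1$ (for $j\ne p^k-1$), all odd and positive.

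For part (i), this associated graded description yields
\[
\mathrm{map}_{\mathcal{U}^{(3)}_{\mathrm{MU}}(\mathrm{BP}\langle n\rangle)}\bigl(\mathrm{gr}\,\mathbf{L}^{\mathbb{E}_3}_{\mathrm{BP}\langle n\rangle/\mathrm{MU}},\,\mathrm{BP}\langle n\rangle\bigr) \;\simeq\; \prod_{\alpha} \Sigma^{-d_\alpha}\mathrm{BP}\langle n\rangle,
\]
with each $d_\alpha$ odd and positive. Since $\pi_*\mathrm{BP}\langle n\rangle$ is concentrated in even degrees, the homotopy of this product lives entirely in odd degrees, and in particular the groups $\pi_{-2k}$ vanish for all $k\ge 0$. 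The filtration spectral sequence converging to the ungraded mapping spectrum degenerates for the same parity reason (no differentials or extensions can connect classes whose underlying degrees have opposite parity from even), so the vanishing lifts to $\mathbf{L}^{\mathbb{E}_3}_{\mathrm{BP}\langle n\rangle/\mathrm{MU}}$ itself.

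For part (ii), the summand corresponding to the $v_{n+1}$-cell produces a canonical $\mathbb{E}_3$-$\mathrm{MU}$-derivation $\widetilde{\delta v_{n+1}}$ in degree $-(2p^{n+1}-1)$. The same cell analysis also identifies the underlying $\mathrm{MU}$-module of $\mathbf{L}^{\mathbb{E}_3}_{\mathrm{BP}\langle n\rangle/\mathrm{MU}}$ with $\mathrm{BP}\langle n\rangle\otimes_{\mathrm{MU}}\Sigma I$, and under this identification the $v_{n+1}$-generator corresponds to the basis element dual to $\sigma v_{n+1}$. Consequently the image of $\widetilde{\delta v_{n+1}}$ under the forgetful map is $\delta v_{n+1}$ up to a unit, which we may absorb into the choice of cell. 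The principal obstacle is the first step: constructing the requested $\mathbb{E}_3$-cell decomposition of $\mathrm{BP}\langle n\rangle$ over $\mathrm{MU}_{(p)}$ with attaching classes precisely the desired polynomial generators of $\ker(\pi_*\mathrm{MU}_{(p)}\to \pi_*\mathrm{BP}\langle n\rangle)$, since this requires finer control than just the existence of some $\mathbb{E}_3$-$\mathrm{MU}$-algebra form. Once such a cell model is in hand, the parity structure of the associated graded does all the remaining work.
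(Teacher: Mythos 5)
There is a genuine gap, and you have put your finger on it yourself: everything rests on producing an $\mathbb{E}_3$-$\mathrm{MU}$-cell decomposition of the \emph{given} form of $\mathrm{BP}\langle n\rangle$ with exactly one cell for each generator $v_i$ ($i\ge n+1$) and $x_j$ ($j\ne p^k-1$), and no argument is supplied for this. It is not a routine obstruction-theoretic induction: attaching a free $\mathbb{E}_3$-$\mathrm{MU}$-cell along an even class does not simply quotient the homotopy by that class --- the pushout $A\otimes_{\mathrm{Free}_{\mathbb{E}_3}(\Sigma^{d}\mathrm{MU})}\mathrm{MU}$ is a bar construction over a free $\mathbb{E}_3$-algebra whose homotopy is far from polynomial, so a cell structure realizing $\mathrm{BP}\langle n\rangle$ would a priori need many extra cells, and a \emph{minimal} (one-cell-per-generator) structure is exactly the kind of statement that requires prior control of the $\mathbb{E}_3$-indecomposables, i.e.\ of $\mathbf{L}^{\mathbb{E}_3}_{\mathrm{BP}\langle n\rangle/\mathrm{MU}}$ or $\mathcal{U}^{(3)}_{\mathrm{MU}}(\mathrm{BP}\langle n\rangle)$ --- the very thing Proposition \ref{prop:bpn-obstrn-zero} is about. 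The minimal-cell technology the paper does use (Proposition \ref{prop:poly-unique}, via Galatius--Kupers--Randal-Williams) works for graded $\mathbb{E}_2$-algebras over $\mathbb{F}_p$ with an evenness input on the iterated bar construction; nothing of the sort is available over $\mathrm{MU}$ with base ring $\mathbb{Z}_{(p)}[v_1,\dots,v_n]$. Note also that the proposition must hold for an \emph{arbitrary} $\mathbb{E}_3$-$\mathrm{MU}$-algebra form handed to you by the induction in Proposition \ref{prop:muy-algebra}, so "replacing it by an equivalent form" does not help unless you build the cell structure on that given form.

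Two further points would need repair even granting the cell structure. First, your identification of the underlying $\mathrm{MU}$-module of $\mathbf{L}^{\mathbb{E}_3}_{\mathrm{BP}\langle n\rangle/\mathrm{MU}}$ with $\mathrm{BP}\langle n\rangle\otimes_{\mathrm{MU}}\Sigma I$ is false: the cofiber sequence $\mathcal{U}^{(3)}_{\mathrm{MU}}(\mathrm{BP}\langle n\rangle)\to \mathrm{BP}\langle n\rangle\to\Sigma^3\mathbf{L}^{\mathbb{E}_3}$ shows that $\pi_*\mathbf{L}^{\mathbb{E}_3}$ is a shift of the augmentation ideal of the exterior algebra $\pi_*\mathcal{U}^{(3)}_{\mathrm{MU}}(\mathrm{BP}\langle n\rangle)=\Lambda_{\mathrm{BP}\langle n\rangle_*}(\sigma^3 v_i,\sigma^3 x_j)$, which differs from $\pi_*(\mathrm{BP}\langle n\rangle\otimes_{\mathrm{MU}}\Sigma I)$; only the map $\Sigma I\to\mathbf{L}^{\mathbb{E}_3}$ and the fact that $\sigma^3 v_{n+1}$ spans part of a module summand are actually needed, which is how the paper argues via Lemma \ref{lem:suspn-and-cotangent}. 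Second, with infinitely many cells the mapping spectrum is a limit of a tower whose homotopy groups do not stabilize in any fixed degree, so the parity argument kills $\lim\pi_{-2k}$ but not the potential $\lim^1$ of the odd groups contributing to $\pi_{-2k}$; this requires an extra argument. By contrast, the paper sidesteps cell structures entirely: part (i) follows by applying $\mathrm{map}_{\mathcal{U}^{(3)}}(-,\mathrm{BP}\langle n\rangle)$ to the cofiber sequence above and invoking the evenness of the homotopy of $\mathrm{map}_{\mathcal{U}^{(3)}}(\mathrm{BP}\langle n\rangle,\mathrm{BP}\langle n\rangle)$ (Theorem \ref{thm:e3-mu-center-bpn}, resting on the Steinberger and Kochman calculations), and part (ii) follows from the collapse of the corresponding Ext spectral sequence together with the suspension formula $x_j\mapsto\sigma^3 x_j$.
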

\begin{proof} By \cite[Theorem 7.3.5.1]{ha}, we have
a cofiber sequence of 
$\mathcal{U}^{(3)}_{\mathrm{MU}}(\mathrm{BP}\langle n\rangle)$-modules
	\[
	\mathcal{U}^{(3)}_{\mathrm{MU}}(\mathrm{BP}\langle n\rangle)
	\to
	\mathrm{BP}\langle n\rangle 
	\to
	\Sigma^3\mathbf{L}^{\mathbb{E}_3}_{\mathrm{BP}\langle
	n\rangle/\mathrm{MU}}.
	\]
Claim (i) then follows by applying the functor
$\mathrm{map}_{\mathcal{U}^{(3)}_{\mathrm{MU}}(\mathrm{BP}\langle
n\rangle)}(-, \mathrm{BP}\langle n\rangle)$ and the previous theorem.
The same reasoning also shows that the spectral sequence
	\[
	\mathrm{Ext}_{
	\pi_*\mathcal{U}^{(3)}_{\mathrm{MU}}(\mathrm{BP}\langle n\rangle)}
	(\pi_*\mathbf{L}^{\mathbb{E}_3}_{\mathrm{BP}\langle n\rangle/
	\mathrm{MU}}, \pi_*\mathrm{BP}\langle n\rangle)
	\Rightarrow
	\pi_*\mathrm{map}_{\mathcal{U}^{(3)}_{\mathrm{MU}}(
	\mathrm{BP}\langle n\rangle)}
	(\mathbf{L}^{\mathbb{E}_3}_{\mathrm{BP}\langle n
	\rangle/\mathrm{MU}}, \mathrm{BP}\langle n\rangle)
	\]
collapses at the $E_2$-page. It will therefore suffice to show
that $\delta v_{n+1}$ lies in the image of the following map
(which arises from the forgetful functor from $\mathbb{E}_3$-algebras
to $\mathbb{E}_0$-algebras):
	\[
	\mathrm{Hom}_{
	\pi_*\mathcal{U}^{(3)}_{\mathrm{MU}}(\mathrm{BP}\langle n\rangle)}
	(\pi_*\mathbf{L}^{\mathbb{E}_3}_{\mathrm{BP}\langle n\rangle/
	\mathrm{MU}}, \pi_*\mathrm{BP}\langle n\rangle)
	\to
	\mathrm{Hom}_{\mathrm{MU}_*}
	(\pi_*\Sigma I,
	\pi_*\mathrm{BP}\langle n\rangle).
	\]
So we must study the $\mathrm{MU}_*$-module map
	\[
	\pi_*\Sigma I \to
	\pi_*\mathbf{L}^{\mathbb{E}_3}_{\mathrm{BP}\langle n\rangle/
	\mathrm{MU}}
	\]
The homotopy groups of the source are given by the suspension of
$I_*=\mathrm{ker}(\mathrm{MU}_* \to 
\mathrm{BP}\langle n\rangle_*)$ while the homotopy groups
of the target are given by $\Sigma^4J_*$ where
$J_* = \mathrm{ker}(\mathcal{U}^{(3)}_{\mathrm{MU}}(
\mathrm{BP}\langle n\rangle)_* \to \mathrm{BP}\langle n\rangle_*)$.
Under these identifications, Lemma \ref{lem:suspn-and-cotangent}
implies that $x_j \mapsto \sigma^3x_j$ for those $x_j$ lying in
$I_*$. By our computation of $\mathcal{U}^{(3)}_{\mathrm{MU}}(
\mathrm{BP}\langle n\rangle)_*$, the subset of \emph{nonzero} classes
of the form $\{\sigma^3x_j\}$ spans an
$\mathrm{MU}_*$-module summand of
$\mathcal{U}^{(3)}_{\mathrm{MU}}(
\mathrm{BP}\langle n\rangle)_*$. Among these is the nonzero
class $\sigma^3v_{n+1}$, and the result follows.
\end{proof}

\subsection{Proof of Theorem \ref{thm:intro-main}} \label{ssec:prove-mult}

We will now prove Theorem \ref{thm:multn-form}, and hence
Theorem \ref{thm:intro-main}. We will deduce the theorem
as a consequence of a more precise assertion. In order to
state it we will need to recall a construction.

\begin{construction} Recall that we have an $\mathbb{E}_{\infty}$-map
of spaces
	\[
	J_{\mathbb{C}}: \mathrm{BU} \times \mathbb{Z} \to
	\mathrm{Pic}(\mathsf{Sp})
	\]
where the target denotes the Picard space of the category 
of spectra. Left Kan extension then yields a symmetric monoidal
functor
	\[
	\xymatrix{
	\mathrm{BU} \times \mathbb{Z} \ar[r] \ar[d]&
	\mathrm{Pic}(\mathsf{Sp}) \ar[r] & \mathsf{Sp}\\
	\mathbb{Z} \ar@{-->}[urr]_{\mathrm{MU}[z^{\pm 1}]}
	}
	\]
We interpret $\mathrm{MU}[z^{\pm 1}]$ as a graded 
$\mathbb{E}_{\infty}$-ring. Here the notation is justified by the fact
that the homotopy groups of the underlying spectrum
(i.e. the direct sum of the graded components)
are given by $\mathrm{MU}_*[z^{\pm 1}]$ where
$z$ is a class in degree $2$.
For any $j \in \mathbb{Z}$
we may then construct a (nonnegatively)
graded $\mathbb{E}_{\infty}$-ring
	\[
	\mathrm{MU}[y]: \mathbb{Z}_{\ge 0} \to 
	\mathbb{Z} \stackrel{\cdot j}{\to}
	\mathbb{Z} \stackrel{\mathrm{MU}[z^{\pm 1}]}{\to}
	\mathsf{Sp}.
	\]
Here, the homotopy groups of the underlying spectrum
of $\mathrm{MU}[y]$ are given by $\mathrm{MU}_*[y]$ where
$|y| = 2j$. By Lemma \ref{lem:truncating}, we may write
$\mathrm{MU}[y]$ as the limit of a tower of 
$\mathbb{E}_{\infty}$-$\mathrm{MU}$-algebra square-zero
extensions
	\[
	\mathrm{MU}[y] \to \cdots \to
	\mathrm{MU}[y]/(y^k) \to \mathrm{MU}[y]/(y^{k-1})
	\to \cdots \to \mathrm{MU}
	\] 
When $j>0$, this is also a limit diagram of underlying 
$\mathbb{E}_{\infty}$-$\mathrm{MU}$-algebras
and square-zero extensions thereof. In our work below,
we regard $\mathrm{MU}[y]$ and
$\mathrm{MU}[y]/(y^k)$ as \emph{ungraded}
$\mathbb{E}_{\infty}$-$\mathrm{MU}$-algebras.
\end{construction}

\begin{proposition}\label{prop:muy-algebra}
Fix $n\ge -1$ and let $B$ be any 
$\mathbb{E}_3$-$\mathrm{MU}$-algebra form of 
$\mathrm{BP}\langle n\rangle$.
Then there exists 
a sequence of maps
	\[
	\cdots \to B_k \to B_{k-1} \to \cdots B_1 = B
	\]
where:
	\begin{enumerate}[{\rm (a)}]
	\item $B_k$ is given the structure of an 
	$\mathbb{E}_3$-$\mathrm{MU}[y]/(y^k)$-algebra,
	where $|y|=2p^{n+1}-2$.
	\item each map $B_k \to B_{k-1}$ is given the structure of
	a map of $\mathbb{E}_3$-$\mathrm{MU}[y]/(y^k)$-algebras,
	\end{enumerate}
and such that the following properties are satisfied:
	\begin{enumerate}[{\rm (i)}]
	\item Each map $B_k \to B_{k-1}$ induces an
	equivalence of $\mathbb{E}_3$-$\mathrm{MU}[y]/(y^k)$-algebras
		\[
		\mathrm{MU}[y]/(y^{k-1}) \otimes_{\mathrm{MU}[y]/(y^k)} B_k
		\stackrel{\simeq}{\to}B_{k-1}.
		\]
	\item The map
		\[
		B=B_1 \to \mathrm{cofib}(B_2 \to B_1) \simeq
		\Sigma^{|y|+1}B
		\]
	is detected by $\delta v_{n+1}$ in
		\[
		E_2=E_{\infty} =
		\mathrm{Ext}^*_{\mathrm{MU}_*}(B_*, B_*)
		\Rightarrow \pi_*\mathrm{map}_{\mathrm{MU}}(B,B).
		\]
	\end{enumerate}

\end{proposition}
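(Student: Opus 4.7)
The plan is to prove this by induction on $k$, with base case $B_1 = B$ regarded as an $\mathbb{E}_3$-$\mathrm{MU}[y]/(y)$-algebra via the identification $\mathrm{MU}[y]/(y) \simeq \mathrm{MU}$. For the inductive step, I will lift $B_{k-1}$ along the $\mathbb{E}_{\infty}$-$\mathrm{MU}$-square-zero extension $\mathrm{MU}[y]/(y^k) \to \mathrm{MU}[y]/(y^{k-1})$, whose fiber is $I_k \simeq \Sigma^{(k-1)|y|}\mathrm{MU}$, and take $B_k \to B_{k-1}$ to be the canonical projection $B_k \to \mathrm{MU}[y]/(y^{k-1}) \otimes_{\mathrm{MU}[y]/(y^k)} B_k$, which automatically yields property (i).

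Existence of a lift is controlled by Proposition \ref{prop:obstrn-theory}(i): the obstruction lies in
\[
\pi_0 \mathrm{Map}_{\mathcal{U}^{(3)}_{\mathrm{MU}[y]/(y^{k-1})}(B_{k-1})}\bigl(\mathbf{L}^{\mathbb{E}_3}_{B_{k-1}/\mathrm{MU}[y]/(y^{k-1})},\; B_{k-1} \otimes_{\mathrm{MU}[y]/(y^{k-1})} \Sigma^2 I_k\bigr).
\]
Iterating property (i) gives $\mathrm{MU} \otimes_{\mathrm{MU}[y]/(y^{k-1})} B_{k-1} \simeq B$, so the coefficient module simplifies to $\Sigma^{(k-1)|y|+2} B$. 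Applying the base change formula for the $\mathbb{E}_3$-cotangent complex along the augmentation $\mathrm{MU}[y]/(y^{k-1}) \to \mathrm{MU}$, together with tensor-hom adjunction, I will identify this group with $\pi_{-(k-1)|y|-2} \mathrm{map}_{\mathcal{U}^{(3)}_{\mathrm{MU}}(B)}(\mathbf{L}^{\mathbb{E}_3}_{B/\mathrm{MU}}, B)$. Since $|y|=2p^{n+1}-2$ is even and $k\ge 2$, this lies in a nonpositive even degree, and so vanishes by Proposition \ref{prop:bpn-obstrn-zero}(i); thus lifts exist at every stage.

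To arrange property (ii), I focus on the case $k=2$. Since $\mathrm{MU}[y]/(y^2) \to \mathrm{MU}$ is a trivial square-zero extension (classified by the zero derivation), Lemma \ref{lem:alg-over-triv-sq-zero} identifies lifts of $B$ with $\mathbb{E}_3$-$B$-module maps $\rho: \mathbf{L}^{\mathbb{E}_3}_{B/\mathrm{MU}} \to \Sigma^{|y|+1} B$, and realizes the underlying $\mathbb{E}_3$-$\mathrm{MU}$-algebra of $B_2$ as the square-zero extension of $B$ classified by $\rho$. Unwinding the pullback definition of this square-zero extension, the boundary map $B \to \Sigma^{|y|+1} B$ of the cofiber sequence $\Sigma^{|y|} B \to B_2 \to B$ is the underlying $\mathrm{MU}$-linear derivation of $\rho$; since any such derivation vanishes on the unit, it factors through $B \to \Sigma I$ and corresponds (via the summand identification in Proposition \ref{prop:bpn-obstrn-zero}(ii)) to the image of $\rho$ under the forgetful map
\[
\pi_* \mathrm{map}_{\mathcal{U}^{(3)}_{\mathrm{MU}}(B)}(\mathbf{L}^{\mathbb{E}_3}_{B/\mathrm{MU}}, B) \longrightarrow \pi_* \mathrm{map}_{\mathrm{MU}}(\Sigma I, B).
\]
By Proposition \ref{prop:bpn-obstrn-zero}(ii), $\delta v_{n+1}$ lies in the image of this forgetful map, so choosing $\rho$ to be any such lift produces a $B_2$ satisfying (ii).

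The main obstacle is the translation of the above obstruction and torsor groups into ones to which Proposition \ref{prop:bpn-obstrn-zero} applies. This requires carefully invoking the base change formula for the $\mathbb{E}_3$-cotangent complex along the augmentation $\mathrm{MU}[y]/(y^{k-1}) \to \mathrm{MU}$, as well as matching the boundary of the cofiber sequence with the universal derivation classifying $\rho$. Once these translations are in place, the argument reduces to formal deformation theory, with all genuine computational content absorbed into Proposition \ref{prop:bpn-obstrn-zero}.
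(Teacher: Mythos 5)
Your proposal is correct and follows essentially the same route as the paper: lift along the square-zero extensions $\mathrm{MU}[y]/(y^k) \to \mathrm{MU}[y]/(y^{k-1})$ using Proposition \ref{prop:obstrn-theory}, transport the obstruction via base change along the augmentation and the identification $\varepsilon^*B_{k-1}\simeq B$ to a negative even-degree class killed by Proposition \ref{prop:bpn-obstrn-zero}(i), and arrange (ii) at the $k=2$ stage via Lemma \ref{lem:alg-over-triv-sq-zero} and a lift of $\delta v_{n+1}$ supplied by Proposition \ref{prop:bpn-obstrn-zero}(ii). The only differences are organizational (the paper treats $B_2$ first and then inducts, and is terser about identifying the cofiber boundary map with the derivation classified by $\rho$), so no gap to report.
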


\begin{proof}
First we prove that we can build $B_2$ satisfying (ii). 
Observe that $\mathrm{MU}[y]/(y^2)$ is a trivial
square-zero extension of $\mathrm{MU}$ by $\Sigma^{|y|}\mathrm{MU}$.
By Lemma \ref{lem:alg-over-triv-sq-zero}, it suffices to supply
a map
	\[
	\mathbf{L}^{\mathbb{E}_3}_{B/\mathrm{MU}} \to
	\Sigma^{|y|+1}\mathrm{BP}\langle n\rangle
	\]
in $\mathsf{Mod}^{\mathbb{E}_3}_{B}(\mathsf{Mod}_{\mathrm{MU}})$
whose image under the forgetful map

	\[
		\pi_*\mathrm{map}_{\mathcal{U}^{(3)}_{\mathrm{MU}}(
		\mathrm{BP}\langle n\rangle)}
		(\mathbf{L}^{\mathbb{E}_3}_{\mathrm{BP}\langle n
		\rangle/\mathrm{MU}}, \mathrm{BP}\langle n\rangle)
		\longrightarrow
		\pi_*\mathrm{map}_{
		\mathrm{MU}}(\mathrm{fib}(\mathrm{MU}_{(p)}
		\to \mathrm{BP}\langle n\rangle),
		\mathrm{BP}\langle n\rangle).
		\]
detects $\delta v_{n+1}$. But this is precisely the content of
Proposition \ref{prop:bpn-obstrn-zero}(ii).

Suppose by induction that we have constructed the algebras
$B_j$ for $j\le k$ as in (a) and (b), satisfying (i) and (ii).
By Proposition \ref{prop:obstrn-theory}, the obstruction to building
$B_{k+1}$ is a map
	\[
	o(B_k): \mathbf{L}^{\mathbb{E}_3}_{B_{k}/(\mathrm{MU}[y]/(y^k))}
	\to 
	B_k \otimes_{\mathrm{MU}[y]/(y^k)}
	\Sigma^{(k+1)|y|+2}\mathrm{MU}
	\]
in $\mathsf{Mod}^{\mathbb{E}_3}_{B_k}(
\mathsf{Mod}_{\mathrm{MU}[y]/(y^k)})$. Base change along
the augmentation
$\mathrm{MU}[y]/(y^k) \to \mathrm{MU}$ gives rise to a functor
	\[
	\varepsilon^*: 
	\mathsf{Mod}^{\mathbb{E}_3}_{B_k}(
	\mathsf{Mod}_{\mathrm{MU}[y]/(y^k)})
	\longrightarrow
	\mathsf{Mod}^{\mathbb{E}_3}_{B}(
	\mathsf{Mod}_{\mathrm{MU}}),
	\]
where we have used (i) to identify $\varepsilon^*B_k \simeq B$. 
So the obstruction $o(B_k)$ is adjoint to a map
	\[
	\mathbf{L}^{\mathbb{E}_3}_{B/\mathrm{MU}}
	\to 
	\Sigma^{(k+1)|y|+2}B
	\]
in $\mathsf{Mod}^{\mathbb{E}_3}_{B}(\mathsf{Mod}_{\mathrm{MU}})$.
By Proposition \ref{prop:bpn-obstrn-zero}(i),
any such map is nullhomotopic. This completes the proof.
\end{proof}

\begin{proof}[Proof of Theorem \ref{thm:multn-form}]
We prove the claim by induction on $n\ge 1$, the cases
$n=-1, 0$ being trivial. By induction, suppose there exists an
$\mathbb{E}_3$-$\mathrm{MU}$-algebra form of 
$\mathrm{BP}\langle n\rangle$, say $B$, and construct
a tower as in the previous proposition.

Let $\widetilde{B}:= \lim B_k$ be the 
$\mathbb{E}_3$-$\mathrm{MU}[y]$-algebra
at the limit of the tower. We claim that
$\widetilde{B}$ is an $\mathbb{E}_3$-$\mathrm{MU}$-algebra
form of $\mathrm{BP}\langle n+1\rangle$.

By (i), the associated graded tower
is a sum of shifts of $B$, and we see that
	\[
	\mathrm{gr}(\pi_*\widetilde{B}) \simeq B_*[y].
	\]
By (ii), the exact sequence of $\mathrm{MU}_*$-modules
	\[
	0 \to \Sigma^{|y|}B_* \stackrel{\cdot y}{\to} \pi_*B_2 \to B_* \to 0 
	\]
corresponds to the class $\delta v_{n+1}$, so that $y$ acts by
$v_{n+1}$ on $B_2$. Combining these observations we see that the
composite
	\[
	\mathbb{Z}_{(p)}[v_1, ..., v_{n+1}] \to (\mathrm{MU}_*)_{(p)}
	\to \widetilde{B}_*
	\]
is an isomorphism, which is what we wanted to show.
\end{proof}

\section{Unraveling Lichtenbaum-Quillen}\label{sec:abstract-ql}

Having constructed an $\mathbb{E}_3$-$\mathrm{MU}$-algebra form of $\mathrm{BP}\langle n \rangle$, we aim in the remainder of the paper to study its $p$-localized algebraic $K$-theory spectrum $\mathrm{K}(\mathrm{BP}\langle n \rangle)_{(p)}$.  The philosophy of chromatic homotopy theory, together with the vanishing results of \cite{cmnn}, suggests that we should study $\mathrm{K}(\mathrm{BP}\langle n \rangle)_{(p)}$ by computing its chromatic localizations $L_{T(i)} \mathrm{K}(\mathrm{BP}\langle n \rangle)_{(p)}$ for $0 \le i \le n+1$, which assemble into the smashing localization $L_{n+1}^{f} \mathrm{K}(\mathrm{BP}\langle n \rangle)_{(p)}$.  One wants to know whether the localization $L_{n+1}^{f} \mathrm{K}(\mathrm{BP}\langle n \rangle)_{(p)}$ faithfully reproduces $\mathrm{K}(\mathrm{BP} \langle n \rangle)_{(p)}$.  This is far from assured: for example, Quillen proved that $\mathrm{K}(\mathbb{F}_p)_{(p)} \cong \mathbb{Z}_{(p)}$, but $L_0^{f} \mathbb{Z}_{(p)} = \mathbb{Q}$.

It turns out, however, that the difference between $\mathrm{K}(\mathrm{BP}\langle n \rangle)_{(p)}$ and its $L_{n+1}^f$ localization is entirely concentrated in low degrees.  In short, in the remainder of the paper we aim to prove that the localization map $\mathrm{K}(\mathrm{BP}\langle n \rangle)_{(p)} \to L_{n+1}^f \mathrm{K}(\mathrm{BP}\langle n \rangle)_{(p)}$ is truncated.

In the case $n=0$, this becomes the classical Lichtenbaum--Quillen conjecture for $\mathbb{Z}_{(p)}$, which is a celebrated theorem of Voevodsky and Rost \cite{voeI, voeII}.  Our goal is to reduce the general case to the Voevodsky--Rost theorem.  We accomplish this first by leaning on the Dundas--Goodwillie--McCarthy theorem \cite[Theorem 0.0.2]{DGM-book}, which relates $\mathrm{K}$ to the more computable ($p$-completed) $\mathrm{TC}$ invariant that we review in \S \ref{sec:cyclotomic-conventions}. The purpose of this section is to discuss a general strategy for proving, for any connective $\mathbb{E}_1$-ring spectrum $R$, that 
\[\mathrm{TC}(R) \to L_{n+1}^{f} \mathrm{TC}(R)\]
is truncated.  Future sections of the paper then implement the strategy in the case of $R=\mathrm{BP} \langle n \rangle$.

In broad outline, our strategy proceeds as follows.  First, following Ausoni and Rognes, we apply work of Mahowald and Rezk to reduce to proving that $\pi_*(F \otimes \mathrm{TC}(R))$ is finite for some type $n+2$ complex $F$ \cite{mahowald-rezk}.  As we will review, $\mathrm{TC}(R)$ is constructed from the simpler invariant $\mathrm{THH}(R)$ together with two pieces of structure, a \emph{cyclotomic Frobenius} and a \emph{circle action}.  One of the main observations of this paper is that, while the definition of $\mathrm{TC}$ mixes these structures by taking $S^1$ fixed points of the Frobenius, it actually suffices to study the two structures independently.

We reduce the problem of checking that $\pi_*(F \otimes \mathrm{TC}(R))$ is bounded to checking that $\mathrm{THH}(R)$ satisfies the \emph{Segal conjecture}, which is purely about the Frobenius, and that $\mathrm{THH}(R)$ satisfies \emph{canonical vanishing}, which is purely about the $S^1$ action.

These two properties of $\mathrm{THH}(R)$ together imply that $F \otimes \mathrm{TR}(R)$ is bounded above, and we thank the third referee for pointing out that they are in fact \emph{equivalent} to the statement that $F \otimes \mathrm{TR}(R)$ is bounded above.  After checking that the homotopy groups $\pi_i R$ are finitely generated, the statement that $F \otimes \mathrm{TR}(R)$ is bounded above implies that $\pi_*(F \otimes \mathrm{TC}(R))$ is finite.

The remainder of this section fixes conventions and makes precise the reductions to canonical vanishing and the Segal conjecture:
\S\ref{sec:segal} verifies the Segal conjecture for $\mathrm{THH}(\mathrm{BP}\langle n\rangle)$, 
while
\S\ref{sec:canvan} verifies canonical vanishing via entirely different means.

\subsection{The work of Mahowald--Rezk}

\begin{definition}
A $p$-complete, bounded below spectrum $X$ is said to be \textbf{fp} if $H^*(X;\mathbb{F}_p)$ is finitely presented over the mod $p$ Steenrod algebra.
\end{definition}

\begin{theorem}[Mahowald--Rezk]
Suppose that $X$ is an fp spectrum.  Then there exists a non-zero $p$-local finite complex $F$ such that $\pi_*(X \otimes F)$ is finite. In other words, $X \otimes F$ has finitely many non-zero homotopy groups, and $\pi_i(X \otimes F)$ is finite for each $i$.

On the other hand, suppose $Y$ is any bounded below $p$-complete spectrum. If there exists a non-zero $p$-local finite complex $F$ such that $\pi_*(Y \otimes F)$ is finite, then $Y$ is an fp spectrum.
\end{theorem}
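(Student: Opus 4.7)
The plan is to attack both directions using the mod $p$ Adams spectral sequence together with structural facts about modules over the Steenrod algebra $\mathcal{A}$.

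For the forward direction, I would take $F$ to be a generalized type $n+1$ Moore complex with $n$ sufficiently large. The relevant spectral sequence is
\[
E_2^{s,t} = \mathrm{Ext}_{\mathcal{A}}^{s,t}\bigl(H^*(X \otimes F; \mathbb{F}_p), \mathbb{F}_p\bigr) \Rightarrow \pi_{t-s}(X \otimes F)^{\wedge}_p,
\]
and since $X$ is $p$-complete and $F$ is finite this computes $\pi_*(X \otimes F)$ directly. The goal is a vanishing line: to show that the $E_2$-page vanishes for $t-s$ above some finite bound. The main input is Margolis' theorem: a finite $p$-local complex $F$ is of type $n+1$ precisely when $H^*(F;\mathbb{F}_p)$ is free over the exterior subalgebra $E(Q_0, \ldots, Q_n) \subset \mathcal{A}$ generated by the first $n+1$ Milnor primitives. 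A K\"unneth argument then shows $H^*(X \otimes F)$ is $E(Q_0, \ldots, Q_n)$-free, and a change-of-rings spectral sequence converts this freeness into concentration of the $\mathcal{A}$-Ext in a bounded region, with the size of that region controlled by the length of a finite presentation of $H^*X$. Choosing $n$ large enough relative to this presentation then gives the desired vanishing line, hence boundedness of $X \otimes F$; combined with finite generation of each $\pi_i$ (which follows from bounded-below plus mod $p$ cohomology finite in each degree), this yields finiteness of $\pi_*(X \otimes F)$.

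For the converse direction, suppose $\pi_*(Y \otimes F)$ is finite for some $p$-local finite $F$. Then $Y \otimes F$ is bounded with finite homotopy, so $H^*(Y \otimes F;\mathbb{F}_p)$ is finite-dimensional and hence trivially finitely presented over $\mathcal{A}$. By the Hopkins--Smith thick subcategory theorem, the collection of finite $p$-local $F$ with $\pi_*(Y \otimes F)$ finite is a thick subcategory, so without loss of generality I can assume $F$ is a standard generalized Moore complex built from $S^0_{(p)}$ by iterated cofiber sequences of the form $F_{i-1} \xrightarrow{v_{j_i}^{k_i}} F_{i-1} \to F_i$. Working backward through these cofiber sequences, each step gives a long exact sequence in cohomology showing that if $H^*(Y \otimes F_i)$ is finitely presented over $\mathcal{A}$ then so is $H^*(Y \otimes F_{i-1})$, since the latter is an iterated extension on which $v_{j_i}$ acts nilpotently on the source and target. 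Induction yields the finite presentation of $H^*Y$.

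The main obstacle I expect is the forward direction, specifically the quantitative step that converts Margolis freeness of $H^*(X \otimes F)$ over $E(Q_0, \ldots, Q_n)$ and a finite presentation of $H^*X$ into a vanishing line for $\mathrm{Ext}_{\mathcal{A}}$. This requires carefully choosing a change-of-rings filtration---for example, via the subalgebras $\mathcal{A}(n)$ or via the Milnor primitive tower---and extracting sharp bounds on how finite presentation propagates through that tower. The conceptual picture is the Quillen stratification of $\mathrm{Spec}\,\mathrm{Ext}_{\mathcal{A}}^{*,*}(\mathbb{F}_p,\mathbb{F}_p)$ by the primitive subalgebras, but turning this picture into a usable effective vanishing estimate is the real work.
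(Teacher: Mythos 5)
The paper itself does not prove this statement; it quotes \cite[Proposition 3.2]{mahowald-rezk}. Measured against the actual Mahowald--Rezk argument, your proposal has genuine gaps, chiefly in the forward direction. First, the appeal to ``Margolis' theorem'' is backwards: freeness of $H^*(F;\mathbb{F}_p)$ over $E(Q_0,\ldots,Q_n)$ implies $K(i)_*F=0$ for $i\le n$, but a type $n+1$ complex need \emph{not} have vanishing $Q_i$-Margolis homologies -- that is a special feature of specially constructed complexes (Smith's, Mitchell's), not of the type. So you cannot start from an arbitrary type $n+1$ complex and conclude the freeness you need. Second, and more seriously, even granting $E(Q_0,\ldots,Q_n)$-freeness of $H^*(X\otimes F)$, the change-of-rings step does not produce what finiteness of $\pi_*(X\otimes F)$ requires, namely vanishing of $E_2^{s,t}$ for all $t-s\gg 0$. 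What exterior-algebra freeness buys (Anderson--Davis, Miller--Wilkerson) is a vanishing line of \emph{positive slope}, which bounds Adams filtration within each stem but leaves infinitely many stems occupied; e.g.\ for a module free over $E(Q_0,\ldots,Q_m)$ but not over $\mathcal{A}(m)$, the change-of-rings spectral sequence contributes polynomial towers from $\mathrm{Ext}_{\mathcal{A}(m)//E}$ whose stems are unbounded. The missing structural inputs, which are the heart of Mahowald--Rezk, are: (1) a finitely presented $\mathcal{A}$-module is induced from a finite subalgebra, $H^*X\cong \mathcal{A}\otimes_{\mathcal{A}(m)}M$ with $M$ finitely generated; and (2) Mitchell's finite complexes with $H^*F$ free over $\mathcal{A}(m)$. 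Then $H^*(X\otimes F)\cong \mathcal{A}\otimes_{\mathcal{A}(m)}(M\otimes H^*F)$ with $M\otimes H^*F$ finitely generated free over $\mathcal{A}(m)$, so by change of rings the $E_2$-page is a $\mathrm{Hom}_{\mathcal{A}(m)}$ concentrated in filtration $0$ and finitely many bidegrees, which gives the finiteness. Your own remark that this step is ``the real work'' is accurate; the mechanism you propose does not close it.

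In the converse direction your opening claim is false: a bounded spectrum with finite homotopy groups need not have finite-dimensional mod $p$ cohomology ($H\mathbb{Z}/p$ has a single finite homotopy group and $H^*(H\mathbb{Z}/p;\mathbb{F}_p)\cong\mathcal{A}$). The correct base case is that such a spectrum is a finite Postnikov tower of Eilenberg--MacLane spectra of finite abelian groups, each with finitely presented cohomology, and finite presentation is closed under extensions. The downward induction can be repaired along your lines, but as written it is too vague to certify: one should replace each self-map by a power inducing zero on mod $p$ cohomology (automatic, since via K\"unneth it acts through a positive-degree endomorphism of the finite-dimensional $H^*F_{i-1}$), so the long exact sequence breaks into short exact sequences in which the kernel is a shift of the cokernel; then the cokernel, being a quotient of a finitely presented module, is finitely generated, hence so is the kernel, and a finitely presented module modulo a finitely generated submodule is finitely presented. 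Running this down to the cofiber sequence for $p^{i_0}$ on $Y$ itself yields finite presentation of $H^*Y$. So the converse is salvageable, but the forward direction as proposed would not go through without the induced-module and $\mathcal{A}(m)$-free-complex inputs above.
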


\begin{proof}
This is \cite[Proposition 3.2]{mahowald-rezk}.
\end{proof}

The collection of $F$ such that $\pi_*(X \otimes F)$ is finite
is obviously a thick subcategory.  One says that an fp spectrum $X$ is \textbf{of fp-type $n$} if $\pi_*(X \otimes F)$ is infinite when $F$ has type $n$, but finite when $F$ has type $n+1$.  Our key interest in fp spectra comes from the following result of Mahowald and Rezk:

\begin{theorem}[Mahowald--Rezk] \label{fp-implies-LQ}
If $Y$ is a spectrum of fp-type $n$ then the localization map
$Y \to L_n^fY$ is an equivalence on homotopy in large degrees.
\end{theorem}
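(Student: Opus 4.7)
The plan is to show that the fiber $C := \mathrm{fib}(Y \to L_n^f Y)$ is bounded above, which is equivalent to the desired $\pi_*$-isomorphism in large degrees.

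First, I will invoke the fp-type $n$ hypothesis to select a finite $p$-local type $n+1$ complex $F$ (conveniently taken to be a generalized Moore spectrum of the form $S^0/(p^{i_0}, v_1^{i_1}, \ldots, v_n^{i_n})$) such that $\pi_*(F \otimes Y)$ is finite, and such that $F$ admits a $v_{n+1}$-self map $v : \Sigma^d F \to F$. Because $L_n^f$ is smashing and kills every type $n+1$ complex, we have $F \otimes L_n^f Y \simeq L_n^f(F) \otimes Y \simeq 0$; hence smashing the fiber sequence $C \to Y \to L_n^f Y$ with $F$ produces $F \otimes C \simeq F \otimes Y$, a bounded spectrum with finite homotopy.

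Second, the task reduces to showing that a $p$-complete, bounded-below, $L_n^f$-acyclic spectrum $C$ for which $F \otimes C$ is bounded must itself be bounded above. My approach would be to analyze the mod $p$ Adams spectral sequence for $Y$. The assumption that $H^*(Y; \mathbb{F}_p)$ is finitely presented over the Steenrod algebra ensures that the $E_2$-term $\mathrm{Ext}_{\mathcal{A}}^{*,*}(H^*Y, \mathbb{F}_p)$ carries a natural $v_{n+1}$-action and decomposes in a controlled way into a $v_{n+1}$-periodic part (which converges to $L_n^f Y$) and a $v_{n+1}$-torsion part (which detects $C$). The fp-type $n$ hypothesis is then equivalent to the assertion that the $v_{n+1}$-torsion portion is finite in each Adams filtration, and accumulates in only a bounded range of total degrees.

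The main obstacle is the convergence issue: translating finiteness of the $v_{n+1}$-torsion at $E_\infty$ into boundedness of $\pi_* C$ requires ensuring the Adams filtration on $C$ has uniformly bounded length. Mahowald and Rezk handle this through a careful structural analysis of fp spectra in terms of the height filtration on the Steenrod algebra, controlling both the $E_2$-term and the Adams convergence. An alternative chromatic route would first use the $v_{n+1}$-self map $v$ to propagate $L_n^f$-acyclicity of $C$ upward: since $v$ acts nilpotently on $F \otimes Y \simeq F \otimes C$, one can upgrade to $T(n+1) \otimes C \simeq 0$, and iterating with $v_{n+2}, v_{n+3}, \ldots$ yields $T(i) \otimes C \simeq 0$ at all heights, whence chromatic convergence for bounded-below $p$-complete spectra would force $C$ into a bounded range. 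Either route relies crucially on the finite-presentation hypothesis to keep the ``correction term'' in low degrees.
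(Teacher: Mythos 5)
The reduction in your first paragraph is fine (smashing the fiber sequence with a type $n+1$ complex $F$ and using that $L_n^f$ is smashing), but the actual content of the theorem --- that the fiber $C=\mathrm{fib}(Y\to L_n^fY)$ is bounded above --- is never established. Your first route simply defers to ``a careful structural analysis'' by Mahowald and Rezk; the paper instead quotes a precise statement, \cite[Theorem 8.2(2)]{mahowald-rezk}, asserting that the Brown--Comenetz dual $IC$ of this fiber is bounded below, and then concludes that $C$ is bounded above because $\mathrm{Hom}(N,\mathbb{Q}/\mathbb{Z})=0$ forces $N=0$. Your intermediate claims --- that the Adams $E_2$-term splits into a $v_{n+1}$-periodic part converging to $L_n^fY$ and a $v_{n+1}$-torsion part detecting $C$, and that fp-type $n$ is \emph{equivalent} to this torsion lying in a bounded range of total degrees --- are exactly the hard part of the Mahowald--Rezk theory and are not argued, so nothing in your write-up actually proves the boundedness.

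Your alternative chromatic route contains a genuine error. From $F\otimes C\simeq F\otimes Y$ bounded with finite homotopy one can indeed deduce $T(i)\otimes C\simeq 0$ for all $i\ge n+1$ (and the $L_n^f$-acyclicity gives it for $i\le n$), but telescopic acyclicity at all heights together with bounded-below and $p$-complete does \emph{not} imply bounded above: the spectrum $\bigvee_{k\ge 0}\Sigma^{k}H\mathbb{F}_p$ is bounded below, $p$-complete, has $T(i)\otimes\left(\bigvee_{k}\Sigma^{k}H\mathbb{F}_p\right)\simeq 0$ for every $i$, and is unbounded above. Likewise there is no ``chromatic convergence'' for general bounded-below $p$-complete spectra (already $L_nH\mathbb{F}_p=0$ for all $n$, so the chromatic tower of $H\mathbb{F}_p$ has trivial limit), so no convergence statement of the kind you invoke can force $C$ into a bounded range. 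The finite-presentation hypothesis on $H^*(Y;\mathbb{F}_p)$ is precisely what excludes such examples, and your sketch never brings it to bear at the decisive step; this is why the paper's proof runs through the Brown--Comenetz duality statement of Mahowald--Rezk rather than through chromatic acyclicity.
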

\begin{proof} Mahowald and Rezk \cite[Theorem 8.2(2)]{mahowald-rezk}
prove that the fiber $C$ of the map $Y \to L_n^fY$ has Brown-Comenetz
dual $IC$ which is bounded below. It follows that $C$ is bounded
above (since, for any abelian group $N$, if 
$\mathrm{Hom}(N, \mathbb{Q}/\mathbb{Z}) = 0$ then
$N=0$), whence the claim.
\end{proof}

\subsection{Background and conventions on cyclotomic spectra} \label{sec:cyclotomic-conventions}

\begin{definition}
A \textbf{$p$-typical cyclotomic spectrum} $X$
is a $p$-complete object $X \in \mathsf{Fun}(\mathrm{B}S^1, \mathsf{Sp})$
equipped with an $S^1$-equivariant map $\varphi: X \to X^{tC_p}$,
where the action on the target is via the equivalence $S^1\cong S^1/C_p$.
If $X$ is a bounded below 
$p$-typical cyclotomic spectrum, so that $(X^{tC_p})^{hC_{p^k}}
\simeq X^{tC_{p^{k+1}}}$ by \cite[II.4.1]{nikolaus-scholze},
then we define
invariants:
	\[
	\mathrm{TR}^j(X):= X\times_{X^{tC_p}}X^{hC_p} \times_{
	X^{tC_{p^2}}} X^{hC_{p^2}} \times_{
	X^{tC_{p^3}}} \cdots
	\times_{X^{tC_{p^j}}} X^{hC_{p^j}}
	\]
using the maps $\varphi^{hC_{p^k}}$ and the canonical
maps from homotopy fixed points to the Tate fixed points.
We define $\mathrm{TR}(X) = \lim_j \mathrm{TR}^j(X)$ where
the connecting maps 
	\[
	R: \mathrm{TR}^j(X) \to \mathrm{TR}^{j-1}(X)
	\]
are projection away from the last factor.
Observe that each object $\mathrm{TR}^j(X)$ and the limit
$\mathrm{TR}(X)$ has a residual $S^1$-action.

\end{definition}

\begin{remark} This is slightly different than the notion of
a `$p$-cyclotomic spectrum' considered in \cite{nikolaus-scholze}.
However, when restricting attention to bounded below and $p$-complete
objects, as we do here,
the two notions coincide (see \cite[Remark II.1.3]{nikolaus-scholze}).
The definition above is the same as in \cite{antieau-nikolaus}
except that we have added the hypothesis that $X$ be $p$-complete.
\end{remark}

\begin{definition} If $X$ is a bounded below,
$p$-typical cyclotomic spectrum
then we define
	\[
	\mathrm{TC}(X):= \mathrm{fib}(\varphi^{hS^1}-\mathrm{can}:
	X^{hS^1} \to X^{tS^1}).
	\]
\end{definition}

\begin{remark} There are maps
$F: \mathrm{TR}^n(X) \to \mathrm{TR}^{n-1}(X)$ corresponding
to projecting away from the first factor and then using
the inclusion of each $C_{p^k}$ homotopy fixed points into
the $C_{p^{k-1}}$ homotopy fixed points. These assemble to
a map $F: \mathrm{TR}(X) \to \mathrm{TR}(X)$ and the original
definition of ($p$-adic) topological cyclic homology was as the fiber:
	\[
	\mathrm{fib}(1-F: \mathrm{TR}(X) \to \mathrm{TR}(X)).
	\]
It is shown in \cite[Theorem II.4.10]{nikolaus-scholze} that this
agrees with the definition above.
\end{remark}

\begin{remark} Antieau-Nikolaus construct
\cite[Example 3.4]{antieau-nikolaus} an $S^1$-equivariant map
$V: \mathrm{TR}(X)_{hC_p} \to \mathrm{TR}(X)$ which fits into a
cofiber sequence
	\[
	\mathrm{TR}(X)_{hC_p} \stackrel{V}{\to} \mathrm{TR}(X) \to X
	\]
of $S^1$-spectra. Thus, one can recover both $\mathrm{TC}(X)$
and $X$ from knowledge of $\mathrm{TR}(X)$. 
\end{remark}

\begin{definition}
Suppose that $A$ is a connective $\mathbb{E}_1$-ring spectrum.
Then $\mathrm{THH}(A)^{\wedge}_p$ is a bounded below
$p$-typical cyclotomic
spectrum.
We will in this circumstance abbreviate 
$\mathrm{TR}^j(\mathrm{THH}(A)^{\wedge}_p)$ by $\mathrm{TR}^j(A)$, and similarly for $\mathrm{TR}(\mathrm{THH}(A)^{\wedge}_p)$ and $\mathrm{TC}(\mathrm{THH}(A)^{\wedge}_p)$.
\end{definition}

\subsection{Bounds on $\mathrm{TR}$ and related conditions on cyclotomic spectra}\label{sec:bound-tr-etc}

\begin{definition} Let $X$ be a bounded below
$p$-typical cyclotomic spectrum. We will be interested in the following
conditions on $X$, which may or may not hold:
	\begin{itemize}
	\item \fbox{$\mathrm{Bounded}\text{ }\mathrm{TR}$} The spectrum
	$\mathrm{TR}(X)$ is bounded.
	\item \fbox{$\mathrm{Segal}\text{ }\mathrm{Conjecture}$} The Frobenius
	$\varphi: X \to X^{tC_p}$ is truncated.
	\item \fbox{$\mathrm{Canonical}\text{ }\mathrm{Vanishing}$}
	There is an integer $d\ge 0$ such that the composite
		\[
		\tau_{\ge d}(X^{hC_{p^k}}) \to
		X^{hC_{p^k}} \stackrel{\mathrm{can}}{\to}
		X^{tC_{p^k}}
		\]
	is nullhomotopic for all $0\le k \le \infty$.
	\item \fbox{$\mathrm{Weak}\text{ }\mathrm{Canonical}\text{ }\mathrm{Vanishing}$} There is an integer
	$d\ge 0$ such that, for $\ast\ge d$, the map
		\[
		\pi_*(\mathrm{can}): \pi_*X^{hC_{p^k}} \to \pi_*X^{tC_{p^k}}
		\]
	is zero for all $0\le k\le \infty$.
	\item \fbox{$\mathrm{Tate}\text{ }\mathrm{Nilpotence}$} $X^{tC_p}$ lies in the thick tensor ideal
	of $\mathsf{Fun}(\mathrm{B}S^1, \mathsf{Sp})$ generated
	by $\mathbb{D}S^1_+$, the Spanier-Whitehead dual of $S^1_+$.
	\item \fbox{$\mathbb{F}_p\text{ }\mathrm{Nilpotence}$} $\mathrm{TR}(X) \in \mathsf{Fun}(\mathrm{B}S^1,
\mathsf{Sp})$ lies in the thick tensor ideal generated by
$\mathbb{F}_p$, where $\mathbb{F}_p$ is considered to have trivial $S^1$ action.
	\item \fbox{$\mathrm{Finiteness}$} For each $i \in \mathbb{Z}$ and $0\le k \le \infty$,
	the groups
	$\pi_iX^{hC_{p^k}}$ and $\pi_iX^{tC_{p^k}}$ are finite,
	and hence so too are the groups $\pi_i\mathrm{TC}(X)$.
	\end{itemize}
\end{definition}

As suggested by its name, the \fbox{$\mathrm{Segal}\text{ }\mathrm{Conjecture}$} condition holds particular historical significance, some of which we recall in Section \ref{sec:segal}.
It turns out that there are many nontrivial implications between the conditions, summarized by the following theorem:

\begin{theorem}\label{thm:abstract-implications}
Let $X$ be a bounded below, $p$-power torsion $p$-typical
cyclotomic spectrum.
That is, we assume there is some $N\ge 0$
for which $p^N: X \to X$ is nullhomotopic as a map
of $p$-typical cyclotomic spectra. Then the following implications hold:

\begin{enumerate}[{\rm (a)}]
\item (Antieau--Nikolaus) \fbox{$\mathrm{Bounded}\text{ }\mathrm{TR}$} $\Rightarrow$ 
\fbox{$\mathrm{Segal}\text{ }\mathrm{Conjecture}$}.
\item \fbox{$\mathrm{Bounded}\text{ }\mathrm{TR}$} $\Rightarrow$ \fbox{$\mathbb{F}_p\text{ }\mathrm{Nilpotence}$}.
\item (Mathew) \fbox{$\mathbb{F}_p\text{ }\mathrm{Nilpotence}$} $\Rightarrow$ \fbox{$\mathrm{Tate}\text{ }\mathrm{Nilpotence}$}.
\item If each homotopy group $\pi_iX$ is finite, then
	\fbox{$\mathrm{Bounded}\text{ }\mathrm{TR}$} $\Rightarrow$
	\fbox{$\mathrm{Finiteness}$}.
\item \fbox{$\mathrm{Segal}\text{ }\mathrm{Conjecture}$} $+$ \fbox{$\mathrm{Tate}\text{ }\mathrm{Nilpotence}$}
$\Rightarrow$ \fbox{$\mathrm{Canonical}\text{ }\mathrm{Vanishing}$}.
\item \fbox{$\mathrm{Segal}\text{ }\mathrm{Conjecture}$} $+$ \fbox{$\mathrm{Weak}\text{ }\mathrm{Canonical}\text{ }\mathrm{Vanishing}$}
$\Rightarrow$ \fbox{$\mathrm{Bounded}\text{ }\mathrm{TR}$}.
\end{enumerate}
\end{theorem}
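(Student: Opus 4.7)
The plan is to prove the six implications in turn, relying throughout on three basic inputs: the pullback definition of $\mathrm{TR}^j(X)$, the Antieau--Nikolaus cofiber sequence $\mathrm{TR}(X)_{hC_p} \xrightarrow{V} \mathrm{TR}(X) \xrightarrow{R} X$, and the norm cofiber sequence $X_{hG} \to X^{hG} \xrightarrow{\mathrm{can}} X^{tG}$ identifying the fiber of the canonical map. For (a), I would cite Antieau--Nikolaus directly: boundedness of $\mathrm{TR}(X)$ forces the Frobenius $\varphi$ to be an isomorphism on $\pi_*$ in high degrees, via their Beilinson $t$-structure analysis of $\mathrm{TR}$. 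For (d), boundedness of $\mathrm{TR}(X)$ combined with the cofiber sequence forces $X$ to be bounded; together with finiteness of each $\pi_i X$, the homotopy orbit, homotopy fixed point, and Tate spectral sequences for $C_{p^k}$ and $S^1$ all have finite entries concentrated in only finitely many columns per total degree, so every abutment is finite.

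The nilpotence statements (b), (c), and (e) follow from Postnikov arguments in the spirit of Mathew's work. For (b), a bounded $p$-power torsion $p$-complete $S^1$-spectrum has a finite Postnikov tower whose stages are Eilenberg--Mac Lane $S^1$-spectra on $\mathbb{Z}/p^N$-modules, each of which lies in the thick $\otimes$-ideal generated by $\mathbb{F}_p$ after filtering coefficients by powers of $p$. For (c), once $\mathrm{TR}(X)$ is $\mathbb{F}_p$-nilpotent, the cofiber sequence forces $X$ to be as well (since homotopy orbits preserve thick $\otimes$-ideals), and then $X^{tC_p}$ lies in the thick $\otimes$-ideal generated by $\mathbb{F}_p^{tC_p}$; one checks directly that $\mathbb{F}_p^{tC_p}$, with its residual $S^1$-action, is $\mathbb{D}S^1_+$-nilpotent. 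For (e), Tate nilpotence reduces the question to objects $Y$ in the thick $\otimes$-ideal generated by $\mathbb{D}S^1_+$, for which the canonical map $Y^{hC_{p^k}} \to Y^{tC_{p^k}}$ is nullhomotopic on $\tau_{\ge d}$ for a uniform $d$ because $\mathbb{D}S^1_+$ is induced and hence has vanishing Tate. The Segal conjecture, via the Nikolaus--Scholze identification $(X^{tC_p})^{hC_{p^{k-1}}} \simeq X^{tC_{p^k}}$ for bounded below $X$, lets us transport this nullhomotopy back to the canonical map for $X$ itself.

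The main obstacle will be (f), the new input provided by the third referee. Here the plan is to study the restriction tower $\{\mathrm{TR}^j(X)\}$, whose transition maps $R:\mathrm{TR}^{j+1} \to \mathrm{TR}^j$ have fibers identified with $X_{hC_{p^j}}$ via the pullback defining $\mathrm{TR}^{j+1}$ from $\mathrm{TR}^j$ and $X^{hC_{p^j}}$. The Segal conjecture makes the Frobenius $\varphi^{hC_{p^{j-1}}}$ appearing in the pullback into an equivalence on $\pi_*$ above a degree that is uniform in $j$ (using that taking $C_{p^{j-1}}$-homotopy fixed points of a map with bounded above fiber preserves the truncation property in positive degrees), while weak canonical vanishing gives that the canonical map $X^{hC_{p^{j+1}}} \to X^{tC_{p^{j+1}}}$ is zero on $\pi_*$ above a uniform degree \emph{independent of $j$}, precisely because the hypothesis is stated uniformly for all $0 \le k \le \infty$. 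Combining these, in each fixed degree $n$ above the uniform threshold the tower $\{\pi_n \mathrm{TR}^j(X)\}$ is eventually constant, so $\mathrm{TR}(X) = \lim_j \mathrm{TR}^j(X)$ has bounded homotopy via the Milnor $\lim^1$ sequence. The hard part is to upgrade the pointwise input (Frobenius an isomorphism, canonical zero, both on $\pi_*$) into a uniform statement about bounded fibers at the spectrum level across the entire tower, which is where the coupling of the Segal and weak canonical vanishing hypotheses is essential.
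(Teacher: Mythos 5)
Parts (a) and (b) of your proposal are fine and essentially agree with the paper ((a) is the citation to \cite{antieau-nikolaus}; for (b) the paper likewise reduces the bounded, $p$-power-torsion $S^1$-spectrum $\mathrm{TR}(X)$ to its Postnikov stages and filters the $\mathbb{Z}/p^N$-module coefficients). The serious problems are in (d), (e), and (f). In (d), the claim that bounded $\mathrm{TR}$ together with the cofiber sequence ``forces $X$ to be bounded'' is false: homotopy orbits do not preserve bounded-above objects, so $X\simeq\mathrm{cofib}(V\colon \mathrm{TR}(X)_{hC_p}\to\mathrm{TR}(X))$ is typically unbounded above. Concretely, for $X=\mathrm{THH}(\mathbb{F}_p)/p$ one has $\mathrm{TR}(X)\simeq \mathbb{Z}_p/p\simeq \mathbb{F}_p$, which is bounded, while $\pi_*X$ is unbounded since $\mathrm{THH}(\mathbb{F}_p)$ has polynomial homotopy; and for an unbounded $X$ with finite homotopy groups the fixed-point and Tate spectral sequences have infinitely many potentially nonzero entries in a fixed total degree, so finiteness of $\pi_iX^{hC_{p^k}}$ and $\pi_iX^{tC_{p^k}}$ is not automatic. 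The paper's proof of (d) instead uses (a)--(c): by the Segal conjecture $X$ agrees in high degrees with the $S^1$-nilpotent spectrum $X^{tC_p}$, and a lemma (proved with a finite skeleton of $ES^1$) shows that then $\tau_{\ge d}X\to X$ factors through an $S^1$-nilpotent spectrum for some $d$, whence $X^{tC_{p^k}}$ is a retract of $(\tau_{<d}X)^{tC_{p^k}}$ and finiteness follows. That same factorization lemma is exactly what your (e) is missing: $\tau_{\ge d}X$ is not itself in the thick tensor ideal, and the proposed ``transport'' of the nullhomotopy through $(X^{tC_p})^{hC_{p^{k-1}}}\simeq X^{tC_{p^k}}$ does not work, because homotopy fixed points do not preserve connectivity (a high-degree statement does not pass through $(-)^{hC_{p^{k-1}}}$ with a bound uniform in $k$), and it says nothing about the required $k=\infty$ case. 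The needed idea is precisely: a truncated equivalence onto an $S^1$-nilpotent object implies that a further truncation $\tau_{\ge d}X\to X$ factors through an $S^1$-nilpotent object, after which $(-)^{tC_{p^k}}$ for all $0\le k\le\infty$ kills it at once.

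In (f) you have the correct pullback squares and the two correct uniform inputs, but the right-hand vertical map of the square is the composite of the projection $\mathrm{TR}^{j-1}(X)\to X^{hC_{p^{j-1}}}$ with $\varphi^{hC_{p^{j-1}}}$; knowing that $\varphi^{hC_{p^{j-1}}}$ is an equivalence in uniformly high degrees is not enough. The missing step is Tsalidis's theorem as recorded in \cite[II.4.9]{nikolaus-scholze}, or equivalently an induction on $j$ using that the fiber of that projection coincides with the fiber of the previous square's right-hand vertical, so its coconnectivity bound does not grow with $j$. With that in place the argument is already entirely at the level of homotopy groups: in uniformly high degrees the bottom map is zero and the right vertical is injective, so $\pi_n(R)$ is \emph{zero} (not merely ``eventually constant'' --- it is the vanishing of the transition maps that kills $\lim$ and $\lim^1$), and $\mathrm{TR}(X)$ is bounded above, hence bounded. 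So the ``hard part'' you defer at the end (a spectrum-level upgrade of the pointwise statements) is not where the difficulty lies; the actual gap is the Tsalidis bootstrapping of the Segal conjecture along the tower. Finally, in (c), ``one checks directly that $\mathbb{F}_p^{tC_p}$ is $\mathbb{D}S^1_+$-nilpotent'' needs an argument: nilpotence cannot be read off from homotopy groups. The paper verifies $(\mathbb{F}_p^{hC_p})^{tS^1}=0$ and invokes the nilpotence criterion of \cite{mathew-naumann-noel}, then transports the conclusion along the lax monoidal functor $(-)^{tC_p}$.
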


\begin{remark}
We thank the anonymous referee for suggesting both the formulations and proofs of several of the statements in the above \Cref{thm:abstract-implications}.  Of the statements, $(a)$ appeared in previous work of Antieau and Nikolaus \cite[Proposition 2.25]{antieau-nikolaus}, and $(c)$ was communicated to us by Akhil Mathew. We thank both the referee and Mathew for suggesting that we present their work within this paper.
\end{remark}

We postpone the proof of Theorem \ref{thm:abstract-implications} to Section \ref{subsec:abstract-implications-proof}.  Let us now describe how we apply it. The main theorem of the remainder of the paper, stated
as Theorem \ref{thm:intro-bounded-tr} in the introduction,
is the following:

\begin{theorem} \label{thm:bpn-bounded-tr}
Let $\mathrm{BP} \langle n \rangle$ denote any $\mathbb{E}_3$-$\mathrm{MU}$-algebra form of $\mathrm{BP}\langle n \rangle$, and suppose that $F$ is a type $n+2$ complex.  Then $F \otimes \mathrm{THH}(\mathrm{BP}\langle n \rangle)$ satisfies \fbox{$\mathrm{Bounded}\text{ }\mathrm{TR}$}.
\end{theorem}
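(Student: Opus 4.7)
The plan is to invoke part $(f)$ of Theorem \ref{thm:abstract-implications}, which asserts that a bounded below, $p$-power torsion $p$-typical cyclotomic spectrum satisfying both \fbox{$\mathrm{Segal}\text{ }\mathrm{Conjecture}$} and \fbox{$\mathrm{Weak}\text{ }\mathrm{Canonical}\text{ }\mathrm{Vanishing}$} also satisfies \fbox{$\mathrm{Bounded}\text{ }\mathrm{TR}$}. The cyclotomic spectrum we care about is $X := F \otimes \mathrm{THH}(\mathrm{BP}\langle n\rangle)$. Since $F$ is a $p$-local finite complex of type $n+2$, some $p^N$ acts nullhomotopically on $F$ and hence on $X$, and $F$ is dualizable, so tensoring with $F$ commutes with $(-)^{hC_{p^k}}$ and $(-)^{tC_{p^k}}$ and preserves the cyclotomic structure inherited from $\mathrm{THH}(\mathrm{BP}\langle n\rangle)$. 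The hypotheses of (f) are thus in force, and the task reduces to verifying the two bracketed conditions for $X$.

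For the Segal Conjecture condition, Theorem \ref{thm:intro-segal} already gives truncatedness of the Frobenius of $F' \otimes \mathrm{THH}(\mathrm{BP}\langle n\rangle)$ whenever $F'$ is type $n+1$. Any type $n+2$ complex $F$ sits in a cofiber sequence whose other two terms are (suspensions of) a type $n+1$ complex $F_0$, so the truncatedness passes from the $F_0$-smash to the $F$-smash by a two-out-of-three argument on the fiber of the Frobenius. This dispatches the Segal Conjecture condition for $X$.

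The substantive content is Weak Canonical Vanishing for $X$, which is accomplished by descent along the $S^1$-equivariant map
\[
\mathrm{THH}(\mathrm{BP}\langle n\rangle)\longrightarrow \mathrm{THH}(\mathrm{BP}\langle n\rangle/\mathrm{MU}).
\]
By the Polynomial THH theorem (Theorem \ref{thm:poly-thh}) the target has polynomial homotopy with $\sigma^2 v_{n+1}$ among the generators, and by the Detection theorem (Theorem \ref{thm:intro-detection}) the image of $v_{n+1}$ in $\pi_*\bigl(\mathrm{THH}(\mathrm{BP}\langle n\rangle/\mathrm{MU})^{hS^1}\bigr)$ is $t\cdot \sigma^2 v_{n+1}$. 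Writing $F$ as the cofiber of a $v_{n+1}$-self-map on a type $n+1$ complex $F_0$, this identification forces the canonical map for the relative cyclotomic spectrum $F\otimes \mathrm{THH}(\mathrm{BP}\langle n\rangle/\mathrm{MU})$ to vanish on homotopy in a range of degrees that is independent of $k$, for all $0\le k\le \infty$. One then transfers this vanishing to $X$ itself along the descent spectral sequence associated to the $\mathrm{MU}$-Amitsur cobar resolution, using the type $n+1$ cofactor $F_0$ to make the spectral sequence collapse (or at least acquire a horizontal vanishing line) that is compatible with both the homotopy and Tate fixed point filtrations.

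The hard part, as I expect it, is this last descent step: $E_\infty$-level vanishing in the homotopy and Tate spectral sequences does not automatically produce vanishing of the canonical map on homotopy groups in a uniform range, and one must simultaneously control the descent filtration, the Tate filtration, and the $v_{n+1}$-self-map of $F_0$, compatibly for all $k\le \infty$. The polynomial structure from Theorem \ref{thm:poly-thh} together with the $t$-divisibility of $v_{n+1}$ supplied by Theorem \ref{thm:intro-detection} should be exactly the rigidity needed to push this through. Once \fbox{$\mathrm{Weak}\text{ }\mathrm{Canonical}\text{ }\mathrm{Vanishing}$} is in hand, Theorem \ref{thm:abstract-implications}(f) produces \fbox{$\mathrm{Bounded}\text{ }\mathrm{TR}$} and the proof is complete.
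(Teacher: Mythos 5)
Your high-level frame agrees with the paper: reduce via Theorem \ref{thm:abstract-implications}(f) (plus the thick subcategory theorem, which lets one work with a single convenient type $n+2$ complex) to the Segal conjecture and weak canonical vanishing, and the Segal half is indeed handled exactly as you say, by a thick subcategory/cofiber argument from the type $n+1$ statement of Theorem \ref{thm:segal-conjecture-main}. (Minor caution: not every type $n+2$ complex is literally a cofiber of a $v_{n+1}$-self map on a type $n+1$ complex, but thickness makes this harmless.)

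The genuine gap is the weak canonical vanishing step, which you explicitly leave as a ``hard part'' that ``should'' go through. Your proposed route --- first prove canonical vanishing for $F\otimes\mathrm{THH}(\mathrm{BP}\langle n\rangle/\mathrm{MU})$ and then transfer the vanishing itself to the absolute $\mathrm{THH}$ along the $\mathrm{MU}$-cobar descent spectral sequence --- is not what the paper does, and as you yourself note, $E_\infty$-page vanishing does not yield vanishing of $\pi_*(\mathrm{can})$ in a range uniform in $k$; nothing in your sketch resolves that. The paper instead descends only \emph{algebraic} information: Propositions \ref{prop:horiz-vanish} and \ref{prop:descent-fp} give a horizontal vanishing line and nilpotent kernel for the descent map on homotopy after smashing with a type $n+1$ complex, and this is used (via Lemma \ref{lem:take-powers-equal} and Lemma \ref{lem:nice-power}) to produce in Lemma \ref{lem:nice-z} a \emph{central} class $z\in\pi_*(M\otimes\mathrm{THH}(\mathrm{BP}\langle n\rangle))$ lifting a power of $\sigma^2 v_{n+1}$, such that $t^m z$ detects a central $v_{n+1}$-element $v$ of $M$ in the homotopy fixed point spectral sequence and $\pi_*(M\otimes\mathrm{THH}(\mathrm{BP}\langle n\rangle))$ is finitely generated over $\mathbb{Z}_{(p)}[z]$. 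This step requires the specific Smith-complex ring $M=\mathrm{End}(X)$ of Proposition \ref{recall:smiths}, whose central $v_{n+1}$-element has Adams filtration of maximal slope (used in Lemma \ref{lemma:high-Adams} to pin down the filtration of $v$, together with the collapse statement of Lemma \ref{lem:hfpss-collapses} which uses surjectivity of $\mathrm{MU}_*(M)\to\mathrm{BP}\langle n\rangle_*(M)$); a generic type $n+1$ complex $F_0$ with a $v_{n+1}$-self map does not obviously admit such a detected, central, filtration-controlled lift. Finally, the uniform-in-$k$ statement of Theorem \ref{thm:canvan-new} is obtained not by descent but by quotienting the Whitehead-filtered spectrum $\{\tau_{\ge j}(M\otimes\mathrm{THH}(\mathrm{BP}\langle n\rangle))\}$ by a filtered lift $\tilde v$, noting the homotopy fixed point side sits in nonnegative filtration while on the Tate side $\mathrm{gr}(X)^{tC_{p^k}}=\mathrm{gr}(X)^{tS^1}/p^k$ and multiplication by $z$ is eventually an isomorphism in nonnegative filtration by finite generation over $\mathbb{Z}_{(p)}[z]$. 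These constructions are the substance of the proof, and your proposal does not supply them.
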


By the thick subcategory theorem of Hopkins and Smith
\cite{hopkins-smith},
Theorem \ref{thm:bpn-bounded-tr} holds for an arbitrary type $n+2$ complex $F$ if and only if it holds for \emph{some} type $n+2$ complex $F$.  Thus, given Theorem \ref{thm:abstract-implications}(d,f), we can prove Theorem \ref{thm:bpn-bounded-tr} by checking the following two results independently:

\begin{theorem} \label{thm:bpn-segal}
(see Theorem \ref{thm:segal-conjecture-main})
For all type $n+2$ complexes $F$,
$F \otimes \mathrm{THH}(\mathrm{BP} \langle n \rangle)$
satisfies the Segal conjecture.
\end{theorem}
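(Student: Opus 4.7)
The plan is to prove the statement for type $n+1$ complexes, which by the thick subcategory theorem of Hopkins--Smith implies the stated type $n+2$ version, since every type $n+2$ complex lies in the thick subcategory generated by any chosen type $n+1$ complex. The central idea is to equip $\mathrm{BP}\langle n\rangle$ with a decreasing multiplicative filtration whose associated graded is a polynomial algebra over $\mathbb{F}_p$. The natural candidate is the $I$-adic filtration for $I=(p,v_1,\dots,v_n)\subset\pi_\ast\mathrm{BP}\langle n\rangle$, realized at the $\mathbb{E}_3$-$\mathrm{MU}$-algebra level using Theorem \ref{thm:multn-form}, so that
\[
\mathrm{gr}^\bullet\,\mathrm{BP}\langle n\rangle\simeq\mathbb{F}_p[v_0,v_1,\dots,v_n],
\]
with $v_i$ in filtration one. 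Applying $\mathrm{THH}$, a lax symmetric monoidal functor that preserves filtered colimits, yields a filtered cyclotomic spectrum with associated graded $\mathrm{THH}(\mathbb{F}_p[v_0,\dots,v_n])$.

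Next, I would smash everything with a chosen type $n+1$ complex $F$. Since $F$ is annihilated by each $v_i$ for $0\le i\le n$ after iterating a self-map, the $I$-adic filtration on $F\otimes\mathrm{THH}(\mathrm{BP}\langle n\rangle)$ becomes degreewise bounded, hence complete and strongly convergent. The cyclotomic Frobenius $\varphi$ is natural, so it refines to a map of filtered cyclotomic spectra; passing to associated graded yields a comparison between the Frobenius for $F\otimes\mathrm{THH}(\mathrm{BP}\langle n\rangle)$ and the Frobenius for $F\otimes\mathrm{THH}(\mathbb{F}_p[v_0,\dots,v_n])$. Controlling the Tate side requires additional care, since $(-)^{tC_p}$ does not commute with filtered colimits in general; however, the torsion hypothesis on $F$ cuts the filtration down to finitely many nonzero layers in each bounded range of degrees, so the map on associated graded spectral sequences genuinely computes the Frobenius up to convergence issues that can be resolved by bounded-below and finiteness arguments.

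Having reduced to the associated graded, the problem becomes a \emph{graded Segal conjecture} for the polynomial $\mathbb{E}_\infty$-$\mathbb{F}_p$-algebra $\mathbb{F}_p[v_0,\dots,v_n]$: one must show that the Frobenius
\[
\varphi:\mathrm{THH}(\mathbb{F}_p[v_0,\dots,v_n])\longrightarrow\mathrm{THH}(\mathbb{F}_p[v_0,\dots,v_n])^{tC_p}
\]
is truncated after smashing with an appropriate finite complex. I would attack this directly. Since the algebra is a tensor product of single-variable polynomial algebras and both $\mathrm{THH}$ and the Frobenius are symmetric monoidal, the problem reduces to the Segal conjecture for a single $\mathbb{F}_p[x]$. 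This in turn follows from B\"okstedt periodicity, which computes $\mathrm{THH}(\mathbb{F}_p[x])$ explicitly as a polynomial algebra over $\pi_\ast\mathrm{THH}(\mathbb{F}_p)$, together with the classical Segal conjecture for $\mathbb{F}_p$ itself (essentially B\"okstedt's comparison of $\mathrm{THH}(\mathbb{F}_p)$ with its $C_p$-Tate construction).

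The main obstacle, in my view, is the interaction of the Tate construction with the filtration: because $(-)^{tC_p}$ neither preserves filtered colimits nor commutes with completions, one must prove by hand that the associated graded of the Tate of the filtered cyclotomic spectrum coincides with the Tate of the associated graded, at least after smashing with the type $n+1$ complex $F$. This is the crux of the argument; the torsion hypothesis on $F$ is what makes the comparison go through, ensuring only finitely many graded pieces contribute in any fixed range. A secondary bookkeeping issue is to arrange the desired $\mathbb{E}_3$-$\mathrm{MU}$-algebra filtration on $\mathrm{BP}\langle n\rangle$ with polynomial associated graded in the first place, which may be accomplished by a Rees-type construction applied to the filtration by powers of $I$ on the $\mathbb{E}_3$-structure given by Theorem \ref{thm:multn-form}.
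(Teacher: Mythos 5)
Your overall strategy---refine $\mathrm{BP}\langle n\rangle$ to a multiplicative filtration whose associated graded is a graded polynomial $\mathbb{F}_p$-algebra, transport the cyclotomic Frobenius to the filtered/graded setting, and prove a graded Segal conjecture for polynomial $\mathbb{F}_p$-algebras by splitting into single-variable factors and invoking B\"okstedt periodicity plus the classical Segal conjecture---is the same as the paper's, which uses the d\'ecalage of the Adams filtration along $S^0\to\mathbb{F}_p$ (the descent tower of \S\ref{ssec:descent-sseq}), whose associated graded is $\mathbb{F}_p[v_0,\dots,v_n]$ by a change-of-rings argument (Recollection \ref{recall-adams-bpn}). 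The gap is in your choice of filtration: you never construct the $I$-adic filtration as a filtered ring spectrum, and your proposed justification is circular, since a ``Rees-type construction applied to the filtration by powers of $I$'' presupposes exactly the spectrum-level filtration by powers of $I$ that you are trying to build. The natural way to make a Rees construction work---exhibiting $\mathrm{BP}\langle n\rangle$ as an algebra over a polynomial ring on classes mapping to $p,v_1,\dots,v_n$---is ruled out in the remark at the end of \S\ref{subsec:outline}: already for $n=1$, an $\mathbb{E}_3$-$\mathrm{MU}[y_0]$-algebra structure on $\ell$ with $y_0$ acting by $p$ would make $k(1)=\ell/p$ an $\mathbb{E}_2$-ring with $p=0$, hence an $\mathbb{F}_p$-module, which it is not. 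A weaker ($\mathbb{E}_1$-level) filtration with your associated graded might conceivably exist, but producing it is precisely the non-formal input your argument needs and does not supply; the paper's descent filtration exists for free because $\mathrm{desc}_{\mathbb{F}_p}$ is a lax symmetric monoidal functor.

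Two further points would need repair even granting the filtration. First, the convergence claim is wrong as stated: since $v_0$ has topological degree $0$, infinitely many filtration weights contribute in each fixed degree even after smashing with a type $n+1$ complex $F$, so the filtration is not degreewise bounded; convergence has to be handled as in Propositions \ref{prop:adams-tower-connectivity} and \ref{prop:filtered-thh-converges}, via connectivity estimates and $v_0$-/$p$-completion, with Proposition \ref{prop:bounded-tate} supplying the compatibility of the Tate construction with the limit of the tower. Second, the graded Segal conjecture does not reduce to one variable by ``symmetric monoidality'' alone: $(-)^{tC_p}$ is only lax monoidal, and the paper needs both the uniqueness of the graded $\mathbb{E}_2$-$\mathbb{F}_p$-algebra structure (Proposition \ref{prop:poly-unique}) to split the algebra as $\mathbb{F}_p\otimes S^0[a_1]\otimes\cdots\otimes S^0[a_n]$, and the pointwise finiteness of $\mathrm{THH}(S^0[a])$ as a graded $C_p$-spectrum in nonzero weights (Lemma \ref{lem:poly-over-sphere}) to commute the Tate construction past the tensor product (Propositions \ref{prop:segal-for-sphere-poly} and \ref{ModpSegal}); your sketch elides these steps, which is where positivity of the weights is actually used.
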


\begin{theorem} \label{thm:bpn-weak-can-vanishing}
(see Theorem \ref{thm:canvan-new})
For some type $n+2$ complex $F$,
$F \otimes \mathrm{THH}(\mathrm{BP} \langle n \rangle)$
satisfies weak canonical vanishing.
\end{theorem}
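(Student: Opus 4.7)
The strategy relies on the Detection Theorem~\ref{thm:intro-detection} combined with MU-based descent. The basic observation is that if $\pi_*(F\otimes\mathrm{THH}(\mathrm{BP}\langle n\rangle)^{hC_{p^k}})$ is annihilated by a power of $t$ in degrees above some uniform $d$, then weak canonical vanishing is immediate: the canonical map factors through the Tate construction, in which $t$ becomes invertible, so $t$-power torsion classes must die. The plan is therefore to use Detection to force such $t$-power torsion after smashing with a well-chosen type $n+2$ complex $F$.

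First, I would set up the MU-descent tower associated to the $S^1$-equivariant unit map $\mathrm{THH}(\mathrm{BP}\langle n\rangle) \to \mathrm{THH}(\mathrm{BP}\langle n\rangle/\mathrm{MU})$, i.e., the cosimplicial object whose $q$th term is $\mathrm{THH}(\mathrm{BP}\langle n\rangle/\mathrm{MU}^{\otimes(q+1)})$, and smash it with a type $n+1$ complex $F_0$. Taking $C_{p^k}$-homotopy and Tate fixed points yields descent spectral sequences converging to the absolute HFPSS and Tate SS, with inputs controlled by the relative theory. By Theorem~\ref{thm:intro-detection}, the relative HFPSS collapses to the power series ring $\pi_*\mathrm{THH}(\mathrm{BP}\langle n\rangle/\mathrm{MU})[[t]]$, with the crucial identity $v_{n+1} = t\cdot\sigma^2 v_{n+1}$ coming from the MU-orientation; analogous statements hold for finite $C_{p^k}$.

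Second, I would upgrade to a type $n+2$ complex $F$ fitting in a cofiber sequence
\[
\Sigma^{N|v_{n+1}|}F_0 \xrightarrow{v_{n+1}^N} F_0 \to F,
\]
the existence of which follows from the Hopkins--Smith periodicity theorem applied to $F_0$. After this smash the class $v_{n+1}$ satisfies $v_{n+1}^N = 0$, so Detection forces $t^N(\sigma^2 v_{n+1})^N = 0$ in the relative HFPSS, and more generally every class divisible by a sufficiently high power of $\sigma^2 v_{n+1}$ becomes $t$-power torsion. One then propagates this torsion through the MU-descent spectral sequence: combining with the fact that most of the polynomial generators of the relative $\pi_*\mathrm{THH}(\mathrm{BP}\langle n\rangle/\mathrm{MU})$ from Theorem~\ref{thm:poly-thh} cannot survive to the much smaller absolute $\pi_*\mathrm{THH}(\mathrm{BP}\langle n\rangle)$, I would conclude that the entire absolute HFPSS for $F\otimes\mathrm{THH}(\mathrm{BP}\langle n\rangle)$ is $t$-power torsion above some degree $d$, uniformly in $0\le k\le\infty$.

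The main obstacle will be this third step: rigorously controlling the MU-descent spectral sequence after tensoring first with $F_0$ and then with the cofiber $F$, and verifying that $t$-power torsion in the relative HFPSS genuinely propagates to $t$-power torsion in the absolute without being resurrected by exotic classes in higher descent filtration. This requires a careful analysis of how descent differentials interact with the circle action and with the multiplicative structure, together with uniform estimates that do not degrade as $k\to\infty$. The Detection theorem provides the essential hook via $v_{n+1} = t\cdot\sigma^2 v_{n+1}$, but carrying out the descent to handle all generators of the relative theory in a uniform manner is the technical heart of the argument.
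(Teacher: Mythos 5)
Your overall setup points in the right direction — use Detection, descend from the relative theory $\mathrm{THH}(\mathrm{BP}\langle n\rangle/\mathrm{MU})$, kill a $v_{n+1}$-self-map, and compare filtrations in the homotopy and Tate fixed point spectral sequences — but the step you yourself flag as ``the technical heart'' is exactly where the argument is missing, and the route you propose for it would not go through as stated. Two concrete problems. First, on $F\otimes\mathrm{THH}(\mathrm{BP}\langle n\rangle)^{hC_{p^k}}$ for an arbitrary finite complex $F$ there is no element ``$t$'' acting on homotopy: $t$ lives in the $E_2$-page of the (relative, complex-oriented) fixed point spectral sequence, so ``being $t$-power torsion above degree $d$'' is not a well-posed condition on the absolute theory, and the heuristic ``$t$ is invertible in the Tate construction, so $t$-torsion dies'' does not by itself rule out filtration jumps under the canonical map. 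Second, and more seriously, your plan gives no mechanism for locating the class in the \emph{absolute} homotopy fixed point spectral sequence that detects the $v_{n+1}$-self-map, nor for the uniformity in $k$; ``propagating $t$-power torsion through the $\mathrm{MU}$-descent spectral sequence'' is precisely the part that needs an argument, and generic descent differentials and higher-filtration classes are a genuine obstruction, not a routine verification.

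The paper's proof (Theorem \ref{thm:canvan-new}) supplies exactly the inputs your outline lacks, and it is worth seeing why each is needed. Instead of an arbitrary type $n+1$ complex, it takes $M=\mathrm{End}(X)$ for Smith's complex, an $\mathbb{E}_1$-ring with a \emph{central} $v_{n+1}$-element $v$ whose Adams filtration is maximal, with $\mathrm{BP}\langle n\rangle\otimes M$ a sum of suspensions of $\mathbb{F}_p$ and $\mathrm{MU}_*(M)\to\mathrm{BP}\langle n\rangle_*(M)$ surjective (Proposition \ref{recall:smiths}). The maximal Adams filtration gives a \emph{lower} bound on the homotopy fixed point filtration of $v$ (Lemma \ref{lemma:high-Adams}), while Detection plus the collapse of the relative spectral sequence after smashing with $M$ (Lemma \ref{lem:hfpss-collapses}, using $\mathrm{MU}$-surjectivity) gives the matching \emph{upper} bound; together with the nilpotence control from the descent computation (Proposition \ref{prop:descent-fp}) and the Hopkins--Smith trick (Lemmas \ref{lem:take-powers-equal}, \ref{lem:nice-power}), one produces a central $z\in\pi_*(M\otimes\mathrm{THH}(\mathrm{BP}\langle n\rangle))$ lifting a power of $\sigma^2v_{n+1}$ such that $v$ is detected by $t^m z$ and, crucially, $\pi_*(M\otimes\mathrm{THH}(\mathrm{BP}\langle n\rangle))$ is finitely generated over $\mathbb{Z}_{(p)}[z]$ (Lemma \ref{lem:nice-z}). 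One then quotients the Whitehead-filtered object by a high-filtration lift $\tilde v$ and observes that the resulting modified homotopy fixed point spectral sequence sits in nonnegative filtration, while the modified Tate spectral sequence is eventually concentrated in negative filtration uniformly in $k$ — the uniformity coming from $\mathrm{gr}(X)^{tC_{p^k}}=\mathrm{gr}(X)^{tS^1}/p^k$ and the finite generation over $\mathbb{Z}_{(p)}[z]$ — so the canonical map is forced to vanish in large degrees. Without analogues of these ingredients (centrality, the two-sided filtration bound on the self-map, and finite generation over a lift of a power of $\sigma^2v_{n+1}$), your descent-propagation step has no way to exclude the exotic higher-filtration classes you worry about, so as written the proposal does not yet constitute a proof.
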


While it is convenient for our proof of Theorem \ref{thm:bpn-weak-can-vanishing} that we pick a particularly nice $F$, it follows from Theorem \ref{thm:abstract-implications}(a,b) that it holds for all choices of $F$. As Akhil Mathew explained to us, we may also use Theorem \ref{thm:abstract-implications} to deduce results about general $\mathbb{E}_1$-$\mathrm{BP}\langle n \rangle$-algebras:

\begin{proposition}[Mathew] \label{prop:mathew-bpn} Suppose that $A$ is a connective $\mathbb{E}_1$-$\mathrm{BP}\langle n\rangle$-algebra, and that $F$ is a type $n+2$ complex.
Then, if $F \otimes \mathrm{THH}(A)$ satisfies the Segal conjecture, $F \otimes \mathrm{TR}(A)$ is bounded.
\end{proposition}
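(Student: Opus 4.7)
The plan is to verify Bounded TR for $F\otimes\mathrm{THH}(A)$ by invoking Theorem \ref{thm:abstract-implications}(f): this reduces the task to establishing the Segal conjecture and Weak Canonical Vanishing. Segal is given by hypothesis, so the only real work is in obtaining Weak Canonical Vanishing. To that end, I would prove the stronger Canonical Vanishing by applying Theorem \ref{thm:abstract-implications}(e), which requires combining the given Segal conjecture with Tate Nilpotence for $F\otimes\mathrm{THH}(A)$.

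The heart of the argument will be transferring Tate Nilpotence from $F\otimes\mathrm{THH}(\mathrm{BP}\langle n\rangle)$, where it already holds, over to $F\otimes\mathrm{THH}(A)$. By Theorem \ref{thm:bpn-bounded-tr} together with implications (b) and (c) of Theorem \ref{thm:abstract-implications}, the cyclotomic spectrum $F\otimes\mathrm{THH}(\mathrm{BP}\langle n\rangle)$ satisfies Tate Nilpotence; that is, $(F\otimes\mathrm{THH}(\mathrm{BP}\langle n\rangle))^{tC_p}$ lies in the thick tensor ideal $\langle\mathbb{D}S^1_+\rangle$ inside $\mathsf{Fun}(\mathrm{B}S^1,\mathsf{Sp})$.

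Since $A$ is an $\mathbb{E}_1$-$\mathrm{BP}\langle n\rangle$-algebra and $\mathrm{BP}\langle n\rangle$ is $\mathbb{E}_3$, $\mathrm{THH}(A)$ carries a canonical $\mathrm{THH}(\mathrm{BP}\langle n\rangle)$-module structure in cyclotomic spectra. Applying the lax symmetric monoidal functor $(-)^{tC_p}$ makes $\mathrm{THH}(A)^{tC_p}$ a module over the ring $\mathrm{THH}(\mathrm{BP}\langle n\rangle)^{tC_p}$ in $S^1$-spectra; tensoring with the finite, trivially acting spectrum $F$ preserves both the Tate fixed points and this module structure, so $(F\otimes\mathrm{THH}(A))^{tC_p}$ is also a $\mathrm{THH}(\mathrm{BP}\langle n\rangle)^{tC_p}$-module in $\mathsf{Fun}(\mathrm{B}S^1,\mathsf{Sp})$. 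The standard unit-and-action retract then realizes this $S^1$-spectrum as a retract of
\[\mathrm{THH}(\mathrm{BP}\langle n\rangle)^{tC_p}\otimes(F\otimes\mathrm{THH}(A))^{tC_p}\simeq\bigl(F\otimes\mathrm{THH}(\mathrm{BP}\langle n\rangle)^{tC_p}\bigr)\otimes\mathrm{THH}(A)^{tC_p}.\]
Because $F\otimes\mathrm{THH}(\mathrm{BP}\langle n\rangle)^{tC_p}\in\langle\mathbb{D}S^1_+\rangle$ and thick tensor ideals absorb arbitrary tensor factors, the whole object belongs to $\langle\mathbb{D}S^1_+\rangle$, and therefore so does the retract. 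This establishes Tate Nilpotence for $F\otimes\mathrm{THH}(A)$.

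Chaining things together: Segal (hypothesis) plus Tate Nilpotence (just proved) yield Canonical Vanishing by Theorem \ref{thm:abstract-implications}(e), which in particular implies Weak Canonical Vanishing; combined once more with Segal, Theorem \ref{thm:abstract-implications}(f) produces Bounded TR, so that $F\otimes\mathrm{TR}(A)$ is bounded. The main obstacle I anticipate is careful bookkeeping for the module structures after applying Tate fixed points --- specifically, verifying that $\mathrm{THH}(A)^{tC_p}$ is an honest $S^1$-equivariant module over $\mathrm{THH}(\mathrm{BP}\langle n\rangle)^{tC_p}$ in the functorial way needed for the retract, and that the factor $F$ slides through Tate fixed points as claimed. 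Both are formal consequences of the lax symmetric monoidal structure on $(-)^{tC_p}$ applied to the cyclotomic module structure on $\mathrm{THH}(A)$; once in place, the nilpotence transfer is purely a thick-tensor-ideal argument.
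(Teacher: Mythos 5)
Your proposal is correct, and its overall skeleton is the one the paper uses: with the Segal conjecture given by hypothesis, everything is funneled through Theorem \ref{thm:abstract-implications} (parts (e) and (f)), and the nilpotence input is transferred from $\mathrm{BP}\langle n\rangle$ to $A$ by a unit-and-action retract coming from the module structure, with Theorem \ref{thm:bpn-bounded-tr} as the essential nonformal ingredient. The one place you diverge is the level at which the transfer happens. The paper transfers \fbox{$\mathbb{F}_p$ Nilpotence} at the level of $\mathrm{TR}$: it observes that $F\otimes\mathrm{TR}(A)$ is a retract of $F\otimes\mathrm{TR}(\mathrm{BP}\langle n\rangle)\otimes\mathrm{TR}(A)$, notes that $F\otimes\mathrm{TR}(\mathrm{BP}\langle n\rangle)$ lies in the thick tensor ideal generated by $\mathbb{F}_p$ by Theorem \ref{thm:bpn-bounded-tr} and Theorem \ref{thm:abstract-implications}(b), and only then feeds $\mathbb{F}_p$-nilpotence of $F\otimes\mathrm{THH}(A)$ into (c), (e), (f). You instead apply (b) and (c) to $F\otimes\mathrm{THH}(\mathrm{BP}\langle n\rangle)$ first and transfer \fbox{Tate Nilpotence} directly, using that $(-)^{tC_p}$ is lax symmetric monoidal on $S^1$-spectra so that $(F\otimes\mathrm{THH}(A))^{tC_p}$ is a retract of $\mathrm{THH}(\mathrm{BP}\langle n\rangle)^{tC_p}\otimes(F\otimes\mathrm{THH}(A))^{tC_p}$, together with the (valid) observation that the finite complex $F$ passes through the Tate construction. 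Both routes are sound and rest on the same main theorems; the paper's version is marginally shorter because the retract statement for $\mathrm{TR}$ needs no equivariant Tate bookkeeping, while yours has the mild advantage of invoking only the standard lax monoidality of $(-)^{tC_p}$ (the same structure already used elsewhere in the paper, e.g.\ in the proof of Proposition \ref{ModpSegal}) rather than monoidality of $\mathrm{TR}$. Do make sure, when citing Theorem \ref{thm:abstract-implications}, to note that its hypotheses (bounded below and $p$-power torsion as a cyclotomic spectrum) hold for $F\otimes\mathrm{THH}(A)$ and $F\otimes\mathrm{THH}(\mathrm{BP}\langle n\rangle)$ because $A$ is connective and $p^N$ is null on the finite type $n+2$ complex $F$; this is implicit in your write-up and is harmless, but it is the only hypothesis-checking the argument needs.
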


\begin{proof}
It suffices to show that $F \otimes \mathrm{TR}(A)$ satisfies \fbox{$\mathbb{F}_p\text{ }\mathrm{Nilpotence}$}.  However, $F \otimes \mathrm{TR}(A)$ is a retract of $F \otimes \mathrm{TR}(\mathrm{BP}\langle n\rangle) \otimes \mathrm{TR}(A)$, so it suffices to check that $F \otimes \mathrm{TR}(\mathrm{BP}\langle n\rangle)$ satisfies \fbox{$\mathbb{F}_p\text{ }\mathrm{Nilpotence}$}.  This follows from 
Theorem \ref{thm:bpn-bounded-tr} and 
Theorem \ref{thm:abstract-implications}(b).
\end{proof}

\subsection{Lichtenbaum--Quillen and bounded $\mathrm{TR}$}

\begin{theorem} \label{tc-lq-master}
Let $A$ be a connective $\mathbb{E}_1$-ring and $F$ a type $n+2$ complex.
Suppose that
	\begin{enumerate}[{\rm (i)}]
	\item $F\otimes \mathrm{TR}(A)$ is bounded.
%	\item $\mathrm{K}(\pi_0A)$ is fp.
%	\item $L_{K(n+1)}\mathrm{TC}(A)\ne 0$.
	\item $\pi_i(A_p^{\wedge})$ is a finitely generated $\mathbb{Z}_p$-module, for all $i$. 
	\end{enumerate}
Then $\mathrm{TC}(A)$ is an fp-spectrum of fp-type at most $n+1$.  In particular, this implies that
	\[\mathrm{TC}(A) \to L_{n+1}^f \mathrm{TC}(A)\]
	is truncated.
\end{theorem}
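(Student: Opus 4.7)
The plan is to reduce to the Mahowald--Rezk theorem by producing a type $n+2$ complex $F$ with $\pi_*(F \otimes \mathrm{TC}(A))$ finite; combined with the fact that $\mathrm{TC}(A)$ is $p$-complete (by our conventions on cyclotomic spectra) and bounded below (since $A$, and hence $\mathrm{THH}(A)$, is connective), this makes $\mathrm{TC}(A)$ into an fp-spectrum of fp-type at most $n+1$, and Theorem~\ref{fp-implies-LQ} then supplies the truncatedness of the localization map. I would take $F$ to be the complex appearing in hypothesis~(i).

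The boundedness half is immediate: from the Nikolaus--Scholze identification $\mathrm{TC}(Y) \simeq \mathrm{fib}(1 - F : \mathrm{TR}(Y) \to \mathrm{TR}(Y))$ and the commutation of $F \otimes (-)$ with fibers, we obtain
\[
F \otimes \mathrm{TC}(A) \simeq \mathrm{fib}\bigl(1 - F : F \otimes \mathrm{TR}(A) \to F \otimes \mathrm{TR}(A)\bigr),
\]
which is bounded by~(i). For the finiteness of the homotopy groups, I would set $X := F \otimes \mathrm{THH}(A)^\wedge_p$ and apply Theorem~\ref{thm:abstract-implications}(d). Because $F$ is a nontrivial $p$-local finite complex of type $\geq 1$, it is rationally acyclic, so $p^N \cdot \mathrm{id}_F \simeq 0$ for some $N$; thus $X$ is a bounded below, $p$-power torsion cyclotomic spectrum, as required by the standing hypothesis of Theorem~\ref{thm:abstract-implications}. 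Hypothesis~(ii) combined with the B\"okstedt spectral sequence---whose $E^2$-page is degreewise finitely generated over $\mathbb{Z}_p$ because $\pi_* A^\wedge_p$ is---ensures each $\pi_i \mathrm{THH}(A)^\wedge_p$ is a finitely generated $\mathbb{Z}_p$-module. Smashing with $F$ and using $p^N \cdot \mathrm{id}_X \simeq 0$ then forces each $\pi_i X$ to be finite. Applying Theorem~\ref{thm:abstract-implications}(d) to $X$, whose \fbox{$\mathrm{Bounded}\text{ }\mathrm{TR}$} condition is exactly hypothesis~(i), yields \fbox{$\mathrm{Finiteness}$}, and in particular $\pi_i \mathrm{TC}(X) = \pi_i(F \otimes \mathrm{TC}(A))$ is finite for all~$i$.

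Combining boundedness with degreewise finiteness gives $\pi_*(F \otimes \mathrm{TC}(A))$ finite, and the Mahowald--Rezk theorem concludes the argument. The main technical input is propagating degreewise finite generation from $\pi_* A^\wedge_p$ to $\pi_* \mathrm{THH}(A)^\wedge_p$ via the B\"okstedt spectral sequence; the remaining assembly is formal given Theorem~\ref{thm:abstract-implications}(d) and Mahowald--Rezk. I do not anticipate serious obstacles, since this reduction is essentially what the conditions collected in Theorem~\ref{thm:abstract-implications} are designed to accomplish.
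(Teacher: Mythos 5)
Your overall skeleton coincides with the paper's: boundedness of $\pi_*(F \otimes \mathrm{TC}(A))$ comes from writing $\mathrm{TC}$ as the fiber of a self-map of $\mathrm{TR}$ and invoking (i), and degreewise finiteness comes from applying Theorem~\ref{thm:abstract-implications}(d) to the $p$-power torsion cyclotomic spectrum $F \otimes \mathrm{THH}(A)$ (the paper's text cites (e), but (d) is what is meant, and you use it correctly), after which Mahowald--Rezk and Theorem~\ref{fp-implies-LQ} finish the job exactly as you say.

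The one non-formal step, however, is justified incorrectly. The B\"okstedt spectral sequence does not have the form you describe: its $E^2$-page is the Hochschild homology $\mathrm{HH}_*\bigl(H_*(A;\mathbb{F}_p)\bigr)$ over $\mathbb{F}_p$, converging to $H_*(\mathrm{THH}(A);\mathbb{F}_p)$, not a spectral sequence from $\pi_* A^{\wedge}_p$ to $\pi_* \mathrm{THH}(A)^{\wedge}_p$; there is no spectral sequence with $E^2$ given by Hochschild homology of $\pi_*A^{\wedge}_p$ over $\mathbb{Z}_p$ without a flatness or base-algebra hypothesis. So as written, the propagation of finite generation from $\pi_* A^{\wedge}_p$ to $\pi_* \mathrm{THH}(A)^{\wedge}_p$ is not established. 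The gap is repairable: either pass through mod~$p$ homology (hypothesis (ii) plus connectivity gives $H_*(A;\mathbb{F}_p)$ degreewise finite, the skeletal filtration of the cyclic bar construction together with the K\"unneth theorem then gives $H_*(\mathrm{THH}(A);\mathbb{F}_p)$ degreewise finite, whence $\pi_*\mathrm{THH}(A)^{\wedge}_p$ is degreewise finitely generated), or argue as the paper does, which avoids the intermediate claim entirely: since $A$ is connective, $\pi_i(F \otimes \mathrm{THH}(A))$ only sees finitely many terms of the cyclic bar construction, so it suffices to show each $\pi_{i-k}(F \otimes A^{\otimes k+1})$ is finite; the $p$-completion of $A^{\otimes k+1}$ has finitely generated homotopy by (ii) and connectivity, and smashing with the torsion complex $F$ (type $\ge 1$) makes these groups finite. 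With either repair your argument goes through and is essentially the paper's proof.
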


\begin{proof}
Since $\mathrm{TC}$ is calculated as the fiber of a self-map of $\mathrm{TR}$, we know by assumption (i) that $\pi_*(F \otimes \mathrm{TC}(A))$ is bounded.
It remains only to check that each $\pi_i(F \otimes \mathrm{TC}(A))$ is finite.  By Theorem \ref{thm:abstract-implications}(e), it suffices to show that each homotopy group
$\pi_i(F \otimes \mathrm{THH}(A))$
is finite.  Recall that $\mathrm{THH}(A)$ can be computed as the geometric realization of the cyclic bar construction $\bullet \mapsto A^{\otimes \bullet+1}$.  Since $A$ is connective, it therefore suffices to prove $\pi_{i-k}(F \otimes A^{\otimes k+1})$ is finite for each $i$ and $k$.  The $p$-completion of the tensor product $A^{\otimes k+1}$ will have finitely generated homotopy groups, by connectivity and $(ii)$.  Since $F$ is not type $0$, the result follows.
\end{proof}

\begin{theorem}
Let $A$ be a connective $\mathbb{E}_1$-ring and $F$ a type $n+2$ complex.
Suppose that
	\begin{enumerate}[{\rm (i)}]
	\item $F\otimes \mathrm{TR}(A)$ is bounded.
%	\item $\mathrm{K}(\pi_0A)$ is fp.
%	\item $L_{K(n+1)}\mathrm{TC}(A)\ne 0$.
	\item $\pi_i (A_p^{\wedge})$ is a finitely generated $\mathbb{Z}_p$-module for all $i$. 
	\item $F \otimes \mathrm{TR}\left(\pi_0A\right)$ is bounded.
	\end{enumerate}
	Then,
	\begin{enumerate}[{\rm (a)}]
	\item If $\pi_*(F \otimes \mathrm{K}(\pi_0 A))$ is finite, then $\mathrm{K}(A)_p^{\wedge}$ is an fp-spectrum of fp-type at most $n+1$.
	\item If the map $\mathrm{K}(\pi_0 A)_{(p)} \to L^{f}_{n+1} \mathrm{K}(\pi_0A)_{(p)}$ is truncated, then the map
		\[
		\mathrm{K}(A)_{(p)} \to L_{n+1}^f\mathrm{K}(A)_{(p)}
		\]
	is truncated.
	\end{enumerate}
\end{theorem}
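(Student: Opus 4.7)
The plan is to combine Theorem~\ref{tc-lq-master} with the Dundas--Goodwillie--McCarthy (DGM) theorem, which gives a pullback square of $\mathrm{K}$ and $\mathrm{TC}$ (after $p$-completion) for any map of connective $\mathbb{E}_1$-rings inducing an isomorphism on $\pi_0$. Applied to the Postnikov truncation $A \to \pi_0 A$, DGM identifies $\mathrm{fib}(\mathrm{K}(A) \to \mathrm{K}(\pi_0 A))^{\wedge}_p$ with $\mathrm{fib}(\mathrm{TC}(A) \to \mathrm{TC}(\pi_0 A))^{\wedge}_p$. First I would apply Theorem~\ref{tc-lq-master} to $A$, where hypotheses (i) and (ii) apply directly, and then to $\pi_0 A$, using hypothesis (iii) as the bounded-$\mathrm{TR}$ input; the required finite-generation hypothesis for $\pi_0 A$ follows from (ii), since $\pi_0((\pi_0 A)^{\wedge}_p) = \pi_0(A^{\wedge}_p)$ is finitely generated over $\mathbb{Z}_p$ and, for such a module, the derived $p$-completion has no other nonvanishing homotopy. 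Thus both $\mathrm{TC}(A)$ and $\mathrm{TC}(\pi_0 A)$ are fp-spectra of fp-type at most $n+1$, and both localization maps $\mathrm{TC}(-)_{(p)} \to L_{n+1}^{f}\mathrm{TC}(-)_{(p)}$ are truncated.

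For part (a), the hypothesis that $\pi_*(F \otimes \mathrm{K}(\pi_0 A))$ is finite, together with the connectivity of $\mathrm{K}$-theory, gives via Mahowald--Rezk that $\mathrm{K}(\pi_0 A)^{\wedge}_p$ is fp of fp-type at most $n+1$. The DGM identification places $\mathrm{fib}(\mathrm{K}(A) \to \mathrm{K}(\pi_0 A))^{\wedge}_p$ in a fiber sequence between $\mathrm{TC}(A)^{\wedge}_p$ and $\mathrm{TC}(\pi_0 A)^{\wedge}_p$, both of fp-type at most $n+1$, so it too is fp of fp-type at most $n+1$. Since being fp of fp-type at most $n+1$ is preserved under fibers, $\mathrm{K}(A)^{\wedge}_p$ itself is fp of fp-type at most $n+1$.

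For part (b), I would compare the fiber sequence $C \to \mathrm{K}(A)_{(p)} \to \mathrm{K}(\pi_0 A)_{(p)}$ with its $L_{n+1}^{f}$-localization. The right-hand vertical map is truncated by hypothesis. The left-hand vertical map is truncated because $C$ agrees after $p$-completion with $\mathrm{fib}(\mathrm{TC}(A) \to \mathrm{TC}(\pi_0 A))$, whose localization map is the fiber of two truncated maps and is therefore truncated. Taking fibers along each column of the square and observing that the fibers of the outer two columns are bounded above forces the middle column's fiber to be bounded above as well, so the middle vertical is truncated. The main technical point to check is the bookkeeping between $p$-completion (used by DGM) and $p$-localization (in the statement of (b)), which is resolved by noting that the chromatic localizations $L_{n+1}^{f}$ depend only on $p$-local data and that truncatedness of a map of bounded-below spectra can be detected after $p$-completion.
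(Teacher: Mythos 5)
Your overall strategy is the paper's: $p$-complete the Dundas--Goodwillie--McCarthy square for $A \to \pi_0 A$, apply Theorem \ref{tc-lq-master} to both $A$ and $\pi_0 A$, and propagate the fp-type bound (for (a)) and the truncated-localization property (for (b)) through fiber sequences. Two of your justifications, however, are not correct as stated. The minor one first: it is not true that finite generation of $\pi_0(A_p^{\wedge})$ over $\mathbb{Z}_p$ forces the derived $p$-completion of the discrete ring $\pi_0 A$ to be concentrated in degree $0$; for instance, if $\pi_0 A$ contains a copy of $\mathbb{Z}/p^{\infty}$, then $\pi_1\bigl((\pi_0 A)_p^{\wedge}\bigr) \cong \mathrm{Hom}(\mathbb{Z}/p^{\infty},\pi_0 A)$ is nonzero. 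What you actually need (and what the paper asserts) is only that all homotopy groups of $(\pi_0 A)_p^{\wedge}$ are finitely generated over $\mathbb{Z}_p$, and this does follow from hypothesis (ii), used in degrees $0$ and $1$: $\mathrm{Ext}(\mathbb{Z}/p^{\infty},\pi_0 A)\cong\pi_0(A_p^{\wedge})$ and $\mathrm{Hom}(\mathbb{Z}/p^{\infty},\pi_0 A)$ is a quotient of $\pi_1(A_p^{\wedge})$.

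The more serious issue is in (b): the principle ``truncatedness of a map of bounded-below spectra can be detected after $p$-completion'' is false in general, because rational homotopy is invisible to $p$-completion (a bounded-below spectrum with unbounded rational homotopy can have zero $p$-completion). This is exactly the delicate point, since you are trying to pass from statements about the $p$-complete spectra $\mathrm{TC}(A)$, $\mathrm{TC}(\pi_0 A)$ to a statement about $p$-localized $K$-theory. The gap is fixable, and the paper fixes it as follows: the class of spectra $X$ for which $X_{(p)} \to L^f_{n+1}X_{(p)}$ is truncated is closed under fiber sequences (as $L^f_{n+1}$ is exact) and contains all rational spectra (on which $L^f_{n+1}$ is the identity), and one then uses the arithmetic fracture square $X_{(p)} \simeq X^{\wedge}_p \times_{X^{\wedge}_p[1/p]} X_{(p)}[1/p]$ to shuttle between the $p$-complete and $p$-local statements. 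Alternatively, one can note that the fiber of $X_{(p)} \to L^f_{n+1}X_{(p)}$ is $L^f_{n+1}$-acyclic, hence rationally trivial, and for a bounded-below spectrum with trivial rationalization boundedness above can indeed be checked after $p$-completion; but some such input beyond your stated general principle is required for the bookkeeping step to go through.
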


\begin{remark}
The condition in $(a)$ of the above theorem is that $\mathrm{K}(\pi_0 A)_p^{\wedge}$ is fp of fp type at most $n+1$.  Mitchell's theorem \cite{Mitchell} ensures that, if $\mathrm{K}(\pi_0 A)_p^{\wedge}$ is fp, then it will be of fp type at most $1$.  Similarly, Mitchell's theorem implies that the spectrum $L^{f}_{n+1} \mathrm{K}(\pi_0A)$ appearing in $(b)$ is equivalent to $L_1^{f} \mathrm{K}(\pi_0A)$.
\end{remark}

\begin{proof}
By $p$-completing the pullback square in the Dundas--Goodwillie--McCarthy theorem \cite[Chapter VII, Theorem 0.0.2]{DGM-book}, we obtain a pullback square
\[
\begin{tikzcd}
\mathrm{K}(A)_p^{\wedge} \arrow{r} \arrow{d} & \mathrm{TC}(A) \arrow{d} \\
\mathrm{K}(\pi_0A)_p^{\wedge} \arrow{r} & \mathrm{TC}(\pi_0 (A)).
\end{tikzcd}
\]
Here we note that the symbol $\mathrm{TC}$ above agrees,
by our conventions, with the $p$-completion of what the authors
of \cite{DGM-book} denote by $\mathrm{TC}$.

The assumption that each homotopy group $\pi_i A_p^{\wedge}$ is finitely generated ensures additionally that $(\pi_0A)_p^{\wedge}$, by which we mean the $p$-completion of the Eilenberg--MacLane spectrum $\pi_0A$, also has finitely generated homotopy groups.
By Theorem \ref{tc-lq-master}, we know that $\mathrm{TC}(\pi_0 A)$ and $\mathrm{TC}(A)$ are fp-spectra of type at most $n+1$.  We then observe that the condition of being an fp-spectrum of type at most $n+1$ is preserved under fiber sequences and finite coproducts, proving $(a)$.

Similarly, to prove $(b)$ we observe that the collection of spectra $X$ such that
\[X_{(p)} \to L_{n+1}^{f} X_{(p)}\]
is truncated is also closed under fiber sequences and finite coproducts. This class of
spectra includes all rational spectra. The claim $(b)$ now follows
from Theorems \ref{fp-implies-LQ} and \ref{tc-lq-master}, the Dundas-Goodwillie-McCarthy square above, and the arithmetic pullback square
	\[
	\xymatrix{
	X_{(p)} \ar[r]\ar[d] & X^{\wedge}_p \ar[d]\\
	X_{(p)}[p^{-1}] \ar[r] & X^{\wedge}_p[p^{-1}]
	}
	\]
\end{proof}

As a corollary of these results, we deduce the main theorems of the Introduction.

\begin{corollary}
Let $A$ denote any $\mathbb{E}_3$-$\mathrm{MU}$-algebra form of $\mathrm{BP}\langle n \rangle$.  Then:
\begin{itemize}
\item $\mathrm{TC}(A)$ is fp of fp-type at most $n+1$, as is $\mathrm{K}(A_p^{\wedge})_{p}^{\wedge}$.
\item The map 
\[\mathrm{K}(A)_{(p)} \to L_{n+1}^{f} \mathrm{K}(A)_{(p)}\]
is an equivalence in large degrees.
\end{itemize}
\end{corollary}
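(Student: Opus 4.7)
The plan is to deduce both bullets as direct consequences of Theorem \ref{tc-lq-master} and the unlabeled theorem immediately preceding the corollary, once the hypotheses are verified for $A$ an $\mathbb{E}_3$-$\mathrm{MU}$-algebra form of $\mathrm{BP}\langle n \rangle$. The entire remainder of the paper is devoted to establishing the only nontrivial input, namely Theorem \ref{thm:bpn-bounded-tr}, so at this stage the corollary is essentially a matter of bookkeeping.

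For the claim that $\mathrm{TC}(A)$ is fp of fp-type at most $n+1$, I would apply Theorem \ref{tc-lq-master} to $A$. Hypothesis (i), that $F \otimes \mathrm{TR}(A)$ is bounded for some type $n+2$ complex $F$, is exactly Theorem \ref{thm:bpn-bounded-tr}. Hypothesis (ii), that each $\pi_i A_p^{\wedge}$ is a finitely generated $\mathbb{Z}_p$-module, is immediate from the identification $\pi_\ast A \cong \mathbb{Z}_{(p)}[v_1,\ldots,v_n]$, which has finite rank in every degree.

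For the statement about $\mathrm{K}(A_p^{\wedge})_p^{\wedge}$ and for the second bullet, I would apply the final theorem of the subsection (with $A$ replaced by $A_p^{\wedge}$ in the former case, and $A$ itself in the latter). Hypotheses (i) and (ii) are handled as above, noting that for connective $A$ with finitely generated homotopy groups the cyclotomic spectrum underlying $\mathrm{TR}(A_p^{\wedge})$ agrees with that of $\mathrm{TR}(A)$ after $p$-completion. The new hypothesis (iii) requires $F \otimes \mathrm{TR}(\pi_0 A)$ to be bounded; since $\pi_0 A = \mathbb{Z}_{(p)}$, this follows from the classical Lichtenbaum--Quillen conjecture for $\mathbb{Z}$ proved by Voevodsky and Rost \cite{voeI,voeII}, combined with the Mahowald--Rezk theory. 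For part (a) of the applied theorem one further needs $\pi_\ast(F \otimes \mathrm{K}(\mathbb{Z}_p))$ to be finite, which again follows from Voevodsky--Rost together with the fact that $F$ is not of type $0$. For part (b), which yields the second bullet, one needs the map $\mathrm{K}(\mathbb{Z}_{(p)})_{(p)} \to L_{n+1}^f \mathrm{K}(\mathbb{Z}_{(p)})_{(p)}$ to be truncated; by Mitchell's theorem \cite{Mitchell}, $\mathrm{K}(\mathbb{Z}_{(p)})$ is already $L_1^f$-local after $p$-completion, so this localization agrees with $\mathrm{K}(\mathbb{Z}_{(p)})_{(p)} \to L_1^f \mathrm{K}(\mathbb{Z}_{(p)})_{(p)}$, whose truncatedness is once more the Voevodsky--Rost theorem.

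There is no genuine obstacle in this corollary itself: the hard content sits entirely in Theorem \ref{thm:bpn-bounded-tr} (which the rest of the paper proves via the Segal conjecture and canonical vanishing for $\mathrm{THH}(\mathrm{BP}\langle n\rangle)$) and in the input from classical algebraic $K$-theory of $\mathbb{Z}$. What this final corollary contributes is merely the assembly of these ingredients via the Dundas--Goodwillie--McCarthy square and the arithmetic fracture square, which is already encoded in the two theorems being invoked.
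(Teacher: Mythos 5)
Your overall strategy coincides with the paper's: both bullets are obtained by feeding Theorem \ref{thm:bpn-bounded-tr} and the finite generation of $\pi_*(\mathrm{BP}\langle n\rangle^{\wedge}_p)\cong\mathbb{Z}_p[v_1,\dots,v_n]$ into Theorem \ref{tc-lq-master} and the theorem with parts (a) and (b), and the only genuinely external input is the Voevodsky--Rost theorem in Waldhausen's formulation (with Mitchell's theorem identifying $L_{n+1}^f\mathrm{K}(\mathbb{Z}_{(p)})$ with $L_1^f\mathrm{K}(\mathbb{Z}_{(p)})$). However, your verification of hypothesis (iii) has a genuine gap. You claim that the boundedness of $F\otimes\mathrm{TR}(\mathbb{Z}_{(p)})$ ``follows from the classical Lichtenbaum--Quillen conjecture for $\mathbb{Z}$ \dots combined with the Mahowald--Rezk theory.'' It does not: Voevodsky--Rost plus Mahowald--Rezk control $\mathrm{K}(\mathbb{Z})$ (and hence, via Dundas--Goodwillie--McCarthy, $\mathrm{TC}$), but there is no implication in this paper's framework from smallness of $\mathrm{TC}$ back to boundedness of $\mathrm{TR}$ --- Theorem \ref{thm:abstract-implications} only produces consequences \emph{of} Bounded $\mathrm{TR}$, and the only route \emph{into} it is the Segal conjecture together with weak canonical vanishing, which are $\mathrm{THH}$-level statements not implied by any $K$-theoretic Lichtenbaum--Quillen input. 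You are running the key implication backwards. The paper instead observes that $\mathbb{Z}_{(p)}$ is an $\mathbb{E}_\infty$-$\mathrm{MU}$-algebra form of $\mathrm{BP}\langle 0\rangle$, so the boundedness of $F\otimes\mathrm{TR}(\mathbb{Z}_{(p)})$ is the $n=0$ case of Theorem \ref{thm:bpn-bounded-tr} itself (combined with the thick subcategory theorem to pass from a type $2$ complex to the given type $n+2$ complex $F$); alternatively one can cite the explicit $\mathrm{TR}$ computations of B\"okstedt--Madsen \cite{BokstedtMadsen} and Rognes \cite{rognes-2adicZ}.

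A smaller point: for part (a) you justify finiteness of $\pi_*(F\otimes\mathrm{K}(\mathbb{Z}_p))$ by saying $F$ is ``not of type $0$.'' Since $\mathrm{K}(\mathbb{Z}_p)^{\wedge}_p$ is fp of fp-type $1$ (not $0$), a rationally acyclic $F$ is not enough --- one needs $F$ of type at least $2$, which kills the $v_1$-periodic part as well. This is harmless here because $F$ has type $n+2\ge 2$, but the reason you give is not the right one. The rest of your bookkeeping (hypotheses (i) and (ii), the identification of $\mathrm{TR}(A^{\wedge}_p)$ with $\mathrm{TR}(A)$ under the paper's $p$-complete conventions, and the use of Mitchell's theorem and Waldhausen's reformulation for part (b)) matches the paper's proof.
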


\begin{proof}
We observe that the $\mathbb{Z}_p$-module
\[\pi_*(\mathrm{BP}\langle n \rangle_p^{\wedge}) \cong \mathbb{Z}_p[v_1,v_2,\cdots,v_n]\]
is finitely generated in each degree.  If we let $F$ denote any type $n+2$ complex, our main Theorem \ref{thm:bpn-bounded-tr} states that $F \otimes \mathrm{TR}(A)$ is bounded. We also observe that $F \otimes \mathrm{TR}(\pi_0 A) \simeq F \otimes \mathrm{TR}(\mathbb{Z}_{(p)})$ is bounded, for example by our work here and the fact that $\mathbb{Z}_{(p)}$ is an $\mathbb{E}_{\infty}$-$\mathrm{MU}$-algebra form of $\mathrm{BP}\langle 0\rangle$ (cf. \cite{BokstedtMadsen} and \cite{rognes-2adicZ} for more explicit proofs of this fact).  It remains to check only that
\[\mathrm{K}(\mathbb{Z}_{(p)})_{(p)} \to L_{n+1}^{f} \mathrm{K}(\mathbb{Z}_{(p)})_{(p)} \simeq L_1^{f} \mathrm{K}(\mathbb{Z}_{(p)})_{(p)}\]
is an equivalence in large degrees.  But this is exactly Waldhausen's reformulation \cite[\S 4]{waldhausen} of the Lichtenbaum--Quillen conjecture for $\mathbb{Z}_{(p)}$, which is proven by the celebrated work of Voevodsky and Rost \cite{voeI,voeII}.
\end{proof}

\begin{remark}
In fact, in the notation of the above corollary, $\mathrm{TC}(A)$ is of fp-type exactly $n+1$, as follows from Corollary \ref{cor:weak-redshift}.
\end{remark}

\subsection{Proof of Theorem \ref{thm:abstract-implications}} \label{subsec:abstract-implications-proof}

In this section we supply the proof of Theorem \ref{thm:abstract-implications}.
The statements and proofs will rely on the following notion of
nilpotence, which
goes back at least to Bousfield \cite{bousfield}. For an excellent survey
of modern uses of this notion we recommend \cite{mathew-nilpotence}.

\begin{definition} Let $\mathcal{C}$ be a stable, symmetric monoidal
category and let $A$ be an $\mathbb{E}_1$-algebra object.
We say that $M \in \mathcal{C}$ is \textbf{$A$-nilpotent} if 
$M$ lies in the thick tensor subcategory generated by
$A$. Equivalently, we can ask that $M$ lies in the thick subcategory
generated by those objects of $\mathcal{C}$ which admit the structure
of a left $A$-module.
\end{definition}

\begin{definition} Let $G$ be a compact Lie group. 
We say that a (Borel) $G$-spectrum $Y$ is
\textbf{$G$-nilpotent} if it is $\mathbb{D}G_+$-nilpotent,
where $\mathbb{D}G_+$ denotes the Spanier-Whitehead
dual of $G_+$.
\end{definition}

\begin{lemma}\label{lem:nilpotent-check} Let $F: \mathcal{C} \to \mathcal{D}$
be a lax symmetric monoidal functor between stable,
symmetric monoidal categories, and $A \in \mathsf{Alg}(\mathcal{C})$
and $B \in \mathsf{Alg}(\mathcal{D})$ be algebra objects.
If $F(A)$ is $B$-nilpotent, then $F(M)$ is $B$-nilpotent
for any $A$-nilpotent object $M$.
\end{lemma}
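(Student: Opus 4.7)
The plan is to consider the full subcategory
\[
\mathcal{N} := \{N \in \mathcal{C} : F(N) \text{ is } B\text{-nilpotent}\} \subseteq \mathcal{C},
\]
and to show that $\mathcal{N}$ is a thick subcategory containing every object of $\mathcal{C}$ that admits the structure of a left $A$-module. By the equivalent characterization of $A$-nilpotence recalled just before the lemma, this implies that $\mathcal{N}$ contains all $A$-nilpotent objects, which is precisely the conclusion sought.

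First I would check that $F(M) \in \mathcal{N}$ whenever $M$ carries a left $A$-module structure. Because $F$ is lax symmetric monoidal, the composite
\[
F(A) \otimes F(M) \longrightarrow F(A \otimes M) \longrightarrow F(M)
\]
of the lax structure map with $F$ applied to the $A$-action endows $F(M)$ with the structure of a left $F(A)$-module in $\mathcal{D}$. The unit axiom for this module structure factors the identity of $F(M)$ through $F(A) \otimes F(M)$, exhibiting $F(M)$ as a retract of $F(A) \otimes F(M)$. Since the $B$-nilpotent objects form a thick tensor ideal of $\mathcal{D}$ and $F(A)$ is $B$-nilpotent by hypothesis, the tensor product $F(A) \otimes F(M)$ is $B$-nilpotent, and hence so is its retract $F(M)$. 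This retract step is the one genuinely non-formal ingredient of the argument.

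It remains to verify that $\mathcal{N}$ is thick in $\mathcal{C}$. Closure of $\mathcal{N}$ under retracts is immediate from the corresponding closure property of $B$-nilpotent objects, since any functor preserves retracts. Closure under suspensions and fiber sequences reduces to $F$ preserving these operations, which is the main subtlety: a lax symmetric monoidal functor between stable categories need not be exact in general. However, in every application of the lemma in this paper $F$ is either a right adjoint (as for $(-)^{hG}$) or is otherwise manifestly exact, so this hypothesis is present. Granting thickness, $\mathcal{N}$ contains the thick subcategory generated by the left $A$-modules, which by the stated equivalent definition is exactly the class of $A$-nilpotent objects, completing the proof.
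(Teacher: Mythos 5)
Your proof is correct and is essentially the paper's own argument: the paper likewise considers the full subcategory of objects $M$ with $F(M)$ $B$-nilpotent, asserts it is thick, and handles $A$-modules via the retraction of $F(M)$ off $F(A)\otimes F(M)$ supplied by the lax structure. Your caveat about exactness is fair---the paper silently treats $F$ as exact when asserting thickness, which holds in every application (e.g.\ $(-)^{hC_p}$ and $(-)^{tC_p}$), so your proof matches the intended argument.
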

\begin{proof} The subcategory $\mathcal{E}\subseteq \mathcal{C}$
of objects $M$ such that $F(M)$ is $B$-nilpotent is thick
so we need only show that it contains all $A$-modules.
If $M$ is an $A$-module then $F(M)$ is an $F(A)$-module
and hence a retract of $F(A) \otimes F(M)$. But $F(A)$
is $B$-nilpotent and hence so is $F(A) \otimes F(M)$
and the retract $F(M)$.
\end{proof}

\begin{lemma}\label{lem:fp-nil-orbits} If $Y \in \mathsf{Fun}(\mathrm{B}S^1,
\mathsf{Sp})$ is $\mathbb{F}_p$-nilpotent, where
$\mathbb{F}_p$ is given the trivial action, then
$Y_{hC_p}$, $Y^{hC_p}$, and $Y^{tC_p}$
are also $\mathbb{F}_p$-nilpotent, where
we give each the residual $S^1/C_p \simeq S^1$ action.
\end{lemma}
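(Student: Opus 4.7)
The plan is to apply Lemma \ref{lem:nilpotent-check} to each of the three functors
\[
(-)_{hC_p},\ (-)^{hC_p},\ (-)^{tC_p}\colon \mathsf{Fun}(\mathrm{B}S^1,\mathsf{Sp}) \longrightarrow \mathsf{Fun}(\mathrm{B}(S^1/C_p),\mathsf{Sp}),
\]
each of which is lax symmetric monoidal (in fact $(-)_{hC_p}$ is strong symmetric monoidal). By Lemma \ref{lem:nilpotent-check}, applied with $A=\mathbb{F}_p$ (trivial $S^1$-action) and $B=\mathbb{F}_p$ (trivial $S^1/C_p$-action), it suffices to verify that $F(\mathbb{F}_p)$ is $\mathbb{F}_p$-nilpotent as a Borel $S^1/C_p$-spectrum, for each of the three functors $F$ above.

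First I would observe that when $\mathbb{F}_p$ carries the trivial $S^1$-action, the residual $S^1/C_p$-action on each of $(\mathbb{F}_p)_{hC_p}$, $(\mathbb{F}_p)^{hC_p}$, and $(\mathbb{F}_p)^{tC_p}$ is itself trivial. Indeed, since $C_p$ is central in the abelian group $S^1$, the conjugation action of $S^1/C_p$ on $\mathrm{B}C_p$ is trivial, so the identifications
\[
(\mathbb{F}_p)_{hC_p}\simeq \mathbb{F}_p\otimes \mathrm{B}C_{p+},\qquad (\mathbb{F}_p)^{hC_p}\simeq F(\mathrm{B}C_{p+},\mathbb{F}_p),
\]
together with the cofiber sequence $(\mathbb{F}_p)_{hC_p}\to (\mathbb{F}_p)^{hC_p}\to (\mathbb{F}_p)^{tC_p}$ of $S^1/C_p$-spectra, show that all three are pulled back along the constant map $\mathrm{B}(S^1/C_p)\to *$ from spectra with a module structure over $\mathbb{F}_p$.

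Since each of these Borel $S^1/C_p$-spectra is (the pullback along the trivial map of) an $\mathbb{F}_p$-module spectrum, each admits the structure of a module over $\mathbb{F}_p$ in $\mathsf{Fun}(\mathrm{B}(S^1/C_p),\mathsf{Sp})$ (with trivial action on $\mathbb{F}_p$). By the definition of $\mathbb{F}_p$-nilpotence, any such object is $\mathbb{F}_p$-nilpotent. Applying Lemma \ref{lem:nilpotent-check} then yields the desired nilpotence for $Y_{hC_p}$, $Y^{hC_p}$, and $Y^{tC_p}$.

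The only conceptual subtlety is verifying that the residual $S^1/C_p$-action on $F(\mathbb{F}_p)$ is trivial, which is the crucial input that turns the generic conclusion of Lemma \ref{lem:nilpotent-check} (namely, $F(\mathbb{F}_p)$-nilpotence) into the stronger $\mathbb{F}_p$-nilpotence we want. This triviality is, however, a direct consequence of the centrality of $C_p\subseteq S^1$ and so poses no real obstacle.
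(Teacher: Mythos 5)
There is a genuine gap, and it sits exactly where you identify the ``crucial input'': the claim that the residual $S^1/C_p$-action on $(\mathbb{F}_p)_{hC_p}$, $(\mathbb{F}_p)^{hC_p}$, and $(\mathbb{F}_p)^{tC_p}$ is trivial is false. Centrality of $C_p$ in $S^1$ only rules out a conjugation action; in the Borel setting the residual action encodes the extension $C_p \to S^1 \to S^1/C_p$ and is genuinely nontrivial. Concretely, $(\mathbb{F}_p)^{hC_p}$ is an $\mathbb{F}_p$-module with $\pi_{-n}\cong H^n(\mathrm{B}C_p;\mathbb{F}_p)\cong\mathbb{F}_p$ for $n\ge 0$, so it splits as $\bigoplus_{n\ge 0}\Sigma^{-n}\mathbb{F}_p$; if its residual action were trivial, then $\bigl((\mathbb{F}_p)^{hC_p}\bigr)^{h(S^1/C_p)}\simeq(\mathbb{F}_p)^{hS^1}$ would be $\mathrm{map}(\mathrm{B}S^1_+,\bigoplus_n\Sigma^{-n}\mathbb{F}_p)$, whose $\pi_{-2}$ is $\mathbb{F}_p^{\,2}$, whereas $\pi_{-2}(\mathbb{F}_p)^{hS^1}=H^2(\mathrm{B}S^1;\mathbb{F}_p)=\mathbb{F}_p$. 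Equivalently, the $S^1/C_p$-homotopy fixed point spectral sequence with $E_2=\mathbb{F}_p[t]\otimes H^*(\mathrm{B}C_p;\mathbb{F}_p)$ converging to $\mathbb{F}_p[t]$ must have differentials, which is impossible for a trivial action on an $\mathbb{F}_p$-module. A second, smaller problem is your claim that $(-)_{hC_p}$ is lax (indeed strong) symmetric monoidal: it is only oplax, being left adjoint to the strong symmetric monoidal inflation functor (note $(S^0)_{hC_p}\simeq\Sigma^{\infty}_+\mathrm{B}C_p\not\simeq S^0$), so Lemma \ref{lem:nilpotent-check} does not apply to it.

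Both issues are repairable, and the repair is essentially the paper's proof. For the orbits, do not invoke a monoidal structure at all: use the $S^1/C_p$-equivariant norm cofiber sequence $Y_{hC_p}\to Y^{hC_p}\to Y^{tC_p}$ and thickness of the class of $\mathbb{F}_p$-nilpotent objects to reduce to the fixed-point and Tate cases. For those, apply Lemma \ref{lem:nilpotent-check} to the lax symmetric monoidal functors $(-)^{hC_p}$ and $(-)^{tC_p}$, and then establish $\mathbb{F}_p$-nilpotence of $\mathbb{F}_p^{hC_p}$ and $\mathbb{F}_p^{tC_p}$ not via triviality of the action but via a module structure: both are modules over the ring $\mathbb{F}_p^{hC_p}$ in $S^1/C_p$-spectra, hence, by restriction along the equivariant unit map $\mathbb{F}_p\to\mathbb{F}_p^{hC_p}$ (with trivial action on the source), modules over $\mathbb{F}_p$ --- which is all that the definition of nilpotence requires.
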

\begin{proof} From the cofiber sequence
$Y_{hC_p} \to Y^{hC_p} \to Y^{tC_p}$ it's enough to
prove the claim for $Y^{hC_p}$ and $Y^{tC_p}$.
By Lemma \ref{lem:nilpotent-check} it's enough
to check that $\mathbb{F}_p^{hC_p}$ and $\mathbb{F}_p^{tC_p}$
are $\mathbb{F}_p$-nilpotent. Both are modules over
$\mathbb{F}_p^{hC_p}$ and hence also, by restriction,
modules over $\mathbb{F}_p$, hence $\mathbb{F}_p$-nilpotent.
\end{proof}

\begin{lemma}\label{lem:fp-nil-to-s1-nil} If $Y\in \mathsf{Fun}(\mathrm{B}S^1, \mathsf{Sp})$
is $\mathbb{F}_p$-nilpotent, then $Y^{tC_p}$ is $S^1$-nilpotent.
\end{lemma}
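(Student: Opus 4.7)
The plan is to apply \cref{lem:nilpotent-check} to the lax symmetric monoidal Tate functor
\[
(-)^{tC_p}\colon \mathsf{Fun}(\mathrm{B}S^1, \mathsf{Sp}) \longrightarrow \mathsf{Fun}(\mathrm{B}S^1/C_p, \mathsf{Sp}),
\]
with the algebra $A = \mathbb{F}_p$ (equipped with the trivial $S^1$-action) and the target algebra $B = \mathbb{D}(S^1/C_p)_+$. Since $Y$ is assumed $\mathbb{F}_p$-nilpotent, the lemma reduces the claim to showing that $F(A) = \mathbb{F}_p^{tC_p}$ is itself $\mathbb{D}(S^1/C_p)_+$-nilpotent in $\mathsf{Fun}(\mathrm{B}S^1/C_p, \mathsf{Sp})$, i.e., $S^1$-nilpotent after the identification $S^1/C_p \simeq S^1$.

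Next, to attack this reduced problem, I would exploit the Euler-class cofiber sequence in $\mathsf{Fun}(\mathrm{B}S^1/C_p, \mathsf{Sp})$,
\[
\Sigma^{-2}\mathbb{S} \xrightarrow{\,e\,} \mathbb{S} \longrightarrow \mathbb{D}(S^1/C_p)_+,
\]
obtained by dualizing the cofiber sequence $(S^1/C_p)_+ \to S^0 \to S^{\mathbb{C}}$ of pointed $S^1/C_p$-spaces. Tensoring this with $\mathbb{F}_p^{tC_p}$ produces the sequence
\[
\Sigma^{-2}\mathbb{F}_p^{tC_p} \xrightarrow{e\cdot} \mathbb{F}_p^{tC_p} \longrightarrow \mathbb{F}_p^{tC_p} \otimes \mathbb{D}(S^1/C_p)_+.
\]
If a sufficiently high power of $e$ acts nullhomotopically on $\mathbb{F}_p^{tC_p}$, iterating this cofiber sequence exhibits $\mathbb{F}_p^{tC_p}$ as a finite extension of objects in the thick tensor ideal generated by $\mathbb{D}(S^1/C_p)_+$, hence as $S^1/C_p$-nilpotent.

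The main obstacle is precisely this nilpotence of the Euler class on $\mathbb{F}_p^{tC_p}$. Although $\mathbb{F}_p$ carries the trivial $S^1$-action, the residual $S^1/C_p$-action on the Tate construction $\mathbb{F}_p^{tC_p}$ has nontrivial higher-coherence data, reflecting the bundle structure of $\mathrm{B}C_p \to \mathrm{B}S^1 \to \mathrm{B}S^1/C_p$. The natural approach is to compute the Euler class action via the module structure of $\mathbb{F}_p^{tC_p}$ over the $\mathbb{E}_{\infty}$-ring $\mathbb{S}^{tC_p}$ in Borel $S^1/C_p$-spectra, and to identify the pullback of the $S^1/C_p$-Euler class with an element of $\pi_* \mathbb{S}^{tC_p}$ whose nilpotence (after further multiplication with elements coming from $\mathbb{F}_p$) follows from the explicit structure of $\mathbb{S}^{tC_p}$, or alternatively by a cellular filtration argument on $\mathbb{S}^{tC_p}$ whose associated graded pieces are free Borel $S^1/C_p$-spectra; once obtained, this nilpotence completes the reduction via the Euler-class cofiber sequence above.
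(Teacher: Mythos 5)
Your reduction is fine, and it is in fact the same skeleton as the paper's argument: apply \cref{lem:nilpotent-check} to the lax symmetric monoidal functor $(-)^{tC_p}$, so that everything comes down to showing $\mathbb{F}_p^{tC_p}$ is $S^1$-nilpotent; and the translation of $S^1$-nilpotence into nilpotence of the Euler class via the cofiber sequence $\Sigma^{-2}\mathbb{S}\xrightarrow{e}\mathbb{S}\to\mathbb{D}(S^1/C_p)_+$ is also correct. The gap is in how you propose to prove that nilpotence. Your main route locates the nilpotence in $\mathbb{S}^{tC_p}$, but by the Segal conjecture (Lin, Gunawardena) $\mathbb{S}^{tC_p}\simeq \mathbb{S}^{\wedge}_p$ with \emph{trivial} residual $S^1/C_p$-action, and the Euler class acts non-nilpotently on it: its powers remain nonzero already after mapping to $H\mathbb{Z}_p$-cohomology of the classifying space. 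In particular $\mathbb{S}^{tC_p}$ is \emph{not} $S^1$-nilpotent, and your alternative route cannot work either: no cellular filtration of $\mathbb{S}^{tC_p}$ by free Borel $S^1/C_p$-spectra can yield Euler-class nilpotence, since any such argument would show that $Y^{tC_p}$ is $S^1$-nilpotent for \emph{every} $Y$ (no $\mathbb{F}_p$-nilpotence hypothesis), which is false for $Y=\mathbb{S}$. So the nilpotence you need genuinely lives on $\mathbb{F}_p^{tC_p}$ and requires an input your sketch does not supply.

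Two ways to fill it. The paper's way: show $(\mathbb{F}_p^{hC_p})^{tS^1}=0$ (it is $p$-complete and its mod $p$ reduction is $(\mathbb{F}_p^{hC_p})^{tC_p}$, which vanishes by the Tate fixed-point lemma \cite[Lemmas I.2.2, IV.4.12]{nikolaus-scholze}), conclude from the Mathew--Naumann--Noel nilpotence criterion \cite[Theorem 4.19]{mathew-naumann-noel} that $\mathbb{F}_p^{hC_p}$ is $S^1$-nilpotent, and then observe that $\mathbb{F}_p^{tC_p}$ is a module over it. Alternatively, staying inside your Euler-class framework: it suffices to show that the $S^1/C_p$-Euler class $\bar e$ has trivial image in $\pi_{-2}\bigl((\mathbb{F}_p^{tC_p})^{h(S^1/C_p)}\bigr)$, since $\mathbb{F}_p^{tC_p}$ is a Borel equivariant ring and multiplication by $\bar e$ is left multiplication by that image. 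Nikolaus--Scholze identify $(\mathbb{F}_p^{tC_p})^{h(S^1/C_p)}$ $p$-adically with $\mathbb{F}_p^{tS^1}$, whose $\pi_{-2}$ is $\mathbb{F}_p$ generated by the $S^1$-Euler class $t$; and since $BS^1\to B(S^1/C_p)$ classifies the $p$-th power of the tautological line bundle, $\bar e$ maps to $p\cdot t=0$. With that computation in hand (which is the real content, absent from your proposal), your cofiber-sequence argument does finish the proof, and is essentially equivalent to the paper's appeal to the nilpotence criterion.
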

\begin{proof} First we claim that $(\mathbb{F}_p^{hC_p})^{tS^1}=0$.
One can check this by direct calculation, or else argue as follows.
Since $\mathbb{F}_p^{hC_p}$ is a module over $\mathbb{F}_p$,
we have that $(\mathbb{F}_p^{hC_p})^{tS^1}$ is $p$-complete
and $(\mathbb{F}_p^{hC_p})^{tS^1}/p = 
(\mathbb{F}_p^{hC_p})^{tC_p}$, by \cite[Lemma IV.4.12]{nikolaus-scholze}.
But $(\mathbb{F}_p^{hC_p})^{tC_p}=0$ by
\cite[Lemma I.2.2]{nikolaus-scholze}, whence the claim.
It now follows from \cite[Theorem 4.19]{mathew-naumann-noel}
that $\mathbb{F}_p^{hC_p}$ is
$S^1$-nilpotent. The lemma now follows from
Lemma \ref{lem:nilpotent-check}.
\end{proof}

\begin{lemma} If $Y \in \mathsf{Fun}(\mathrm{B}G, \mathsf{Sp})$
is $G$-nilpotent then there is a $d\ge 0$ such that,
for all integers $n$, the map
$\tau_{\ge d+n}Y \to \tau_{\ge n}Y$ factors through a $G$-nilpotent spectrum.
\end{lemma}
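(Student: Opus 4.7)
The plan is to first handle the case where $Y$ is a $\mathbb{D}G_+$-module, constructing the factorization explicitly via the unit of $\mathbb{D}G_+$ and connectivity, and then to extend to general $G$-nilpotent $Y$ by thick-subcategory closure.

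For the base case, suppose $Y \in \mathsf{Fun}(\mathrm{B}G, \mathsf{Sp})$ carries a $\mathbb{D}G_+$-module structure with action $\alpha \colon \mathbb{D}G_+ \otimes Y \to Y$. Let $g := \dim G$. Since $G_+$ is a finite CW spectrum with cells concentrated in homological degrees $[0,g]$, its Spanier--Whitehead dual $\mathbb{D}G_+$ is $(-g)$-connective, and in particular $\mathbb{D}G_+ \otimes Z$ is $n$-connective whenever $Z$ is $(n+g)$-connective. I claim $d = g$ works. The Spanier--Whitehead dual of the collapse $G_+ \to S^0$ furnishes a unit $\eta \colon S \to \mathbb{D}G_+$, and I form the composite
\[
\tau_{\ge n+g} Y \xrightarrow{\eta \otimes \mathrm{id}} \mathbb{D}G_+ \otimes \tau_{\ge n+g}Y \longrightarrow \mathbb{D}G_+ \otimes Y \xrightarrow{\alpha} Y.
\]
Because the middle term is $n$-connective, this composite factors uniquely through $\tau_{\ge n}Y \to Y$. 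The unit axiom $\alpha \circ (\eta \otimes \mathrm{id}_Y) = \mathrm{id}_Y$ together with naturality of $\eta$ identifies the resulting map $\tau_{\ge n+g}Y \to \tau_{\ge n}Y$ with the canonical truncation comparison, and the intermediate spectrum $\mathbb{D}G_+ \otimes \tau_{\ge n+g}Y$ is a free $\mathbb{D}G_+$-module, hence $G$-nilpotent.

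To extend to general $G$-nilpotent $Y$, I check that the class $\mathcal{P}$ of spectra satisfying the conclusion is closed under the operations generating the $G$-nilpotent subcategory from $\mathbb{D}G_+$-modules. Closure under retracts and shifts is immediate with the same $d$. For a cofiber sequence $Y_1 \to Y_2 \to Y_3$ with $Y_1, Y_3 \in \mathcal{P}$, the truncation functor $\tau_{\ge k}$ preserves cofibers (being a right adjoint in a stable $t$-structure), producing a morphism of cofiber sequences at levels $n+d$ and $n$ whose outer two vertical maps factor through $G$-nilpotent spectra $Z_1$ and $Z_3$; after choosing $d$ large enough and exploiting functoriality of the base-case construction $M \mapsto \mathbb{D}G_+ \otimes \tau_{\ge n+g} M$ in the $\mathbb{D}G_+$-module $M$, one arranges these $Z_i$ to be connected by a compatible map $Z_1 \to Z_3$, so that $Z_2 := \mathrm{cofib}(Z_1 \to Z_3)$ is $G$-nilpotent and supplies the required factorization for $Y_2$. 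Since the $G$-nilpotent spectra are by definition the thick $\otimes$-ideal generated by $\mathbb{D}G_+$-modules, every such $Y$ is obtained from the base case by finitely many such operations, proving the lemma. The main technical point is the cofiber-closure step, whose compatibility conditions are enforced by the functorial nature of the base-case construction.
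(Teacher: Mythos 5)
Your base case is fine: for an honest $\mathbb{D}G_+$-module $Y$ the factorization $\tau_{\ge n+g}Y \to \mathbb{D}G_+\otimes \tau_{\ge n+g}Y \to \tau_{\ge n}Y$ works, with $d=\dim G$ coming from the $(-g)$-connectivity of $\mathbb{D}G_+$ and the unit axiom identifying the composite with the truncation map. The gap is in the passage to general $G$-nilpotent $Y$. First, the assertion that ``$\tau_{\ge k}$ preserves cofibers (being a right adjoint)'' is false: $\tau_{\ge k}$ is right adjoint to the inclusion $\mathsf{Sp}_{\ge k}\subseteq \mathsf{Sp}$, so if anything it preserves limits, and in fact it preserves neither cofiber nor fiber sequences in general (e.g.\ the cofiber of $\tau_{\ge 0}S^{-1}\to 0$ is not $\tau_{\ge 0}S^{0}$). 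So the ``morphism of cofiber sequences at levels $n+d$ and $n$'' you want to compare does not exist as stated. Second, even granting it, the compatibility you invoke is not available: the objects $Y_1,Y_3$ in the extension step are arbitrary members of your class $\mathcal{P}$, not $\mathbb{D}G_+$-modules, so ``functoriality of the base-case construction'' does not apply to them; their factorizations through $G$-nilpotent objects were produced by earlier, non-canonical choices in the induction, and nothing forces the existence of a map $Z_1\to Z_3$ commuting (coherently, up to specified homotopy) with the connecting data of the triangle. Without that coherence you cannot form $Z_2=\mathrm{cofib}(Z_1\to Z_3)$ together with the required maps $\tau_{\ge n+d}Y_2\to Z_2\to \tau_{\ge n}Y_2$ composing to the truncation map. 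This closure-under-extensions step is exactly the hard point of your strategy, and as written it is not proved.

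The paper avoids the induction entirely by using a different characterization of $G$-nilpotence: if $Y$ is $G$-nilpotent, then for some finite $N$ the map $Y\to \mathrm{map}(\mathrm{sk}_N(EG)_+,Y)$ admits a retraction $r$. Taking $d$ to be the dimension of $\mathrm{sk}_N(EG)_+$, the spectrum $\mathrm{map}(\mathrm{sk}_N(EG)_+,\tau_{\ge d+n}Y)$ is $n$-connective and is $G$-nilpotent for formal reasons (it is a finite limit of copies of $\mathbb{D}G_+\otimes \tau_{\ge d+n}Y$, since $\mathrm{sk}_N(EG)$ is a finite free $G$-CW complex), and composing the diagonal $\tau_{\ge d+n}Y\to \mathrm{map}(\mathrm{sk}_N(EG)_+,\tau_{\ge d+n}Y)$ with $r$ produces the desired factorization in one stroke, uniformly for all $n$. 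If you want to salvage your approach, you should either prove carefully that your class $\mathcal{P}$ is thick (which requires handling the homotopy coherence you elided), or replace the thick-subcategory induction by this retraction argument.
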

\begin{proof} Choose an $N$ so that the map
$Y \to \mathrm{map}(\mathrm{sk}_N(EG)_+, Y)$ has a retract $r$.
Let $d$ be the dimension of the finite complex $\mathrm{sk}_N(EG)_+$.
Then, for all $n\in \mathbb{Z}$,
the spectrum $\mathrm{map}(\mathrm{sk}_N(EG)_+, \tau_{\ge d+n}Y)$
is $n$-connective, so the composite
	\[
	\mathrm{map}(\mathrm{sk}_N(EG)_+, \tau_{\ge d+n}Y)
	\to
	\mathrm{map}(\mathrm{sk}_N(EG)_+, Y)
	\stackrel{r}{\to} Y
	\]
factors through $\tau_{\ge n}Y$. The map $\tau_{\ge d+n}Y \to
\tau_{\ge n}Y$ then factors through the diagonal
	\[
	\tau_{\ge d+n}Y
	\to
	\mathrm{map}(\mathrm{sk}_N(EG)_+, \tau_{\ge d+n}Y),
	\]
the target of which is $G$-nilpotent.
\end{proof}

\begin{lemma}\label{lem:X-almost-nilpotent} Let $X$ be an $S^1$-spectrum
and suppose we have a map of $S^1$-spectra
$f: X \to Y$, where $Y$ is $S^1$-nilpotent,
which induces an equivalence $\tau_{\ge m}X \simeq \tau_{\ge m}
Y$ for some $m\ge 0$. Then there is a $d\ge 0$ such that the map
$\tau_{\ge d}X \to X$ factors through an $S^1$-nilpotent spectrum.
\end{lemma}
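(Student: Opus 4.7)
The plan is to reduce this directly to the previous lemma (applied to $Y$), using the hypothesis that $f$ becomes an equivalence in high degrees to transfer nilpotence information from $Y$ back to $X$.

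First, I would apply the previous lemma to the $S^1$-nilpotent spectrum $Y$: this produces a constant $d_0 \ge 0$ such that, for every integer $n$, the canonical map $\tau_{\ge d_0 + n}Y \to \tau_{\ge n}Y$ factors through an $S^1$-nilpotent spectrum. I would then set $d := d_0 + m$ and consider the factorization
\[
\tau_{\ge d}X \xrightarrow{\;\simeq\;} \tau_{\ge d}Y \longrightarrow \tau_{\ge m}Y \xrightarrow{\;\simeq\;} \tau_{\ge m}X \longrightarrow X,
\]
where the first and third arrows use that $f$ restricts to an equivalence $\tau_{\ge k}X \simeq \tau_{\ge k}Y$ for every $k \ge m$ (which follows from the hypothesis by truncating further), and the last arrow is the natural inclusion.

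By the previous lemma, the middle arrow $\tau_{\ge d}Y \to \tau_{\ge m}Y$ factors through some $S^1$-nilpotent spectrum $Z$. Splicing this in exhibits the original map $\tau_{\ge d}X \to X$ as factoring through $Z$, which is exactly what we wanted.

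There is essentially no obstacle here; the only thing to be careful about is the bookkeeping of connectivity bounds, namely verifying that the hypothesis "$\tau_{\ge m}X \simeq \tau_{\ge m}Y$" propagates to "$\tau_{\ge k}X \simeq \tau_{\ge k}Y$ for all $k \ge m$," which is immediate since truncation functors compose. Everything else is a formal consequence of the previous lemma together with the two-out-of-three manipulation above.
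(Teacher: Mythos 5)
Your proof is correct and is essentially identical to the paper's argument: both apply the previous lemma to $Y$, set $d = d_0 + m$, and transport the nilpotent factorization of $\tau_{\ge d}Y \to \tau_{\ge m}Y$ across the equivalences $\tau_{\ge k}X \simeq \tau_{\ge k}Y$ (for $k \ge m$) supplied by $f$, using naturality of the truncation maps. Nothing further is needed.
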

\begin{proof} By the previous lemma there is a $d'\ge 0$
such that $\tau_{\ge d'+n}Y \to \tau_{\ge n}Y$ factors through an
$S^1$-nilpotent spectrum, for all integers $n$. Set
$d = d'+m$. Then $\tau_{\ge d}X \simeq \tau_{\ge d'+m}Y
\to \tau_{\ge m}Y \simeq \tau_{\ge m}X$ factors through an $S^1$-nilpotent
spectrum and hence so does the composite
$\tau_{\ge d}X \to \tau_{\ge m}X \to X$.
\end{proof}

\begin{lemma}\label{lem:abstract-canvan}
Let $X$ and $d$ be as in Lemma \ref{lem:X-almost-nilpotent}.
Then, for all $0\le k\le \infty$,
	\begin{enumerate}[{\rm (i)}]
	\item $(\tau_{\ge d}X)^{tC_{p^k}} \to X^{tC_{p^k}}$
	is nullhomotopic.
	\item $\tau_{\ge d}(X^{hC_{p^k}}) \to X^{tC_{p^k}}$
	is nullhomotopic.
	\item The map $X^{tC_{p^k}} \to (\tau_{<d}X)^{tC_{p^k}}$
	has a retract.
	\end{enumerate}
\end{lemma}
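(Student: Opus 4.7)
The proof turns on the following Tate-vanishing input: for every $0 \le k \le \infty$, the functor $(-)^{tC_{p^k}}$ annihilates $S^1$-nilpotent spectra. Since $(-)^{tC_{p^k}}$ is exact and lax symmetric monoidal on $\mathsf{Fun}(\mathrm{B}S^1,\mathsf{Sp})$, Lemma \ref{lem:nilpotent-check} reduces this to $(\mathbb{D}S^1_+)^{tC_{p^k}} \simeq 0$. As an $S^1$-spectrum, $\mathbb{D}S^1_+ \simeq \Sigma^{-1}S^1_+$ carries the left translation action, and $C_{p^k} \subset S^1$ acts freely on $S^1$ by rotation; upon restriction this is a free $C_{p^k}$-spectrum, whose Tate construction vanishes.

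Granting this, (i) follows at once: Lemma \ref{lem:X-almost-nilpotent} factors $\tau_{\ge d}X \to X$ through an $S^1$-nilpotent spectrum $N$, so applying $(-)^{tC_{p^k}}$ routes the induced map through $N^{tC_{p^k}} \simeq 0$. Part (iii) is then formal from (i): apply $(-)^{tC_{p^k}}$ to the cofiber sequence $\tau_{\ge d}X \to X \to \tau_{<d}X$; its first arrow is null by (i), so the sequence splits and $X^{tC_{p^k}} \to (\tau_{<d}X)^{tC_{p^k}}$ acquires a retract.

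Part (ii) is the step that requires a little more work; the plan is to identify $\tau_{\ge d}(X^{hC_{p^k}})$ with $\tau_{\ge d}((\tau_{\ge d}X)^{hC_{p^k}})$ and thereby reduce to (i). Applying $(-)^{hC_{p^k}}$ to the same cofiber sequence yields a fiber sequence $(\tau_{\ge d}X)^{hC_{p^k}} \to X^{hC_{p^k}} \to (\tau_{<d}X)^{hC_{p^k}}$. In the bounded below setting of our applications, the homotopy fixed point spectral sequence for $(\tau_{<d}X)^{hC_{p^k}}$ converges with $E_2$-page concentrated in homotopy degrees $<d$, so $\tau_{\ge d}((\tau_{<d}X)^{hC_{p^k}}) \simeq 0$. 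The right adjoint $\tau_{\ge d}$ preserves fiber sequences, yielding an equivalence $\tau_{\ge d}((\tau_{\ge d}X)^{hC_{p^k}}) \simeq \tau_{\ge d}(X^{hC_{p^k}})$. Under this identification, $\tau_{\ge d}(X^{hC_{p^k}}) \to X^{hC_{p^k}} \to X^{tC_{p^k}}$ factors through $(\tau_{\ge d}X)^{hC_{p^k}} \to X^{hC_{p^k}} \to X^{tC_{p^k}}$, which by naturality of the canonical map coincides with the composite $(\tau_{\ge d}X)^{hC_{p^k}} \to (\tau_{\ge d}X)^{tC_{p^k}} \to X^{tC_{p^k}}$, and this vanishes by (i).

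The only nonformal ingredient is the vanishing of $(\mathbb{D}S^1_+)^{tC_{p^k}}$, a standard consequence of $S^1$ carrying a free $C_{p^k}$-action; modulo this and the convergence of the homotopy fixed point spectral sequence for the bounded spectrum $(\tau_{<d}X)^{hC_{p^k}}$, the entire lemma reduces formally to Lemma \ref{lem:X-almost-nilpotent}.
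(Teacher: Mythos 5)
Your proposal is correct and takes essentially the same route as the paper: (i) follows from the Tate construction annihilating $S^1$-nilpotent spectra combined with Lemma \ref{lem:X-almost-nilpotent}, (iii) is the same splitting of the cofiber sequence, and (ii) is the same factorization of $\tau_{\ge d}(X^{hC_{p^k}}) \to X^{tC_{p^k}}$ through $(\tau_{\ge d}X)^{hC_{p^k}} \to (\tau_{\ge d}X)^{tC_{p^k}} \to X^{tC_{p^k}}$, with your argument merely spelling out why the first factorization exists. One small caution: the phrase ``$\tau_{\ge d}$ preserves fiber sequences'' is false for general fiber sequences of spectra, but your conclusion $\tau_{\ge d}((\tau_{\ge d}X)^{hC_{p^k}}) \simeq \tau_{\ge d}(X^{hC_{p^k}})$ is valid here because the cofiber $(\tau_{<d}X)^{hC_{p^k}}$ is $(d-1)$-truncated (truncated spectra are closed under limits), so $\pi_m((\tau_{\ge d}X)^{hC_{p^k}}) \to \pi_m(X^{hC_{p^k}})$ is an isomorphism for $m \ge d$.
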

\begin{proof} 
The Tate construction $(-)^{tC_{p^k}}$ annihilates all
$S^1$-nilpotent spectra, so (i) is immediate from the previous lemma.
The map in (ii) factors as
	\[
	\tau_{\ge d}(X^{hC_{p^k}})
	\to
	(\tau_{\ge d}X)^{hC_{p^k}}
	\to
	(\tau_{\ge d}X)^{tC_{p^k}}
	\to X^{tC_{p^k}}
	\]
and so is nullhomotopic by (i). The claim (iii) follows from (i) and
the cofiber sequence
	\[
	(\tau_{\ge d}X)^{tC_{p^k}} \to
	X^{tC_{p^k}} \to (\tau_{<d}X)^{tC_{p^k}}
	\]
\end{proof}

\begin{proof}[Proof of Theorem \ref{thm:abstract-implications}]
Claim (a) is \cite[Proposition 2.25]{antieau-nikolaus}.

We now prove (b). By assumption, there is some $N$ for which
$p^N: X \to X$ is nullhomotopic as a map in
$\mathsf{CycSp}_p$. It follows that $p^N$ annihilates each
homotopy group $\pi_i\mathrm{TR}(X)$. It follows that each
object $\pi_i\mathrm{TR}(X)$ admits the structure of
a $\mathbb{Z}/p^N$-module in $S^1$-spectra, and hence that
each object $\pi_i\mathrm{TR}(X)$ is $\mathbb{F}_p$-nilpotent.
If $\mathrm{TR}(X)$
is bounded, then we conclude that $\mathrm{TR}(X)$ is also
$\mathbb{F}_p$-nilpotent, as an $S^1$-spectrum.

Now we prove (c), so we assume that
$\mathrm{TR}(X)$ is $\mathbb{F}_p$-nilpotent.
By Lemma \ref{lem:fp-nil-orbits}, we deduce that
$\mathrm{TR}(X)_{hC_p}$ is $\mathbb{F}_p$-nilpotent as well,
and, from the $S^1$-equivariant cofiber sequence
	\[
	\mathrm{TR}(X)_{hC_p} \stackrel{V}{\to} \mathrm{TR}(X)
	\to X
	\]
we deduce that $X$ is $\mathbb{F}_p$-nilpotent. By
Lemma \ref{lem:fp-nil-to-s1-nil}, we deduce that $X^{tC_p}$ is
$S^1$-nilpotent, which completes the proof of (c).

Claim (e) is immediate from Lemma \ref{lem:abstract-canvan}(ii).

For claim (d), first observe that (a), (b), (c),
and Lemma \ref{lem:abstract-canvan}(iii)
imply that $X^{tC_{p^k}}$ is a retract of $(\tau_{<d}X)^{tC_{p^k}}$.
The finiteness assumption on $X$ ensures that the homotopy groups
of $(\tau_{<d}X)^{tC_{p^k}}$ are finite and hence so are the
homotopy groups of $X^{tC_{p^k}}$. The homotopy groups of
$X$ being finite also implies that the homotopy groups of
$X_{hC_{p^k}}$ are finite, since $X$ was assumed bounded below.
The claim (d) now follows from the
cofiber sequence $X_{hC_{p^k}} \to X^{hC_{p^k}} \to
X^{tC_{p^k}}$.

We are left with establishing the claim (f), for which we argue
as in \cite{akhil-tr}. Recall that we have
pullback squares
	\[
	\xymatrix{
	\mathrm{TR}^k(X) \ar[r]^-R\ar[d] & \mathrm{TR}^{k-1}(X)\ar[d]\\
	X^{hC_{p^k}} \ar[r]_{\mathrm{can}} & X^{tC_{p^k}}
	}
	\]
The right vertical map is an equivalence
in large degrees, independent of $k$,
by Tsalidis's theorem \cite[II.4.9]{nikolaus-scholze}
and the assumption that 
\fbox{Segal Conjecture} holds.
The bottom horizontal map is zero in
in large degrees by the assumption that
\fbox{Weak Canonical Vanishing} holds. It follows that 
the top horizontal map is zero in large degrees,
and hence that the limit $\mathrm{TR}(X)$ is bounded above. Since $X$
was assumed bounded below, $\mathrm{TR}(X)$
is bounded below, and hence $\mathrm{TR}(X)$ is
bounded.
\end{proof}

\section{The Segal Conjecture} \label{sec:segal}

We fix throughout this section an $\mathbb{E}_3$-$\mathrm{MU}$-algebra form of $\mathrm{BP} \langle n \rangle$.
Our purpose is to prove the Segal
Conjecture (Theorem \ref{thm:intro-segal}), which
we restate here for convenience.

\begin{theorem}\label{thm:segal-conjecture-main}
Let $F$ denote any type $n+1$ finite complex.
Then the cyclotomic Frobenius $\mathrm{THH}(\mathrm{BP}\langle n \rangle) \to \mathrm{THH}(\mathrm{BP}\langle n \rangle)^{tC_p}$ induces an isomorphism
\[F_*\mathrm{THH}(\mathrm{BP}\langle n \rangle) \cong F_*(\mathrm{THH}(\mathrm{BP}\langle n \rangle)^{tC_p})\]
in all sufficiently large degrees $* \gg 0$.
\end{theorem}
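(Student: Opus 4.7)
The plan is to reduce the Segal Conjecture for $\mathrm{THH}(\mathrm{BP}\langle n\rangle)$ to a graded analog for $\mathrm{THH}$ of a polynomial $\mathbb{F}_p$-algebra, where the classical Bökstedt-style calculations become available.

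First, I would equip $\mathrm{BP}\langle n\rangle$ with a multiplicative decreasing filtration $F^{\star}\mathrm{BP}\langle n\rangle$ whose associated graded is the polynomial $\mathbb{F}_p$-algebra $\mathbb{F}_p[v_0,v_1,\ldots,v_n]$ (with appropriate internal bigradings), where we write $v_0 = p$. The natural candidate is the adic filtration by powers of the augmentation ideal $I = (p,v_1,\ldots,v_n) \subseteq \pi_*\mathrm{BP}\langle n\rangle$, refined to the level of spectra using the $\mathbb{E}_1$-$\mathrm{MU}$-algebra structure. Since $\mathrm{THH}$ is lax symmetric monoidal, it sends filtered algebras to filtered cyclotomic spectra, and the cyclotomic Frobenius refines to a filtered map. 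Crucially, the associated graded of this filtered Frobenius computes the cyclotomic Frobenius for $\mathrm{THH}(\mathbb{F}_p[v_0,\ldots,v_n])$, regarded as a graded cyclotomic spectrum.

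Second, I would prove the graded Segal conjecture for the polynomial $\mathbb{F}_p$-algebra directly. Using Bökstedt periodicity, $\pi_*\mathrm{THH}(\mathbb{F}_p[v_0,\ldots,v_n])$ is a polynomial algebra over $\mathbb{F}_p[v_0,\ldots,v_n]$ generated by Bökstedt's class $u$ in degree $2$ and the suspensions $\sigma v_i$. The Tate construction $\mathrm{THH}(\mathbb{F}_p[v_0,\ldots,v_n])^{tC_p}$ can be computed via the Tate spectral sequence, and a bookkeeping argument based on the classical Segal conjecture for $\mathbb{F}_p$ (that is, $\mathrm{THH}(\mathbb{F}_p)\to \mathrm{THH}(\mathbb{F}_p)^{tC_p}$ is truncated) shows that the Frobenius is an isomorphism in internal degrees sufficiently large relative to a bounded filtration weight.

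Third, I would descend from the associated graded back to $\mathrm{THH}(\mathrm{BP}\langle n\rangle)$ after tensoring with a type $n+1$ complex $F$. The point is that on $F \otimes \mathrm{BP}\langle n\rangle$ the classes $v_0, v_1, \ldots, v_n$ all act nilpotently, so the filtration becomes effectively finite; in particular the induced filtrations on $F \otimes \mathrm{THH}(\mathrm{BP}\langle n\rangle)$ and $F \otimes \mathrm{THH}(\mathrm{BP}\langle n\rangle)^{tC_p}$ are bounded in weight in each internal degree, and the associated graded spectral sequences collapse into genuine calculations of the homotopy groups.

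The main technical obstacle I expect lies in the interaction between the Tate construction and the filtration: in general $(-)^{tC_p}$ does not commute with limits of filtered diagrams, so care is required to ensure the filtered Frobenius really does have the claimed associated graded, and that the resulting spectral sequence comparison is strong enough to yield a large-degree isomorphism on $F_*$. The use of a type $n+1$ complex $F$ is what controls this: the nilpotence of $v_0,\ldots,v_n$ on $F \otimes \mathrm{BP}\langle n\rangle$ provides a finite vanishing line in the filtration, converting the potentially delicate completeness issues into routine finiteness statements and allowing the graded Segal conjecture to be transported to the ungraded statement of Theorem~\ref{thm:segal-conjecture-main}.
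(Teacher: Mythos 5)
Your overall strategy---filter $\mathrm{BP}\langle n\rangle$ so that the associated graded is a graded polynomial $\mathbb{F}_p$-algebra on $v_0,\dots,v_n$, prove a graded Segal conjecture for that polynomial algebra, and transport it back using nilpotence of the $v_i$ after smashing with a type $n+1$ complex---is the strategy of the paper (which implements the filtration as the d\'ecalage of the $\mathbb{F}_p$-Adams/descent tower rather than an ad hoc spectrum-level $I$-adic filtration, and handles the Tate-versus-limit issue by uniform boundedness below and $v_0$-completion, Proposition \ref{prop:filtered-thh-converges}, rather than by the finite complex). The genuine gap is in your second step, which is the non-formal heart of the matter. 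First, a consequential slip: the statement that $\mathrm{THH}(\mathbb{F}_p)\to\mathrm{THH}(\mathbb{F}_p)^{tC_p}$ inverts the B\"okstedt class is B\"okstedt's computation, not the classical Segal conjecture; the classical Segal conjecture (Lin, Gunawardena), about $S^0\to (S^0)^{tC_p}$, enters the argument separately and essentially. More seriously, B\"okstedt periodicity plus the Tate spectral sequence only identifies the homotopy groups of the source and target of $\varphi$; it does not determine the map. What is actually needed is the effect of the cyclotomic Frobenius on the generators $v_i$ and their suspensions, and since these generators sit in nonzero topological degrees (e.g.\ $v_1$ in degree $2p-2$), one cannot quote the known computations for discrete smooth $\mathbb{F}_p$-algebras: the graded case with generators in positive degree is precisely the new content, and ``bookkeeping'' in the Tate spectral sequence does not produce it.

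For comparison, the paper gets at the Frobenius on generators as follows: it proves that a graded $\mathbb{E}_2$-$\mathbb{F}_p$-algebra with the given even polynomial homotopy is unique (Proposition \ref{prop:poly-unique}), hence splits as $\mathbb{F}_p\otimes S^0[a_1]\otimes\cdots\otimes S^0[a_n]$ with each $S^0[a_i]$ a free graded ring over the sphere; it then uses monoidality of $\mathrm{THH}$ as a functor to cyclotomic spectra, computes the Frobenius of each $\mathrm{THH}(S^0[a_i])$ via the weight decomposition into induced spectra $\mathrm{Ind}_{C_k}^{S^1}(S^{2rk}(wk))$---reducing to the Tate diagonal on spheres, i.e.\ the honest Lin--Gunawardena Segal conjecture (Proposition \ref{prop:segal-for-sphere-poly})---and justifies commuting $(-)^{tC_p}$ past the tensor product by pointwise finiteness of $\mathrm{THH}(S^0[a])$ as a graded $C_p$-spectrum (Lemma \ref{lem:poly-over-sphere}). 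Your proposal contains none of these ingredients, nor any substitute for identifying the multiplicative structure of the associated graded of your filtration (without which its $\mathrm{THH}$ and Frobenius cannot be computed at all). Your flagged concern in step 3 about the Tate construction and filtered limits is real but repairable along the paper's lines; the missing idea is the proof of the graded polynomial Segal conjecture itself.
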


\begin{remark} This theorem implies the corresponding statement
for $F$ a type $n+2$ complex, which is all that is used in deducing
the Lichtenbaum-Quillen statements as in the previous section.
\end{remark}

The idea of the proof is to use
(the d\'ecalage of) the Adams filtration on 
$\mathrm{BP}\langle n\rangle$ to reduce
the claim
to a much simpler one about graded polynomial algebras
over $\mathbb{F}_p$. 

\begin{remark} There are several antecedents to the
Segal conjecture. First, the classical 
Segal conjecture for the group $C_p$ states that the map
	\[
	S^0 = \mathrm{THH}(S^0) \to
	\mathrm{THH}(S^0)^{tC_p} = (S^0)^{tC_p}
	\]
is $p$-completion, and is a theorem of Lin \cite{lin} (at $p=2$) and
Gunawardena \cite{gunawardena} (for $p$ odd). For various classes
of ordinary commutative rings $R$, the map
	\[
	\varphi:
	\mathrm{THH}(R) \to \mathrm{THH}(R)^{tC_p}
	\]
is a $p$-adic equivalence in large degrees:
this is the case for DVRs of mixed characteristic
with perfect residue field in odd characteristic
\cite{hesselholt-madsen, hesselholt-madsen-derham},
for smooth algebras in positive characteristic
\cite[Prop. 6.6]{hesselholt},
and for $p$-torsionfree excellent noetherian
rings $R$ with $R/p$ finitely generated over its
$p$th powers \cite[Cor. 5.3]{akhil-tr}. 

When $R=\mathrm{\ell}$
is the Adams summand, it is proved in
\cite[Thm. 5.5]{ausoni-rognes-redshift} for $p\ge 5$ that
	\[
	\varphi: \mathrm{THH}(\ell)/(p,v_1)
	\to \mathrm{THH}(\ell)^{tC_p}/(p,v_1)
	\]
is an equivalence in degrees larger than $2(p-1)$ (cf. \cite{sverre-thesis}).
When $R=\mathrm{MU}$, Lun{\o}e-Nielsen and Rognes
show \cite{lunoenielsen-rognes} that
	\[
	\varphi: \mathrm{THH}(\mathrm{MU}) \to 
	\mathrm{THH}(\mathrm{MU})^{tC_p}
	\]
is a $p$-adic equivalence. In another direction, Angelini-Knoll
and Quigley \cite{angeliniknoll-quigley-segal}
have shown that $\varphi$
is an equivalence for Ravenel's $X(n)$ spectra.
\end{remark}

\subsection{Polynomial rings over the sphere} \label{sec:segal-over-sphere}

We begin by recording some facts about polynomial rings over
the sphere spectrum, starting with their construction.

\begin{construction}\label{build-poly}
For $r,w \in \mathbb{Z}$ we will denote
by $S^{2r}(w)$ the graded spectrum which is
$S^{2r}$ in weight $w$ and zero elsewhere.
Recall (see, e.g., \cite[3.4.1,3.4.2]{rotation})
that there is a graded $\mathbb{E}_2$-ring
$S^0[y_{-2, -1}]$ equipped with a class 
$y_{-2, -1}: S^{-2}(-1) \to S^0[y_{-2,-1}]$ which exhibits the target
as the free graded $\mathbb{E}_1$-algebra on $S^{-2}(-1)$. 
This graded $\mathbb{E}_2$-ring corresponds to an
$\mathbb{E}_2$-monoidal functor
	\[
	S^0[y_{-2,-1}]: \mathbb{Z}^{\mathrm{ds}} \to \mathsf{Sp}
	\]
which factors through the subcategory $\mathrm{Pic}(\mathsf{Sp})$
of invertible spectra. When $r\le 0$ we define the graded
$\mathbb{E}_2$-ring
$S^0[y_{2r, -1}]$ as the composite
	\[
	\mathbb{Z}^{\mathrm{ds}} \stackrel{\cdot r}{\to}
	\mathbb{Z}^{\mathrm{ds}} \stackrel{S^0[y_{-2,-1}]}{\longrightarrow}
	\mathrm{Pic}(\mathsf{Sp}) \to \mathsf{Sp}
	\]
When $r\ge 0$ we define the graded $\mathbb{E}_2$-ring
$S^0[y_{2r,-1}]$ as the composite
	\[
	\mathbb{Z}^{\mathrm{ds}} \stackrel{\cdot (-r)}{\to}
	\mathbb{Z}^{\mathrm{ds}} \stackrel{S^0[y_{-2,-1}]}{\longrightarrow}
	\mathrm{Pic}(\mathsf{Sp}) \stackrel{D}{\to}
	\mathrm{Pic}(\mathsf{Sp}) \to \mathsf{Sp}
	\]
where $D$ denotes the duality functor. Finally, 
we define $S^0[y_{2r,w}]$ for arbitrary $r, w \in \mathbb{Z}$
by left Kan extending $S^0[y_{2r,-1}]$ along the map
$(-w): \mathbb{Z}^{\mathrm{ds}} \to \mathbb{Z}^{\mathrm{ds}}$. 
Thus, for each $r,w \in \mathbb{Z}$, we have constructed
a graded $\mathbb{E}_2$-ring $S^0[a]$ equipped with
a class $a: S^{2r}(w) \to S^0[a]$ which exhibits the target
as the free graded $\mathbb{E}_1$-ring on $S^{2r}(w)$.
\end{construction}

Next we establish an important finiteness property for
$\mathrm{THH}(S^0[a])$. But first we recall a definition.

\begin{definition} If $G$ is a (topological) group,
we will say that a spectrum with $G$-action,
$X \in \mathsf{Fun}(\mathrm{B}G, \mathsf{Sp})$,
is \textbf{finite} if it lies in the thick subcategory generated
by the objects $G/H_+$ where $H \subseteq G$ is a closed
subgroup and $G/H_+$ denotes 
$\Sigma^{\infty}_+(G/H)$. 
\end{definition}

\begin{lemma} \label{lem:poly-over-sphere}
The graded $\mathbb{E}_1$-ring map
\[S^0[a] \to \mathrm{THH}(S^0[a])\]
induces on $\mathbb{F}_p$-homology the ring map
\[\mathbb{F}_p[a] \to \mathbb{F}_p[a]\otimes \Lambda_{\mathbb{F}_p}(\sigma a).\]
Here, the weights of $a$ and $\sigma a$ are both $w$.
Furthermore, if $w\ne 0$, then,
as a graded $C_p$-spectrum $\mathrm{THH}(S^0[a])$ is pointwise finite. That is: at each weight $j$, the $C_p$-spectrum
$\mathrm{THH}(S^0[a])_j$ lies in the thick subcategory generated
by $S^0$ and $C_{p+}$. 
\end{lemma}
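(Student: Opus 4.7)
For part (1), the plan is to base-change to $\mathbb{F}_p$ and apply the Bökstedt spectral sequence. Since the cyclic bar construction commutes with smashing with $\mathbb{F}_p$, we have $\mathbb{F}_p \otimes \mathrm{THH}(S^0[a]) \simeq \mathrm{THH}(\mathbb{F}_p[a]/\mathbb{F}_p)$, where $\mathbb{F}_p[a] := \mathbb{F}_p\otimes S^0[a]$ is the graded polynomial $\mathbb{F}_p$-algebra on $a$. The Bökstedt spectral sequence
\[E^2_{s,t} = \mathrm{HH}_{s,t}(\mathbb{F}_p[a]/\mathbb{F}_p) \Longrightarrow H_{s+t}(\mathrm{THH}(S^0[a]);\mathbb{F}_p)\]
then has $E^2$-term computed by the classical Hochschild--Kostant--Rosenberg isomorphism $\mathrm{HH}_*(\mathbb{F}_p[a]/\mathbb{F}_p) \cong \mathbb{F}_p[a] \otimes \Lambda(\sigma a)$, with $\sigma a$ in Hochschild degree $1$ (total internal degree $2r+1$) and weight $w$. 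Since everything is concentrated in rows $s=0,1$, the spectral sequence collapses, and the relation $(\sigma a)^2=0$ holds automatically because it would otherwise lie in Hochschild degree $2$. The map $S^0[a] \to \mathrm{THH}(S^0[a])$ is the inclusion as the $0$-simplex of the cyclic bar, which on homology is visibly the unit inclusion $\mathbb{F}_p[a] \hookrightarrow \mathbb{F}_p[a] \otimes \Lambda(\sigma a)$.

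For part (2), I exploit the cyclic bar decomposition
\[\mathrm{THH}(S^0[a]) \simeq |B^{\mathrm{cyc}}_\bullet(S^0[a])|, \qquad B^{\mathrm{cyc}}_n(S^0[a]) = S^0[a]^{\otimes(n+1)}.\]
Using $S^0[a] = \bigoplus_{k\ge 0} S^{2rk}(wk)$, the weight-$wj$ summand at simplicial level $n$ is
\[\bigvee_{(j_0,\ldots,j_n):\,\sum j_i = j,\,j_i\ge 0} S^{2rj},\]
with $C_{n+1}$-action (from the cyclic structure) by cyclic permutation of coordinates. A simplex is degenerate precisely when some $j_i=0$, so for $j\ge 1$ the non-degenerate simplices in weight $wj$ are ordered compositions of $j$ into strictly positive parts, which can only occur for $n \le j-1$. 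Consequently, the weight-$wj$ piece of $B^{\mathrm{cyc}}_\bullet(S^0[a])$ has only finitely many non-degenerate simplices and presents $\mathrm{THH}(S^0[a])_{wj}$ as a finite $S^1$-CW spectrum.

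This finite $S^1$-CW structure is assembled from cells indexed by cyclic orbits of positive compositions. An orbit whose stabilizer is $C_d \subseteq C_{n+1}$ contributes a cell of the form $(S^1/C_d)_+ \wedge S^{2rj}$. Restricting to $C_p \subseteq S^1$: the space $(S^1/C_d)_+\cong (S^1)_+$ acquires a $C_p$-action by rotation, and carries a finite $C_p$-CW structure built from cells of type $S^0$ and $C_{p+}$ (the action is free if $p\nmid d$ and factors through the trivial group if $p\mid d$). Smashing with the trivial $C_p$-spectrum $S^{2rj}$ and gluing the finitely many such cells exhibits $\mathrm{THH}(S^0[a])_{wj}$ as lying in the thick subcategory of $C_p$-spectra generated by $S^0$ and $C_{p+}$.

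The main obstacle I anticipate is keeping the $S^1$-action on the geometric realization of the cyclic simplicial spectrum straight: tracking how cyclic orbits of positive compositions assemble into $(S^1/C_d)_+$-cells requires some bookkeeping with Connes' cyclic category, but is formal once the cyclic structure has been unwound. The key non-formal input is simply the bound $n \le j-1$ on the simplicial degree of non-degenerate simplices in a fixed non-zero weight.
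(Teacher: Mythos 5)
Part (1) of your proposal is correct and is essentially the paper's argument: base change along $S^0 \to \mathbb{F}_p$ identifies $\mathbb{F}_p \otimes \mathrm{THH}(S^0[a])$ with $\mathrm{THH}(\mathbb{F}_p[a]/\mathbb{F}_p)$, and one quotes the classical Hochschild homology of a polynomial algebra (the B\"okstedt spectral sequence is superfluous after base change, but harmless).

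In part (2) there is a genuine gap, and it sits exactly where you wave it off as bookkeeping: the identification of the equivariant cells. In the cyclic bar construction the stabilizer $C_d$ of an orbit of positive compositions does \emph{not} act trivially on the corresponding sphere summand: it permutes the smash factors $S^{2rj_0}\wedge\cdots\wedge S^{2rj_n}$ (and rotates the simplex coordinate $\Delta^n/\partial\Delta^n$). So the subquotients of the skeletal filtration are induced spectra of the form $\mathrm{Ind}_{C_d}^{S^1}(S^V)$ for nontrivial $C_d$-representation spheres $S^V$, not cells $(S^1/C_d)_+\wedge S^{2rj}$ with $S^{2rj}$ a trivial $C_p$-spectrum, and your subsequent step "smashing with the trivial $C_p$-spectrum $S^{2rj}$" is therefore unjustified. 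Nor is this a harmless simplification: Borel-equivariantly the cyclic permutation action on $(S^{2r})^{\otimes p}$ is not equivalent to the trivial action when $r\neq 0$ --- for instance its $C_p$-Tate construction is $(S^{2r})^{\wedge}_p$ (by the classical Segal conjecture, via the Tate diagonal), whereas the trivial action would give $(S^{2rp})^{\wedge}_p$ --- and this distinction is precisely what powers Proposition \ref{prop:segal-for-sphere-poly} afterwards. The conclusion of the lemma is of course insensitive to this, but your proof needs the missing step that the paper supplies: recognize the sphere with its permutation action as the one-point compactification of $2r$ copies of the regular representation, which admits a finite genuine $C$-CW structure with cells of the form $C/H_+\wedge S^m$ (trivial action on $S^m$), so that after inducing to $S^1$ and restricting to $C_p$ (where your case analysis $p\mid d$ versus $p\nmid d$ correctly handles the orbit factor) each piece lands in the thick subcategory generated by $S^0$ and $C_{p+}$. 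The paper sidesteps the cyclic-category bookkeeping entirely by quoting the standard decomposition $\mathrm{THH}(S^0[a])\simeq\bigoplus_{k\ge 0}\mathrm{Ind}_{C_k}^{S^1}(S^{2rk}(wk))$, which has at most one summand in each nonzero weight (your bound $n\le j-1$ plays the analogous finiteness role), and then making exactly this representation-sphere argument.
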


\begin{proof}
We have that $\mathbb{F}_p\otimes \mathrm{THH}(-)=
\mathrm{THH}(\mathbb{F}_p \otimes(-)/\mathbb{F}_p)$, so the induced map on
homology is given by
	\[
	\mathbb{F}_p[a] \to \mathrm{THH}(\mathbb{F}_p[a]/\mathbb{F}_p)_*.
	\]
This map depends
only on the $\mathbb{E}_1$-algebra structure of
$\mathbb{F}_p[a]$, which is free, so this is equivalent to the
classical calculation of $\mathrm{THH}(\mathbb{F}_p[a]/\mathbb{F}_p)_*$
(i.e. ordinary Hochschild homology over $\mathbb{F}_p$).

We now show that $\mathrm{THH}(S^0[a])$ is pointwise finite as a graded $C_p$-spectrum. This statement only depends
on the graded $\mathbb{E}_1$-algebra structure on $S^0[a]$,
which is free. The Hochschild homology of free $\mathbb{E}_1$-algebras
is well-known (see, e.g., the argument in \cite[Theorem 3.8]{akhil-tr},
which applies verbatim to the graded case), and in this case specializes to:
	\[
	\mathrm{THH}(S^0[a])
	\simeq \bigoplus_{k \ge 0} \mathrm{Ind}_{C_k}^{S^1}(S^{2rk}(wk)).
	\]
Here $\mathrm{Ind}_{C_k}^{S^1}$ denotes the induction
functor, given by left Kan extension along the
functor $\mathrm{B}C_k \to \mathrm{B}S^1$,
and $C_k$ acts by permuting the factors in 
$(S^{2r}(w))^{\otimes k}=S^{2rk}(wk)$.
Observe that, since $w\ne 0$, there is at most one nonzero
summand in each fixed weight. To complete the proof
we need to show that each summand is finite as a $C_p$-spectrum.

The property of finiteness is always
preserved by induction.
In this case,
the restriction functor $\mathsf{Fun}(\mathrm{B}S^1, \mathsf{Sp})
\to \mathsf{Fun}(\mathrm{B}C_p, \mathsf{Sp})$ also preserves
finiteness. Indeed, when $k=mp$
the object $S^1/C_{k+}$ is equivalent
to $S^1_+ = \Sigma (C_p/C_p)_+$ as a $C_p$-spectrum,
and when $k$ is coprime to $p$ then 
we have a cofiber sequence
	\[
	C_{p+} \to S^1/C_{k+} \to \Sigma C_{p+}.
	\]
So it suffices to show that $(S^{2r})^{\otimes k}$
is finite as a $C_k$-spectrum. After possibly dualizing we may
assume that $r\ge 0$, and then this is the suspension 
spectrum of the one-point compactification of $2r$ copies of
the regular representation of $C_k$, which admits a finite
$C_p$-CW-structure.
\end{proof}

We now prove the Segal conjecture for these graded
polynomial rings over the sphere. For the
statement, recall that the cyclotomic Frobenius
on filtered objects multiplies filtrations by $p$,
and we review the formalism for this using the functor $L_p$
in \S\ref{ssec:hh-of-filtered}.

\begin{proposition}\label{prop:segal-for-sphere-poly} Suppose $w\ne 0$.
Then the cyclotomic Frobenius
	\[
	L_p\mathrm{THH}(S^0[a]) \to
	\mathrm{THH}(S^0[a])^{tC_p}
	\]
witnesses the target as the $p$-completion of the source.
\end{proposition}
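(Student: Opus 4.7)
The plan is to reduce the claim to a weight-by-weight comparison that can be settled by the classical Segal conjecture for $C_p$. By Lemma~\ref{lem:poly-over-sphere}, we have the direct sum decomposition
\[
\mathrm{THH}(S^0[a]) \simeq \bigoplus_{k \geq 0} \mathrm{Ind}_{C_k}^{S^1}\bigl(S^{2r\rho_k^{\mathbb{R}}}\bigr),
\]
where $\rho_k^{\mathbb{R}}$ denotes the real regular representation of $C_k$ and the $k$th summand sits in weight $wk$. The hypothesis $w \neq 0$ guarantees that distinct $k$ give distinct weights, so at each weight $j$ at most one summand contributes; in particular, the decomposition is pointwise finite. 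The functors $L_p$ and $(-)^{tC_p}$ therefore commute with the sum weight-by-weight, reducing the claim to a statement at each individual weight.

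Next, I would compute the two sides at each weight $j$. If $pw \nmid j$, both source and target vanish: either $L_p$ annihilates (no weight-$j/p$ summand in the source when $w \nmid j/p$), or $C_p$ acts freely on $S^1/C_{j/w}$ (when $p \nmid j/w$), forcing the target Tate to vanish by the pointwise finiteness of Lemma~\ref{lem:poly-over-sphere}. For $j = pwk'$ with $k' \geq 0$, the map becomes
\[
\mathrm{Ind}_{C_{k'}}^{S^1}\bigl(S^{2r\rho_{k'}^{\mathbb{R}}}\bigr)\;\longrightarrow\;\bigl(\mathrm{Ind}_{C_{pk'}}^{S^1}\bigl(S^{2r\rho_{pk'}^{\mathbb{R}}}\bigr)\bigr)^{tC_p}.
\]

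To analyze the target, I would use base change for the inclusion $BC_p\to BS^1$: since $C_p\subset C_{pk'}$ acts trivially on $S^1/C_{pk'}$, the restriction of the induced spectrum to $C_p$ identifies with $(S^1/C_{pk'})_+\otimes S^{2rk'\rho_p^{\mathbb{R}}}$, where $C_p$ acts trivially on the first factor and via restriction on the second (using $\rho_{pk'}^{\mathbb{R}}|_{C_p}=k'\rho_p^{\mathbb{R}}$). Because $(S^1/C_{pk'})_+$ is a dualizable spectrum with trivial $C_p$-action, the projection formula gives
\[
\bigl(\mathrm{Ind}_{C_{pk'}}^{S^1}\bigl(S^{2r\rho_{pk'}^{\mathbb{R}}}\bigr)\bigr)^{tC_p}\;\simeq\;(S^1/C_{pk'})_+\otimes\bigl(S^{2rk'\rho_p^{\mathbb{R}}}\bigr)^{tC_p}.
\]
The classical Segal conjecture of Lin and Gunawardena for $C_p$, applied to the finite $C_p$-spectrum $S^{2rk'\rho_p^{\mathbb{R}}}$, identifies this Tate construction with the $p$-completion of a representation sphere with trivial $C_p$-action; matching factors (noting that $S^1/C_{pk'}\simeq S^1$ exchanges the induction data on the two sides) then recovers the $p$-completion of the source.

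The main obstacle is verifying that the cyclotomic Frobenius, defined abstractly via the Tate diagonal, agrees on the $k'$th summand with the explicit comparison map produced above. I expect this to follow from the standard description of the cyclotomic structure on $\mathrm{THH}$ of a free $\mathbb{E}_1$-algebra: the Frobenius restricted to the weight-$wk'$ summand is given by a diagonal/$p$-th power map $\mathrm{Ind}_{C_{k'}}^{S^1}(X^{\otimes k'})\to (\mathrm{Ind}_{C_{pk'}}^{S^1}(X^{\otimes pk'}))^{tC_p}$, which under the base-change identification above becomes exactly the Segal map for the relevant representation sphere.
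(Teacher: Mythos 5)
Your proposal is correct and follows essentially the same route as the paper's proof: the weight-wise decomposition of $\mathrm{THH}(S^0[a])$ into induced spheres from Lemma \ref{lem:poly-over-sphere}, vanishing of the Tate construction on the summands with $k$ coprime to $p$, and the classical Segal conjecture applied to the cyclic smash power in the summands with $k=pk'$. The one step you flag as the main obstacle --- that the Frobenius on the weight-$wk'$ summand is the Tate diagonal $S^{2rk'}\to\bigl((S^{2rk'})^{\otimes p}\bigr)^{tC_p}$ tensored with $S^1_+$ --- is exactly how the paper closes the argument, using the natural commutative square comparing $L_pA^{\otimes m}\to(A^{\otimes mp})^{tC_p}$ with $\varphi$ on $\mathrm{THH}(A)$ and then inducing up along $S^1_+$, confirming your expectation.
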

\begin{proof} As in the previous proposition, we may compute:
	\[
	\mathrm{THH}(S^0[a])
	\simeq \bigoplus_{k \ge 0} \mathrm{Ind}_{C_k}^{S^1}(S^{2rk}(wk)).
	\]
Since there is at most one nonzero summand in each fixed weight,
taking the Tate fixed points (in the category of graded
spectra) commutes with this sum, so that we have
	\[
	\mathrm{THH}(S^0[a])^{tC_p}
	\simeq \bigoplus_{k \ge 0}
	 (\mathrm{Ind}_{C_k}^{S^1}(S^{2rk}(wk)))^{tC_p}.
	\]
When $k$ is not divisible by $p$, the restriction of 
$\mathrm{Ind}_{C_k}^{S^1}(S^{2rk})$ to a $C_p$-spectrum
lies in the thick subcategory generated by $C_{p+}$, and so is
annihilated by $(-)^{tC_p}$. When $k=mp$ is divisible by $p$,
then the restriction of $\mathrm{Ind}_{C_k}^{S^1}(S^{2rk})$
is equivalent to $S^1_+ \otimes (S^{2rm})^{\otimes p}$,
where $C_p$ acts trivially on the first term and by cyclic permutations
on the second. Thus:
	\[
	\mathrm{THH}(S^0[a])^{tC_p}
	\simeq \bigoplus_{m \ge 0}
	((S^{2rm})^{\otimes p}(wmp))^{tC_p} \oplus \Sigma 
	((S^{2rm})^{\otimes p}(wmp))^{tC_p}.
	\]
To compute what the cyclotomic Frobenius does, recall
that, directly from the construction of the cyclotomic Frobenius,
we have a commutative diagram, for any graded $\mathbb{E}_1$-ring
$A$:
	\[
	\xymatrix{
	L_pA^{\otimes m} \ar[r]\ar[d] & (A^{\otimes mp})^{tC_p}\ar[d]\\
	L_p\mathrm{THH}(A) \ar[r] & \mathrm{THH}(A)^{tC_p}
	}
	\] 
The bottom arrow is $S^1\cong S^1/C_p$-equivariant, so we may induce up
the targets of the vertical maps to get a diagram:
	\[
	\xymatrix{
	S^1_+ \otimes
	L_pA^{\otimes m} \ar[r]\ar[d] & 
	S^1_+\otimes (A^{\otimes mp})^{tC_p}\ar[d]\\
	L_p\mathrm{THH}(A) \ar[r] & \mathrm{THH}(A)^{tC_p}
	}
	\]
If we now take $A=S^0[a]$ and restrict to the summand
corresponding to $a^m$, then we learn that the cyclotomic
Frobenius map in weight $mp$ is given by tensoring the Tate diagonal
	\[
	S^{2rm} \to ((S^{2rm})^{\otimes p})^{tC_p}
	\]
with $S^1_+$. The
Tate diagonal here
witnesses the target as the $p$-completion of the source
by the classical Segal conjecture.
\end{proof}

\subsection{The Segal conjecture for polynomial $\mathbb{F}_p$-algebras} 

We consider in this section a graded $\mathbb{E}_2$-$\mathbb{F}_p$-algebra $R$, with homotopy groups a polynomial ring
\[\pi_*(R) \cong \mathbb{F}_p[a_1,a_2,\cdots,a_n].\]
Each $a_i$ will have non-negative even degree $|a_i|$ and 
positive weight $\mathrm{wt}(a_i)$, though we suppress the weights from the notation.  In fact, there is a unique ring $R$ with the above description:

\begin{proposition}\label{prop:poly-unique}
As a graded $\mathbb{E}_2$-$\mathbb{F}_p$-algebra, the ring $R$ above must be equivalent to
\[\mathbb{F}_p \otimes S^0[a_1] \otimes S^0[a_2] \otimes \cdots \otimes S^0[a_n],\]
where $S^0[a_i]$ is the ring constructed in
Construction \ref{build-poly}
with $2r=|a_i|$ and $w=\mathrm{wt}(a_i)$.
\end{proposition}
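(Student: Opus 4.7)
The strategy is to construct a map of graded $\mathbb{E}_2$-$\mathbb{F}_p$-algebras $\Phi\colon T \to R$, where $T := \mathbb{F}_p \otimes S^0[a_1] \otimes \cdots \otimes S^0[a_n]$, inducing the evident isomorphism on $\pi_*$; any such $\Phi$ is then an equivalence by a Whitehead argument in graded connective spectra. The main subtlety, which distinguishes this from the $\mathbb{E}_1$-analogue, is that the tensor product in graded $\mathbb{E}_2$-algebras is the symmetric monoidal product but \emph{not} the coproduct, so $\Phi$ cannot simply be assembled from independent $\mathbb{E}_1$-maps $S^0[a_i] \to R$: one genuinely needs to produce an $\mathbb{E}_2$-coherent map.

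I would proceed by induction on $n$, with the trivial base case $n=0$. For the inductive step, the decisive construction is an $\mathbb{E}_2$-$\mathbb{F}_p$-algebra map $\phi\colon \mathbb{F}_p \otimes S^0[a_n] \to R$ sending the generator to $a_n$. Recall from Construction~\ref{build-poly} that $S^0[a_n]$ arises from an $\mathbb{E}_2$-monoidal (indeed symmetric-monoidal) functor $\mathbb{Z}^{\mathrm{ds}} \to \mathrm{Pic}(\mathsf{Sp})$. Unwinding this universal description, an $\mathbb{E}_2$-$\mathbb{F}_p$-algebra map out of $\mathbb{F}_p \otimes S^0[a_n]$ to $R$ amounts to equipping the class $a_n \in \pi_{|a_n|,w_n} R$ with a coherent $\mathbb{E}_2$-multiplicative structure on its tensor powers. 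The obstructions to such a coherent structure live in Andr\'e--Quillen-style cohomology groups, and they vanish for dimensional reasons: the only potential $\mathbb{E}_2$-obstructions on an even-degree class --- the Browder bracket $[a_n, a_n]$ (of odd total degree) and the lowest nontrivial Dyer--Lashof operation $Q_1 a_n$ --- would land in bidegrees in which $\pi_* R = \mathbb{F}_p[a_1, \ldots, a_n]$ is identically zero.

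Given $\phi$, I regard $R$ as a graded $\mathbb{E}_2$-$(\mathbb{F}_p \otimes S^0[a_n])$-algebra and form the relative tensor product $\overline{R} := R \otimes_{\mathbb{F}_p \otimes S^0[a_n]} \mathbb{F}_p$ along the augmentation. Since $a_n$ is a non-zero-divisor in $\pi_* R$, the K\"unneth spectral sequence collapses and $\pi_* \overline{R} \cong \mathbb{F}_p[a_1, \ldots, a_{n-1}]$, so the inductive hypothesis supplies an equivalence $\overline{R} \simeq \mathbb{F}_p \otimes S^0[a_1] \otimes \cdots \otimes S^0[a_{n-1}]$ of graded $\mathbb{E}_2$-$\mathbb{F}_p$-algebras. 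Base-changing this equivalence back up along the unit $\mathbb{F}_p \to \mathbb{F}_p \otimes S^0[a_n]$, using the $\mathbb{F}_p \otimes S^0[a_n]$-algebra structure on $R$ supplied by $\phi$, produces the desired equivalence $R \simeq T$.

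The hardest step by far is the construction of $\phi$ as an $\mathbb{E}_2$-map rather than merely an $\mathbb{E}_1$-map. This vanishing of the $\mathbb{E}_2$-obstructions is a phenomenon specific to the graded even-polynomial setting: in the $\mathbb{E}_3$-setting additional Dyer--Lashof operations would survive and force genuine obstructions (so the analogous uniqueness genuinely fails), while in the $\mathbb{E}_1$-setting the argument is formal. The argument therefore hinges on using precisely the mixture of $\mathbb{E}_2$-operations with the even-concentrated, positive-weight, polynomial form of $\pi_* R$.
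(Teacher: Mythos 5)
Your overall strategy (map the standard model to $R$ and conclude by Whitehead) is the same as the paper's, but as written the argument has two genuine gaps, the more serious of which is the last step of your induction. The inductive hypothesis hands you an equivalence $T_{n-1}\simeq \overline{R}$ of graded $\mathbb{E}_2$-$\mathbb{F}_p$-algebras, where $\overline{R}=R\otimes_{\mathbb{F}_p\otimes S^0[a_n]}\mathbb{F}_p$. Base-changing that equivalence along the unit $\mathbb{F}_p\to\mathbb{F}_p\otimes S^0[a_n]$ only produces an equivalence $T_n\simeq \overline{R}\otimes_{\mathbb{F}_p}(\mathbb{F}_p\otimes S^0[a_n])$; it does not produce any map to or from $R$. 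Identifying $R$ with the base change of its own reduction, i.e. asserting $R\simeq \overline{R}\otimes_{\mathbb{F}_p}(\mathbb{F}_p\otimes S^0[a_n])$ as algebras over $\mathbb{F}_p\otimes S^0[a_n]$, is exactly the kind of rigidity statement the proposition asserts, so this step is circular. To make the induction run you would instead need to lift the map $T_{n-1}\to\overline{R}$ through the quotient $R\to\overline{R}$ (an obstruction problem along the $a_n$-adic tower), or equivalently produce a map $T_{n-1}\to R$ of $\mathbb{E}_2$-$\mathbb{F}_p$-algebras hitting $a_1,\dots,a_{n-1}$; neither is addressed. (A related wrinkle: to form $\overline{R}$ as an $\mathbb{E}_2$-$\mathbb{F}_p$-algebra you need $R$ to be an $\mathbb{E}_2$-algebra in $\mathsf{Mod}_{\mathbb{F}_p\otimes S^0[a_n]}$, which is strictly more structure than the $\mathbb{E}_2$-ring map $\phi$ you construct.)

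The second gap is the construction of $\phi$ itself. Since $S^0[a_n]$ is free only as a graded $\mathbb{E}_1$-algebra (Construction \ref{build-poly}), an $\mathbb{E}_2$-map out of $\mathbb{F}_p\otimes S^0[a_n]$ is controlled by its $\mathbb{E}_2$-cotangent complex, and your claim that the only possible obstructions are the Browder bracket $[a_n,a_n]$ and a single Dyer--Lashof operation is asserted rather than derived from any identification of that cotangent complex or cell structure. The paper makes precisely this point rigorous, and for all generators simultaneously, by a different device: the iterated bar construction $\mathbb{F}_p\otimes_{\mathbb{F}_p\otimes_T\mathbb{F}_p}\mathbb{F}_p$ has even homotopy (its K\"unneth spectral sequence collapses to a divided power algebra on even classes), so by the cited theorems of Galatius--Kupers--Randal-Williams the standard model $T$ admits a minimal graded $\mathbb{E}_2$-$\mathbb{F}_p$-algebra cell structure with even cells; since $\pi_*R$ is concentrated in even degrees, there is then no obstruction to an $\mathbb{E}_2$-map $T\to R$ sending $a_i\mapsto a_i$, and this one step replaces both your induction and your obstruction count. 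If you want to keep your outline, replace the heuristic obstruction analysis with an even cell structure (or an actual computation of the relevant Andr\'e--Quillen groups) and rework the inductive step as a lifting problem along $R\to\overline{R}$ rather than a base change.
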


\begin{proof}
Let us denote
$\mathbb{F}_p \otimes S^0[a_1] \otimes S^0[a_2] \otimes \cdots \otimes S^0[a_n]$ by $A$. We first claim that $A$ has a 
graded $\mathbb{E}_2$-$\mathbb{F}_p$-algebra cell structure
with cells in even degrees. Indeed, this
algebra is canonically augmented over $\mathbb{F}_p$, so
we may apply 
\cite[Theorem 11.21,Theorem 13.7]{kupers-galatius-randall-williams}
which, together, show that, if 
	\[
	\mathbb{F}_p\otimes_{
	\mathbb{F}_p\otimes_A\mathbb{F}_p}\mathbb{F}_p
	\]
has homotopy groups in even degrees, then $A$ has a minimal
cell structure as a graded $\mathbb{E}_2$-$\mathbb{F}_p$-algebra
with cells in even degrees.\footnote{Here it is important that we are considering
$\mathbb{E}_2$-algebras:
the iterated bar construction is related to the $\mathbb{E}_k$-cotangent complex up to a shift by $k$. Since $k=2$, the property
of being concentrated in even degrees is insensitive to this shift.}
But the K\"unneth spectral sequence
computing these homotopy groups collapses at the $E_2$-page
as a divided power algebra on even degree classes, so the claim follows.

There is then no obstruction to constructing an $\mathbb{E}_2$-map
$A \to R$ sending $a_i$ to $a_i$, since the homotopy groups of $R$
are concentrated in even degrees. The result follows. 
\end{proof}

Our main theorem about this $\mathbb{E}_2$-$\mathbb{F}_p$-algebra $R$ is as follows:

\begin{proposition} \label{ModpSegal}
The cyclotomic Frobenius 
\[
L_p\mathrm{THH}(R) \to \mathrm{THH}(R)^{tC_p}\]
induces on homotopy groups the ring map
\[\mathbb{F}_p[x,a_1,a_2,\cdots,a_n] \otimes \Lambda(\sigma a_1,\sigma a_2, \cdots, \sigma a_n) \to \mathbb{F}_p[x^{\pm 1},a_1,a_2,\cdots,a_n] \otimes \Lambda(\sigma a_1,\sigma a_2, \cdots, \sigma a_n) \]
that inverts $x$.  Here, $x$ is in degree $2$ and weight $0$. The degree of $\sigma a_i$ is one more than the degree of $a_i$, and the weight of $\sigma a_i$ is the same as the weight of $a_i$. 
\end{proposition}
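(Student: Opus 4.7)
My plan is to reduce to the Segal conjectures already established: for $\mathrm{THH}(\mathbb{F}_p)$ (classical, via B\"okstedt and Nikolaus--Scholze) and for $\mathrm{THH}(S^0[a])$ (Proposition \ref{prop:segal-for-sphere-poly}). By Proposition \ref{prop:poly-unique}, we may take
\[R \simeq \mathbb{F}_p \otimes S^0[a_1] \otimes \cdots \otimes S^0[a_n]\]
as graded $\mathbb{E}_2$-$\mathbb{F}_p$-algebras. Symmetric monoidality of $\mathrm{THH}$ as a functor to cyclotomic spectra then gives
\[\mathrm{THH}(R) \simeq \mathrm{THH}(\mathbb{F}_p) \otimes \mathrm{THH}(S^0[a_1]) \otimes \cdots \otimes \mathrm{THH}(S^0[a_n])\]
as graded cyclotomic spectra, and expresses the cyclotomic Frobenius on $\mathrm{THH}(R)$ via the tensor of Frobenii on factors, composed with the lax monoidality of $(-)^{tC_p}$.

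The central technical step is to compute $\mathrm{THH}(R)^{tC_p}$ weight-by-weight. Graded Tate fixed points are pointwise in each weight, and by Lemma \ref{lem:poly-over-sphere} the positivity of each $\mathrm{wt}(a_i)$ forces the weight-$w$ piece of $\mathrm{THH}(S^0[a_1]) \otimes \cdots \otimes \mathrm{THH}(S^0[a_n])$ to be a finite $C_p$-spectrum. This finiteness allows one to split the Tate construction across the tensor factors (working relative to the $\mathrm{THH}(\mathbb{F}_p)^{tC_p}$-module structure on $\mathrm{THH}(R)^{tC_p}$, or equivalently by collapsing a Tate spectral sequence using the multiplicative structure). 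Under this splitting, the cyclotomic Frobenius on $\mathrm{THH}(R)$ becomes the tensor of individual Frobenii on the factors.

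Each factor's Frobenius is known. For $\mathrm{THH}(\mathbb{F}_p)$, B\"okstedt's theorem gives $\pi_* \mathrm{THH}(\mathbb{F}_p) = \mathbb{F}_p[x]$ with $|x|=2$ and weight $0$, and the Nikolaus--Scholze identification of the cyclotomic Frobenius realizes the inclusion $\mathbb{F}_p[x] \hookrightarrow \mathbb{F}_p[x^{\pm 1}]$. For each $\mathrm{THH}(S^0[a_i])$, Proposition \ref{prop:segal-for-sphere-poly} identifies the Frobenius with a $p$-completion; this becomes an equivalence after tensoring with $\mathbb{F}_p$, and Lemma \ref{lem:poly-over-sphere} identifies the resulting mod-$p$ homotopy with $\mathbb{F}_p[a_i] \otimes \Lambda(\sigma a_i)$. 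A K\"unneth argument, using that all factors are $\mathbb{F}_p$-free in each weight, assembles these into the claimed ring map on homotopy groups.

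The hard part is the Tate-splitting step in the second paragraph: while the tensor decomposition of $\mathrm{THH}(R)$ itself is automatic from monoidality of $\mathrm{THH}$, the analogous decomposition of $\mathrm{THH}(R)^{tC_p}$ is a delicate consequence of the pointwise finiteness afforded by Lemma \ref{lem:poly-over-sphere}, which in turn requires every weight $\mathrm{wt}(a_i)$ to be strictly positive.
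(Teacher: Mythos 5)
Your proposal is correct and follows essentially the same route as the paper: decompose $R$ via Proposition \ref{prop:poly-unique} and monoidality of $\mathrm{THH}$ on cyclotomic spectra, feed in B\"okstedt for the $\mathrm{THH}(\mathbb{F}_p)$ factor and Proposition \ref{prop:segal-for-sphere-poly} for each $\mathrm{THH}(S^0[a_i])$, and isolate the splitting of $(-)^{tC_p}$ across the tensor factors as the key step, powered by the pointwise finiteness of Lemma \ref{lem:poly-over-sphere}. The only difference is that where you gesture at a module-structure or Tate spectral sequence argument for that splitting, the paper makes it precise by a thick subcategory argument in each (finite) weight, reducing to the two cases $Y = C_{p+}$ (both sides vanish) and $Y = S^0$ (where the classical Segal conjecture identifies $(S^0)^{tC_p}$ with the $p$-complete sphere, so the comparison map is a $p$-adic equivalence).
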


A version of the above is well-known in the case that all $a_i$ are in degree $0$ and weight zero,
so $R$ is a classical commutative ring (see, e.g.,
\cite[6.6]{hesselholt} for a much stronger result).  Our main observation is that the result extends to the case where not all $a_i$ are in degree $0$, in which case $R$ is not discrete.
Since an exterior algebra on classes of degree $|a_i|+1$ has no homotopy above degree $n+\sum |a_i|$, we obtain the following result:

\begin{corollary}[Segal conjecture for graded
polynomial $\mathbb{F}_p$-algebras] \label{cor:fp-poly-segal}
The map
\[\pi_*(\mathrm{THH}(R)/(a_1,\cdots,a_n)) \to \pi_*(\mathrm{THH}(R)^{tC_p}/(a_1,\cdots,a_n))\]
is an equivalence in degrees $*>n+\sum^{n}_{i=1} |a_i|$.
\end{corollary}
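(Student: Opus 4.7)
The plan is to deduce this directly from Proposition \ref{ModpSegal} by modding out by the regular sequence $(a_1, \ldots, a_n)$. Since $R$ is an $\mathbb{E}_2$-$\mathbb{F}_p$-algebra, both $\mathrm{THH}(R)$ and $\mathrm{THH}(R)^{tC_p}$ are $R$-module spectra via their units, and the cyclotomic Frobenius is a map of ring spectra carrying unit to unit. Consequently $\varphi(a_i \cdot y) = a_i \cdot \varphi(y)$, so iterated cofibres of multiplication by the $a_i$ can be formed compatibly on both sides, producing the map $\mathrm{THH}(R)/(a_1,\ldots,a_n) \to \mathrm{THH}(R)^{tC_p}/(a_1,\ldots,a_n)$ whose effect on $\pi_*$ we wish to analyze.

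First, I would invoke Proposition \ref{ModpSegal} to identify the Frobenius on underlying $\pi_*$ with the $x$-inversion
\[\mathbb{F}_p[x, a_1, \ldots, a_n] \otimes \Lambda(\sigma a_1, \ldots, \sigma a_n) \hookrightarrow \mathbb{F}_p[x^{\pm 1}, a_1, \ldots, a_n] \otimes \Lambda(\sigma a_1, \ldots, \sigma a_n).\]
Next, I would observe that $(a_1, \ldots, a_n)$ is a regular sequence on both source and target, as each is a polynomial ring in the $a_i$ (over $\mathbb{F}_p[x]$ or $\mathbb{F}_p[x^{\pm 1}]$, respectively) tensored with a free exterior algebra. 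Iterating the long exact sequence for the cofibre of multiplication by each $a_i$ then identifies the homotopy of the quotients with the ring-theoretic quotients, and the Frobenius reduces to the inclusion
\[\mathbb{F}_p[x] \otimes \Lambda(\sigma a_1, \ldots, \sigma a_n) \hookrightarrow \mathbb{F}_p[x^{\pm 1}] \otimes \Lambda(\sigma a_1, \ldots, \sigma a_n).\]

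Finally, I would conclude by bounding the cokernel of this inclusion, which equals $x^{-1}\mathbb{F}_p[x^{-1}] \otimes \Lambda(\sigma a_1, \ldots, \sigma a_n)$. Since $x^{-1}$ has degree $-2$ and the top class of the exterior algebra lives in degree $\sum_i(|a_i|+1) = n + \sum_i |a_i|$, this cokernel is concentrated in degrees at most $-2 + n + \sum_i |a_i|$. Hence the Frobenius induces an isomorphism on $\pi_*$ in all degrees $* > n + \sum_i |a_i| - 2$, which in particular implies the stated range $* > n + \sum_i |a_i|$. There is essentially no obstacle: all substantive content lies in Proposition \ref{ModpSegal}, and the remainder is routine Koszul bookkeeping once $R$-linearity of the Frobenius has been noted.
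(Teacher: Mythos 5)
Your proposal is correct and follows essentially the same route as the paper: deduce from Proposition \ref{ModpSegal} that after killing the regular sequence $(a_1,\dots,a_n)$ the Frobenius becomes the inclusion $\mathbb{F}_p[x]\otimes\Lambda(\sigma a_1,\dots,\sigma a_n)\hookrightarrow\mathbb{F}_p[x^{\pm 1}]\otimes\Lambda(\sigma a_1,\dots,\sigma a_n)$, whose cokernel sits in degrees bounded by the top exterior class (the paper states this bound directly, implicitly using the same $\varphi$-linearity over $R$ that you spell out). Your extra care about regularity and the precise degree $-2+n+\sum|a_i|$ of the cokernel only sharpens the stated range.
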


\begin{proof}[Proof of Propostion \ref{ModpSegal}]
For convenience we will omit the grading shear, $L_p$, from the
notation throughout.

By Proposition \ref{prop:poly-unique}, we may assume that $R$ is a tensor product of graded $\mathbb{E}_2$-rings
\[
R \simeq \mathbb{F}_p \otimes S^0[a_1] \otimes \cdots \otimes S^0[a_{n}].\]
Since $\mathrm{THH}$ is symmetric monoidal as a functor to cyclotomic spectra \cite[p.341]{nikolaus-scholze}, we may compute
\[
\mathrm{THH}(R) \simeq \mathrm{THH}(\mathbb{F}_p) \otimes \mathrm{THH}(S^0[a_1]) \otimes \cdots \otimes \mathrm{THH}(S^0[a_n])
\]
as a $C_p$-equivariant $\mathbb{E}_1$-ring spectrum.  We next compute the cyclotomic Frobenius on each component of the above tensor product.
It follows from B\"okstedt's unpublished computation of 
$\mathrm{THH}(\mathbb{F}_p)$ (see \cite[\S IV.4]{nikolaus-scholze}
for a modern reference) that the map
	\[
	\varphi: \mathrm{THH}(\mathbb{F}_p) \to
	\mathrm{THH}(\mathbb{F}_p)^{tC_p}
	\]
induces, on homotopy groups, the map
	\[
	\mathbb{F}_p[x] \to \mathbb{F}_p[x^{\pm 1}]
	\]
which inverts $x$. Here $x=\sigma^2v_0$ is in degree 2 and
weight zero. We have already seen (Proposition 
\ref{prop:segal-for-sphere-poly}) that each map
	\[
	\varphi:
	\mathrm{THH}(S^0[a_i]) \to \mathrm{THH}(S^0[a_i])^{tC_p}
	\]
is an equivalence after $p$-completion. It follows that the map
	\[
	\mathrm{THH}(\mathbb{F}_p) \otimes 
	\bigotimes_i \mathrm{THH}(S^0[a_i])
	\to
	\mathrm{THH}(\mathbb{F}_p)^{tC_p}
	 \otimes \bigotimes_i \mathrm{THH}(S^0[a_i])^{tC_p}
	\]
has the desired effect on homotopy groups. To finish the proof
we need to show that the lax monoidal structure map
	\[
	\mathrm{THH}(\mathbb{F}_p)^{tC_p}
	 \otimes 
	 \bigotimes_i \mathrm{THH}(S^0[a_i])^{tC_p}
	 \to
	\left(
	\mathrm{THH}(\mathbb{F}_p) \otimes 
	\bigotimes_i \mathrm{THH}(S^0[a_1]) 
	\right)^{tC_p}
	\]
is an equivalence. By Lemma \ref{lem:poly-over-sphere}
it suffices to prove the following more general statement:
if $X$ and $Y$
are nonnegatively graded $C_p$-spectra, and, for each weight
$j$ the $C_p$-spectrum $Y_j$ lies in the thick subcategory
generated by $C_{p+}$ and $S^0$, then the map
	\[
	\alpha: X^{tC_p} \otimes Y^{tC_p} \to (X\otimes Y)^{tC_p}
	\]
is an equivalence after $p$-completion.

The Tate construction on graded $C_p$-spectra is computed pointwise,
so we need to prove that 
	\[
	\bigoplus_{i+j=w} X_i^{tC_p} \otimes Y_j^{tC_p}
	\to \left(\bigoplus X_i\otimes Y_j\right)^{tC_p}
	\]
is a $p$-adic equivalence. Since $X$ and $Y$ are nonnegatively graded,
these sums are finite.
We are therefore reduced
to proving the analogous ungraded statement: that
$\alpha$ is a $p$-adic equivalence where $X$ and $Y$ are (ungraded)
$C_p$-spectra and where we assume
that $Y$ belongs to the thick subcategory generated by $S^0$
and $C_{p+}$. Since the Tate consruction is exact,
the 
category
of $C_p$-spectra $Y$ for which $\alpha$ an equivalence
after $p$-completion is
a thick subcategory. So we need only prove the claim for
$Y=S^0$ and $Y=C_{p+}$. When $Y=C_{p+}$ both sides
vanish. When $Y=S^0$ we may identify $Y^{tC_p}$
with the $p$-complete sphere,
$(S^0)^{\wedge}_p$, by the Segal conjecture,
so this map becomes the canonical one
	\[
	X^{tC_p} \otimes (S^0)^{\wedge}_p \to X^{tC_p},
	\]
which is indeed an equivalence after $p$-completion.
\end{proof}

\subsection{The Segal conjecture for $\mathrm{BP} \langle n \rangle$}

The key to the proof of Theorem \ref{thm:intro-segal} is the following:

\begin{theorem}\label{thm:segal-for-bpn-simpler}
The map of $\mathrm{BP}$-algebras
\[\mathrm{THH}(\mathrm{BP} \langle n \rangle) / (p,v_1,v_2,\cdots,v_n) \to \mathrm{THH}(\mathrm{BP} \langle n \rangle)^{tC_p}/(p,v_1,v_2,\cdots,v_n)\]
is an equivalence in large degrees.
Here we regard $p, v_1, ..., v_n$ as elements in the homotopy
of the right hand side via the ring map $\varphi$.
\end{theorem}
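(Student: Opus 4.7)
The plan is to use a filtration argument to reduce to the graded Segal conjecture for polynomial $\mathbb{F}_p$-algebras proved in Corollary~\ref{cor:fp-poly-segal}. The strategy is to refine $\mathrm{BP}\langle n\rangle$ to a filtered $\mathbb{E}_2$-$\mathrm{MU}$-algebra whose associated graded is polynomial over $\mathbb{F}_p$, apply $\mathrm{THH}$ to obtain a filtered cyclotomic spectrum, and then compare the associated-graded Frobenius with the one analyzed in Proposition~\ref{ModpSegal}.

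First I would equip $\mathrm{BP}\langle n\rangle$ (viewed as an $\mathbb{E}_2$-$\mathrm{MU}$-algebra via Theorem~\ref{thm:multn-form}) with a filtration---for example the d\'ecalage of its Adams filtration---promoting it to a complete filtered $\mathbb{E}_2$-$\mathrm{MU}$-algebra $\mathrm{fil}^\bullet \mathrm{BP}\langle n\rangle$ whose underlying object is $\mathrm{BP}\langle n\rangle$ and whose associated graded is equivalent, as a graded $\mathbb{E}_2$-$\mathbb{F}_p$-algebra, to $\mathbb{F}_p[\bar{v}_0, \bar{v}_1, \ldots, \bar{v}_n]$ with each $\bar{v}_i$ in weight one and internal degree $2p^i-2$ (taking $\bar{v}_0$ to detect $p$). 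By Proposition~\ref{prop:poly-unique} this graded algebra is the tensor product of $\mathbb{F}_p$ with the free graded $\mathbb{E}_2$-rings $S^0[a_0], \ldots, S^0[a_n]$ from Construction~\ref{build-poly}.

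Applying $\mathrm{THH}$ as a lax symmetric monoidal functor produces a filtered cyclotomic spectrum refining $\mathrm{THH}(\mathrm{BP}\langle n\rangle)$ whose associated graded is $\mathrm{THH}$ of the above polynomial algebra. The cyclotomic Frobenius becomes a map of filtered cyclotomic spectra after the $L_p$-reindexing that multiplies weights by $p$, and on associated graded recovers exactly the map of Proposition~\ref{ModpSegal}. Quotienting on the underlying ring by $(p, v_1, \ldots, v_n)$ corresponds on associated graded to quotienting by $(\bar{v}_0, \ldots, \bar{v}_n)$, so Corollary~\ref{cor:fp-poly-segal} implies that the associated-graded Frobenius is an equivalence in all internal degrees above an explicit bound depending on $p$ and $n$.

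Finally, to lift this to the underlying map, I would use completeness of the filtration together with the observation that after modding out by the $\bar{v}_i$, the associated graded is particularly tame: the source becomes the finite-dimensional exterior algebra $\Lambda(\sigma \bar{v}_0, \ldots, \sigma \bar{v}_n)$ and the target becomes $\mathbb{F}_p[x^{\pm 1}] \otimes \Lambda(\sigma \bar{v}_0, \ldots, \sigma \bar{v}_n)$ with $x$ in weight zero, so in each internal degree only finitely many weights can contribute. This forces the associated-graded equivalence in a range to lift to an equivalence of underlying spectra in the same range. The main technical obstacle is establishing the filtered $\mathbb{E}_2$-$\mathrm{MU}$-algebra refinement of $\mathrm{BP}\langle n\rangle$ with the specified associated graded, together with verifying that the Tate construction interacts well with the filtration---since it involves inverse limits that can fail to commute with filtered colimits in general---once these foundations are in place the convergence step is essentially formal.
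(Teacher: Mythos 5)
Your proposal is correct and is essentially the paper's own argument: one filters $\mathrm{BP}\langle n\rangle$ by the d\'ecalage of the Adams filtration (the descent tower along $S^0\to\mathbb{F}_p$), whose associated graded is the graded polynomial algebra $\mathbb{F}_p[v_0,\dots,v_n]$, applies the filtered cyclotomic Frobenius, and reduces via Corollary \ref{cor:fp-poly-segal} to the graded polynomial case. The technical points you flag are exactly the ones the paper supplies---the filtered $\mathbb{E}_2$-refinement and the $L_p$-twisted Frobenius come from the descent-tower formalism and \cite{ammn-fiber}, and the interaction of the Tate construction with the filtration is handled by Propositions \ref{prop:bounded-tate} and \ref{prop:filtered-thh-converges} (conditional convergence after $v_0$-completion), with the quotient by $(p,v_1,\dots,v_n)$ made precise by lifting the $v_i$ to filtered classes $\tilde v_i$---the only bookkeeping discrepancy being that the d\'ecalage places $v_i$ in weight $2p^i-1$ rather than weight one, which does not affect the argument.
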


Before proving it, we need to recall a few things about
the Adams spectral sequence for $\mathrm{BP}\langle n\rangle$.

\begin{recollection}\label{recall-adams-bpn}
Recall the descent tower 
$\mathrm{desc}_{\mathbb{F}_p}^{\ge \bullet}(\mathrm{BP}\langle n\rangle)$
discussed in \S\ref{sec:sseq}.
We claim that the associated graded object has homotopy groups
given by:
	\[
	\pi_*(\mathrm{gr}(\mathrm{desc}^{\ge \bullet}_{\mathbb{F}_p}
	\mathrm{BP} \langle n \rangle))
	\simeq
	\mathbb{F}_p[v_0, v_1, ..., v_n],
	\]
where each $v_i$ lies in weight $2p^i-1$
(recall that the \emph{weight} of a class in
$E_2^{s,t}$ is $t$, see Convention \ref{conv:grading-sseq}).
Indeed, from the definition
of the descent tower, these homotopy groups agree with
	\[
	\mathrm{Ext}^{*,*}_{\mathcal{A}_*}(\mathbb{F}_p,
	\mathrm{H}_*(\mathrm{BP}\langle n\rangle;\mathbb{F}_p)).
	\]
Recall that the homology of $\mathrm{BP}\langle n\rangle$ as a comodule
is coextended from the quotient Hopf algebra
$\Lambda(\overline{\tau_0}, ..., \overline{\tau_n})$ (where we write $\tau_j$
for $\zeta_{j+1}$ at the prime 2)\footnote{Here we use the convention of \cite{milnor}
for the definition of $\zeta_j$.}:
	\[
	\mathrm{H}_*(\mathrm{BP}\langle n\rangle;\mathbb{F}_p)
	=
	\mathcal{A}_* \square_{\Lambda(\overline{\tau_0}, ..., 
	\overline{\tau_n})} \mathbb{F}_p
	\]
(this goes back to the construction of $\mathrm{BP}\langle n\rangle$,
see \cite[Proposition 1.7]{wilson}). By the change of rings isomorphism
\cite[A1.3.13]{ravenel}, we have
	\[
	\mathrm{Ext}^{*,*}_{\mathcal{A}_*}(\mathbb{F}_p,
	\mathrm{H}_*(\mathrm{BP}\langle n\rangle;\mathbb{F}_p))
	\cong
	\mathrm{Ext}^{*,*}_{\Lambda(\overline{\tau_0}, ..., 
	\overline{\tau_n})}(\mathbb{F}_p,
	\mathbb{F}_p)
	\cong
	\mathbb{F}_p[v_0, ..., v_n],
	\]
where the $v_i$ are represented by $[\overline{\tau_i}]$ in the cobar complex.
The classes are so-named because they detect the corresponding
classes in $\pi_*\mathrm{BP}\langle n\rangle$, and $v_0$ detects
$p$. For $i>0$
we denote by $\tilde{v}_i$ chosen lifts of each $v_i$
to elements in $\pi_*\mathrm{desc}_{\mathbb{F}_p}^{\ge 2p^i -1}
(\mathrm{BP}\langle n\rangle)$, and we observe that
$v_0$ admits a unique lift to an element in $\pi_*\mathrm{desc}_{\mathbb{F}_p}(\mathrm{BP}\langle n\rangle)$, which we denote
by the same symbol.
\end{recollection}

\begin{proof}[Proof of Theorem \ref{thm:segal-for-bpn-simpler}]
For convenience, in this proof we will suppress
the functor $L_p$ from the notation
when discussing the cyclotomic Frobenius
for filtered and graded objects.

First observe that we may reformulate this claim
as saying that the map
	\[
	\mathbb{F}_p\otimes_{\mathrm{BP}\langle n\rangle}
	\varphi:
	\mathbb{F}_p\otimes_{\mathrm{BP}\langle n\rangle}
	\mathrm{THH}(\mathrm{BP}\langle n\rangle)
	\to \mathbb{F}_p\otimes_{\mathrm{BP}\langle n\rangle}
	\mathrm{THH}(\mathrm{BP}\langle n\rangle)^{tC_p}
	\]
is an equivalence in large degrees, since
	\[
	\mathrm{BP}\langle n\rangle/(p, v_1, ..., v_n) \simeq
	\mathbb{F}_p.
	\]
	
To define this map we are using that
$\mathrm{THH}(\mathrm{BP}\langle n\rangle)$ is
an $\mathbb{E}_1$-$\mathrm{BP}\langle n\rangle$-algebra,
and the map $\varphi$ is an $\mathbb{E}_2$-algebra map,
and hence $\varphi$ in particular has the structure of a map
of modules over $\mathrm{BP}\langle n\rangle$
(where the module structure on the target is defined
using the map $\varphi$).

The $\mathbb{E}_2$-algebra $\mathrm{BP}\langle n\rangle$
has a refinement to a filtered $\mathbb{E}_2$-algebra
$\mathrm{desc}_{\mathbb{F}_p}^{\ge \bullet}(\mathrm{BP}\langle n\rangle)$,
and $\mathrm{desc}_{\mathbb{F}_p}^{\ge \bullet}(\mathbb{F}_p)=\mathbb{F}_p$ is a module over this algebra, where
the right hand side is the tower with $0$ in positive filtration
and $\mathbb{F}_p$ in non-positive filtration. Moreover,
$\mathrm{THH}$ inherits a filtration, and so we can ask whether
the map
	\[
	\mathbb{F}_p\otimes_{
	\mathrm{desc}_{\mathbb{F}_p}^{\ge \bullet}(\mathrm{BP}\langle n\rangle)}
	\mathrm{THH}(\mathrm{desc}_{\mathbb{F}_p}^{\ge \bullet}(\mathrm{BP}\langle n\rangle))
	\to \mathbb{F}_p\otimes_{\mathrm{desc}_{\mathbb{F}_p}^{\ge \bullet}(\mathrm{BP}\langle n\rangle)}
	\mathrm{THH}(\mathrm{desc}_{\mathbb{F}_p}^{\ge \bullet}(\mathrm{BP}\langle n\rangle))^{tC_p}
	\]
is an equivalence in large degrees on homotopy groups.

We would like to reduce this to a claim on the associated graded,
but in order to do so we need to know that the towers on
both sides are conditionally convergent.
By Proposition \ref{prop:filtered-thh-converges}, 
the towers
$\mathrm{THH}(\mathrm{desc}^{\ge \bullet}_{\mathbb{F}_p} \mathrm{BP} \langle n \rangle)$ and $\mathrm{THH}(\mathrm{desc}^{\ge \bullet}_{\mathbb{F}_p} \mathrm{BP} \langle n \rangle)^{tC_p}$ are conditionally convergent, after $v_0$-completion. 
Using the notation in Recollection \ref{recall-adams-bpn},
it suffices by a thick subcategory
argument to prove
that
	\[
	\mathrm{desc}^{\ge \bullet}_{\mathbb{F}_p}(\mathrm{BP}\langle n\rangle)/(v_0, \tilde{v_1}, ..., \tilde{v_n}) \simeq \mathbb{F}_p
	\]
as filtered modules over 
$\mathrm{desc}^{\ge \bullet}_{\mathbb{F}_p}(\mathrm{BP}\langle n\rangle)$.
Again, since
$\mathrm{desc}^{\ge \bullet}_{\mathbb{F}_p}(\mathrm{BP}\langle n\rangle)$
is conditionally convergent after $v_0$-completion, it suffices
to check this equivalence upon taking the associated graded, where it is
clear from Recollection \ref{recall-adams-bpn}.

We are now reduced to checking that the associated graded
of the map
	\[
	\mathbb{F}_p\otimes_{
	\mathrm{desc}_{\mathbb{F}_p}^{\ge \bullet}(\mathrm{BP}\langle n\rangle)}
	\mathrm{THH}(\mathrm{desc}_{\mathbb{F}_p}^{\ge \bullet}(\mathrm{BP}\langle n\rangle))
	\to \mathbb{F}_p\otimes_{\mathrm{desc}_{\mathbb{F}_p}^{\ge \bullet}(\mathrm{BP}\langle n\rangle)}
	\mathrm{THH}(\mathrm{desc}_{\mathbb{F}_p}^{\ge \bullet}(\mathrm{BP}\langle n\rangle))^{tC_p}
	\]
is an equivalence in high enough degrees.

Upon taking associated graded, we may,
by Recollection \ref{recall-adams-bpn}, identify this map with
\[\mathrm{THH}(\mathbb{F}_p[v_0,v_1,\cdots,v_n])/(v_0, ..., v_n)
\to \mathrm{THH}(\mathbb{F}_p[v_0,v_1,\cdots,v_n])^{tC_p}/
(v_0, ..., v_n),\]
and it follows from Corollary \ref{cor:fp-poly-segal} that this map is an equivalence in large degrees. This completes the proof.
\end{proof}

From a thick subcategory argument in 
$\mathrm{BP}$-modules, we then learn the following: 

\begin{corollary}\label{cor:segal-conjecture}
For any positive integers $i_0,i_1,\cdots,i_n$,
the map of $\mathrm{BP}$-algebras
\[\mathrm{THH}(\mathrm{BP} \langle n \rangle) / (p^{i_0},v_1^{i_1},v_2^{i_2},\cdots,v_n^{i_n}) \to \mathrm{THH}(\mathrm{BP} \langle n \rangle)^{tC_p}/(p^{i_0},v_1^{i_1},v_2^{i_2},\cdots,v_n^{i_n})\]
is an equivalence in large degrees.

In particular, if we let $S/I$ denote
a generalized Moore spectrum of the form
 $S^0/(p^{i_0},v_1^{i_1},\cdots,v_n^{i_n})$, then 
\[(S/I)_* \mathrm{THH}(\mathrm{BP} \langle n \rangle) \to (S/I)_* \mathrm{THH}(\mathrm{BP} \langle n \rangle)^{tC_p}\]
is an equivalence in large degrees.
\end{corollary}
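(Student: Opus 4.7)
The plan is to deduce the corollary from Theorem \ref{thm:segal-for-bpn-simpler} by a thick subcategory argument in the category $\mathsf{Mod}_{\mathrm{BP}}$. Since $\varphi$ is an $\mathbb{E}_2$-ring map under $\mathrm{BP}$, its fiber $C := \mathrm{fib}(\varphi)$ is naturally a $\mathrm{BP}$-module. Tensoring the desired statement over $\mathrm{BP}$ with the $\mathrm{BP}$-module
\[
M := \mathrm{BP}/(p^{i_0}, v_1^{i_1}, \ldots, v_n^{i_n}) \simeq \mathrm{BP}/p^{i_0} \otimes_{\mathrm{BP}} \mathrm{BP}/v_1^{i_1} \otimes_{\mathrm{BP}} \cdots \otimes_{\mathrm{BP}} \mathrm{BP}/v_n^{i_n}
\]
reduces the first claim to showing that $M \otimes_{\mathrm{BP}} C$ is bounded above.

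Let $\mathcal{T} \subseteq \mathsf{Mod}_{\mathrm{BP}}$ denote the full subcategory of those $N$ for which $N \otimes_{\mathrm{BP}} C$ is bounded above. Since bounded-above spectra form a thick subcategory of $\mathsf{Sp}$ and the functor $N \mapsto N \otimes_{\mathrm{BP}} C$ is exact, $\mathcal{T}$ is a thick subcategory of $\mathsf{Mod}_{\mathrm{BP}}$. Theorem \ref{thm:segal-for-bpn-simpler} is precisely the statement that $\mathrm{BP}/(p, v_1, \ldots, v_n) \in \mathcal{T}$.

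Next, I would show that $M$ lies in the thick subcategory of $\mathsf{Mod}_{\mathrm{BP}}$ generated by $\mathrm{BP}/(p, v_1, \ldots, v_n)$. For each $k$ (with $v_0 := p$), applying the octahedral axiom to the factorization $v_k^{j+1} = v_k \cdot v_k^j$ of self-maps of $\mathrm{BP}$ yields a cofiber sequence
\[
\Sigma^{j|v_k|}\mathrm{BP}/v_k \longrightarrow \mathrm{BP}/v_k^{j+1} \longrightarrow \mathrm{BP}/v_k^j
\]
of $\mathrm{BP}$-modules, so induction on $j$ shows that each $\mathrm{BP}/v_k^{i_k}$ lies in the thick subcategory generated by $\mathrm{BP}/v_k$. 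Since the property of lying in a given thick subcategory of $\mathsf{Mod}_{\mathrm{BP}}$ is preserved by tensoring over $\mathrm{BP}$ with any fixed $\mathrm{BP}$-module, iterating over $k$ places $M$ in the thick subcategory generated by $\mathrm{BP}/(p, v_1, \ldots, v_n)$. Hence $M \in \mathcal{T}$, proving the first assertion.

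The second assertion is then a formal consequence: for any $\mathrm{BP}$-module $X$ and any generalized Moore spectrum $S/I$ with $I = (p^{i_0}, v_1^{i_1}, \ldots, v_n^{i_n})$, there is a canonical equivalence $S/I \otimes X \simeq \mathrm{BP}/I \otimes_{\mathrm{BP}} X$ arising from $S/I \otimes \mathrm{BP} \simeq \mathrm{BP}/I$, and applying this with $X = C$ yields the claim. There is no real obstacle in this step: the substantive content is entirely contained in Theorem \ref{thm:segal-for-bpn-simpler}, and what remains is a routine thick subcategory manipulation.
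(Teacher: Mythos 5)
Your proposal is correct and is essentially the paper's own argument: the paper deduces Corollary \ref{cor:segal-conjecture} from Theorem \ref{thm:segal-for-bpn-simpler} by exactly the ``thick subcategory argument in $\mathrm{BP}$-modules'' that you have spelled out (fiber of $\varphi$ as a $\mathrm{BP}$-module, cofiber sequences $\Sigma^{j|v_k|}\mathrm{BP}/v_k \to \mathrm{BP}/v_k^{j+1} \to \mathrm{BP}/v_k^j$, and $S/I \otimes \mathrm{BP} \simeq \mathrm{BP}/I$ for the ``in particular'' clause). The only difference is that you have made explicit the routine details the paper leaves unstated.
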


The Segal conjecture (Theorem \ref{thm:intro-segal}) now follows by
a thick subcategory argument in spectra, since
any $S/I$ generates the thick subcategory of
type $n+1$ spectra.
 
\section{The Detection Theorem}\label{sec:detection}

Throughout this section, we will use $\mathrm{BP}\langle n \rangle$ to denote a fixed $\mathbb{E}_3$-$\mathrm{MU}$-algebra form $\mathrm{BP}\langle n \rangle$.  By $v_{n+1} \in \pi_{2p^{n+1}-2} \mathrm{MU}_{(p)}$ we will refer to a specific indecomposable generator, with:
\begin{itemize}
\item trivial mod $p$ Hurewicz image, and
\item the key property that the unit map $\mathrm{MU}_{(p)} \to \mathrm{BP} \langle n \rangle$ sends $v_{n+1}$ to $0$ in homotopy.
\end{itemize}
This last assumption ensures that $v_{n+1}$ admits a unique lift to an element in the homotopy of the fiber of the unit map $\mathrm{MU}_{(p)} \to \mathrm{BP} \langle n \rangle$.  Our main aim will be to prove
Theorem \ref{thm:intro-detection} from the introduction,
which we restate for convenience:

\begin{theorem}[Detection] \label{thm:body-detection}
There is an isomorphism of $\mathbb{Z}_{(p)}[v_1, ..., v_n]$-algebras
	\[
	\pi_*(\mathrm{THH}(\mathrm{BP}\langle n\rangle/\mathrm{MU})^{hS^1})
 	\cong \left(\pi_*\mathrm{THH}(\mathrm{BP}\langle n \rangle 
	/\mathrm{MU})\right)\llbracket t \rrbracket, 
	\]
where $|t|=-2$.
This isomorphism can be chosen such that, under the unit map
\[\pi_*(\mathrm{MU}_{(p)}^{hS^1}) \to \pi_*(\mathrm{THH}(\mathrm{BP}\langle n\rangle/\mathrm{MU})^{hS^1}),\] 
the canonical complex orientation maps to $t$ and $v_{n+1}$ is sent to $t(\sigma^2v_{n+1})$.
\end{theorem}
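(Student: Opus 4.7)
The plan is to analyze the $S^1$-homotopy fixed point spectral sequence
\[
E_2 = \pi_*\mathrm{THH}(\mathrm{BP}\langle n\rangle/\mathrm{MU})[t] \Longrightarrow \pi_*\bigl(\mathrm{THH}(\mathrm{BP}\langle n\rangle/\mathrm{MU})^{hS^1}\bigr),
\]
where $t$ is the standard generator of $H^2(BS^1;\mathbb{Z})$, placed in filtration $2$ and total degree $-2$. By Theorem \ref{thm:poly-thh}, $\pi_*\mathrm{THH}(\mathrm{BP}\langle n\rangle/\mathrm{MU})$ is polynomial on even-degree generators over $\mathbb{Z}_{(p)}[v_1,\ldots,v_n]$, so the $E_2$-page is concentrated in even total degree; every differential therefore vanishes for parity reasons. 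The spectrum $\mathrm{THH}(\mathrm{BP}\langle n\rangle/\mathrm{MU})$ is bounded below, so the spectral sequence is conditionally and strongly convergent, and because the $E_\infty$-page is free as a module over $\pi_*\mathrm{THH}(\mathrm{BP}\langle n\rangle/\mathrm{MU})$, there are no multiplicative extension problems. This produces the desired abstract isomorphism with the $t$-adic power series ring.

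Next I would pin down the element $t$ by comparison with $\mathrm{MU}_{(p)}$. The unit is $S^1$-equivariant when $\mathrm{MU}_{(p)}$ is given the trivial action, so we obtain an induced map $\mathrm{MU}_{(p)}^{hS^1}\to \mathrm{THH}(\mathrm{BP}\langle n\rangle/\mathrm{MU})^{hS^1}$ of HFPSS-computable spectra. Here $\mathrm{MU}_{(p)}^{hS^1}=F(BS^1_+,\mathrm{MU}_{(p)})$ and $\pi_{-2}\mathrm{MU}_{(p)}^{hS^1}=\mathrm{MU}_{(p)}^2(\mathbb{CP}^\infty)$ contains the canonical complex orientation $x_{\mathrm{MU}}$, which detects $t$ on the $E_2$-page of its HFPSS. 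Taking $t\in\pi_{-2}(\mathrm{THH}(\mathrm{BP}\langle n\rangle/\mathrm{MU})^{hS^1})$ to be the image of $x_{\mathrm{MU}}$ fixes a lift of $t\in E_\infty$ and realizes the first half of the addendum.

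For the image of $v_{n+1}$, note that the unit factors as $\mathrm{MU}_{(p)}\to \mathrm{BP}\langle n\rangle\to \mathrm{THH}(\mathrm{BP}\langle n\rangle/\mathrm{MU})$ and $v_{n+1}$ is zero in $\pi_*\mathrm{BP}\langle n\rangle$, so its image in $\pi_*\mathrm{THH}(\mathrm{BP}\langle n\rangle/\mathrm{MU})$ vanishes and its image in $\pi_*(\mathrm{THH}(\mathrm{BP}\langle n\rangle/\mathrm{MU})^{hS^1})$ lies in filtration at least $2$. Writing it as $t\cdot c_1+t^2c_2+\cdots$, the coefficient $c_1\in\pi_{|v_{n+1}|+2}\mathrm{THH}(\mathrm{BP}\langle n\rangle/\mathrm{MU})$ is, by the definition recalled in Example \ref{exm:double-suspension}, the double-suspension class $\sigma^2 v_{n+1}$: the operator $\sigma^2$ is constructed precisely so as to extract the leading-order $S^1$-equivariant obstruction to an element of $\mathrm{fib}(\mathrm{MU}_{(p)}\to\mathrm{BP}\langle n\rangle)$ lifting equivariantly to $\mathrm{THH}$. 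Using the freedom in Proposition \ref{prop:e2-mu-envelope} I would choose the polynomial generator $w_{n+1,0}$ to equal $\sigma^2 v_{n+1}$, and then adjust the lifts of the remaining polynomial generators into $\pi_*(\mathrm{THH}(\mathrm{BP}\langle n\rangle/\mathrm{MU})^{hS^1})$ so as to absorb the higher-order terms $t^2c_2+\cdots$, yielding an isomorphism in which $v_{n+1}$ is sent exactly to $t(\sigma^2 v_{n+1})$.

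The main obstacle is the identification $c_1=\sigma^2 v_{n+1}$: the parity argument only shows that $v_{n+1}$ is divisible by $t$, with leading coefficient \emph{some} class in the correct degree, and matching this class with the double-suspension requires unwinding the construction of $\sigma^2$ and comparing the null-homotopy of $v_{n+1}$ coming from its vanishing in $\mathrm{BP}\langle n\rangle$ with the null-homotopy built into the definition of $\sigma^2$, once one pairs with the rotation generator of $\pi_1 S^1$. The absorption of the higher-order corrections $t^k c_k$ into the generators of the power-series ring is then routine.
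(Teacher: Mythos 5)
Your outline reproduces the paper's strategy up to the final step: the collapse of the homotopy fixed point spectral sequence by parity (the paper's Proposition \ref{prop:gr-of-tc}), the identification of $t$ with the image of the complex orientation, and the trick of absorbing the higher-order terms $t^2c_2+\cdots$ by modifying the chosen lift of $w_{n+1,0}$ are all exactly as in the paper. But the one step you flag as "the main obstacle" is precisely the step that carries the content of the Detection Theorem, and your proposal does not prove it. The assertion that $c_1=\sigma^2 v_{n+1}$ holds "by the definition recalled in Example \ref{exm:double-suspension}" is not correct as stated: $\sigma^2$ is defined there via factorization homology, as the reduced form of the map $S^1_+\otimes A\to\int_{S^1}A$ together with the basepoint splitting; nothing in that definition says that $\sigma^2$ computes the leading $t$-coefficient of a class of $\pi_*\mathbf{1}$ that dies in $\pi_*A$. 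The parity argument only gives $v_{n+1}\equiv t\cdot c_1 \bmod t^2$ for \emph{some} even class $c_1$, and identifying $c_1$ with $\sigma^2v_{n+1}$ requires an actual compatibility between the suspension map and the circle action, namely a commutative square relating $\sigma^2\colon \Sigma^{-1}(A/\mathbf{1})\to\Sigma^{-2}\mathrm{HH}(A)$, the connecting map $t\colon \Sigma^{-2}\mathrm{HH}(A)\to\lim_{\mathbb{C}P^1}\mathrm{HH}(A)$, and the unit $\mathbf{1}\to\lim_{\mathbb{C}P^1}\mathrm{HH}(A)$ coming from the trivial $S^1$-action on $\mathbf{1}$.

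That square is exactly Lemma \ref{lem:detection} of the paper, whose proof is not a routine unwinding: it is built by functorial left and right Kan extensions of a specific commutative square of spectra, and the authors remark that they could not locate this statement in the literature (it also forces a choice of sign convention for $\sigma^2$ so that the square commutes on the nose rather than up to $\pm1$). Applying that lemma with $\mathcal{C}=\mathsf{Mod}_{\mathrm{MU}}$, $A=\mathrm{BP}\langle n\rangle$, and the lift of $v_{n+1}$ to $\pi_*\,\mathrm{fib}(\mathrm{MU}_{(p)}\to\mathrm{BP}\langle n\rangle)\simeq\pi_*\Sigma^{-1}(A/\mathbf{1})$ is what yields $v_{n+1}=t\,\sigma^2v_{n+1}$ modulo filtration $\ge 4$, after which your absorption argument finishes the proof. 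So the gap is genuine: you have correctly isolated the needed statement but have neither proved it nor reduced it to something established, and the appeal to "the definition" of $\sigma^2$ does not substitute for it. (A minor further point: your claim that freeness of the $E_\infty$-page as a module rules out multiplicative extensions is not by itself a justification; the paper instead builds the algebra map from the power series ring by choosing lifts of the polynomial generators and of $t$, and checks it is an isomorphism on associated graded of a complete filtration.)
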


Before turning to the proof, we observe that the Detection Theorem
implies a weak form of redshift.

\begin{corollary}\label{cor:weak-redshift}
For each $0 \le m \le n+1$, $L_{K(m)}\mathrm{K}(\mathrm{BP}\langle n\rangle)\ne 0$.
In particular, $L_{K(n+1)}\mathrm{K}(\mathrm{BP}\langle n\rangle)\ne 0$.
\end{corollary}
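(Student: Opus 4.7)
The plan is to combine the Detection Theorem with a cyclotomic-trace argument. I would form the composite of unital spectrum maps
\[
\mathrm{K}(\mathrm{BP}\langle n\rangle) \xrightarrow{\mathrm{trc}} \mathrm{TC}(\mathrm{BP}\langle n\rangle) \to \mathrm{THH}(\mathrm{BP}\langle n\rangle)^{hS^1} \to \mathrm{THH}(\mathrm{BP}\langle n\rangle/\mathrm{MU})^{hS^1},
\]
where the first map is the cyclotomic trace, the second is the forgetful map from the equalizer defining $\mathrm{TC}$, and the third is induced by base change of relative $\mathrm{THH}$. Each arrow preserves the multiplicative unit, so the composite sends $1\in\pi_0\mathrm{K}(\mathrm{BP}\langle n\rangle)$ to $1\in\pi_0\mathrm{THH}(\mathrm{BP}\langle n\rangle/\mathrm{MU})^{hS^1}$. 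Consequently, once I know $L_{K(m)}\mathrm{THH}(\mathrm{BP}\langle n\rangle/\mathrm{MU})^{hS^1}\ne 0$, the image of the unit in $L_{K(m)}\mathrm{THH}(\mathrm{BP}\langle n\rangle/\mathrm{MU})^{hS^1}$ is non-zero, which forces $L_{K(m)}\mathrm{K}(\mathrm{BP}\langle n\rangle)\ne 0$.

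I would then verify the non-vanishing of $L_{K(m)}\mathrm{THH}(\mathrm{BP}\langle n\rangle/\mathrm{MU})^{hS^1}$ for $0\le m\le n+1$ by combining the Detection Theorem with Theorem \ref{thm:poly-thh}. These give an isomorphism
\[
\pi_*\mathrm{THH}(\mathrm{BP}\langle n\rangle/\mathrm{MU})^{hS^1}\cong\mathrm{BP}\langle n\rangle_*[w_{n+1,i},y_{j,i}][[t]],
\]
under which the $\mathrm{MU}_{(p)}^{hS^1}$-algebra action of $v_{n+1}$ is multiplication by $t\cdot w_{n+1,0}$. For $0\le m\le n$, the element $v_m$ (with $v_0=p$) is a polynomial generator of $\mathbb{Z}_{(p)}[v_1,\ldots,v_n]$ and standard Landweber-style analysis of $K(m)_*$ of this $\mathrm{BP}$-module gives a non-zero answer. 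For the critical case $m=n+1$, although $v_{n+1}=0$ in $\mathrm{BP}\langle n\rangle$ itself, the Detection Theorem exhibits $v_{n+1}$ as the product $t\cdot w_{n+1,0}$, which is non-nilpotent in the polynomial power-series ring $\mathbb{F}_p[w_{n+1,i},y_{j,i}][[t]]$ obtained after quotienting $\pi_*\mathrm{THH}(\mathrm{BP}\langle n\rangle/\mathrm{MU})^{hS^1}$ by $(p,v_1,\ldots,v_n)$; inverting this non-nilpotent element yields a non-zero module, so $K(n+1)_*\mathrm{THH}(\mathrm{BP}\langle n\rangle/\mathrm{MU})^{hS^1}\ne 0$.

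The main obstacle is justifying rigorously that the formal inversion of $v_{n+1}$ at the level of homotopy groups reflects $K(n+1)$-localization at the spectrum level, since the underlying spectrum $\mathrm{THH}(\mathrm{BP}\langle n\rangle/\mathrm{MU})$ is itself $K(n+1)$-acyclic (being a $\mathrm{BP}\langle n\rangle$-module). The conceptual point is that the $(-)^{hS^1}$ construction breaks this $K(n+1)$-acyclicity precisely through the Detection Theorem identification $v_{n+1}=t\cdot w_{n+1,0}$; the technical heart is controlling the interaction of $K(n+1)$-localization with the $t$-adic completion of the homotopy fixed-point spectral sequence, which should be tractable because the relevant class sits in a polynomial subring whose behavior under $K(n+1)$-tensor is computable.
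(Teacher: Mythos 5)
Your overall route is the same as the paper's: you push the unit through the composite $\mathrm{K}(\mathrm{BP}\langle n\rangle)\to \mathrm{TC}(\mathrm{BP}\langle n\rangle)\to \mathrm{THH}(\mathrm{BP}\langle n\rangle)^{hS^1}\to \mathrm{THH}(\mathrm{BP}\langle n\rangle/\mathrm{MU})^{hS^1}$ and reduce Corollary \ref{cor:weak-redshift} to the nonvanishing of $L_{K(m)}\mathrm{THH}(\mathrm{BP}\langle n\rangle/\mathrm{MU})^{hS^1}$, exactly as in the paper (which secures the multiplicativity you need via the lax symmetric monoidality of the trace \cite{bgt}, so that all maps are $\mathbb{E}_2$-ring maps and a zero source would force a zero target). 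The problem is that the step you explicitly defer --- ``justifying rigorously that the formal inversion of $v_{n+1}$ at the level of homotopy groups reflects $K(n+1)$-localization at the spectrum level'' --- is not a cleanup item; it is the only nonformal input beyond Theorems \ref{thm:poly-thh} and \ref{thm:body-detection}, and your proposal does not supply it. Your suggested remedy, controlling the interaction of $K(n+1)$-localization with the $t$-adic completion of the homotopy fixed point spectral sequence, is precisely where a naive argument breaks: $K(n+1)\otimes(-)$ does not commute with the limit over skeleta of $BS^1$, and since $\mathrm{THH}(\mathrm{BP}\langle n\rangle/\mathrm{MU})$ itself is $K(n+1)$-acyclic, running the spectral sequence termwise would (wrongly) suggest vanishing. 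So as written, the central claim $K(n+1)_*\mathrm{THH}(\mathrm{BP}\langle n\rangle/\mathrm{MU})^{hS^1}\ne 0$ remains unproven.

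The paper closes this gap with a module-theoretic rather than spectral-sequence-theoretic argument: $\mathrm{THH}(\mathrm{BP}\langle n\rangle/\mathrm{MU})^{hS^1}$ is an $\mathrm{MU}$-module, and for $\mathrm{MU}$-modules Hovey's theorems (\cite[Theorem 1.9]{hovey-splitting} and \cite[Theorem 1.5.4]{hovey-vn}) give $L_{K(m)}X \simeq (X[v_m^{-1}])^{\wedge}_{(p,v_1,\ldots,v_{m-1})}$. By Theorems \ref{thm:poly-thh} and \ref{thm:body-detection} the homotopy of $X=\mathrm{THH}(\mathrm{BP}\langle n\rangle/\mathrm{MU})^{hS^1}$ is the evenly graded ring $\mathrm{THH}(\mathrm{BP}\langle n\rangle/\mathrm{MU})_*\llbracket t\rrbracket$ on which $v_m$ (and, in the critical case, $v_{n+1}=t\cdot\sigma^2 v_{n+1}$) acts as a non-zero-divisor modulo $(p,\ldots,v_{m-1})$, so the localization and completion are computed algebraically and are visibly nonzero, uniformly for all $0\le m\le n+1$ (this also replaces your hand-waved ``Landweber-style analysis'' for $m\le n$). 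Note also that the correct recipe is to invert $v_{n+1}$ and then complete at $(p,\ldots,v_n)$, not to quotient by that ideal and invert; nonvanishing survives either way given the power-series description, but the former is what the localization theorem actually produces. If you add this citation (or an equivalent argument identifying $K(m)$-localization of $\mathrm{MU}$-modules with the algebraic localization-completion), your proof becomes the paper's proof; without it, it has a genuine hole at its key step.
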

\begin{proof} By \cite{bgt}, the cyclotomic trace map 
\[\mathrm{K}(-)\to \mathrm{TC}(-)\]
 is a lax
symmetric monoidal natural transformation. It follows that the trace
$\mathrm{K}(\mathrm{BP}\langle n\rangle) \to \mathrm{TC}(\mathrm{BP}\langle n\rangle)$
is a map of $\mathbb{E}_2$-rings. Recall that there is a canonical
map $\mathrm{TC}(-) \to \mathrm{THH}(-)^{hS^1}$, to negative cyclic homology.
Thus we have a sequence of $\mathbb{E}_2$-ring maps:
	\[
	\mathrm{K}(\mathrm{BP}\langle n\rangle)
	\to
	\mathrm{TC}(\mathrm{BP}\langle n\rangle)
	\to 
	\mathrm{THH}(\mathrm{BP}\langle n\rangle)^{hS^1}
	\to
	\mathrm{THH}(\mathrm{BP}\langle n\rangle/\mathrm{MU})^{hS^1}
	\]
and hence an $\mathbb{E}_2$-ring map
	\[
	L_{K(m)}\mathrm{K}(\mathrm{BP}\langle n\rangle) \to
	L_{K(m)}\mathrm{THH}(\mathrm{BP}\langle n\rangle/\mathrm{MU})^{hS^1}
	\]
for each height $m \le n+1$.
If the source of this map were zero, then the target would be zero as well, since
this is a map of rings. The relative negative cyclic homology
$\mathrm{THH}(\mathrm{BP}\langle n\rangle/\mathrm{MU})^{hS^1}$
has the structure of an $\mathrm{MU}$-module. It follows from
\cite[Theorem 1.9]{hovey-splitting} and \cite[Theorem 1.5.4]{hovey-vn}
that
	\[
	L_{K(m)}\mathrm{THH}(\mathrm{BP}\langle n\rangle/\mathrm{MU})^{hS^1}
	= (\mathrm{THH}(\mathrm{BP}\langle n\rangle/\mathrm{MU})^{hS^1}
	)[v_{m}^{-1}]^{\wedge}_{(p,v_1, ..., v_{m-1})}.
	\]
By Theorem \ref{thm:body-detection} and
Theorem \ref{thm:poly-thh}, this completion and localization can be
computed algebraically and the result is nonzero.
\end{proof}

\begin{remark} In the statement and proof of the theorem we have
used that the $S^1$-action on 
$\mathrm{THH}(\mathrm{BP}\langle n\rangle/\mathrm{MU})$
is compatible with the algebra structure. One way to see
this is to use the generality in which $\mathrm{THH}$
is defined. Recall that for any
symmetric monoidal category $\mathcal{C}$
with tensor product compatible with sifted colimits,
Hochschild homology gives a functor:
	\[
	\mathrm{HH}_{\mathcal{C}}: 
	\mathsf{Alg}_{\mathbb{E}_1}(\mathcal{C})
	\to \mathsf{Fun}(\mathrm{B}S^1, \mathcal{C}).
	\]
For a reference, one could observe that the construction
of $\mathrm{THH}$ with its circle action in 
\cite[\S III.2]{nikolaus-scholze} works just the same for
$\mathcal{C}$ in place of $\mathsf{Sp}$. Alternatively, one can use
the identification of $\mathrm{THH}$ with factorization
homology over $S^1$, which is defined in this generality
(\cite[\S 5.5.2]{ha}, \cite{ayala-francis}). Now apply this in the case
$\mathcal{C} = 
\mathsf{Alg}_{\mathbb{E}_2}(\mathsf{Mod}_{\mathrm{MU}})$
to see that $\mathrm{THH}(\mathrm{BP}\langle n\rangle/\mathrm{MU})$
has a canonical enhancement to an object in
$\mathsf{Fun}(\mathrm{B}S^1, 
\mathsf{Alg}_{\mathbb{E}_2}(\mathsf{Mod}_{\mathrm{MU}}))$.
\end{remark}

Let us now proceed with the proof of Theorem \ref{thm:body-detection}.

First we compute 
$\mathrm{THH}(\mathrm{BP}\langle n\rangle/\mathrm{MU})^{hS^1}$.
Recall that we computed the homotopy groups of
$\mathrm{THH}(\mathrm{BP}\langle n\rangle/\mathrm{MU})$ in
Theorem \ref{thm:poly-thh}.  An immediate consequence of that calculation is the following proposition:

\begin{proposition}\label{prop:gr-of-tc} The homotopy fixed point spectral sequence
for $\mathrm{THH}(\mathrm{BP}\langle n\rangle/\mathrm{MU})^{hS^1}$
collapses at the $E_2$-page, with 
	\[
	E_{\infty} = 
	\mathrm{THH}(\mathrm{BP}\langle n\rangle/\mathrm{MU})_*[t],
	\]
where $t \in H^2(\mathbb{C}P^{\infty})$ is the standard generator.
\end{proposition}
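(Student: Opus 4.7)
The plan is to prove collapse by a parity argument on the $E_2$-page. The homotopy fixed point spectral sequence takes the form
\[
E_2^{s,t} = H^s(\mathrm{B}S^1; \pi_t \mathrm{THH}(\mathrm{BP}\langle n\rangle/\mathrm{MU})) \Longrightarrow \pi_{t-s}\mathrm{THH}(\mathrm{BP}\langle n\rangle/\mathrm{MU})^{hS^1},
\]
with differentials $d_r : E_r^{s,t} \to E_r^{s+r, t+r-1}$. Since $H^*(\mathrm{B}S^1) \cong \mathbb{Z}[t]$ is concentrated in even cohomological degrees, the $E_2$-page can be identified as a bigraded group with
\[
E_2^{*,*} \cong \pi_*\mathrm{THH}(\mathrm{BP}\langle n\rangle/\mathrm{MU})[t], \qquad |t|=(2,0).
\]

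First I would invoke Theorem \ref{thm:poly-thh} to conclude that $\pi_*\mathrm{THH}(\mathrm{BP}\langle n\rangle/\mathrm{MU})$ is concentrated in even internal degrees (since it is polynomial on even-degree generators over $\mathrm{BP}\langle n\rangle_*$, which is itself concentrated in even degrees). Combined with the evenness of $H^*(\mathrm{B}S^1)$, this shows that $E_2^{s,t}$ vanishes whenever $s$ or $t$ is odd; in particular, every nonzero class sits in a bidegree with $t-s$ even.

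The key observation is then that each differential $d_r$ decreases total degree by one: a class at $(s,t)$ with $t-s$ even is mapped into bidegree $(s+r, t+r-1)$ where the new total degree $t-s-1$ is odd, and hence where $E_r^{s+r,t+r-1}=0$ by the evenness just established (the $E_r$-page is a subquotient of $E_2$, so it too is concentrated in even total degree). Therefore $d_r=0$ for all $r \ge 2$, and the spectral sequence collapses at $E_2$ with $E_\infty = \pi_*\mathrm{THH}(\mathrm{BP}\langle n\rangle/\mathrm{MU})[t]$ as claimed. There is no real obstacle here; the only content beyond the sparsity argument is the already-established Theorem \ref{thm:poly-thh}, and no extension problems arise at the level of associated graded computed in the proposition.
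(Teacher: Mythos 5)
Your proposal is correct and is essentially the paper's own argument: the paper's proof also just observes that, by Theorem \ref{thm:poly-thh}, the $E_2$-page is concentrated in even (total) degrees, so all differentials vanish and the spectral sequence collapses. You merely spell out the parity bookkeeping that the paper leaves implicit.
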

\begin{proof}
The homotopy fixed point spectral sequence
computing $\mathrm{THH}(\mathrm{BP}\langle n\rangle/\mathrm{MU})^{hS^1}$
is concentrated in even degrees by Theorem \ref{thm:poly-thh},
and hence collapses as indicated.
%The $E_{\infty}$-page is free as a $\mathbb{Z}_{(p)}[v_1, ..., v_n]$-algebra
%so there can be no $\mathbb{Z}_{(p)}[v_1, ..., v_n]$-algebra extensions.
\end{proof}

As we will shortly explain, the remainder of the argument for Theorem \ref{thm:body-detection}
is a formal consequence of the relationship between the
suspension map $\sigma^2$ and the circle action. We explore this relationship in \S \ref{sec:suspension}. 

\begin{proof}[Proof of \ref{thm:body-detection}]

The image of the canonical complex orientation under the unit map
\[\pi_* \mathrm{MU}_{(p)}^{hS^1} \to \pi_* \mathrm{THH}(\mathrm{BP}\langle n \rangle /\mathrm{MU})^{hS^1} \]
will be detected by $t$ in the homotopy fixed point spectral sequence.
We recall that $\pi_* \mathrm{THH}(\mathrm{BP}\langle n \rangle /\mathrm{MU})$ is a polynomial $\mathbb{Z}_{(p)}[v_1,\cdots,v_n]$-algebra generated by classes
$w_{n+1,i}$ (for $i \ge 0$), and $y_{j,i}$ (for $i \ge 0$, $j \ge 1$, and $j \notequiv -1$ modulo $p$).  Furthermore, we may set $w_{n+1,0}$ equal to $\sigma^2 v_{n+1}$.

Since polynomial algebras are free commutative algebras, an isomorphism 
\[\pi_*
\mathrm{THH}(\mathrm{BP}\langle n \rangle /\mathrm{MU})^{hS^1} \cong \left(\pi_*\mathrm{THH}(\mathrm{BP}\langle n \rangle /\mathrm{MU})\right)\llbracket t \rrbracket, \]
is determined by a choice of elements $\widetilde{w_{n+1,i}} , \widetilde{y_{j,i}} \in \pi_*
\mathrm{THH}(\mathrm{BP}\langle n \rangle /\mathrm{MU})^{hS^1}$ detecting the similarly named classes in the homotopy fixed point spectral sequence.

If we make any such choice, then Lemma \ref{lem:detection} ensures that $t \widetilde{w_{n+1,0}}$ will be $v_{n+1}$ modulo $t^2$,
say $t\widetilde{w_{n+1,0}} = v_{n+1} +t^2y$.
We may then replace $\widetilde{w_{n+1,0}}$ by
$\widetilde{w_{n+1,0}} - ty$, which also lifts the class $w_{n+1,0}$,
and so guarantee that
$t \widetilde{w_{n+1,0}} = v_{n+1}$.
\end{proof}

\begin{remark} Rognes has sketched an alternative
proof that $v_{n+1}$ is detected
in the homotopy of $\mathrm{THH}(\mathrm{BP}\langle n\rangle/\mathrm{MU})^{hS^1}$. 
The strategy is to consider the exact sequence in mod $p$
homology:
	\[
	H_*(\lim_{\mathbb{C}P^1}
	\mathrm{THH}(\mathrm{BP}\langle n\rangle/\mathrm{MU})) \to 
	H_*(\mathrm{THH}(\mathrm{BP}\langle n\rangle/\mathrm{MU}))
	\stackrel{B}{\to}
	H_{*+1}(\mathrm{THH}(\mathrm{BP}\langle n\rangle/\mathrm{MU})).
	\]
One computes that $B\overline{\tau}_{n+1}$ is nonzero and hence does not lie
in the kernel, i.e. does not arise in the first term.
It follows from an Adams spectral sequence argument
that $v_{n+1}$ must be detected in $\pi_*
\lim_{\mathbb{C}P^1}
	\mathrm{THH}(\mathrm{BP}\langle n\rangle/\mathrm{MU})$.
\end{remark}

%\subsection{Finishing the proof of the Detection Theorem}

%Now we can prove Theorem \ref{thm:redshift-main}. The ring $(\pi_*(A)/(p,v_1, ..., v_n))[v_{n+1}^{-1}]$
%is nonzero if and only if $v_{n+1}$ is nonnilpotent in $\pi_*(A)/(p, v_1, ..., v_n)$.
%We have seen (Proposition \ref{prop:gr-of-tc}) 
%that there is a filtration on this ring with associated graded ring
%	\[
%	\mathbb{F}_p[\gamma_{p^i}(\sigma^2v_{n+1}),
%	\gamma_{p^i}(\sigma^2 x_j): j\ne -1 \bmod p][t]
%	\]
%and such that $v_{n+1}$ maps to a unit times $t \sigma^2v_{n+1}$ in this associated graded
%(Proposition \ref{prop:extension}).
%It follows that $v_{n+1}^k$ is detected by the nonzero
%element $t^k(\sigma^2 v_{n+1})^k$ in the associated graded, and this
%completes the proof.

\section{Canonical Vanishing}\label{sec:canvan}

Fix an $\mathbb{E}_3$-$\mathrm{MU}$-algebra form of $\mathrm{BP} \langle n \rangle$. In this section, we will study the canonical map
\[\mathrm{can}: \mathrm{THH}(\mathrm{BP}\langle n \rangle)^{hS^1}
	\longrightarrow \mathrm{THH}(\mathrm{BP}\langle n \rangle)^{tS^1}.\]
Our goal will be to establish
Theorem \ref{thm:intro-can-van}, which by the results of Section \ref{sec:abstract-ql} can be reduced to weak canonical vanishing (Theorem \ref{thm:canvan-new}). % We will prove this weak canonical vanishing theorem as Theorem \ref{thm:canvan-new}.

To be specific, we will choose a convenient type $n+1$ complex $M$, with $v_{n+1}$ self map $v$, and consider the map
\[1 \otimes \mathrm{can}: M/v \otimes \mathrm{THH}(\mathrm{BP}\langle n \rangle)^{hS^1} \to M/v \otimes \mathrm{THH}(\mathrm{BP}\langle n \rangle)^{tS^1},\]
where $1$ is the identity map of the type $n+2$ complex $M/v$.  We will prove that the $\pi_*(1 \otimes \mathrm{can})$ map is zero for all sufficiently large degrees $*$.

As a prototype for the result and its proof, consider the case $n=-1$, where the statement is that
\[\mathrm{can}/p: \mathrm{THH}(\mathbb{F}_p)^{hS^1}/p
	\longrightarrow \mathrm{THH}(\mathbb{F}_p)^{tS^1}/p,\]
induces the zero map on homotopy groups in large degrees.

%In fact, the map
%	\[
%	\mathrm{can}: \mathrm{THH}(\mathbb{F}_p)^{hS^1}
%	\longrightarrow \mathrm{THH}(\mathbb{F}_p)^{tS^1}
%	\]
%is given on homotopy groups by
%	\[
%	\mathbb{Z}_p[\sigma^2v_0, t]/(t\sigma^2v_0 - p) \longrightarrow
%	\mathbb{Z}_p[t^{\pm 1}].
%	\]
%Since $|t|=-2$, any positive degree class in $\pi_*\mathrm{THH}(\mathbb{F}_p)^{hS^1}$ is a multiple of $\sigma^2 v_0$, and this class is sent under the canonical map to a multiple of $p$.

%We would like to understand a proof of mod $p$ canonical vanishing for $\mathrm{THH}(\mathbb{F}_p)$ that does not involve completely understanding the homotopy groups of $\mathrm{THH}(\mathbb{F}_p)^{hS^1}$

We may compute $\mathrm{THH}(\mathbb{F}_p)^{hS^1}$ via the homotopy fixed point spectral sequence 
\[E_2=\mathbb{F}_p[\sigma^2v_0, t] \implies \pi_*\mathrm{THH}(\mathbb{F}_p)^{hS^1}.\]
Here, $\sigma^2v_0$ is in homotopy dimension $2$ and filtration $0$, while $t$ is in homotopy dimension $-2$ and filtration $2$.

We may also understand $\mathrm{THH}(\mathbb{F}_p)^{tS^1}$ via the Tate fixed point spectral sequence, with $E_2$ page $\mathbb{F}_p[\sigma^2v_0,t^{\pm 1}]$.
The canonical map is compatible with homotopy and Tate fixed point spectral sequences, and at the level of $E_2$-pages it is approximated by the map
	\[
	\mathbb{F}_p[\sigma^2v_0, t] \longrightarrow
	\mathbb{F}_p[\sigma^2v_0, t^{\pm 1}]
	\]
that inverts $t$.

The element $p \in \pi_*\mathrm{THH}(\mathbb{F}_p)^{hS^1}$ is detected by $t\sigma^2v_0$, which lives in filtration $2$ in both spectral sequences. By killing a filtration $2$ lift of $p$, we build a map between a (modified) homotopy fixed point spectral sequence converging to $\pi_*\left(\mathrm{THH}(\mathbb{F}_p)^{hS^1}/p\right)$ and a (modified) Tate fixed point spectral sequence converging to $\pi_*\left(\mathrm{THH}(\mathbb{F}_p)^{tS^1}/p\right)$.  At the level of $E_2$ pages, the mod $p$ canonical map is approximated by
%	\[
%	\begin{tikzcd}
\[	E_2=\mathbb{F}_p[\sigma^2v_0, t]/(t\sigma^2v_0) \rightarrow
	E_2=\mathbb{F}_p[t^{\pm 1}] \]
%	\end{tizkcd}
%	\]
%This map vanishes in positive degrees by inspection, but there
%is an even more basic reason for that vanishing: 
This map of $E_2$ pages is trivial in positive homotopy dimension, and we would like to conclude that the mod $p$ canonical map is zero in positive degrees.  We might be worried about filtration jumps, but in fact this is no issue.  The source spectral sequence is concentrated in nonnegative filtration, while the target spectral sequence, in positive homotopy dimension, is concentrated in negative filtration.

Our strategy for proving Theorem \ref{thm:canvan-new} is to
mimic the above argument at a general height.
The main challenge in carrying this out (especially
in the absence of Smith-Toda complexes)
is to find and name an appropriate class in the homotopy
fixed point spectral sequence for $M \otimes \mathrm{THH}(\mathrm{BP}\langle n\rangle)^{hS^1}$ that detects the $v_{n+1}$ self map $v$.
We address this issue by descending information from
$\mathrm{THH}(\mathrm{BP}\langle n\rangle/\mathrm{MU})$,
which we understand well thanks to the previous section.

\subsection{Descent}

We will need to know that $\mathrm{THH}(\mathrm{BP}\langle n\rangle)$
is well approximated by 
$\mathrm{THH}(\mathrm{BP}\langle n\rangle/\mathrm{MU})$
in a way made precise in the below proposition.
We will use notation as in \S\ref{ssec:descent-sseq}
(also note Remark \ref{rmk:descent-weaker}).

\begin{proposition}\label{prop:horiz-vanish} 
For any type $(n+1)$-complex 
$F$, the spectral sequence computing\\
$\pi_*(F \otimes \mathrm{THH}(\mathrm{BP}\langle n\rangle))$
by descent along the map
	\[
	\mathrm{THH}(\mathrm{BP}\langle n\rangle)
	\to
	\mathrm{THH}(\mathrm{BP}\langle n\rangle/\mathrm{MU})
	\]
collapses at a finite page with a horizontal vanishing line.
In particular, if $F$ is equipped with a
homotopy ring structure, then the kernel
of the map 
	\[
	\pi_*(F\otimes \mathrm{THH}(\mathrm{BP}\langle n\rangle))
	\to
	\pi_*(F \otimes
	\mathrm{THH}(\mathrm{BP}\langle n\rangle/\mathrm{MU}))
	\]
is nilpotent.
\end{proposition}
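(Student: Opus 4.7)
The plan is to identify the $E_2$-page of this descent spectral sequence explicitly, then show that after smashing with a type $(n+1)$ complex $F$ the $E_2$-page has only finitely many nonzero filtrations, yielding collapse at a finite page with a horizontal vanishing line. The main obstacle will be controlling, uniformly in $i$, the nilpotence order of the positive-filtration generators in $E_2 \otimes F_*$; I would address this via the $v_{n+1}$-self-map of $F$ and hidden multiplicative extensions.

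First, I would unpack the descent. Using the base-change equivalence $\mathrm{THH}(\mathrm{BP}\langle n\rangle/\mathrm{MU}) \simeq \mathrm{MU} \otimes_{\mathrm{THH}(\mathrm{MU})} \mathrm{THH}(\mathrm{BP}\langle n\rangle)$, descent along the map $\mathrm{THH}(\mathrm{BP}\langle n\rangle) \to \mathrm{THH}(\mathrm{BP}\langle n\rangle/\mathrm{MU})$ reduces, after smashing with $\mathrm{THH}(\mathrm{BP}\langle n\rangle)$, to descent along the $\mathbb{E}_3$-$\mathrm{MU}$-algebra map $\mathrm{MU}_{(p)} \to \mathrm{BP}\langle n\rangle$. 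The $E_2$-page is therefore computed by Cotor over the Hopf-algebroid-like object $\pi_*(\mathrm{BP}\langle n\rangle \otimes_{\mathrm{MU}} \mathrm{BP}\langle n\rangle)$, which is an exterior algebra over $\pi_*\mathrm{BP}\langle n\rangle$ on classes $\sigma v_i$ for $i \ge n+1$ and $\sigma x_j$ for $j \ne p^k - 1$. Dualizing yields a polynomial algebra on duals $\delta v_i, \delta x_j$ in positive filtration, with coefficients in $\pi_*\mathrm{THH}(\mathrm{BP}\langle n\rangle/\mathrm{MU})$.

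Second, I would analyze $E_2 \otimes F_*$. Since $F$ is type $n+1$, it is annihilated by powers of $p, v_1, \ldots, v_n$, so only classes involving $\delta v_i, \delta x_j$ for $i \ge n+1$ remain potentially problematic. The critical input is a hidden multiplicative extension in the spectral sequence: roughly, $v_i$ is detected in filtration one by $\delta v_i$. For $i \ge n+1$ such hidden extensions are controlled by the $v_{n+1}$-self-map of $F$, since combining with the polynomial structure from Theorem \ref{thm:poly-thh} shows that suitable powers of each $\delta v_i$ and $\delta x_j$ are multiples of elements acting nilpotently on $F$. A careful bookkeeping of these extensions, leveraging that the polynomial generators $w_{n+1,i}$ and $y_{j,i}$ of $\pi_*\mathrm{THH}(\mathrm{BP}\langle n\rangle/\mathrm{MU})$ are themselves moved by the $v_{n+1}$-self-map, yields a uniform bound $N$ such that all classes of filtration strictly greater than $N$ in $E_2 \otimes F_*$ vanish.

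Finally, a horizontal vanishing line at $N$ forces collapse at $E_{N+1}$, since higher-page differentials land in the zero region. For the ``in particular'' statement, an element $x$ in the kernel of the edge homomorphism is represented in positive filtration of $E_\infty$; if $F$ is a homotopy ring spectrum then the induced ring structure on $E_\infty$ implies that $x^k$ has filtration at least $k$, and for $k > N$ this power vanishes in $E_\infty$. Standard multiplicative-convergence arguments for spectral sequences with a horizontal vanishing line then lift this to $\pi_*(F \otimes \mathrm{THH}(\mathrm{BP}\langle n\rangle))$, giving nilpotence of the kernel.
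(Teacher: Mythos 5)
Your proposal has two genuine problems, the second of which is fatal as written. First, the $E_2$-page you describe is not the $E_2$-page of the stated spectral sequence. Descent along $\mathrm{THH}(\mathrm{BP}\langle n\rangle)\to\mathrm{THH}(\mathrm{BP}\langle n\rangle/\mathrm{MU})$ is governed by the coalgebra $\pi_*\bigl(B\otimes_A B\bigr)$ with $A=\mathrm{THH}(\mathrm{BP}\langle n\rangle)$, $B=\mathrm{THH}(\mathrm{BP}\langle n\rangle/\mathrm{MU})$; using the base-change equivalence you quote, $B\otimes_A B\simeq B\otimes_{\mathrm{THH}(\mathrm{MU})}\mathrm{MU}$, so the relevant descent object is controlled by $\mathrm{MU}\otimes_{\mathrm{THH}(\mathrm{MU})}\mathrm{MU}$ (double suspensions $\sigma^2 x_j$, divided-power/polynomial classes, essentially the homology of $\mathrm{BSU}$), not by the exterior algebra $\pi_*(\mathrm{BP}\langle n\rangle\otimes_{\mathrm{MU}}\mathrm{BP}\langle n\rangle)$ on the single suspensions $\sigma v_i,\sigma x_j$. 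What you wrote down is (roughly) the $E_2$-page of descent along $\mathrm{MU}\to\mathrm{BP}\langle n\rangle$, a different tower, and even there the coefficients would not be $\pi_*\mathrm{THH}(\mathrm{BP}\langle n\rangle/\mathrm{MU})$.

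Second, and more seriously, the step where all the mathematical content lives is asserted rather than proved: the claim that "careful bookkeeping of hidden extensions" produces a \emph{uniform} horizontal vanishing line after smashing with $F$. A type $n+1$ complex kills powers of $p,v_1,\dots,v_n$, but your filtration-one polynomial generators include infinitely many classes ($\delta x_j$ for all $j\neq p^k-1$, and $\delta v_i$ for $i\geq n+2$) that bear no evident relation to the ideal $(p,v_1,\dots,v_{n+1})$ or to the $v_{n+1}$-self-map of $F$; moreover, even if each generator were individually nilpotent in $E_2\otimes F_*$, products of many \emph{distinct} generators have unbounded filtration, so generator-by-generator nilpotence does not yield a horizontal vanishing line. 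You also have no mechanism to rule out differentials, and you never address how tensoring with the finite complex $F$ interacts with the $E_2$-term. The paper proceeds quite differently: by a thick subcategory argument it reduces to $F$ a generalized Moore spectrum, notes $F\otimes\mathrm{THH}(\mathrm{BP}\langle n\rangle)\simeq(F\otimes\mathrm{BP}\langle n\rangle)\otimes_{\mathrm{BP}\langle n\rangle}\mathrm{THH}(\mathrm{BP}\langle n\rangle)$ with $F\otimes\mathrm{BP}\langle n\rangle$ built from the $\mathrm{BP}\langle n\rangle$-module $\mathbb{F}_p$, and then computes the $\mathbb{F}_p$-coefficient descent spectral sequence directly (Proposition \ref{prop:descent-fp}): using the Angeltveit--Rognes computation of $\pi_*\mathrm{THH}(\mathrm{BP}\langle n\rangle;\mathbb{F}_p)$ and a filtration-plus-size-count argument it collapses with $E_2=E_\infty=\mathbb{F}_p[\sigma^2 v_{n+1}]\otimes\Lambda(\sigma t_1,\dots,\sigma t_{n+1})$, concentrated in filtrations $0$ through $n+1$, so the vanishing line and the nilpotence of the kernel are immediate. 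To repair your argument you would need to replace the "bookkeeping" paragraph with an actual computation of this kind, or some other concrete input forcing a uniform bound on filtration.
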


\begin{remark} It is possible to obtain much stronger results
about horizontal vanishing lines in these and
related descent spectral
sequences by further developing
the methods used below. We hope to
return to this in future work.
\end{remark}

In the proof we will use Hochschild homology with
coefficients in a bimodule, which we now recall.

\begin{definition} If $M$ is a bimodule over an $\mathbb{E}_1$-algebra $A$
in $\mathcal{C}$, then we define
	\[
	\mathrm{HH}(A; M):= M\otimes_{A\otimes A^{op}} A.
	\]
\end{definition}
\begin{remark}
(Compare \cite[\S 2]{angeltveit-hill-lawson}.)
If $A$ admits the structure of an $\mathbb{E}_2$-algebra, so that
$\mathrm{HH}(A)$ has the structure of a module over $A$, 
and $M$ is a right $A$-module viewed as an $A$-bimodule by restriction
along $A\otimes A^{op} \to A$, then we have a canonical equivalence
	\[
	\mathrm{HH}(A; M)= M\otimes_{A\otimes A^{op}} A
	\simeq
	M\otimes_A (A\otimes_{A\otimes A^{op}}A)
	\simeq M\otimes_A \mathrm{HH}(A).
	\]
If, moreover, the bimodule structure on $M$ arises from an
$\mathbb{E}_1$-algebra map $A \to M$, then we have an equivalence
	\[
	\mathrm{HH}(A; M)
	=M \otimes_{A \otimes A^{op}} A
	\simeq M \otimes_{M \otimes A^{op}}M \otimes A^{op}
	\otimes_{A \otimes A^{op}}A
	\simeq M \otimes_{M \otimes A^{op}}M.
	\]
\end{remark}

\begin{construction} By the previous remark we have an equivalence
	\[
	\mathrm{THH}(\mathrm{BP}\langle n\rangle;\mathbb{F}_p)
	\simeq
	\mathbb{F}_p \otimes_{\mathbb{F}_p \otimes 
	\mathrm{BP}\langle n\rangle} \mathbb{F}_p.
	\]
Recall that 
	\[
	\pi_*(\mathbb{F}_p \otimes \mathrm{BP}\langle n\rangle)
	\simeq
	\Lambda(\overline{\tau}_i : i\ge n+1) \otimes_{\mathbb{F}_p}
	\mathbb{F}_p[t_1, ..., t_{n+1}]
	\]
where the $t_i$ come from the homology of $\mathrm{BP}$. 
Thus we have well-defined elements
$\sigma \overline{\tau}_{n+1}$ and $\sigma t_1, ..., \sigma t_{n+1}$
in $\pi_*\mathrm{THH}(\mathrm{BP}\langle n\rangle;\mathbb{F}_p)$.
We will write $\sigma\overline{\tau}_{n+1}$ as $\sigma^2v_{n+1}$
since this is its image inside $\mathrm{THH}(\mathrm{BP}\langle n\rangle/
\mathrm{MU}; \mathbb{F}_p)$. 
\end{construction}

\begin{proposition}\label{prop:descent-fp} The descent spectral sequence for
	\[
	\mathrm{THH}(\mathrm{BP}\langle n\rangle;\mathbb{F}_p)
	\to
	\mathrm{THH}(\mathrm{BP}\langle n\rangle/\mathrm{MU};
	\mathbb{F}_p)
	\]
collapses at the $E_2$-page as
	\[
	E_2 = \mathbb{F}_2[\sigma^2v_{n+1}]
	\otimes \Lambda(\sigma t_1, ..., \sigma t_{n+1})
	\]
Here $\sigma^2v_{n+1}$ has filtration 0 and homotopy dimension
$2p^{n+1}$, and each $\sigma t_i$ has
filtration 1 and homotopy dimension $2p^i-1$. 
\end{proposition}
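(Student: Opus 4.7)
The plan is to analyze the descent spectral sequence set up in $\S\ref{ssec:descent-sseq}$ in two phases: first identify the $E_2$-page, then prove collapse via permanent cycles and multiplicativity.

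To identify $E_2$, I would exploit the explicit computation of $\pi_*\mathrm{THH}(\mathrm{BP}\langle n\rangle/\mathrm{MU};\mathbb{F}_p)$, obtained from Theorem \ref{thm:poly-thh} by reducing mod $(p,v_1,\ldots,v_n)$. The cosimplicial resolution underlying the descent spectral sequence has degree-zero term $\mathrm{THH}(\mathrm{BP}\langle n\rangle/\mathrm{MU};\mathbb{F}_p)$, and its higher degrees are built from iterated tensor products of $\mathrm{BP}\langle n\rangle \otimes_{\mathrm{MU}} \mathrm{BP}\langle n\rangle^{op}$ over $\mathrm{BP}\langle n\rangle \otimes \mathrm{BP}\langle n\rangle^{op}$; the homotopy of these terms is accessible via the exterior structure on $\pi_*(\mathrm{BP}\langle n\rangle \otimes_{\mathrm{MU}} \mathrm{BP}\langle n\rangle) \cong \mathrm{BP}\langle n\rangle_* \otimes \Lambda(\sigma x_j : j \ne p^i-1)$. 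Taking $d_1$-cohomology, the non-primitive polynomial classes $w_{n+1,i}$ (for $i \ge 1$) and $y_{j,i}$ are killed, leaving $\mathbb{F}_p[\sigma^2 v_{n+1}] = \mathbb{F}_p[w_{n+1,0}]$ in filtration $0$; a parallel computation identifies the surviving filtration-$1$ classes with $\sigma t_1,\ldots,\sigma t_{n+1}$, and the rest of $E_2$ is generated multiplicatively.

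For collapse, I would show that every algebra generator of $E_2$ is a permanent cycle. The class $\sigma^2 v_{n+1}$ is in filtration $0$, hence trivially a permanent cycle. For each $1 \le i \le n+1$, the Milnor class $t_i \in \pi_*(\mathbb{F}_p \otimes \mathrm{BP}\langle n\rangle)$ admits, via the suspension construction of $\S\ref{sec:suspension}$, a lift $\sigma t_i \in \pi_*\mathrm{THH}(\mathrm{BP}\langle n\rangle;\mathbb{F}_p)$; since this element lives in the abutment and detects the $E_2$-class of the same name in filtration $1$, it is a permanent cycle. Because the descent spectral sequence is multiplicative (the base-change map is a ring map) and $E_2$ is generated as an $\mathbb{F}_p$-algebra by $\sigma^2 v_{n+1}$ and the $\sigma t_i$'s, we conclude $E_\infty = E_2$.

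The principal obstacle is the $E_2$-computation: pinning down the cosimplicial $E_1$-page precisely and executing the $d_1$-cohomology calculation. In particular, explaining why the exterior factor cuts off at $\sigma t_{n+1}$, rather than extending to higher-index Milnor generators, requires carefully tracking the MU-relative bimodule structure on $\mathrm{BP}\langle n\rangle$ through the cosimplicial object. By contrast, once $E_2$ is in hand, the collapse argument is essentially formal, reducing to the exhibition of explicit lifts of algebra generators.
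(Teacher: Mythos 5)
Your route is genuinely different from the paper's, and the difference matters: the paper never identifies $E_2$ by a direct cobar computation. It quotes the Angeltveit--Rognes computation $\pi_*\mathrm{THH}(\mathrm{BP}\langle n\rangle;\mathbb{F}_p)\cong\mathbb{F}_p[\sigma^2v_{n+1}]\otimes\Lambda(\sigma t_1,\dots,\sigma t_{n+1})$ for the abutment, and then shows (via the Whitehead filtration on the cosimplicial terms and a May-type spectral sequence converging to $\mathrm{Ext}_{\Sigma}$, $\Sigma=\pi_*(B\otimes_AB)$) that the $E_2$-page has at most the size of the known abutment, so convergence forces collapse. Your plan instead makes the $E_2$-identification the first step, but that is precisely where the essential input must enter, and you leave it unexecuted: the $E_1$-page consists of $\pi_*\bigl(B^{\otimes_A(s+1)}\bigr)$ with $A=\mathrm{THH}(\mathrm{BP}\langle n\rangle;\mathbb{F}_p)$, and already $\pi_*(B\otimes_AB)$ cannot be computed without knowing $\pi_*A$ as a ring (or some equivalent Bökstedt-type input); in particular the fact that the exterior factor stops at $\sigma t_{n+1}$ is exactly the statement that $\pi_*A$ has precisely the odd generators $\sigma t_1,\dots,\sigma t_{n+1}$, i.e.\ the \cite{angeltveit-rognes} input you never invoke. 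Moreover your description of the cosimplicial object (``iterated tensor products of $\mathrm{BP}\langle n\rangle\otimes_{\mathrm{MU}}\mathrm{BP}\langle n\rangle^{op}$ over $\mathrm{BP}\langle n\rangle\otimes\mathrm{BP}\langle n\rangle^{op}$'') is not the descent cosimplicial object for $A\to B$, and no mechanism is offered for the asserted $d_1$-cohomology outcome. Flagging this as the ``principal obstacle'' concedes the point: it is the entire content of the proposition, so as written the argument has a genuine gap.

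The collapse half also contains an error of reasoning. In this descent (Adams-type) spectral sequence differentials \emph{raise} filtration, so a filtration-$0$ class is not ``trivially a permanent cycle''; the correct argument is that $\sigma^2v_{n+1}$ is by construction the image of the abutment class $\sigma\overline{\tau}_{n+1}\in\pi_*A$ under $\pi_*A\to\pi_*B=E_2^{0,*}$, hence survives. Likewise, for the $\sigma t_i$ it is not enough that they exist in $\pi_*A$ and map to zero in the even ring $\pi_*B$: you must show they are detected in filtration exactly $1$ by the named cobar classes, which requires a compatibility statement between the suspension operation and the descent filtration (an analogue of Lemma \ref{lem:compatible-with-kunneth} for this tower) that you do not supply. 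With those repairs, and granting the $E_2$-identification, the multiplicative permanent-cycle argument would indeed give collapse and would be a legitimate alternative to the paper's counting argument; but both halves need the missing inputs before the proof stands.
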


\begin{proof}[Proof of Proposition \ref{prop:horiz-vanish}
from Proposition \ref{prop:descent-fp}]

By a thick subcategory argument
(using \cite{hopkins-palmieri-smith})
it suffices to establish the claim
for $F = S^0/(p^{i_0}, ..., v_n^{i_n})$ a generalized Moore complex.
Observe that
	\begin{align*}
	F \otimes \mathrm{THH}(\mathrm{BP}\langle n\rangle)
	&\simeq
	(F\otimes \mathrm{BP}\langle n\rangle)
	\otimes_{\mathrm{BP}\langle n\rangle}
	\mathrm{THH}(\mathrm{BP}\langle n\rangle).
	\end{align*}
The $\mathrm{BP}\langle n\rangle$-module
$F\otimes \mathrm{BP}\langle n\rangle$
lies in the thick subcategory generated by
the $\mathrm{BP}\langle n\rangle$-module $\mathbb{F}_p$,
so
we are reduced to the statement
in Proposition \ref{prop:descent-fp}.
\end{proof}

\begin{proof}[Proof of Proposition
\ref{prop:descent-fp}] For this proof we will abbreviate
	\[
	A:= \mathrm{THH}(\mathrm{BP}\langle n\rangle;\mathbb{F}_p),
	\, B:= \mathrm{THH}(\mathrm{BP}\langle n\rangle/\mathrm{MU};
	\mathbb{F}_p).
	\]
It follows
from \cite[Theorem 5.12]{angeltveit-rognes} that
	\[
	\pi_*A = \mathbb{F}_p[\sigma^2v_{n+1}]
	\otimes \Lambda(\sigma t_1, ..., \sigma t_{n+1}).
	\]
We will see shortly that $\Sigma:=\pi_*(B\otimes_AB)$ is flat over
$\pi_*B$. The proposition will follow if we can show that
$\mathrm{Ext}^*_{\Sigma}(\pi_*B, \pi_*B)$ already has the
correct size. We will prove this by constructing a further spectral
sequence computing $\mathrm{Ext}^*_{\Sigma}(\pi_*B, \pi_*B)$
whose $E_2$-term has the same size as $\pi_*A$.

Regard $A$ and $B$ as filtered algebras via the Whitehead filtrations
$\{\tau_{\ge j}A\}$ and $\{\tau_{\ge j}B\}$, so that the map
$A \to B$ is a map of filtered algebras. We may then regard
the cosimplicial object
	\[
	[n] \mapsto B^{\otimes_A (n+1)}
	\]
as a cosimplicial filtered spectrum. The associated graded
cosimplicial object is then given by
	\[
	[n] \mapsto (\pi_*B)^{\otimes_{\pi_*A}^{\mathbb{L}} (n+1)}
	\]
(where we have used $\otimes^{\mathbb{L}}$ to remind the reader
that the tensor products are derived). Since $\pi_*B$ is concentrated in even degrees, the exterior classes
must vanish under the map $\pi_*A \to \pi_*B$. It follows that
	\[
	\overline{\Sigma}:=
	\pi_*\left(\pi_*B \otimes^{\mathbb{L}}_{\pi_*A} \pi_*B\right)
	\simeq
	\pi_*B \otimes_{\mathbb{F}_p} P \otimes_{\mathbb{F}_p} \Gamma
	\]
where $P$ is a polynomial algebra on even degree classes
and 
$\Gamma = \Gamma\{\sigma^2t_1, ..., \sigma^2t_{n+1}\}$ is
a divided power algebra on the indicated generators.

Since $\overline{\Sigma}$ is concentrated in even degrees, 
we learn that each of the
spectral sequences
	\[
	\pi_*\left((\pi_*B)^{\otimes_{\pi_*A}^{\mathbb{L}} (n+1)}\right)
	\Rightarrow \pi_*(B^{\otimes_A(n+1)})
	\]
collapses at the $E_2$-page. In other words: we have a filtration on
each group $\pi_*(B^{\otimes_A(n+1)})$ whose associated graded
is given by $\pi_*\left((\pi_*B)^{\otimes_{\pi_*A}^{\mathbb{L}} (n+1)}\right)$.
This, in particular, implies that $\Sigma$ is flat over $\pi_*B$ as we claimed
earlier.

Using this filtration on homotopy groups, we may then extract
a spectral sequence (\cite[Theorem A.1.3.9]{ravenel}):
	\[
	\mathrm{Ext}^*_{\overline{\Sigma}}(\pi_*B, \pi_*B)
	\Rightarrow
	\mathrm{Ext}^*_{\Sigma}(\pi_*B, \pi_*B).
	\]
It will now suffice to prove that
$\mathrm{Ext}^*_{\overline{\Sigma}}(\pi_*B, \pi_*B)$ has the same
size as $\pi_*A$. The map $\pi_*A \to \pi_*B$ can be written as a tensor product
(over $\mathbb{F}_p$)
of the three maps
	\[
	\mathbb{F}_p[\sigma^2v_{n+1}] \stackrel{\mathrm{id}}{\to}
	\mathbb{F}_p[\sigma^2v_{n+1}],\,\,
	\mathbb{F}_p \to P,\,\,
	\Lambda(\sigma t_1, ..., \sigma t_{n+1}) \to \mathbb{F}_p
	\]
The descent Hopf algebroid for the first map is just the pair
$(\mathbb{F}_p[\sigma^2v_{n+1}], \mathbb{F}_p[\sigma^2v_{n+1}])$
which has cohomology $\mathbb{F}_p[\sigma^2v_{n+1}]$,
concentrated in cohomological dimension zero.
The descent Hopf algebroid for the second map
is $(P, P\otimes_{\mathbb{F}_p} P)$, which has cohomology
$\mathbb{F}_p$ concentrated in cohomological dimension zero.
The descent Hopf algebroid for the last map is the divided power
Hopf algebra
$(\mathbb{F}_p, \Gamma\{\sigma^2t_1, ..., \sigma^2t_{n+1}\})$.

It follows that we may compute our Ext as
	\[
	\mathrm{Ext}^*_{\overline{\Sigma}}(\pi_*B, \pi_*B)
	\simeq
	\mathbb{F}_p[\sigma^2v_{n+1}] \otimes
	\mathrm{Ext}^*_{\Gamma\{\sigma^2t_1, ..., \sigma^2t_{n+1}\}}(\mathbb{F}_p,
	\mathbb{F}_p)
	\simeq \mathbb{F}_p[\sigma^2 v_{n+1}] \otimes
	\Lambda(\sigma t_1, ..., \sigma t_{n+1})
	\]
which completes the proof.
\end{proof}

\subsection{Recollection on Hopkins-Smith}

It will be convenient for our argument to use a type $n+1$
complex with a $v_{n+1}$-element that has as high an
Adams filtration as possible. We do not know whether
it is possible to do this and also equip our complex
with a homotopy commutative ring structure, but
the below proposition will suffice for our purposes.

\begin{proposition}\label{recall:smiths} There is a finite $p$-local
$\mathbb{E}_1$-ring
spectrum $M$ with the following properties:
	\begin{enumerate}[{\rm (i)}]
	\item $M$ admits a non-nilpotent $v_{n+1}$-element, $v \in \pi_*M$.
	\item The element $v$ is central.
	\item $\mathrm{BP}\langle n\rangle \otimes M$ splits,
	as a $\mathrm{BP}\langle n\rangle$-module, as a direct
	sum of suspensions of $\mathbb{F}_p$.
	\item Let $\mathrm{fil}(v)$ denote the Adams filtration of $v$,
	and $|v|$ the dimension.
	Then $\dfrac{|v|}{\mathrm{fil}(v)}= 2p^{n+1}-2$.
	\item The map $\mathrm{MU}_*(M) \to
	\mathrm{BP}\langle n\rangle_*(M)$ is surjective.
	\end{enumerate}
\end{proposition}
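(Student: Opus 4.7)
The plan is to construct $M$ as a generalized Moore--Smith--Toda complex, combining the Hopkins--Smith periodicity theorem with the work of Angeltveit (extending Devinatz--Hopkins, Baker--Jeanneret, and Lazarev) on $\mathbb{E}_1$-ring structures on generalized Moore spectra.

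First I would produce the underlying finite complex by forming a generalized Moore spectrum $M_0 = S^0/(p^{a_0}, v_1^{a_1}, \ldots, v_n^{a_n})$, where the exponents $a_k$ are chosen large enough that the iterated cofiber exists (which holds since $p, v_1, \ldots, v_n$ is a regular sequence in $\mathrm{MU}_{(p)*}$) and admits an $\mathbb{E}_1$-ring structure. The cellular construction of $M_0$ in $\mathrm{MU}_{(p)}$-modules gives $\mathrm{MU}_*(M_0) = \mathrm{MU}_{(p)*}/(p^{a_0}, v_1^{a_1}, \ldots, v_n^{a_n})$ by regularity, and the analogous computation for $\mathrm{BP}\langle n\rangle_*(M_0)$ verifies property (v) via the natural surjection of coefficient rings $\mathrm{MU}_{(p)*} \twoheadrightarrow \mathrm{BP}\langle n\rangle_*$.

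Next, for properties (i)--(iv), I would apply the Hopkins--Smith periodicity theorem to obtain a $v_{n+1}$-self map $f \colon \Sigma^d M_0 \to M_0$. The construction of $f$ via lifts of $v_{n+1}^k$ in the Adams spectral sequence gives $|f| = k(2p^{n+1}-2)$ and Adams filtration exactly $k$, yielding the ratio in (iv). Since $M_0$ is an $\mathbb{E}_1$-ring, $f$ corresponds to multiplication by some $v \in \pi_* M_0$, establishing (i). To arrange centrality (ii), I would pass to a sufficiently large iterate $v^N$, using that by the nilpotence theorem a non-nilpotent element must eventually lie in the center up to passage to powers.

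The main obstacle is property (iii), which asserts that $\mathrm{BP}\langle n\rangle \otimes M$ splits as a direct sum of suspensions of $\mathbb{F}_p$ as a $\mathrm{BP}\langle n\rangle$-module. For the generalized Moore complex $M_0$ with exponents $a_k > 1$, the module $\mathrm{BP}\langle n\rangle/(p^{a_0}, \ldots, v_n^{a_n})$ does not split this way since $p$ and the $v_j$ continue to act non-trivially. Resolving this requires either to choose $M = V(n)$ (in the cases where it exists), or otherwise to modify $M_0$ by an auxiliary tensor factor that trivializes these residual actions upon tensoring with $\mathrm{BP}\langle n\rangle$---analogous to Smith's original construction of $V(n)$ where it exists. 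The modification must be compatible with both the $\mathbb{E}_1$-ring structure and the $v_{n+1}$-self map produced in the previous step, and arranging this simultaneously is the delicate part of the construction.
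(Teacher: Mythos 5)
There is a genuine gap, and it sits exactly where you flagged it: property (iii) is never established, and in fact it \emph{cannot} hold for the spectrum you construct. For a generalized Moore spectrum $M_0=S^0/(p^{a_0},v_1^{a_1},\dots,v_n^{a_n})$ with any exponent $a_k>1$, the module $\mathrm{BP}\langle n\rangle\otimes M_0\simeq \mathrm{BP}\langle n\rangle/(p^{a_0},\dots,v_n^{a_n})$ has nontrivial residual action of $p$ and the $v_j$ (already $\mathbb{Z}_{(p)}/p^2$ is not a sum of $\mathbb{F}_p$'s), equivalently $H^*(M_0;\mathbb{F}_p)$ is not free over $\Lambda(Q_0,\dots,Q_n)$; while taking all exponents equal to $1$ requires the Smith--Toda complex $V(n)$, whose existence (let alone a ring structure) generically fails, as the paper itself notes. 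So the ``modification by an auxiliary tensor factor'' that you defer is not a technical afterthought --- it is the entire content of the construction. A secondary issue is your starting point: $\mathbb{E}_1$-ring structures on generalized Moore spectra $S^0/(p^{a_0},v_1^{a_1},\dots,v_n^{a_n})$ are not available in the generality you invoke, and (iv) (Adams filtration of the self-map exactly equal to its $v_{n+1}$-power) is asserted rather than proved; Hopkins--Smith establish it for self-maps of Smith's particular complexes, not for arbitrary type $(n+1)$ complexes.

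The paper sidesteps all of this at once by taking $M=\mathrm{End}(X)=\mathbb{D}X\otimes X$, where $X$ is Smith's type $(n+1)$ complex: an endomorphism spectrum is automatically an $\mathbb{E}_1$-ring (no Moore-spectrum multiplication results needed); (i), (ii), (iv) are exactly what Hopkins--Smith prove in the course of Theorem 4.12 for this $M$; and (iii) follows because the $Q_i$-Margolis homology of $H^*(X;\mathbb{F}_p)$ vanishes for $i\le n$, so by Miller--Wilkerson $H^*(M;\mathbb{F}_p)$ is free over $\Lambda(Q_0,\dots,Q_n)$, and a choice of basis produces a map $M\to V$ to a sum of suspensions of $\mathbb{F}_p$ that becomes an equivalence after tensoring with $\mathrm{BP}\langle n\rangle$. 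Property (v) is then a separate descent argument along $\mathrm{BP}\to\mathrm{BP}\langle n\rangle$, using that Smith's $X$ is a summand of a tensor power of a complex $Y$ with cells in degrees $2$ through $2p^{n+1}$, so that the relevant descent spectral sequences collapse; your regular-sequence computation of $\mathrm{MU}_*(M_0)$ does not transfer to $\mathrm{End}(X)$, so (v) genuinely needs an argument of this kind. If you want to salvage your outline, replace the generalized Moore spectrum by Smith's complex and its endomorphism ring; as written, the proposal proves (at best) (i), (ii), (v) for an $M$ that fails (iii).
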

\begin{proof}
We may take $M=\mathrm{End}(X)$ where $X$ is
the type $(n+1)$ spectrum constructed by Jeff Smith in \cite[\S 6.4]{smith}. 
The claims (i), (ii), and (iv) are shown in the course of the proof
of \cite[Theorem 4.12]{hopkins-smith}.
Since the Margolis homology of $H^*(X;\mathbb{F}_p)$
vanishes with respect to each
$Q_i$ with $i\le n$, \cite[Proposition 2.7]{miller-wilkerson}
shows that $H^*(X;\mathbb{F}_p)$ is a finitely generated
free module over $\Lambda(Q_0, ..., Q_n)$, so the same
is true of $M$. Choosing a basis of $H^*(M;\mathbb{F}_p)$
as a $\Lambda(Q_0, ..., Q_n)$-module gives a map
$M \to V$, into a direct sum of suspensions of $\mathbb{F}_p$.
After extending scalars of
the source to $\mathrm{BP}\langle n\rangle$ this becomes
an equivalence on $\mathbb{F}_p$-cohomology, and hence an equivalence,
proving (iii).

We now turn to the proof of (v). It will suffice to prove
the statement for $\mathrm{BP}$ in place of $\mathrm{MU}$.
We will use descent along $\mathrm{BP} \to \mathrm{BP}\langle n\rangle$
to study the $\mathrm{BP}$-modules
$\mathrm{BP} \otimes M$ and $\mathrm{BP}/(p, ..., v_n) \otimes M$.
We have a map between descent spectral sequences:
	\[
	\xymatrix{
	\mathrm{Ext}_{\Lambda_{\mathrm{BP}\langle n\rangle_*}(
	\sigma v_{n+1}, ..)}
	(\mathrm{BP}\langle n\rangle_*, 
	\mathrm{BP}\langle n\rangle_*(M)) 
	\ar@{=>}[r] \ar[d] &
	\mathrm{BP}_*(M) \ar[d]\\
	\mathrm{Ext}_{\Lambda_{\mathrm{BP}\langle n\rangle_*}(
	\sigma v_{n+1}, ..)}
	(\mathrm{BP}\langle n\rangle, 
	H_*(M;\mathbb{F}_p)) \ar@{=>}[r] & 
	(\mathrm{BP}/(p, ..., v_n))_*(M)
	}
	\]
Here we have used that 
$\mathrm{BP}\langle n\rangle \otimes_{\mathrm{BP}}
\mathrm{BP}/(p, ..., v_n) \simeq \mathbb{F}_p$.

Since $\mathrm{BP}\langle n\rangle \otimes M$ and
$\mathbb{F}_p \otimes M$ are $\mathbb{F}_p$-modules, by (iii), we may rewrite the map of Ext groups as
	\[
	\mathrm{Ext}_{\Lambda_{\mathbb{F}_p}(\overline{\tau}_{n+1},...)}
	(\mathbb{F}_p, \mathrm{BP}\langle n\rangle_*(M))
	\to
	\mathrm{Ext}_{\Lambda_{\mathbb{F}_p}(\overline{\tau}_{n+1}, ...)}
	(\mathbb{F}_p, H_*(M;\mathbb{F}_p)).
	\]
As argued above, the map $\mathrm{BP}\langle n\rangle \otimes M
\to \mathbb{F}_p \otimes M$ has a retract, so the map
$\mathrm{BP}\langle n\rangle_*(M) \to
H_*(M;\mathbb{F}_p)$ is injective. Finally, 
$H_*(M;\mathbb{F}_p)$ is a trivial comodule over
the Hopf algebra
$\Lambda(\overline{\tau}_{n+1}, ...)
\simeq \Lambda(\sigma v_{n+1}, ...)$,
and hence so too is
$\mathrm{BP}\langle n\rangle_*(M)$. It follows that the map
on $E_2$-terms above is an injection, and that every class in $\mathrm{BP}\langle n \rangle_*(M)$ has a representative on the 0-line of the spectral sequence computing $\mathrm{BP}_*(M)$. It remains to show that these representative classes survive to the $E_\infty$-page. By the above injectivity, it will suffice
to prove that the spectral sequence
	\[
	\xymatrix{
	\mathrm{Ext}_{\Lambda_{\mathbb{F}_p}(\overline{\tau}_{n+1}, ...)}
	(\mathbb{F}_p, 
	H_*(M;\mathbb{F}_p)) \ar@{=>}[r] & 
	(\mathrm{BP}/(p, ..., v_n))_*(M)
	}
	\]
collapses at the $E_2$-page. 

Observe that the property of a descent
spectral sequence collapsing at the $E_2$-page
is closed under direct sums, suspensions, and retracts. Since
the descent spectral sequence for $\mathrm{BP}/(p, ..., v_n)$
collapses at the $E_2$-page, it will suffice to prove that
$\mathrm{BP}/(p, ..., v_n) \otimes M$ is a direct summand
of a finite direct sum of suspensions of
$\mathrm{BP}/(p, ..., v_n)$.

Recall \cite[Lemma 6.2.6, Theorem C.3.2]{smith} that Smith's complex $X$ is obtained
as a summand of a tensor power of
a finite complex $Y$ with cells in dimensions $2$ through
$2p^{n+1}$. It follows that 
$H_*(Y;\mathbb{F}_p)$ is a trivial
comodule over $\Lambda(\overline{\tau}_{n+1}, ...)$
and that, for dimension reasons, the Adams spectral sequence
	\[
	\mathrm{Ext}_{\Lambda(\overline{\tau}_{n+1}, ...)}
	(\mathbb{F}_p, H_*(Y;\mathbb{F}_p))
	\simeq
	H_*(Y;\mathbb{F}_p)[v_{n+1}, ...]
	\Rightarrow
	(\mathrm{BP}/(p, ..., v_n))_*(Y)
	\]
collapses at the $E_2$-page. The $E_2$-page is
a finite free module over $\mathbb{F}_p[v_{n+1}, ...]$.
Using any homotopy ring structure on
$\mathrm{BP}/(p, ...,v_n)$ as a $\mathrm{BP}$-module,
we may then lift a basis to construct an equivalence
between $\mathrm{BP}/(p, ..., v_n) \otimes Y$ and
a finite direct sum of suspensions of
$\mathrm{BP}/(p, ..., v_n)$.

Similarly, using any homotopy ring structure 
on $\mathrm{BP}/(p, ..., v_n)$, we deduce that both
\[ \mathrm{BP}/(p, ..., v_n) \otimes (Y^{\otimes j}),
\,\,\text{and }
\mathrm{BP}/(p, ..., v_n) \otimes ((\mathbb{D}Y)^{\otimes j})\]
are equivalent to finite direct sums of suspensions of
$\mathrm{BP}/(p, ..., v_n)$, as $\mathrm{BP}$-modules.
So we conclude that
$\mathrm{BP}/(p, ..., v_n) \otimes M$ is a summand of
a finite direct sum of suspensions of
$\mathrm{BP}/(p, ..., v_n)$. This completes the proof.
\end{proof}

We will, several times, use the following elementary lemma found in
\cite{hopkins-smith}, which we recall for the reader's convenience.

\begin{lemma}[{\cite[Lemma 3.4]{hopkins-smith}}] \label{lem:take-powers-equal}
Suppose
that $x$ and $y$ are commuting elements of
a $\mathbb{Z}_{(p)}$-algebra. If $x-y$ is both torsion and nilpotent, then
for $N\gg 0$, 
	\[
	x^{p^N} = y^{p^N}.
	\]
\end{lemma}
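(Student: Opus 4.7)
The plan is to apply the binomial theorem, exploiting that $x$ and $y$ commute so that $x-y$ commutes with $y$. Writing $x = y + (x-y)$, we have
\[
x^{p^N} = \sum_{k=0}^{p^N} \binom{p^N}{k} y^{p^N-k}(x-y)^k.
\]
The $k=0$ term is $y^{p^N}$, so it suffices to show that for $N$ sufficiently large, every term with $k \geq 1$ vanishes.

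Let $m$ and $M$ be integers with $(x-y)^m = 0$ and $p^M(x-y) = 0$; such integers exist by the nilpotence and torsion hypotheses (and by working in the $\mathbb{Z}_{(p)}$-subalgebra generated by $x-y$, where torsion is automatically $p$-power torsion). For $k \geq m$ the factor $(x-y)^k$ is already zero, so only the terms with $1 \leq k < m$ require attention. For such $k$ the obstruction is to show that $p^M$ divides $\binom{p^N}{k}$.

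The key number-theoretic input is the standard formula $v_p\!\left(\binom{p^N}{k}\right) = N - v_p(k)$ for $1 \leq k \leq p^N$, which follows from Kummer's theorem on carries, or by direct inspection of $\binom{p^N}{k} = \frac{p^N}{k}\binom{p^N-1}{k-1}$. Since $v_p(k) \leq \log_p k < \log_p m$ for $1 \leq k < m$, we obtain a uniform lower bound $v_p\!\left(\binom{p^N}{k}\right) \geq N - \log_p m$. Choosing $N$ large enough that $N - \log_p m \geq M$ forces $p^M \mid \binom{p^N}{k}$ for every $1 \leq k < m$, and hence $\binom{p^N}{k}(x-y)^k = 0$ by the torsion hypothesis.

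There is no real obstacle: the only subtlety is ensuring that commutativity of $x$ and $y$ (which is given) justifies the binomial expansion in this non-commutative ambient algebra, and that the torsion bound $p^M$ controls all the cross terms uniformly. Combining the two displays, $x^{p^N} = y^{p^N}$ for all $N$ with $p^{N-\lceil \log_p m\rceil} \geq p^M$, which proves the lemma.
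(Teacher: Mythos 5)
Your proof is correct and is essentially the paper's own argument: the paper's proof is the one-line sketch ``expand $(y+(x-y))^{p^N}$ and use that $p^k(x-y)=0$ for some $k$,'' which implicitly relies on exactly the two points you spell out, namely nilpotence to cap the relevant binomial terms and the valuation $v_p\bigl(\binom{p^N}{k}\bigr)=N-v_p(k)$ together with $p$-power torsion (automatic over $\mathbb{Z}_{(p)}$) to kill the remaining ones. No gaps; you have simply made the standard details explicit.
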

\begin{proof} Expand
$(y+(x-y))^{p^N}$ and use that
$p^k(x-y) = 0$ for some $k$.
\end{proof}

From Proposition \ref{recall:smiths}(iii), we see that $\mathrm{THH}(\mathrm{BP}\langle n\rangle;\mathbb{F}_p)$
is a summand of
	\[
	M \otimes \mathrm{THH}(\mathrm{BP}\langle n\rangle)
	\simeq
	(M\otimes \mathrm{BP}\langle n\rangle)
	\otimes_{\mathrm{BP}\langle n\rangle}
	\mathrm{THH}(\mathrm{BP}\langle n\rangle),
	\]
arising from the unit map. In particular, there is a class
which lifts $\sigma^2v_{n+1}$ from
$M \otimes \mathrm{THH}(\mathrm{BP}\langle n\rangle/\mathrm{MU})$. 
We will need the following result ensuring the uniqueness and
centrality of such lifts, up to taking large powers.

\begin{lemma}\label{lem:nice-power} If $x \in \pi_*(M\otimes 
\mathrm{THH}(\mathrm{BP}\langle n\rangle))$ is a lift of
a power of
$\sigma^2v_{n+1} \in \pi_*(M\otimes \mathrm{THH}(\mathrm{BP}\langle n\rangle/
\mathrm{MU}))$, then there is some $k\ge 0$ for which
$x^{p^k}$ is central. Moreover, if $y$ is another such lift,
then there are $j,j' \ge 0$ such that $x^{p^j} = y^{p^{j'}}$
and both elements are central.
\end{lemma}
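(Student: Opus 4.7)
The plan is to work inside the non-commutative $\mathbb{F}_p$-algebra $R := \pi_*(M \otimes \mathrm{THH}(\mathrm{BP}\langle n\rangle))$ and to exploit two features: a finite filtration coming from descent, and the Freshman's Dream in characteristic $p$. First, Proposition \ref{recall:smiths}(iii) gives a splitting $M \otimes \mathrm{BP}\langle n\rangle \simeq \bigoplus \Sigma^{?} \mathbb{F}_p$ of $\mathrm{BP}\langle n\rangle$-modules, which induces $M \otimes \mathrm{THH}(\mathrm{BP}\langle n\rangle) \simeq \bigoplus \Sigma^{?} \mathrm{THH}(\mathrm{BP}\langle n\rangle;\mathbb{F}_p)$; the underlying abelian group of $R$ is therefore an $\mathbb{F}_p$-module, and in particular $p\cdot 1_R = 0$. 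Second, Proposition \ref{prop:horiz-vanish} provides a finite, multiplicative descending filtration $R = F^0 \supset F^1 \supset \cdots \supset F^{d+1} = 0$ whose augmentation factors the map to $\bar R := \pi_*(M \otimes \mathrm{THH}(\mathrm{BP}\langle n\rangle/\mathrm{MU}))$.

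For the first assertion, I consider the inner derivation $\partial_x = L_x - R_x$, where $L_x$ and $R_x$ denote left and right multiplication by $x$ on $R$. The main technical point is that $\partial_x$ raises the descent filtration, i.e.\ $\partial_x(F^j) \subseteq F^{j+1}$ for every $j$. At $j=0$ this amounts to the statement that the image $\bar x \in \bar R$ is central, which holds because $\bar x$ is pulled back from the center of the $\mathbb{E}_2$-algebra $\mathrm{THH}(\mathrm{BP}\langle n\rangle/\mathrm{MU})$. For general $j$ it reduces to the fact that the $E_\infty$-page of the descent spectral sequence (computed in Proposition \ref{prop:descent-fp}) is graded commutative and $\bar x$ sits in even total bidegree, so $[\bar x,-]$ vanishes on the associated graded. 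I expect this filtration-raising step to be the main technical challenge of the argument. Granted it, iteration yields $\partial_x^{d+1}(R) \subseteq F^{d+1} = 0$.

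With $\partial_x^{d+1}=0$ in hand, the endgame is purely algebraic. Since $L_x$ and $R_x$ commute as endomorphisms of $R$ and $R$ is an $\mathbb{F}_p$-algebra, the Freshman's Dream gives
\[
(L_x - R_x)^{p^k} \;=\; L_x^{p^k} - R_x^{p^k} \;=\; L_{x^{p^k}} - R_{x^{p^k}}.
\]
Choosing $k$ with $p^k \ge d+1$ forces the left-hand side to vanish, so $L_{x^{p^k}} = R_{x^{p^k}}$, i.e.\ $x^{p^k}$ is central.

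For the uniqueness clause, I first reduce to the case that $x$ and $y$ lift the same element of $\bar R$, which can be arranged by replacing $x, y$ by suitable powers. Then I choose $N$ so large that both $x^{p^N}$ and $y^{p^N}$ are central by the first part. Their difference lies in $F^1$, hence is nilpotent (since $F^{d+1}=0$) and is $p$-annihilated since $R$ is an $\mathbb{F}_p$-algebra. As $x^{p^N}$ and $y^{p^N}$ commute, Lemma \ref{lem:take-powers-equal} supplies an $N'$ with $x^{p^{N+N'}} = y^{p^{N+N'}}$, both central, completing the proof.
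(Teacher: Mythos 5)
Your endgame (Freshman's dream for the commuting operators $L_x,R_x$ on the $\mathbb{F}_p$-algebra $R$, then Lemma \ref{lem:take-powers-equal} for the uniqueness clause) is fine, but the step you yourself flag as the main technical point --- that the inner derivation $\partial_x=L_x-R_x$ raises the descent filtration --- is a genuine gap, and the justification you sketch for it does not work. You would need (a) that the descent filtration on $\pi_*(M\otimes\mathrm{THH}(\mathrm{BP}\langle n\rangle))$ is multiplicative, which Proposition \ref{prop:horiz-vanish} does not assert and which is delicate here because the tower is built from an $\mathbb{E}_2$-algebra map and then tensored with the merely $\mathbb{E}_1$-ring $M$; and (b) that the symbol of $x$ multiplies the same way from the left and the right on each associated graded piece. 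Your proposed reason for (b), graded-commutativity of the $E_\infty$-page, is false in general: since $M=\mathrm{End}(X)$ is noncommutative, the $E_2$- and $E_\infty$-pages of the $M$-coefficient descent spectral sequence are not commutative rings (already $\pi_*(M\otimes\mathrm{THH}(\mathrm{BP}\langle n\rangle/\mathrm{MU}))$ is noncommutative). What is actually true, and what you would have to use, is that $\bar x$ is a power of $\sigma^2v_{n+1}$, hence lies in the image of $\pi_*\mathrm{THH}(\mathrm{BP}\langle n\rangle/\mathrm{MU})$, an $\mathbb{E}_2$-ring, and therefore acts centrally; but propagating that centrality to every filtration stage of the $M$-coefficient spectral sequence is exactly the unproven content. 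A similar multiplicativity assumption is hidden in your uniqueness step (``lies in $F^1$, hence nilpotent since $F^{d+1}=0$''); there you should instead quote the nilpotence-of-the-kernel statement of Proposition \ref{prop:horiz-vanish} directly, which applies since $M$ is a homotopy ring.

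The paper avoids the filtration-raising issue altogether, and you could too: regard $L_x$ and $R_x$ not as operators on $R$ but as commuting elements of the homotopy ring $\pi_*\bigl(\mathrm{End}(M)\otimes\mathrm{THH}(\mathrm{BP}\langle n\rangle)\bigr)$. Their difference maps to $L_{\bar x}-R_{\bar x}=0$ in $\pi_*\bigl(\mathrm{End}(M)\otimes\mathrm{THH}(\mathrm{BP}\langle n\rangle/\mathrm{MU})\bigr)$ because $\bar x$ is central there, so by Proposition \ref{prop:horiz-vanish} (derived from Proposition \ref{prop:descent-fp}), applied to the homotopy ring spectrum $\mathrm{End}(M)$, the element $L_x-R_x$ is nilpotent; it is also $p$-power torsion, so Lemma \ref{lem:take-powers-equal} (or your Freshman's dream, since everything is $\mathbb{F}_p$-linear) gives $L_{x^{p^k}}=R_{x^{p^k}}$, i.e.\ $x^{p^k}$ is central. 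This one-line detour through $\mathrm{End}(M)$ is precisely what replaces your filtration argument; as written, your proof is incomplete at its central step.
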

\begin{proof} Let $F = \mathrm{End}(M)$ and denote by $L_x$ and
$R_x$ the elements in homotopy corresponding to left and right
multiplication by $x$, respectively. These elements commute and
their difference is nilpotent by Proposition \ref{prop:descent-fp}.
It follows from Lemma \ref{lem:take-powers-equal} that
$L_x^{p^k} = R_x^{p^k}$ for some $k\ge 0$, and hence that
$x^{p^k}$ is central. For the second claim, first replace
$x$ and $y$ by $x^{p^k}$ and $y^{p^{k'}}$ so that $x^{p^k}$
is central and both elements map to the same power of
$\sigma^2v_{n+1}$ inside $\pi_*(M \otimes 
\mathrm{THH}(\mathrm{BP}\langle n\rangle/
\mathrm{MU}))$. Then $x$ and $y$ are commuting elements
and $x-y$ maps to zero in
$\pi_*(M\otimes \mathrm{THH}(\mathrm{BP}\langle 
n\rangle/\mathrm{MU}))$.
By Proposition \ref{prop:descent-fp}, $x-y$ is nilpotent
and again Lemma \ref{lem:take-powers-equal} implies that
$x^{p^j} = y^{p^j}$ for some $j\ge 0$. This completes the proof.
\end{proof}

\subsection{Proof of canonical vanishing}

\begin{theorem}\label{thm:canvan-new}
There is a $v_{n+1}$-element $v \in \pi_*M$
and an integer $d\ge 0$ such that, for all
$0\le k \le \infty$, the map
	\[
	\pi_*(M/v \otimes \mathrm{can}):
	\pi_*(M/v \otimes \mathrm{THH}(\mathrm{BP}\langle n\rangle)^{hC_{p^k}})
	\to
	\pi_*(M/v \otimes \mathrm{THH}(\mathrm{BP}\langle n\rangle)^{tC_{p^k}})
	\]
	is zero when $*\ge d$. 
\end{theorem}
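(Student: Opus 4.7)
The plan is to mimic the prototype calculation at height $n = -1$ outlined at the start of Section \ref{sec:canvan}: produce a representative $\tilde v$ of some $p$-power $v^{p^N}$ in high HFPSS-filtration for $M \otimes \mathrm{THH}(\mathrm{BP}\langle n \rangle)^{hS^1}$, kill it to form modified spectral sequences converging to $\pi_*(M/v^{p^N} \otimes \mathrm{THH}(\mathrm{BP}\langle n\rangle)^{hS^1})$ and $\pi_*(M/v^{p^N} \otimes \mathrm{THH}(\mathrm{BP}\langle n\rangle)^{tS^1})$, and then compare filtration ranges. Since any power $v^{p^N}$ is again a $v_{n+1}$-self map on $M$, replacing $v$ by $v^{p^N}$ is harmless for the final statement.

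The first task is to build $\tilde v$. Let $v \in \pi_*M$ be the central $v_{n+1}^j$-self map from Proposition \ref{recall:smiths}. By the Detection Theorem (\ref{thm:body-detection}), the image of $v_{n+1}$ in $\pi_*\mathrm{THH}(\mathrm{BP}\langle n\rangle/\mathrm{MU})^{hS^1}$ equals $t \cdot \sigma^2 v_{n+1}$, which has HFPSS-filtration $2$. Hence $v_{n+1}^j$ has a representative in filtration $\geq 2j$ of the relative HFPSS, and the same holds after smashing with $M$. To lift this back along descent to the HFPSS for $M \otimes \mathrm{THH}(\mathrm{BP}\langle n\rangle)^{hS^1}$, I would combine Proposition \ref{prop:horiz-vanish} (nilpotent descent kernel) with Lemmas \ref{lem:take-powers-equal} and \ref{lem:nice-power}: comparing $v$ with an $\mathrm{MU}$-lift of $v_{n+1}^j$ furnished by Proposition \ref{recall:smiths}(v), and then passing to a $p$-power, produces a central lift $\tilde v \in \pi_*(M \otimes \mathrm{THH}(\mathrm{BP}\langle n\rangle)^{hS^1})$ of $v^{p^N}$ in HFPSS-filtration $\geq 2jp^N$.

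The main obstacle is the filtration-range bookkeeping after modding by $\tilde v$, together with the uniformity in $k$. In the modified HFPSS for $M/v^{p^N} \otimes \mathrm{THH}^{hS^1}$, every class in homotopy degree $d$ is confined to a bounded band of nonnegative filtration, because the relation $\tilde v = 0$ cuts off the tower generated by the HFPSS polynomial generator $t$. In the modified Tate SS for $M/v^{p^N} \otimes \mathrm{THH}^{tS^1}$, the same relation together with the invertibility of $t$ eliminates the nonnegative-filtration part in each sufficiently high homotopy degree, leaving only classes of strictly negative filtration. Since the canonical map preserves filtration and is an isomorphism between the nonnegative-filtration pieces of the two $E_2$-pages, any surviving HFPSS class in high homotopy degree must map to zero in the Tate SS. For uniformity across $0 \leq k \leq \infty$, the identical argument runs with $C_{p^k}$-fixed points: the lift $\tilde v$ and its filtration are inherited by restriction along $BC_{p^k} \to BS^1$, so the same $d$ works for every $k$.
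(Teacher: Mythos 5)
Your overall skeleton matches the paper's proof (realize a power of the self-map $v$ in high filtration, cofiber it off at the level of filtered spectra, and compare the nonnegative-filtration source with an eventually negatively-filtered target), but two of the substantive steps are not justified, and the first is argued in the wrong direction. You obtain the lower bound on the filtration of $v$ in the homotopy fixed point spectral sequence for $M\otimes\mathrm{THH}(\mathrm{BP}\langle n\rangle)^{hS^1}$ by pushing forward to the relative theory: the Detection Theorem \ref{thm:body-detection} shows the image of $v$ in $\pi_*(M\otimes\mathrm{THH}(\mathrm{BP}\langle n\rangle/\mathrm{MU})^{hS^1})$ is detected in filtration $2m$, and you then want to ``lift this back along descent.'' But a map of towers can only increase filtration: detection of the image in filtration $2m$ in the relative spectral sequence gives an \emph{upper} bound on the absolute filtration of $v$, not a lower bound, and neither Proposition \ref{prop:horiz-vanish} nor Lemmas \ref{lem:take-powers-equal}, \ref{lem:nice-power} transfers a filtration bound backwards along the map of homotopy fixed point spectral sequences. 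The lower bound is exactly where the paper uses the special features of Smith's complex that you never invoke: $v$ has maximal Adams filtration (Proposition \ref{recall:smiths}(iv)) and $M\otimes\mathrm{THH}(\mathrm{BP}\langle n\rangle)$ is a wedge of suspensions of $\mathbb{F}_p$ (Proposition \ref{recall:smiths}(iii)), so the $\mathbb{C}P^{m-1}$-truncated fixed points form an Adams tower and $v$ must sit in filtration at least $2m$ (Lemma \ref{lemma:high-Adams}). This bound is not cosmetic: if $v$ were detected in lower filtration, its detecting class would be $t^{s}w$ with $w$ mapping to zero in $\pi_*(M\otimes\mathrm{THH}(\mathrm{BP}\langle n\rangle/\mathrm{MU}))$, hence nilpotent by Proposition \ref{prop:descent-fp}, and the quotient Tate spectral sequence would \emph{not} become negatively filtered in high degrees.

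The second gap is the assertion that killing $\tilde v$, ``together with the invertibility of $t$, eliminates the nonnegative-filtration part in each sufficiently high homotopy degree'' of the Tate spectral sequence. After inverting $t$, what must be shown is that multiplication by the filtration-zero leading term $z$ of the detecting class $t^mz$ is eventually an isomorphism on $\pi_*(M\otimes\mathrm{THH}(\mathrm{BP}\langle n\rangle))$ (both cokernel and $z$-torsion bounded above, uniformly so that the argument survives reduction mod $p^k$ for all $0\le k\le\infty$). This requires identifying $z$ as a lift of a power of $\sigma^2v_{n+1}$ — which needs both the filtration bounds above and the collapse of Lemma \ref{lem:hfpss-collapses} — and, crucially, the finiteness statement of Lemma \ref{lem:nice-z}(iv): $\pi_*(M\otimes\mathrm{THH}(\mathrm{BP}\langle n\rangle))$ is a finitely generated $\mathbb{Z}_{(p)}[z]$-module, which rests on the descent computation of Proposition \ref{prop:descent-fp}. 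Your proposal cites none of this, so the central vanishing claim is unsupported; with it supplied, the remaining filtration bookkeeping and the uniformity in $k$ (via $\mathrm{gr}(X)^{tC_{p^k}}\simeq\mathrm{gr}(X)^{tS^1}/p^k$) go through as you describe.
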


\begin{remark} In the proof of the theorem and the lemmas below
we will make use of the homotopy fixed point spectral sequence.
If a group $G$ acts on a spectrum $X$, then we will take
the homotopy fixed point spectral sequence computing
$\pi_*(X^{hG})$ to be the one associated to the tower
$\{(\tau_{\ge j}X)^{hG}\}$ according to our conventions in
\S\ref{sec:sseq}. However, it will be convenient to know that,
for fixed $s$,
an element $x \in \pi_*(X^{hG})$
is detected by a class in $E_2^{s',t}$ for some $s'>s$
if and only if $x$ vanishes when restricted to
$\mathrm{map}(\mathrm{sk}_{s}(EG)_+, X)^{hG}$.
This follows from \cite[Theorem B.8]{greenlees-may}.
\end{remark}

\begin{lemma} \label{lemma:high-Adams}
Let $v \in \pi_*M$ be the $v_{n+1}$-element
from Proposition \ref{recall:smiths}.
Then $v$ is detected
in the homotopy fixed point spectral sequence for
$M\otimes \mathrm{THH}(\mathrm{BP}\langle n\rangle)^{hS^1}$
in filtration at least $\frac{|v|}{p^{n+1}-1}$.
\end{lemma}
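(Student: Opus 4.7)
The plan is to combine the Detection Theorem (Theorem \ref{thm:body-detection}) with the $\mathrm{MU}$-lifting property of $v$ from Proposition \ref{recall:smiths}(v). Morally, each ``factor of $v_{n+1}$'' in $v$ contributes a factor of $t$ in the HFPSS (by Detection), so that the Adams filtration $\mathrm{fil}(v)$ of Proposition \ref{recall:smiths}(iv) doubles to HFPSS filtration $2\,\mathrm{fil}(v) = |v|/(p^{n+1}-1)$. My task is to make this doubling precise.

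First, I would lift the image of $v$ along Proposition \ref{recall:smiths}(v) to a class $\tilde v \in \mathrm{MU}_*(M) = \pi_*(\mathrm{MU}\otimes M)$, and feed it through the composite of $S^1$-equivariant maps
\[
\mathrm{MU}\otimes M \longrightarrow \mathrm{THH}(\mathrm{BP}\langle n\rangle)\otimes M \stackrel{\Phi}{\longrightarrow} \mathrm{THH}(\mathrm{BP}\langle n\rangle/\mathrm{MU})\otimes M
\]
obtained from the unit of $\mathrm{THH}$ and the base-change along $\mathrm{MU}$. After taking $(-)^{hS^1}$ and using that $\mathrm{MU}$ has trivial $S^1$-action, the first stage becomes a map from $\mathrm{MU}^{hS^1}\otimes M$. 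On the far right, the Detection Theorem asserts that $v_{n+1}\in\pi_*\mathrm{MU}^{hS^1}$ is sent to $t\,\sigma^2 v_{n+1}$, which sits in HFPSS filtration $2$. The degree–filtration identity $|v| = \mathrm{fil}(v)\cdot |v_{n+1}|$ from Proposition \ref{recall:smiths}(iv) forces $\tilde v$ to equal $v_{n+1}^{\mathrm{fil}(v)}$ modulo decomposables (and elements of strictly higher Adams filtration) in $\mathrm{MU}_*(M)$, so its image in the relative HFPSS is detected at filtration $2\,\mathrm{fil}(v)$ via $t^{\mathrm{fil}(v)}(\sigma^2 v_{n+1})^{\mathrm{fil}(v)}$.

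The key difficulty is transferring this filtration bound back to the absolute HFPSS, since a map of filtered spectra only bounds target filtration from source filtration, not the reverse. I would use the descent spectral sequence of Proposition \ref{prop:horiz-vanish}, whose horizontal vanishing line controls the ambiguity between the absolute and relative sides. The strategy is to lift the filtered detection directly: in $\mathrm{MU}^{hS^1}\otimes M$, $v_{n+1}^{\mathrm{fil}(v)}\cdot\tilde w$ (where $\tilde w$ witnesses the decomposable correction) can be refined to a representative of filtration $2\,\mathrm{fil}(v)$ already in the absolute $\mathrm{THH}$ HFPSS, because $v_{n+1}\mapsto 0$ in $\pi_*\mathrm{BP}\langle n\rangle$ forces $v_{n+1}\in \pi_*\mathrm{MU}^{hS^1}$ to acquire a higher-filtration representative after pushing into $\mathrm{THH}(\mathrm{BP}\langle n\rangle)^{hS^1}$. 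Any resulting nilpotent ambiguity (between this lift and $v$ itself) is then eliminated by Lemmas \ref{lem:nice-power} and \ref{lem:take-powers-equal}, after passing to large $p^N$-th powers if necessary, since the bound is preserved under taking powers (filtration is super-multiplicative).

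The main obstacle is precisely this final bookkeeping step: ensuring that the $p^N$-th power trick used to manage nilpotent discrepancies between $\tilde v$ and a genuine filtration-$2\mathrm{fil}(v)$ lift is compatible with both the Adams filtration on $M$ and the HFPSS filtration on $\mathrm{THH}(\mathrm{BP}\langle n\rangle)^{hS^1}$, so that the filtration of $v$ itself (and not merely of a power) is bounded below. This requires careful tracking of the bi-filtration coming from the descent tower $\mathrm{desc}^{\ge \bullet}_{\mathbb{F}_p}\mathrm{BP}\langle n\rangle$ of Recollection \ref{recall-adams-bpn} and its interaction with the Whitehead filtration controlling the HFPSS.
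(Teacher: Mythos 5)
There is a genuine gap, and it sits exactly where you flag it. The Detection Theorem controls the image of $v$ in the \emph{relative} homotopy fixed point spectral sequence for $M\otimes\mathrm{THH}(\mathrm{BP}\langle n\rangle/\mathrm{MU})^{hS^1}$, and since a map of filtered spectra can only raise filtration, this yields an \emph{upper} bound of $2\,\mathrm{fil}(v)$ on the absolute filtration of $v$ — the opposite inequality from what Lemma \ref{lemma:high-Adams} asserts. (Indeed, this is precisely how the Detection Theorem is used in the proof of Lemma \ref{lem:nice-z}, where the present lemma supplies the missing lower bound.) Your proposed repairs do not close this gap: the horizontal vanishing line of Proposition \ref{prop:horiz-vanish} bounds descent filtration, not homotopy fixed point filtration; the heuristic that ``each factor of $v_{n+1}$ contributes a $t$'' presupposes that $v$ is a product of $\mathrm{fil}(v)$ classes each detected in filtration $\ge 2$, which is not the case — $v$ is a single element whose $\mathrm{MU}_*(M)$-lift need not be $v_{n+1}^{\mathrm{fil}(v)}$ times anything, and super-multiplicativity of filtration gives nothing for a non-product; and the $p^N$-th power trick of Lemmas \ref{lem:take-powers-equal} and \ref{lem:nice-power} identifies \emph{elements} up to powers, so at best it would bound the filtration of some power of $v$, not of $v$ itself, which you acknowledge but do not resolve.

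The paper's proof is much more direct and uses exactly the two inputs your argument omits or underuses: Proposition \ref{recall:smiths}(iii) and (iv). Setting $m=|v|/(2p^{n+1}-2)$, detection in filtration $\ge 2m$ is equivalent (by the Greenlees--May criterion recalled before Theorem \ref{thm:canvan-new}) to the vanishing of $v$ in $\lim_{\mathbb{C}P^{m-1}}\bigl(M\otimes\mathrm{THH}(\mathrm{BP}\langle n\rangle)\bigr)$. Since $M\otimes\mathrm{THH}(\mathrm{BP}\langle n\rangle)\simeq (M\otimes\mathrm{BP}\langle n\rangle)\otimes_{\mathrm{BP}\langle n\rangle}\mathrm{THH}(\mathrm{BP}\langle n\rangle)$ is a direct sum of shifts of $\mathbb{F}_p$ by (iii), the skeletal filtration of $\mathbb{C}P^{m-1}$ exhibits this partial limit as admitting an Adams resolution of length $m-1$; hence any class of Adams filtration at least $m$ maps to zero there, and (iv) says $\mathrm{fil}(v)=m$. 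This is the mechanism that converts Adams filtration into homotopy fixed point filtration (with the factor of $2$ coming from the even cells of $\mathbb{C}P^{m-1}$), and without property (iii) — or some substitute for it — your route has no such conversion step.
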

\begin{proof} Set $m = \frac{|v|}{2p^{n+1}-2}$. We need to prove
that the image of $v$ vanishes inside
	\[
	Y:=\lim_{\mathbb{C}P^{m-1}}
	M \otimes \mathrm{THH}(\mathrm{BP}\langle n\rangle).
	\]
Since 
	\[
	M \otimes \mathrm{THH}(\mathrm{BP}\langle n\rangle)
	\simeq 
	(M\otimes \mathrm{BP}\langle n\rangle)
	\otimes_{\mathrm{BP}\langle n\rangle}
	\mathrm{THH}(\mathrm{BP}\langle n\rangle)
	\]
is a direct sum of shifts of $\mathbb{F}_p$, 
the skeletal filtration on $\mathbb{C}P^{m-1}$ gives
rise to an Adams resolution of  
$Y$ of length $m-1$.
The claim now follows from Proposition \ref{recall:smiths}(iv).
\end{proof}

\begin{lemma}\label{lem:hfpss-collapses}
The homotopy fixed point spectral sequence converging to
$\pi_*(\mathrm{M} \otimes \mathrm{THH}(\mathrm{BP}\langle n\rangle/
\mathrm{MU})^{hS^1})$ collapses at the $E_2$-page.
\end{lemma}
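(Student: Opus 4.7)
The plan is to reduce the collapse of the HFPSS for $(M\otimes\mathrm{THH}(\mathrm{BP}\langle n\rangle/\mathrm{MU}))^{hS^1}$ to the collapse of the HFPSS for $\mathrm{THH}(\mathrm{BP}\langle n\rangle/\mathrm{MU};\mathbb{F}_p)^{hS^1}$, and then dispatch the reduced claim by a parity argument.

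For the reduction, recall that Proposition \ref{recall:smiths}(iii) supplies an equivalence $M\otimes\mathrm{BP}\langle n\rangle\simeq\bigoplus_i\Sigma^{n_i}\mathbb{F}_p$ of $\mathrm{BP}\langle n\rangle$-modules; since both sides carry only a trivial $S^1$-action, this is already an equivalence of $S^1$-equivariant $\mathrm{BP}\langle n\rangle$-modules. Tensoring over $\mathrm{BP}\langle n\rangle$ with the $S^1$-equivariant $\mathrm{BP}\langle n\rangle$-module $\mathrm{THH}(\mathrm{BP}\langle n\rangle/\mathrm{MU})$ gives an $S^1$-equivariant equivalence
\[
M\otimes\mathrm{THH}(\mathrm{BP}\langle n\rangle/\mathrm{MU})\simeq\bigoplus_i\Sigma^{n_i}\mathrm{THH}(\mathrm{BP}\langle n\rangle/\mathrm{MU};\mathbb{F}_p).
\]
Since $(-)^{hS^1}$ commutes with finite direct sums and suspensions, the HFPSS for the left-hand side decomposes as a finite direct sum of shifts of the HFPSS for $\mathrm{THH}(\mathrm{BP}\langle n\rangle/\mathrm{MU};\mathbb{F}_p)^{hS^1}$, so it is enough to prove collapse for the latter.

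For the parity argument, Theorem \ref{thm:poly-thh} identifies $\pi_*\mathrm{THH}(\mathrm{BP}\langle n\rangle/\mathrm{MU})$ with a polynomial algebra over $\mathrm{BP}\langle n\rangle_*$ on even-degree generators, hence it is flat (in fact free) over $\mathrm{BP}\langle n\rangle_*$. Accordingly
\[
\pi_*\mathrm{THH}(\mathrm{BP}\langle n\rangle/\mathrm{MU};\mathbb{F}_p)\cong\mathbb{F}_p\otimes_{\mathrm{BP}\langle n\rangle_*}\pi_*\mathrm{THH}(\mathrm{BP}\langle n\rangle/\mathrm{MU})
\]
is a polynomial $\mathbb{F}_p$-algebra on even-degree generators and in particular is concentrated in even degrees. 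The $E_2$-page $H^*(\mathrm{B}S^1;\pi_*\mathrm{THH}(\mathrm{BP}\langle n\rangle/\mathrm{MU};\mathbb{F}_p))$ is therefore concentrated in bidegrees whose total degree is even; since each differential $d_r$ shifts total degree by $-1$, every differential must land in a group of opposite parity and hence vanish. I anticipate no real obstacle: the only point that deserves a moment of care is the compatibility of the Proposition \ref{recall:smiths}(iii) splitting with $S^1$-equivariance, which is automatic because that splitting takes place entirely within the full subcategory of $\mathrm{BP}\langle n\rangle$-modules equipped with trivial $S^1$-action.
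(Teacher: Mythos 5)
Your parity computation for the $\mathbb{F}_p$-coefficient object is fine, but the reduction to it has a genuine gap: the splitting you use is not available $S^1$-equivariantly. The identification $M\otimes\mathrm{THH}(\mathrm{BP}\langle n\rangle/\mathrm{MU})\simeq (M\otimes\mathrm{BP}\langle n\rangle)\otimes_{\mathrm{BP}\langle n\rangle}\mathrm{THH}(\mathrm{BP}\langle n\rangle/\mathrm{MU})$ is an equivalence of underlying spectra only. The circle acts on $\mathrm{THH}(\mathrm{BP}\langle n\rangle/\mathrm{MU})$ through $\mathbb{E}_2$-$\mathrm{MU}$-algebra maps, so the action is $\mathrm{MU}$-linear (trivial action on $\mathrm{MU}$), but it is not $\mathrm{BP}\langle n\rangle$-linear: the unit $\mathrm{BP}\langle n\rangle\to\mathrm{THH}(\mathrm{BP}\langle n\rangle/\mathrm{MU})$ is not $S^1$-equivariant for the trivial action on the source in any natural way, and you supply no lift of $\mathrm{THH}(\mathrm{BP}\langle n\rangle/\mathrm{MU})$ to a $\mathrm{BP}\langle n\rangle$-module in $\mathsf{Fun}(\mathrm{B}S^1,\mathsf{Sp})$ refining the standard module structure. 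This is not a pedantic point: the classical illustration is that $\mathrm{THH}(\mathbb{F}_p)$ is not an $S^1$-equivariant $\mathbb{F}_p$-module, since $\pi_0\mathrm{THH}(\mathbb{F}_p)^{tS^1}=\mathbb{Z}_p$ is not $p$-torsion; and in the present relative situation such a structure, compatible with the given $\mathrm{MU}$-linearity, would force $v_{n+1}\in\pi_*\mathrm{MU}_{(p)}^{hS^1}$ to act trivially on $\pi_*\mathrm{THH}(\mathrm{BP}\langle n\rangle/\mathrm{MU})^{hS^1}$, contradicting the Detection Theorem \ref{thm:body-detection}, which sends $v_{n+1}$ to $t\,\sigma^2v_{n+1}\neq 0$. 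Relatedly, $\mathrm{THH}(\mathrm{BP}\langle n\rangle/\mathrm{MU};\mathbb{F}_p)$ carries no natural circle action at all (Hochschild homology with coefficients in a bimodule is not a cyclic object), so the asserted decomposition of $S^1$-spectra into shifts of it is not even well formed without the very structure in question. Note finally that the actual $E_2$-page, $\mathrm{BP}\langle n\rangle_*(M)\otimes_{\mathrm{BP}\langle n\rangle_*}\mathrm{THH}(\mathrm{BP}\langle n\rangle/\mathrm{MU})_*[t]$, has classes in both parities (since $H_*(M;\mathbb{F}_p)$ is a free $\Lambda(Q_0,\dots,Q_n)$-module), so the collapse cannot be obtained by a parity argument applied directly to the target.

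This is exactly where the paper has to work over $\mathrm{MU}$ instead of over $\mathrm{BP}\langle n\rangle$: its proof uses the two genuinely $S^1$-equivariant maps $\mathrm{MU}\otimes M\to M\otimes\mathrm{THH}(\mathrm{BP}\langle n\rangle/\mathrm{MU})$ and $\mathrm{THH}(\mathrm{BP}\langle n\rangle/\mathrm{MU})\to M\otimes\mathrm{THH}(\mathrm{BP}\langle n\rangle/\mathrm{MU})$, together with Proposition \ref{recall:smiths}(v) (surjectivity of $\mathrm{MU}_*(M)\to\mathrm{BP}\langle n\rangle_*(M)$), to see that the $E_2$-page is generated as a ring by images of permanent cycles; multiplicativity of the spectral sequence then forces collapse. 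To repair your argument you would either have to construct the equivariant $\mathrm{BP}\langle n\rangle$-linearity (which, as indicated above, is not to be expected), or route the generation of the $E_2$-page through equivariant $\mathrm{MU}$-linear data — which is in effect the paper's proof.
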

\begin{proof} The $E_2$-page can be described as
	\[
	\mathrm{BP}\langle n\rangle_*(M) 
	\otimes_{\mathrm{BP}\langle n\rangle_*}
	\mathrm{THH}(\mathrm{BP}\langle n\rangle/\mathrm{MU})_*[t]
	\]
By Proposition \ref{recall:smiths}(v), the images
of the equivariant maps
	\[
	\mathrm{MU} \otimes M \to
	\mathrm{M} \otimes \mathrm{THH}(\mathrm{BP}\langle n\rangle/
	\mathrm{MU})
	\]
	\[
	\mathrm{THH}(\mathrm{BP}\langle n\rangle/
	\mathrm{MU})
	\to
	\mathrm{M} \otimes \mathrm{THH}(\mathrm{BP}\langle n\rangle/
	\mathrm{MU})
	\]
induce maps of spectral sequences
whose images generate the $E_2$-page of 
the target as a ring. Every element
in the homotopy fixed point spectral sequence for both
$\mathrm{MU}^{hS^1}\otimes M$ and
$\mathrm{THH}(\mathrm{BP}\langle n\rangle/
\mathrm{MU})$ is a permanent cycle, so the claim follows.
\end{proof}

\begin{lemma}\label{lem:nice-z} There is an element $z \in
\pi_*(M \otimes \mathrm{THH}(\mathrm{BP}\langle n\rangle))$
with the following properties:
	\begin{enumerate}[{\rm (i)}]
	\item $z$ is central.
	\item $z$ maps to a power of $\sigma^2v_{n+1}$
	inside 
	$\pi_*(M \otimes 
	\mathrm{THH}(\mathrm{BP}\langle n\rangle/\mathrm{MU}))$.
	\item For some $m>0$, $t^mz$, in the $E_2$-term of the 
	homotopy fixed point spectral sequence,
	detects the image of a 
	central $v_{n+1}$-element from $\pi_*(M)$
	inside $\pi_*(M \otimes 
	\mathrm{THH}(\mathrm{BP}\langle n\rangle)^{hS^1})$.
	\item $\pi_*(M \otimes \mathrm{THH}(\mathrm{BP}\langle n\rangle))$
	is a finitely generated $\mathbb{Z}_{(p)}[z]$-module.
	\end{enumerate}
\end{lemma}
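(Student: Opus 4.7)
I will construct $z$ as a suitably chosen central power of a canonical lift $\tilde z$ of $\sigma^2 v_{n+1}$, and then verify (iii) by comparing to the collapsing homotopy fixed point spectral sequence for $M\otimes\mathrm{THH}(\mathrm{BP}\langle n\rangle/\mathrm{MU})^{hS^1}$. The exponent of $\tilde z$ will be dictated by the need for both centrality (via Lemma \ref{lem:nice-power}) and degree/filtration matching with a power of the $v_{n+1}$-element $v\in\pi_*M$.

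\textbf{Construction and properties (i), (ii), (iv).} The $\mathrm{BP}\langle n\rangle$-module splitting $M\otimes\mathrm{BP}\langle n\rangle\simeq\bigoplus_i\Sigma^{a_i}\mathbb{F}_p$ from Proposition \ref{recall:smiths}(iii), base changed over $\mathrm{BP}\langle n\rangle$, yields
\[
M\otimes\mathrm{THH}(\mathrm{BP}\langle n\rangle)\simeq\bigoplus_i\Sigma^{a_i}\mathrm{THH}(\mathrm{BP}\langle n\rangle;\mathbb{F}_p).
\]
The bottom-cell summand produces a lift $\tilde z\in\pi_*(M\otimes\mathrm{THH}(\mathrm{BP}\langle n\rangle))$ of $\sigma^2v_{n+1}\in\pi_*\mathrm{THH}(\mathrm{BP}\langle n\rangle;\mathbb{F}_p)$. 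Writing $m_0:=|v|/(2p^{n+1}-2)$ for the Adams filtration of $v$, I will set $z:=\tilde z^{m_0 p^k}$, where $k$ is large enough that $z$ is central by Lemma \ref{lem:nice-power}. Then $z$ is central and lifts $(\sigma^2v_{n+1})^{m_0p^k}$, giving (i) and (ii). For (iv), the splitting reduces finite generation to that of $\pi_*\mathrm{THH}(\mathrm{BP}\langle n\rangle;\mathbb{F}_p)$ over $\mathbb{F}_p[z]$; by the collapse of the descent spectral sequence (Proposition \ref{prop:descent-fp}), this is a free module of rank $2^{n+1}$ over $\mathbb{F}_p[\sigma^2v_{n+1}]$, hence finitely generated over $\mathbb{F}_p[z]$ since $z$ lifts a power of $\sigma^2v_{n+1}$, and consequently finitely generated over $\mathbb{Z}_{(p)}[z]$.

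\textbf{Property (iii) — the main obstacle.} The target hfpss for $M\otimes\mathrm{THH}(\mathrm{BP}\langle n\rangle/\mathrm{MU})^{hS^1}$ collapses by Lemma \ref{lem:hfpss-collapses}. The Detection Theorem sends $v_{n+1}\mapsto t\sigma^2v_{n+1}$, and since $v\in\pi_*M$ is a $v_{n+1}^{m_0}$-self-map, its Hurewicz image in $\mathrm{MU}_*M$ equals $v_{n+1}^{m_0}\cdot 1_M$ modulo $(v_{n+2},v_{n+3},\ldots)\mathrm{MU}_*M$. It follows that the image of $v^{p^k}$ in the target hfpss is detected at filtration $2m_0p^k$ by a unit multiple of $t^{m_0p^k}(\sigma^2v_{n+1})^{m_0p^k}\cdot 1_M$. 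Lemma \ref{lemma:high-Adams} provides a source hfpss filtration of at least $2m_0p^k$ for $v^{p^k}$, and matching along the source-to-target comparison forces the detecting class in source filtration $2m_0p^k$ to be a unit multiple of $t^{m_0p^k}z$. Absorbing the unit into the choice of central $v_{n+1}$-element in $\pi_*M$ gives (iii) with $m=m_0p^k$. The hard part will be confirming that the source filtration of $v^{p^k}$ is exactly $2m_0p^k$ (and not strictly higher) and that no residual ambiguity obstructs the identification $t^m z\leftrightarrow v^{p^k}$; both concerns are controlled by the horizontal vanishing line from Proposition \ref{prop:horiz-vanish}, which caps the range of filtrations contributing in each fixed total degree.
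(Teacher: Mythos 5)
Your construction of $z$ and the verification of (i), (ii), (iv) follow the paper's route (lift $\sigma^2 v_{n+1}$ using Proposition \ref{recall:smiths}(iii) and Proposition \ref{prop:descent-fp}, centralize by taking a $p$-power via Lemma \ref{lem:nice-power}), and your overall plan for (iii) — compare the two homotopy fixed point spectral sequences, using the Detection Theorem and Lemma \ref{lem:hfpss-collapses} on the target and Lemma \ref{lemma:high-Adams} on the source — is also the paper's. But the decisive step of (iii) has a genuine gap: the comparison does \emph{not} force the detecting class in source filtration $2m_0p^k$ to be a unit multiple of $t^{m_0p^k}z$. What it forces is that $v^{p^k}$ is detected by $t^{m_0p^k}z'$ for \emph{some} $z'\in\pi_*(M\otimes\mathrm{THH}(\mathrm{BP}\langle n\rangle))$ with $f(z')=(\sigma^2v_{n+1})^{m_0p^k}$, where $f$ is the map to $\pi_*(M\otimes\mathrm{THH}(\mathrm{BP}\langle n\rangle/\mathrm{MU}))$. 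Since $f$ has a nonzero (merely nilpotent) kernel, $(\sigma^2v_{n+1})^{m_0p^k}$ has many lifts, and nothing identifies $z'$ with your pre-chosen power $\tilde z^{m_0p^k}$. The paper closes exactly this gap with the uniqueness clause of Lemma \ref{lem:nice-power}: applied to $z$ and $z'$ it gives $j,j'$ with $z^{p^j}=(z')^{p^{j'}}$, both central, and one then replaces $z$ and $v$ by these further powers. Your argument cannot absorb this adjustment as written, because you fixed the exponent of $z$ once and for all, choosing $k$ only to achieve centrality.

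Relatedly, the tool you invoke to settle your two flagged concerns is the wrong one. Proposition \ref{prop:horiz-vanish} gives a horizontal vanishing line in the \emph{descent} spectral sequence along $\mathrm{THH}(\mathrm{BP}\langle n\rangle)\to\mathrm{THH}(\mathrm{BP}\langle n\rangle/\mathrm{MU})$; the homotopy fixed point spectral sequence for $(-)^{hS^1}$ has no horizontal vanishing line (the classes $t^jx$ populate arbitrarily high filtration in every total degree), so it neither rules out detection of $v^{p^k}$ in filtration strictly above $2m_0p^k$ nor resolves the lift ambiguity. The correct mechanisms, as in the paper, are: the upper bound on the source filtration comes from the fact that the image of $v^{p^k}$ is already nonzero in filtration exactly $2m_0p^k$ of the \emph{collapsing} target spectral sequence (Lemma \ref{lem:hfpss-collapses}), so it cannot lie in higher filtration at the source; and the lift ambiguity is removed only after taking further $p$-th powers via Lemma \ref{lem:take-powers-equal} and Lemma \ref{lem:nice-power}, which is where the nilpotence of $\ker f$ (Proposition \ref{prop:descent-fp} — the actual role of the descent vanishing) enters. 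With these two repairs your argument becomes the paper's proof. (Two smaller points: a $v_{n+1}$-self map controls the $\mathrm{MU}$-Hurewicz image only modulo $(p,v_1,\ldots,v_n)$ as well as $(v_{n+2},\ldots)$ — harmless here since those classes act by zero on $M\otimes\mathrm{THH}(\mathrm{BP}\langle n\rangle/\mathrm{MU})$; and in (iv) the splitting is one of $\mathrm{THH}(\mathrm{BP}\langle n\rangle)$-modules, so multiplication by $z$ need not preserve it, and finite generation should be checked on the associated graded of the finite descent filtration, where $z$ acts as $\sigma^2v_{n+1}$.)
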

\begin{proof} First observe that each of these properties
is preserved after replacing $z$ by any power of itself,
so we may do this at any time in the argument.

By Proposition \ref{prop:descent-fp} and Proposition \ref{recall:smiths}(iii),
we may choose $z \in 
\pi_*(M\otimes \mathrm{THH}(\mathrm{BP}\langle n\rangle))$ which
lifts $\sigma^2v_{n+1}$ and for which 
$\pi_*(M\otimes \mathrm{THH}(\mathrm{BP}\langle n\rangle))$
is a finitely generated $\mathbb{Z}_{(p)}[z]$-module.
By Lemma \ref{lem:nice-power}, after replacing $z$ by a power,
we may assume that $z$ is central as well. So we have
chosen a $z$ which satisfies (i), (ii), and (iv). 

%Finally, by the previous lemma,
%the element $v$ is detected in the homotopy fixed point
%spectral sequence in filtration $\frac{|v|}{p^{n+1}-1}$.
%Setting $m= \frac{|v|}{2p^{n+1}-2}$, it follows that $v$ must
%be detected by an element of the form $t^mz'$ for some
%$z' \in \pi_*(M\otimes \mathrm{THH}(\mathrm{BP}\langle n\rangle))$. 

%The central $v \in \pi_*(M)$ is detected in the homotopy fixed point spectral sequence
%for $\pi_*(M\otimes \mathrm{THH}(\mathrm{BP}\langle n\rangle)^{hS^1})$ by some class $t^m z'$, where
%$z' \in \pi_*(M\otimes \mathrm{THH}(\mathrm{BP}\langle n\rangle))$.

Let
	\[
	f: M\otimes \mathrm{THH}(\mathrm{BP}\langle n\rangle)
	\to
	M\otimes \mathrm{THH}(\mathrm{BP}\langle n\rangle/\mathrm{MU})
	\]
be the canonical map. Let us denote by $\{E_r'\}$ the homotopy fixed point
spectral sequence computing
$\pi_*\mathrm{THH}(\mathrm{BP}\langle n\rangle)^{hS^1}$
and by
$\{E_r^{''}\}$ the homotopy fixed point spectral sequence
computing $\pi_*\mathrm{THH}(\mathrm{BP}\langle n\rangle/\mathrm{MU})^{hS^1}$.
We will denote by $E_r(f): E_r' \to E_r^{''}$ the map induced by $f$.

By Theorem \ref{thm:body-detection}, we know that
$v_{n+1}$ is detected in $E_2^{''}$
by $t(\sigma^2v_{n+1})$. 
Let $v$ denote a central $v_{n+1}$-element in $\pi_*(M)$, projected to $\pi_*(M \otimes \mathrm{THH}(\mathrm{BP}\langle n \rangle)$.
By the
definition of a $v_{n+1}$-element, there is an $m>0$ such that
$f(v)=v^{m}_{n+1}$ modulo the ideal $(p, ..., v_n)$. 
Property (iii) in Proposition \ref{recall:smiths} guarantees that
$M\otimes
\mathrm{THH}(\mathrm{BP}\langle n\rangle/\mathrm{MU})$
has $(p, ..., v_n) = 0$, and hence that $v$ is detected by
$t^m(\sigma^2v_{n+1})^m$ in $E_2^{''}=E_{\infty}^{''}$.

It follows that $v$ cannot be detected in $E_2^{'}$ in filtration higher than $2m$.
By Lemma \ref{lemma:high-Adams}, $v$ must be detected in $E_2^{'}$ by a class in filtration at least $2m$.
Say that $v$ is detected by $t^m z'$, where $z' \in \pi_*(M \otimes \mathrm{THH}(\mathrm{BP}\langle n \rangle)$.

Then, since $E_2(f)(t^mz') = t^m (\sigma^2 v_{n+1})^m$, and $E_2''=E_{\infty}''$, we must have that $f(z')=(\sigma^2v_{n+1})^m$.

%The class $E_2(f)(t^mz')$ in $E_2^{''}$ must detect
%$v$ modulo terms
%of filtration higher than $m$. Since $v$ is actually detected
%in filtration $m$, and $E_2^{''}=E_{\infty}^{''}$,
%we must have $E_2(f)(t^mz')=t^m(\sigma^2v_{n+1})^m$,
%and hence $f(z')=(\sigma^2v_{n+1})^m$.

After replacing $z$ and $v$ by suitable powers, the result now follows
from Lemma \ref{lem:nice-power} applied to the elements $z$ and $z'$.
\end{proof}

\begin{remark} At height one and primes $p\ge 5$ a
version of Lemma \ref{lem:nice-z}(iii)
was obtained
by Ausoni-Rognes in \cite[Proposition 4.8]{ausoni-rognes-redshift}.
\end{remark}

\begin{proof}[Proof of Theorem \ref{thm:canvan-new}]
Fix $v$, $z$, and $m$ as in the previous lemma.
Let
	\[
	X= \{\tau_{\ge j}
(M\otimes
\mathrm{THH}(\mathrm{BP}\langle n\rangle))\}
	\]
denote the filtered spectrum corresponding to
taking connective covers of
$
M\otimes
\mathrm{THH}(\mathrm{BP}\langle n\rangle)$. We can
choose a lift $\tilde{v} \in \pi_*(X^{\ge 2mp^{n+1}})^{hS^1}$ of $v$
and form the cofibers in filtered spectra:
	\[
	Y:= X^{hC_{p^k}}/\tilde{v}, \,\,
	Z:= X^{tC_{p^k}}/\tilde{v}.
	\]
The filtered spectra $Y$ and $Z$ give spectral sequences
converging to
$M/v \otimes 
\mathrm{THH}(\mathrm{BP}\langle n\rangle)^{hC_{p^k}}$
and
$M/v \otimes 
\mathrm{THH}(\mathrm{BP}\langle n\rangle)^{tC_{p^k}}$
respectively, and the canonical map
$Y \to Z$ converges to the canonical map between
these two spectra. Observe that the $E_2$-page of
the spectral sequence
for $Y$
is concentrated
in nonnegative filtration (with our grading conventions).
It will therefore suffice to prove that the $E_2$-page of
the spectral for $Z$ is eventually concentrated
in negative filtration (uniformly in $k$). From the cofiber sequence
(with suspensions and grading shifts omitted)
	\[
	\mathrm{gr}(X)^{tC_{p^k}}
	\stackrel{t^mz}{\to}
	\mathrm{gr}(X)^{tC_{p^k}}
	\to \mathrm{gr}(Z),
	\]
it is enough to show that multiplication by $z$
on $\pi_*(\mathrm{gr}(X)^{tC_{p^k}})$ is eventually
an isomorphism in nonnegative filtration.
By \cite[Lemma IV.4.12]{nikolaus-scholze},
we have $\mathrm{gr}(X)^{tC_{p^k}}=
\mathrm{gr}(X)^{tS^1}/p^k$. So it suffices to prove that
multiplication by $z$ is eventually an isomorphism
in nonnegative filtration
for the group
	\[
	\pi_*(M\otimes \mathrm{THH}(\mathrm{BP}\langle n\rangle))
	[t^{\pm 1}].
	\]
But, more generally, if $L$ is any finitely generated
$\mathbb{Z}_{(p)}[z]$-module, then the analogous
claim is true for $L[t^{\pm 1}]$.
\end{proof}

\appendix

\section{Suspension Maps}\label{sec:suspension}

Suppose $R$ is an augmented (discrete) algebra over
a field $k$ with augmentation ideal $I$. Then there is a
homomorphism of abelian groups
	\[
	\sigma: I \to \mathrm{Tor}_1^R(k,k)
	\]
where $\sigma x$ is represented by the class $[x]$ in the bar complex.
At various points in the paper we use a generalization of
this construction to the spectrum level. 
Specifically, it is used in \S\ref{sec:mult} to
provide canonical lifts of elements in K\"unneth
spectral sequences and, more crucially, in \S\ref{sec:detection}
in order to prove the Detection
Theorem (Theorem \ref{thm:body-detection}). We make no claim
of originality for the material in this appendix, though we were not able
to find the Detection Lemma (Lemma \ref{lem:detection}) in the
literature. 

\begin{convention} Throughout this section $\mathcal{C}$ will denote
a stable, presentably symmetric monoidal category
with unit object $\mathbf{1}$. 
\end{convention}

\subsection{Construction of suspension maps}

For the purposes of functoriality, it is convenient to construct
our suspension maps in the setting of factorization homology.
Let $\mathsf{Mfld}^{\mathrm{fr}}_n$ denote the category
of framed $n$-manifolds as constructed in \cite{ayala-francis},
equipped with its symmetric monoidal structure under disjoint
unions. Let $\mathsf{Disk}^{\mathrm{fr}}_n$ be the full
subcategory spanned by $n$-manifolds equivalent
to disjoint unions of copies of $\mathbb{R}^n$. This category
is equivalent to the symmetric monoidal envelope of
the $\mathbb{E}_n$-operad. Factorization homology is then
given by a functor

	\[
	\int: \mathsf{Alg}_{\mathbb{E}_n}(\mathcal{C})
	\simeq \mathsf{Fun}^{\otimes}(\mathsf{Disk}^{\mathrm{fr}}_n,
	\mathcal{C}) \longrightarrow
	\mathsf{Fun}^{\otimes}(\mathsf{Mfld}^{\mathrm{fr}}_n,
	\mathcal{C})
	\]
which is left adjoint to restriction. Here $\mathsf{Fun}^{\otimes}(-,-)$
denotes the category of symmetric monoidal functors.

\begin{construction}[Unreduced suspension]
Since factorization homology is functorial on 
$\mathsf{Mfld}_n^{\mathrm{fr}}$ we always have an
(unpointed) map of spaces:
	\[
	\mathrm{Map}_{\mathsf{Mfld}_n^{\mathrm{fr}}}
	(N,M)
	\to
	\mathrm{Map}_{\mathcal{C}}\left(\int_N A, \int_MA\right)
	\]
If we set $N=\mathbb{R}^n$, then $\int_NA=A$ the above
is adjoint to a map
	\[
	s^M: \mathrm{Map}_{\mathsf{Mfld}_n^{\mathrm{fr}}}
	(\mathbb{R}^n,M)_+ \otimes A 
	= M_+\otimes A \to \int_MA
	\]
which is functorial in $M$ and $A$. Here we have used $X_+\otimes(-)$
to denote the tensoring of $\mathcal{C}$ over the category of
unpointed spaces. 

We observe that, when $A=\mathbf{1}$, this map is canonically
identified with the collapse
	\[
	M_+ \otimes \mathbf{1} \to \mathbf{1} \simeq \int_M \mathbf{1}.
	\]
\end{construction}

\begin{construction}[Suspension] Let $M$ be a framed
$n$-manifold equipped with a basepoint. From the previous
construction, we have a functorial diagram:
	\[
	\xymatrix{
	M_+ \otimes \mathbf{1} \ar[r]^-{s^M} \ar[d] & \mathbf{1} \ar[d]\\
	M_+ \otimes A \ar[r]_{s^M} & \int_MA
	}
	\]
The choice of basepoint provides a splitting
of the top map and hence a commutative square 
(functorial in $A$ and basepoint preserving maps
in $M$):
	\[
	\xymatrix{
	M \otimes \mathbf{1} \ar[r]\ar[d] & 0\ar[d]\\
	M\otimes A \ar[r] & \int_MA
	}
	\]
Thus we get a map from the pushout of the diagram
with the lower right vertex deleted:
	\[
	\sigma^M: M \otimes (A/\mathbf{1}) \to \int_MA
	\]
where $A/\mathbf{1}$ denotes the cofiber of the unit map
for $A$.
\end{construction}

\begin{remark} If one instead used the transposed diagram
	\[
	\xymatrix{
	M\otimes \mathbf{1} \ar[r] \ar[d] & M \otimes A\ar[d]\\
	0 \ar[r] & \int_MA
	}
	\]
this would alter the definition of $\sigma^M$ by $-1$.
Since neither choice seems canonical, and we will often
find ourselves rotating distinguishing triangles in the arguments
below, we will mostly make and prove statements about
$\sigma^M$ only up to a factor of $\pm 1$. 
For convenience and for the purposes of this paper,
we will, in \S\ref{ssec:detect}, fix a choice so as to make
a certain equation true on the nose rather than up to
a factor of $\pm 1$.
\end{remark}

\subsection{Examples of suspension maps}

\begin{example}[Dimension 0]
If $A$ is an $\mathbb{E}_0$-algebra, then we denote
$\sigma^{S^0}$ by $\sigma$, which is a map
	\[
	\sigma: (A/\mathbf{1}) \to \int_{S^0}A = A \otimes A.
	\]
From the construction, $\sigma$ comes
as the induced map from the square
	\[
	\xymatrix{
	\mathbf{1} \ar[rr]\ar[d] && 0 \ar[d]\\
	A \ar[rr]_-{\mathrm{id} \otimes 1 - 1 \otimes \mathrm{id}} &&
	A \otimes A
	}
	\]
Equivalently, we can describe this map (up to sign)
as arising from the large square in the
diagram
	\[
	\xymatrix{
	\Sigma^{-1}(A/\mathbf{1})\ar[dd]\ar[rr] \ar[dr]&& 0\ar[d]\\
	&\mathbf{1} \ar[r]\ar[d] & A \ar[d]^-{1\otimes \mathrm{id}}\\
	0 \ar[r] &A \ar[r]_-{\mathrm{id} \otimes 1} & A \otimes A
	}
	\]
\end{example}

\begin{variant}\label{var:suspension} 
Recall that the category $\mathsf{Alg}_{\mathbb{E}_1}(\mathcal{C})$
carries an action of $C_2$ given informally by sending an
$\mathbb{E}_1$-algebra $B$ to the algebra $B^{\mathrm{op}}$
equipped with the opposite multiplication. 

Let $R \in \mathsf{Alg}_{\mathbb{E}_1}(\mathcal{C})^{hC_2}$
be an object in the fixed points so that, in particular, $R$ comes
equipped with an equivalence $\tau: R \simeq R^{\mathrm{op}}$.
This induces an equivalence
\[(-)^{\tau}: \mathsf{LMod}_R(\mathcal{C})
 \to \mathsf{RMod}_R(\mathcal{C})\]
which is the identity on underlying objects.
Now let $k$ be a left $R$-module equipped with a map
$\mathbf{1} \to k$ in $\mathcal{C}$. We can
extend this to a left $R$-module map $1_k: R \to k$
and to a right $R$-module map $1^{\tau}_k: R \to k^{\tau}$.
Since $\tau$ is an equivalence, there is a canonical identification
between the fibers
	\[
	\mathrm{fib}(1_k) \simeq \mathrm{fib}(1_k^{\tau})
	\]
and we denote either by $I$.

This is enough to make sense of the following
diagram in $\mathcal{C}$:
	\[
	\xymatrix{
	I\ar[dd]\ar[rr] \ar[dr]&& 0\ar[d]\\
	&R \ar[r]^{1_k}\ar[d]_{1_k^{\tau}}
	& k \ar[d]^-{1_k^{\tau}\otimes \mathrm{id}}\\
	0 \ar[r] &k \ar[r]_-{\mathrm{id} \otimes 1_k} & k^{\tau} \otimes_R k
	}
	\]
Thus
we may extend the definition of $\sigma$ in this case to:
	\[
	\sigma: \Sigma I \to k^{\tau}\otimes_Rk.
	\]
\end{variant}

\begin{lemma}[Compatibility with K\"unneth spectral sequence]
\label{lem:compatible-with-kunneth}
Take $\mathcal{C} = \mathsf{Sp}$ and adopt
notation as in Variant \ref{var:suspension}. Let
$i: I \to R$ denote the fiber of $R\to k$. Suppose that the map
	\[
	1_k: \pi_*R \to \pi_*k
	\]
is surjective.
Then, for any $x \in \pi_*I$,
$\sigma(x) \in \pi_{*+1}(k^{\tau}\otimes_Rk)$ is detected in the
K\"unneth spectral sequence in filtration $1$ by the 
class 
	\[
	[1 \otimes i(x)\otimes 1] \in \mathrm{Tor}_1^{\pi_*R}(\pi_*k,
	\pi_*k),
	\]
up to sign.
\end{lemma}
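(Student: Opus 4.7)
The plan is to identify the Künneth spectral sequence with the skeletal filtration on the two-sided bar construction $B_\bullet = B_\bullet(k^\tau, R, k)$, whose realization computes $k^\tau \otimes_R k$, and then to construct an explicit filtration-one lift of $\sigma(x)$. Let $F_p = |B_{\le p}|$, so that $F_0 = k^\tau \otimes k$ and the cofiber $F_1/F_0 \simeq \Sigma(B_1 / s_0 B_0)$ has attaching map $d_0 - d_1 \colon B_1/s_0 B_0 \to B_0$ (up to sign). First I would consider the map $\psi \colon R \to B_1$, $r \mapsto 1_{k^\tau} \otimes r \otimes 1_k$, and set $\phi$ to be the composite $I \xrightarrow{i} R \xrightarrow{\psi} B_1 \twoheadrightarrow B_1/s_0 B_0$. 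Both $d_0 \circ \phi$ and $d_1 \circ \phi$ are null, since $1_k \circ i = 0 = 1_k^\tau \circ i$ by definition of $I$. Consequently $(d_0 - d_1) \circ \phi$ is null, and the cofiber sequence $B_1/s_0 B_0 \to B_0 \to F_1$ yields a canonical lift $\tilde\phi \colon \Sigma I \to F_1$.

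To show the composition $\Sigma I \xrightarrow{\tilde\phi} F_1 \hookrightarrow k^\tau \otimes_R k$ agrees with $\sigma$ up to sign, observe that the realization map $B_0 \to k^\tau \otimes_R k$ coequalizes $d_0$ and $d_1$; after precomposition with $\psi$, this coequalization recovers exactly the commutative square
\[
\begin{tikzcd}
R \arrow[r, "1_k"] \arrow[d, "1_k^\tau"'] & k \arrow[d, "1_k^\tau \otimes \mathrm{id}"] \\
k \arrow[r, "\mathrm{id} \otimes 1_k"'] & k^\tau \otimes_R k
\end{tikzcd}
\]
used to define $\sigma$. The two null-homotopies of $d_0 \circ \phi$ and $d_1 \circ \phi$ arising from $I \hookrightarrow R$, combined with this coequalization homotopy, produce by construction the same connecting map $\Sigma I \to k^\tau \otimes_R k$ as the defining diagram of $\sigma$.

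Finally, the image of $\tilde\phi$ in $\pi_*(F_1/F_0) = \pi_{*-1}(B_1/s_0 B_0)$ is, by construction, the suspension of $\phi$, which sends $x \mapsto [1_{k^\tau} \otimes i(x) \otimes 1_k]$. Under the surjectivity hypothesis, the fiber sequence $I \to R \to k$ yields a short exact sequence $0 \to \pi_* I \to \pi_* R \to \pi_* k \to 0$, and in the bar complex computing $\mathrm{Tor}_*^{\pi_* R}(\pi_* k, \pi_* k)$ the element $1 \otimes i(x) \otimes 1$ is a cycle because $i(x)$ kills both $1_k$ and $1_{k^\tau}$; its class in $\mathrm{Tor}_1^{\pi_* R}(\pi_* k, \pi_* k)$ is the desired detecting element. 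The main obstacle will be making the comparison in step two fully rigorous: although both $\sigma$ and $\tilde\phi$ are morally the unique connecting map extracted from the above square together with the fiber $I$, the identification requires a careful comparison of the connecting map in $\mathcal{C}$ arising from the square with the connecting map of the bar-filtration cofiber sequence $B_1/s_0 B_0 \to B_0 \to F_1$, which is where the implicit sign convention is introduced.
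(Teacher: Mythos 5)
There is a genuine gap, and it sits exactly where you flag it. Your whole argument rests on two identifications that you assert rather than prove. First, the comparison in your second step — that the lift $\tilde\phi\colon \Sigma I \to F_1$ obtained from your chosen nullhomotopies of $d_0\circ\phi$ and $d_1\circ\phi$ composes to the same map $\Sigma I \to k^{\tau}\otimes_R k$ as $\sigma$ — is the entire content of the lemma. The map $\sigma$ is defined by a specific commutative square (equivalently a specific choice of nullhomotopies coming from the fiber sequence $I \to R \to k$ and its $\tau$-twisted counterpart), and "morally the same connecting map" is not an argument: different choices of nullhomotopy of $(d_0-d_1)\circ\phi$ differ by elements of $\pi_{*+1}$ of the target and need not agree even up to sign. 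You acknowledge this is "the main obstacle" but do not resolve it, so the proof is incomplete at its crucial point. Second, your identification of the K\"unneth spectral sequence with the skeletal filtration of the two-sided bar construction is not available at the stated level of generality: the lemma only assumes $1_k\colon \pi_*R \to \pi_*k$ is surjective, and for a general $R$-module $k$ over the sphere the $E_1$-page of the bar filtration is $\pi_*(k^{\tau}\otimes R^{\otimes\bullet}\otimes k)$, whose homology is not $\mathrm{Tor}^{\pi_*R}_*(\pi_*k,\pi_*k)$ without flatness/K\"unneth isomorphisms. The K\"unneth spectral sequence in the lemma is the one built by lifting a free resolution of $\pi_*k$ to a filtration by $R$-modules, so "detected by $[1\otimes i(x)\otimes 1]$ in the bar filtration" does not directly translate into the asserted statement.

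For comparison, the paper's proof sidesteps both issues. It never compares $\sigma$ with a bar-filtration lift; instead it composes further along $k^{\tau}\otimes_R k \to k^{\tau}\otimes_R \Sigma I$ and shows by a short diagram argument (rotating the defining square for $\sigma$) that the composite is, up to sign, simply $1^{\tau}_k\otimes\mathrm{id}$. The surjectivity hypothesis gives a short exact sequence $0 \to \pi_*I \to \pi_*R \to \pi_*k \to 0$, so one can choose a free resolution $C_*$ of $\pi_*k$ with $C_0 = \pi_*R$ whose truncation resolves $\pi_*I$; naturality of the resolution-lifted K\"unneth spectral sequence then identifies the induced map on $E_2$ with the boundary isomorphism $\mathrm{Tor}_1^{\pi_*R}(\pi_*k,\pi_*k) \xrightarrow{\ \cong\ } \mathrm{Tor}_0^{\pi_*R}(\pi_*k,\pi_*I)$, and the detection statement is transported back from the evident filtration-zero statement for $k^{\tau}\otimes_R\Sigma I$. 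If you want to salvage your route, you would need both a careful homotopy-coherent identification of the two connecting maps and an argument (or added hypotheses) identifying the bar-filtration spectral sequence with the K\"unneth spectral sequence; the paper's detour through $k^{\tau}\otimes_R\Sigma I$ exists precisely to avoid doing either.
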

\begin{proof} First we claim that the composite
	\[
	\Sigma I \stackrel{\sigma}{\longrightarrow}
	k^{\tau}\otimes_Rk \to k^{\tau} \otimes_R \Sigma I
	\]
is homotopic, up to sign, to the map
$1^{\tau}_k \otimes \mathrm{id}$. Indeed, consider
the following
diagram
	\[
	\xymatrix{
	&I \ar[r] \ar[ddl] \ar[d]& 0 \ar[d]\ar[ddr]&\\
	&R \ar[r] \ar[d]& k \ar[d]&\\
	0\ar[r]&k \ar[r]\ar[d]& k\amalg_Rk \ar[r] \ar[d]
	& k^{\tau}\otimes_Rk\ar[d]\\
	&0 \ar[r] & \Sigma I \ar[r] & k^{\tau} \otimes_R \Sigma I
	}
	\]
The vertices of the large trapezoid form the commutative square
	\[
	\xymatrix{
	I \ar[r]\ar[d] & 0\ar[d]\\
	0 \ar[r] & k^{\tau}\otimes_Rk
	}
	\]
used to define $\sigma$, and hence
the induced map on the pushout of
$(0 \leftarrow I \rightarrow 0)$ gives the desired factorization.

It follows from naturality of the K\"unneth spectral sequence
that the map $R\otimes_R I = 
\Sigma I \to k^{\tau}\otimes_R I$ induces, on $E^2$-terms,
the projection map
	\[
	\pi_*I \to \mathrm{Tor}_0^{\pi_*R}(\pi_*k, \pi_*I) = \pi_*I/(\pi_*I)^2.
	\]
Finally, recall the construction of the K\"unneth spectral
sequence for $M\otimes_RN$
proceeds by lifting a (graded) free $\pi_*R$-resolution
of $N$ to a filtration by left $R$-modules \cite{tilson},
and that this construction
is natural in the resolution. Since
	\[
	0 \to \pi_*I \to \pi_*R \to \pi_*k \to 0
	\]
is exact,
we may choose a resolution
$C_*$ of $\pi_*k$ that begins with $C_0 = R$ and with the
property that
	\[
	\pi_*I \leftarrow C_1 \leftarrow C_2 \leftarrow \cdots
	\]
is a resolution of $\pi_*I$. Considering $\pi_*R$ as a complex
concentrated in degree zero, the quotient map
$C_* \to (C_*/R)$ can then be lifted to a map of filtered objects
and then we may apply $k^{\tau}\otimes_R(-)$ to this map.
This gives a map of spectral sequences which, on the $E^2$-page,
gives the boundary map
$\partial: \mathrm{Tor}_1^{\pi_*R}(\pi_*k, \pi_*k) \to
\Sigma \mathrm{Tor}^{\pi_*R}_0(\pi_*k, \pi_*I)$, which is
an isomorphism. The result follows.
\end{proof}

\begin{example}[Dimension 1] \label{exm:double-suspension}
The circle acts on itself by framed maps, where we use the Lie group
framing, and hence the map to Hochschild homology
	\[
	s^{S^1}: S^1_+ \otimes A \to \int_{S^1}A = \mathrm{HH}(A)
	\]
is circle equivariant. Since the source of $s^{S^1}$ is induced,
the map must be induced from its restriction along the identity,
i.e. $s^{S^1}$ is adjoint to the nonequivariant map
$A \to \mathrm{HH}(A)$ corresponding to the identity element
in $S^1$. We abbreviate the reduced suspension map by
$\sigma^2: \Sigma (A/\mathbf{1}) \to \mathrm{HH}(A)$. 
\end{example}

\subsection{Relationship with the cotangent complex}

Let $A$ be an $\mathbb{E}_n$-algebra in $\mathcal{C}$.
We will abbreviate by $\mathbf{L}_A^{(n)}$ the
$\mathbb{E}_n$-algebra cotangent complex of $A$,
which is an $\mathbb{E}_n$-$A$-module. 
Recall \cite[Theorem 7.3.5.1]{ha} that we have a functorial
cofiber sequence of $\mathbb{E}_n$-$A$-modules:
	\[
	\mathcal{U}^{(n)}(A) \to A \to \Sigma^n\mathbf{L}^{(n)}_A.
	\]

\begin{example} When $n=0$, this is the cofiber sequence
	\[
	\mathbf{1} \to A \to (A/\mathbf{1})
	\]
\end{example}

If $k\le n$ we may apply $A\otimes_{\mathcal{U}^{(k-1)}}(-)$ to
the cofiber sequence computing the $\mathbb{E}_{k-1}$-cotangent
complex and get a cofiber sequence in $\mathcal{C}$:
	\[
	A \to A \otimes_{\mathcal{U}^{(k-1)}(A)} A
	\simeq \mathcal{U}^{(k)}(A) \to 
	A \otimes_{\mathcal{U}^{(k-1)}(A)} \Sigma^{k-1}\mathbf{L}_A^{(k-1)}
	\]
This gives a functorial splitting of objects in $\mathcal{C}$:
	\[
	A \oplus \Sigma^{k-1}\mathbf{L}^{(k)}_A \simeq_{\mathcal{C}} 
	\mathcal{U}^{(k)}(A)
	\]
and an identification
	\[
	\mathbf{L}^{(k)}_A \simeq_{\mathcal{C}} 
	A \otimes_{\mathcal{U}^{(k-1)}(A)}\mathbf{L}_A^{(k-1)}.
	\]
(Here we have places a subscript on the equivalence to emphasize
that this equivalence is not one of $\mathbb{E}_k$-$A$-modules).

\begin{lemma}\label{lem:suspn-and-cotangent} For $k\le n$,
the following diagram commutes (up to sign):
	\[
	\xymatrix{
	\Sigma^{k-1}A/\mathbf{1}\ar[d]_{\sigma^k} \ar@{=}[r] & \Sigma^{k-1}
	\mathbf{L}_A^{(0)} \ar[d]\\
	\mathcal{U}^{(k)}(A) \ar[r] & \Sigma^{k-1}\mathbf{L}^{(k)}_A
	}
	\]
\end{lemma}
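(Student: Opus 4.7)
The plan is to induct on $k\ge 1$, with the base case handled by a direct computation from definitions and the inductive step reducing commutativity to a factorization-homology decomposition combined with the naturality of the construction in Variant \ref{var:suspension}.

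For the base case $k=1$, I would unpack the explicit formula for $\sigma=\sigma^1$ given in the example following Variant \ref{var:suspension}: as an object of $\mathcal{C}$, $\mathcal{U}^{(1)}(A)=A\otimes A$, and $\sigma^1$ is (up to sign) the map $A/\mathbf{1}\to A\otimes A$ induced by $\mathrm{id}\otimes 1-1\otimes\mathrm{id}$. The bottom horizontal in the $k=1$ diagram, obtained by applying $A\otimes_{\mathbf{1}}(-)$ to the $\mathbb{E}_0$-cotangent cofiber sequence $\mathbf{1}\to A\to\mathbf{L}^{(0)}_A$, is the projection $A\otimes A\to A\otimes(A/\mathbf{1})=\mathbf{L}^{(1)}_A$. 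Composing with $\sigma^1$ kills the $\mathrm{id}\otimes 1$ summand and sends $-1\otimes\mathrm{id}$ to the unit map $A/\mathbf{1}\to A\otimes(A/\mathbf{1})$, which is precisely the right vertical; this yields the desired commutativity up to sign.

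For the inductive step, I would use the equivalence $\mathcal{U}^{(k)}(A)\simeq A\otimes_{\mathcal{U}^{(k-1)}(A)}A^{\mathrm{op}}$ from the earlier proposition and apply the formalism of Variant \ref{var:suspension} with $R=\mathcal{U}^{(k-1)}(A)$ and augmented module given by $A$ (via the canonical map $\mathcal{U}^{(k-1)}(A)\to A$ from the $\mathbb{E}_{k-1}$-cotangent cofiber sequence). The fiber $I=\mathrm{fib}(\mathcal{U}^{(k-1)}(A)\to A)$ is identified with $\Sigma^{k-2}\mathbf{L}^{(k-1)}_A$, and the construction produces a canonical map
\[
\sigma_{\mathrm{rel}}:\Sigma^{k-1}\mathbf{L}^{(k-1)}_A=\Sigma I\longrightarrow A\otimes_R A^{\mathrm{op}}=\mathcal{U}^{(k)}(A).
\]
The core computational step, and the main obstacle, is to identify $\sigma^k$ with the composite
\[
\Sigma^{k-1}(A/\mathbf{1})\xrightarrow{\ \Sigma\mathrm{ind}_{k-1}\ }\Sigma^{k-1}\mathbf{L}^{(k-1)}_A\xrightarrow{\ \sigma_{\mathrm{rel}}\ }\mathcal{U}^{(k)}(A),
\]
where $\mathrm{ind}_{k-1}$ denotes the right vertical map from the diagram at stage $k-1$. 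I would verify this factorization by appealing to the pushforward-excision decomposition of factorization homology applied to the radial identification $\mathbb{R}^k-\{0\}\simeq S^{k-1}\times\mathbb{R}$, which realizes $\int_{\mathbb{R}^k-\{0\}}A\simeq\mathcal{U}^{(k)}(A)$ as a relative tensor product compatible with the construction of $\sigma^M$ in that it exhibits the unreduced suspension $s^{\mathbb{R}^k-\{0\}}$ as built from its $(k-1)$-dimensional analogue. Granting this factorization, the commutativity at stage $k$ follows by combining the inductive hypothesis with the naturality of the variant-$\sigma$ construction applied to the canonical morphism of augmented $\mathcal{U}^{(k-1)}(A)$-modules $A\to A/\mathbf{1}=\mathbf{L}^{(0)}_A$: this naturality shows that $\sigma_{\mathrm{rel}}$ postcomposed with the cofiber projection $\mathcal{U}^{(k)}(A)\to\Sigma^{k-1}\mathbf{L}^{(k)}_A$ coincides with the base-change map $\Sigma^{k-1}\mathbf{L}^{(k-1)}_A\to\Sigma^{k-1}\mathbf{L}^{(k)}_A$ induced by $\mathcal{U}^{(k-1)}(A)\to A$, and the inductive hypothesis then supplies the final identification with the right vertical at stage $k$.
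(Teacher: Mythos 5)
Your proposal is essentially the paper's own argument: induct on $k$, use the functoriality/excision structure of factorization homology for $\mathbb{R}^k-\{0\}$ to factor the top suspension through the relative suspension $\sigma_{\mathrm{rel}}$ of Variant \ref{var:suspension} applied to $\mathcal{U}^{(k-1)}(A)\to A$, and conclude by identifying $\sigma_{\mathrm{rel}}$ followed by the cofiber projection with the base-change map $1\otimes\mathrm{id}$, which is exactly the computation at the start of the proof of Lemma \ref{lem:compatible-with-kunneth}, combined with the inductive hypothesis. The only slip is cosmetic: in the last step the naturality should be applied to the $\mathcal{U}^{(k-1)}(A)$-module projection $A\to\mathrm{cofib}\bigl(\mathcal{U}^{(k-1)}(A)\to A\bigr)\simeq\Sigma^{k-1}\mathbf{L}^{(k-1)}_A$ rather than to $A\to A/\mathbf{1}$, but this does not affect the structure or validity of the argument.
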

\begin{proof} We prove this by induction on $k$, the base case
being trivial. For the inductive step, observe that, by functoriality of
of $\sigma^M$ in $M$, we have a diagram
	\[
	\xymatrix{
	0 \ar@{=}[r]\ar[d] & (\mathbb{R}^k) \otimes (A/\mathbf{1})\ar[d] &
	\Sigma^{k-1}(A/\mathbf{1}) \ar[r] \ar[l]\ar[d]_{\sigma^k}
	&  (\mathbb{R}^k) \otimes (A/\mathbf{1}) \ar@{=}[r] \ar[d]& 0\ar[d]\\
	A \ar@{=}[r]& \int_{\mathbb{R}^k}A & \int_{\mathbb{R}^k-\{0\}}A 
	\ar[l]\ar[r]&
	\int_{\mathbb{R}^k}A \ar@{=}[r]& A
	}
	\]
The induced map on the pushout is, on the one hand,
given by $\sigma^{k+1}$ and, on the other hand, by the
inductive hypothesis, given by
the composite
	\[
	\Sigma^{k}(A/\mathbf{1}) \stackrel{\sigma^k}{\to}
	\Sigma \mathcal{U}^{(k)}(A) \to \Sigma^{k}\mathbf{L}_A^{(k)}
	\stackrel{\sigma}{\to} \mathcal{U}^{k+1}(A)
	\]
where $\sigma$ is constructed
as in Variant \ref{var:suspension}\footnote{Notice that, in this inductive step, $k<n$, so
$\mathcal{U}^{(k)}(A)$ is at least an $\mathbb{E}_2$-algebra,
and hence the involution $\tau$ is trivial.}
On the other hand, as explained in the beginning of the proof of
Lemma \ref{lem:compatible-with-kunneth}, the composite
	\[
	\Sigma^k\mathbf{L}_A^{(k)} \stackrel{\sigma}{\to}
	\mathcal{U}^{(k+1)}(A) \to \Sigma^k\mathbf{L}_A^{(k+1)}
	\simeq A\otimes_{\mathcal{U}^{(k)}(A)}\Sigma^k\mathbf{L}_A^{(k)}
	\]
is given by the map $1\otimes \mathrm{id}$. This completes
the proof.
\end{proof}

\subsection{Undoing suspension in Hochschild homology}\label{ssec:detect}

If $X\in \mathsf{Fun}(\mathrm{B}S^1, \mathcal{C})$ is an object
equipped with an $S^1$-action, then we may
compute the limit over $\mathbb{C}P^1 \subseteq \mathrm{B}S^1$
using the fiber sequence
	\[
	\lim_{\mathbb{C}P^1} X \to X \to \Sigma^{-1}X,
	\]
where $X \to \Sigma^{-1}X$ is adjoint to the (reduced) action map
$\Sigma X \to S^1_+ \otimes X \to X$. We denote the connecting
homomorphism by
	\[
	t: \Sigma^{-2}X \to \lim_{\mathbb{C}P^1}X.
	\]
	
Our goal in this section is to prove the following lemma, which
allows us to use the circle action on Hochschild homology to
`undo' the suspension.

\begin{lemma}[Detection Lemma] \label{lem:detection}There is a functorial diagram\footnote{Recall that the definition of the suspension
map requires a choice, and that altering this choice
multiplies the map by $(-1)$. For the purposes
of this paper, we will fix this choice so 
that the diagram in this lemma commutes.}:
	\[
	\xymatrix{
	\Sigma^{-1}(A/\mathbf{1}) \ar[r]\ar[d]_-{\sigma^2} & \mathbf{1} \ar[d]\\
	\Sigma^{-2}\mathrm{HH}(A) \ar[r]_-{t} & 
	\lim_{\mathbb{C}P^1}\mathrm{HH}(A)
	}
	\]
Here, the map $\mathbf{1} \to \lim_{\mathbb{C}P^1}\mathrm{HH}(A)$
arises from the $S^1$-equivariant map $\mathbf{1} \to \mathrm{HH}(A)$
where $\mathbf{1}$ has the trivial action.
\end{lemma}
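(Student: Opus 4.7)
The plan is to exploit the $S^1$-equivariance of $\sigma^2$ together with the defining fiber sequence for $\lim_{\mathbb{C}P^1}$.

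First, I would verify that $\sigma^2 \colon \Sigma(A/\mathbf{1}) \to \mathrm{HH}(A)$ is $S^1$-equivariant, where the source $\Sigma(A/\mathbf{1}) = S^1 \otimes (A/\mathbf{1})$ carries the $S^1$-action by rotation on the first factor (trivial on $A/\mathbf{1}$), and the target carries the standard cyclic action. This follows directly from the construction: $\sigma^{S^1}$ is the map induced on pushouts of a diagram of $S^1$-equivariant maps, with equivariance of $s^{S^1} \colon S^1_+ \otimes A \to \mathrm{HH}(A)$ coming from the functoriality of $s^M$ under framed self-diffeomorphisms of $M = S^1$. Likewise, the unit $u \colon \mathbf{1} \to \mathrm{HH}(A)$ is $S^1$-equivariant with $\mathbf{1}$ carrying the trivial action, since it factors through the inclusion of $0$-simplices in the cyclic bar construction.

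Next, I would interpret the square using the fiber sequence
\[
\Sigma^{-2}\mathrm{HH}(A) \xrightarrow{t} \lim_{\mathbb{C}P^1}\mathrm{HH}(A) \xrightarrow{\pi} \mathrm{HH}(A) \xrightarrow{B} \Sigma^{-1}\mathrm{HH}(A),
\]
where $B$ is the reduced $S^1$-action. A map $X \to \lim_{\mathbb{C}P^1}\mathrm{HH}(A)$ is equivalent to a pair $(f, h)$ where $f \colon X \to \mathrm{HH}(A)$ and $h$ is a null-homotopy of $B \circ f$. For the top-right composite $\tilde{u} \circ \partial$, the underlying map is $u \circ \partial$, which is null since $\partial$ factors as $\Sigma^{-1}(A/\mathbf{1}) \to \mathbf{1} \to A$ where the second stage is the cofiber inclusion (so the composite $\Sigma^{-1}(A/\mathbf{1}) \to A$ is null). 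For the bottom-left composite $t \circ \Sigma^{-2}\sigma^2$, the underlying map $\pi \circ t \circ \Sigma^{-2}\sigma^2$ is zero, since $\pi \circ t = 0$ tautologically. So both composites have the same underlying map to $\mathrm{HH}(A)$, namely the zero map.

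The key step is then to match the two resulting lifts to $\Sigma^{-2}\mathrm{HH}(A)$. For the bottom-left composite, the lift is $\Sigma^{-2}\sigma^2$ itself, by construction of $t$. For the top-right composite, the lift is assembled from the $S^1$-equivariant null-homotopy $B \circ u \simeq 0$ (arising from the triviality of the action on $\mathbf{1}$) precomposed with $\partial$, together with the factorization of $\partial$ through $\mathbf{1} \to A$. I would verify that these two lifts agree by feeding the defining pushout square of $\sigma^2$ through the functor $\lim_{\mathbb{C}P^1}(-)$ and chasing the resulting diagram of fiber sequences, using that the $S^1$-equivariance of $\sigma^2$ means exactly that the correction term measured by $\sigma^2$ computes the obstruction to equivariantly lifting the unit through the cofiber sequence $\mathbf{1} \to A \to A/\mathbf{1}$.

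The main obstacle will be this last matching of null-homotopies, including the sign. I expect this is what the paper's footnote alludes to in fixing a sign convention. I would handle it either by an explicit model-categorical calculation with the cyclic bar complex and its $S^1$-action, or by reducing to a universal example such as $A = \mathrm{Free}^{\mathbb{E}_1}(X)$ on a generic spectrum $X$, where both sides admit transparent descriptions and the agreement can be pinned down directly.
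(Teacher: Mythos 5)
Your reduction to the fiber sequence $\Sigma^{-2}\mathrm{HH}(A) \xrightarrow{t} \lim_{\mathbb{C}P^1}\mathrm{HH}(A) \to \mathrm{HH}(A) \xrightarrow{B} \Sigma^{-1}\mathrm{HH}(A)$ is the right setting, and you correctly note that both composites in the square become null after projection to $\mathrm{HH}(A)$. But that observation carries almost no weight: commutativity of the square is not a property of underlying maps, it is the datum of a homotopy, and the whole content of the lemma is the identification of the two null-homotopies (equivalently, the vanishing of the difference of the two lifts through $\Sigma^{-2}\mathrm{HH}(A)$). This is precisely the step you defer (``I would verify that these two lifts agree\dots''), and neither fallback you offer closes it. The appeal to $S^1$-equivariance of $\sigma^2$ is not well-posed as stated: $\Sigma(A/\mathbf{1})$ does not carry a rotation action, because the basepoint splitting of $S^1_+\otimes\mathbf{1}\to\mathbf{1}$ used to pass from $s^{S^1}$ to the reduced map $\sigma^{S^1}$ is not equivariant; what is equivariant is the unreduced, induced map $s^{S^1}\colon S^1_+\otimes A\to\mathrm{HH}(A)$ together with the canonical trivialization of the action on the unit. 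And the proposed reduction to $A=\mathrm{Free}^{\mathbb{E}_1}(X)$ does not obviously succeed either: since commutativity is data, one cannot verify it objectwise on free algebras and then descend along the bar resolution without already having the homotopy naturally in $A$ --- which is exactly what is missing.

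The paper's proof avoids the comparison altogether: starting from the single coherent diagram $\mathbf{1}\to A\to\mathrm{HH}(A)\to\Sigma^{-1}\mathrm{HH}(A)$ together with $\mathbf{1}\to 0$, one first left Kan extends (this produces the map $A/\mathbf{1}\to\Sigma^{-1}\mathrm{HH}(A)$, which is adjoint to $\sigma^2$ because $\sigma^2$ was defined as a map out of exactly this pushout) and then right Kan extends (this produces $\mathbf{1}\to\lim_{\mathbb{C}P^1}\mathrm{HH}(A)$ from the canonical trivialization of the circle action on $\mathrm{HH}(\mathbf{1})=\mathbf{1}$). The output is a map of cofiber sequences from $\mathbf{1}\to A\to A/\mathbf{1}$ to $\lim_{\mathbb{C}P^1}\mathrm{HH}(A)\to\mathrm{HH}(A)\to\Sigma^{-1}\mathrm{HH}(A)$, and the square of the lemma is its rotation; the two null-homotopies you would have to match are, in that construction, the same cell of one diagram, so no separate matching argument is needed beyond the sign convention fixed in the footnote. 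To repair your outline you should similarly build both maps of the square out of one diagram, rather than constructing them independently and attempting to reconcile the homotopies afterwards.
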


\begin{proof}
We have a diagram, functorial in $A$,
	\[
	\xymatrix{
	\mathbf{1} \ar[r] \ar[d]& 0\ar[dd]\\
	A \ar[d]& \\
	\mathrm{HH}(A) \ar[r] & \Sigma^{-1}\mathrm{HH}(A)
	}
	\] 
We can left Kan extend to a diagram:
\[\begin{tikzcd}
	{\mathbf{1}} && 0 & {} \\
	&&&& A && {\overline{A}} \\
	&& 0 & {} \\
	&&&& {\mathrm{HH}(A)} && {\Sigma^{-1}\mathrm{HH}(A)}
	\arrow[from=2-5, to=4-5]
	\arrow[from=2-5, to=2-7]
	\arrow[from=2-7, to=4-7]
	\arrow[from=4-5, to=4-7]
	\arrow[from=1-1, to=2-5]
	\arrow[from=1-1, to=1-3]
	\arrow[from=1-3, to=2-7]
	\arrow[from=1-3, to=3-3]
	\arrow[from=3-3, to=4-7]
\end{tikzcd}\]

The map $\overline{A} \to \Sigma^{-1}\mathrm{HH}(A)$ is
adjoint to $\sigma: \Sigma \overline{A} \to \mathrm{HH}(A)$, since
this latter map was constructed as the induced map on the pushout
of the diagram
	\[
	\xymatrix{
	\Sigma \mathbf{1} \ar[r] \ar[d]& 0\ar[d]\\
	\Sigma A \ar[r] & \mathrm{HH}(A)
	}
	\]
	
Now we may further right Kan extend to a diagram:

\[
\begin{tikzcd}
	{\mathbf{1}} && 0 \\
	&&&& A && {\overline{A}} \\
	{\lim_{\mathbb{C}P^1}\mathrm{HH}(A)} && 0 & {} \\
	&&&& {\mathrm{HH}(A)} && {\Sigma^{-1}\mathrm{HH}(A)}
	\arrow[from=1-1, to=2-5]
	\arrow[from=1-3, to=2-7]
	\arrow[from=1-1, to=3-1]
	\arrow[from=3-1, to=3-3]
	\arrow[from=1-1, to=1-3]
	\arrow[from=1-3, to=3-3]
	\arrow[from=3-1, to=4-5]
	\arrow[from=2-5, to=4-5]
	\arrow[from=2-5, to=2-7]
	\arrow[from=2-7, to=4-7]
	\arrow[from=4-5, to=4-7]
	\arrow[from=3-3, to=4-7]
\end{tikzcd}
\]

Here, the map $\mathbf{1} \to \lim_{\mathbb{C}P^1}\mathrm{HH}(A)$ arises from the canonical trivialization of the $S^1$-action
$S^1_+ \otimes \mathbf{1} \to 
\mathbf{1} \simeq \mathrm{HH}(\mathbf{1})$.

We may view the above cube as a map of fiber sequences
	\[
	\xymatrix{
	\mathbf{1} \ar[r]\ar[d] & A \ar[r] \ar[d]& \overline{A} \ar[d]\\
	\lim_{\mathbb{C}P^1}\mathrm{HH}(A) \ar[r] & 
	\mathrm{HH}(A) \ar[r] & \Sigma^{-1}\mathrm{HH}(A)
	}
	\]
which then yields the desired diagram:
	\[
	\xymatrix{
	\Sigma^{-1}\overline{A} \ar[d]_-{\sigma^2} 
	\ar[r]& \mathbf{1} \ar[d]\\
	\Sigma^{-2}\mathrm{HH}(A)\ar[r]_-{t} &
	\lim_{\mathbb{C}P^1}\mathrm{HH}(A)
	}
	\]
\end{proof}
\section{Recollections on Graded Objects}\label{sec:graded-koszul}

In this section we briefly review some facts about graded
rings used in the body of the paper.

\begin{definition} Let $k$ be an $\mathbb{E}_{\infty}$-ring.
The category of graded $k$-modules
	is defined by
		\[
		\mathsf{grMod}_k := \mathsf{Fun}(\mathbb{Z}^{\mathrm{ds}},
		\mathsf{Mod}_k),
		\]
	where $\mathbb{Z}^{\mathrm{ds}}$ denotes
	the integers viewed as a 0-category.
	The category $\mathsf{grMod}_k$ is a presentably
	symmetric monoidal category under Day convolution.
	If $M$ is a graded $k$-module, we will denote by $M_i$
	its values at $i$, and by $M(n)$ the precomposition
	with addition by $-n$ (so that $M(n)_i = M_{i-n}$).
	We will refer to the grading as the \textbf{weight} throughout.
\end{definition}

We will need various (co)connectivity conditions and finiteness conditions.
\begin{definition}
	\begin{itemize}
		\item Let $A$ be an augmented, graded
		 $\mathbb{E}_n$-$k$-algebra. We say that 
		 $A$ is \textbf{weight-connected}
		 (resp. \textbf{weight-coconnected}) if
		 the fiber of the augmentation $A \to k$ is
		 concentrated in positive grading (resp. negative
		 grading).
		 We denote the corresponding categories
		 with superscripts $\mathrm{wt-cn}$
		 and $\mathrm{wt-ccn}$, respectively.
		 \item We denote by
		 $\mathsf{grMod}^{\mathrm{wt}\ge n}_k$
		 (resp. $\mathsf{grMod}^{\mathrm{wt}\le n}_k$)
		 the full subcategory of graded $k$-modules concentrated
		 in weights at least $n$ (resp. at most $n$).
		 We will write $M\ge n$ (resp. $M\le n$) to indicate
		 that $M$ belongs to this subcategory.
	\end{itemize}
\end{definition}

\begin{remark}\label{rmk:regrading}
Observe that the map $-1: \mathbb{Z}^{\mathrm{ds}} \to 
\mathbb{Z}^{\mathrm{ds}}$ is a symmetric monoidal equivalence,
and hence induces a symmetric monoidal equivalence
on the category of graded $k$-modules, algebras, etc. 
It follows that any result about weight-connected algebras,
or modules of weight bounded below by $n$,
has a counterpart for weight-coconnected algebras or
modules of weight bounded above by $-n$. 
\end{remark}

\begin{lemma} Let $A$ be an augmented, graded $k$-algebra
and denote by $\overline{A}$ the fiber of the augmentation.
Let $M \in \mathsf{LMod}_A$ and $N \in \mathsf{RMod}_A$,
then there is a filtration on $M\otimes_AN$:
	\[
	M\otimes N = F_0 \to F_1 \to \cdots \to \colim F_i = 
	M\otimes_A N
	\]
such that
	\[
	\mathrm{gr}_i(M\otimes_A N) \simeq 
	\Sigma^{i}M \otimes 
	\overline{A}^{\otimes i} \otimes N.
	\]
\end{lemma}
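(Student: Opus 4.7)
The plan is to realize $M \otimes_A N$ as the geometric realization of the two-sided bar construction
\[
B_\bullet(M,A,N), \qquad B_n := M \otimes A^{\otimes n} \otimes N,
\]
and to take $F_i := |\mathrm{sk}_i B_\bullet(M,A,N)|$. Then $F_0 = B_0 = M \otimes N$ together with its canonical map to $M \otimes_A N$, the transition maps $F_i \to F_{i+1}$ come from the skeletal inclusions, and $\colim_i F_i \simeq M \otimes_A N$ because the skeletal filtration of a simplicial object in a presentable stable $\infty$-category converges to its realization.

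The core of the argument is the identification of the associated graded. In any stable $\infty$-category, the $n$-th associated graded piece of the skeletal filtration of a simplicial object $X_\bullet$ is $\Sigma^n (X_n / L_n X_\bullet)$, where the latching object $L_n X_\bullet$ is the colimit of $X_k$ over non-identity surjections $[n] \twoheadrightarrow [k]$. Since $A$ is augmented, the unit and augmentation combine to a splitting $A \simeq k \oplus \overline{A}$ of $k$-modules. Distributing this through the tensor product gives
\[
B_n \simeq \bigoplus_{S \subseteq \{1,\ldots,n\}} M \otimes \overline{A}^{\otimes |S|} \otimes N,
\]
and the summands with $S \subsetneq \{1,\ldots,n\}$ are precisely the images of the degeneracy maps, which insert the unit at some position. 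The normalized quotient is therefore the $S = \{1,\ldots,n\}$ summand, yielding $\mathrm{gr}_i F_\bullet \simeq \Sigma^i M \otimes \overline{A}^{\otimes i} \otimes N$, as desired.

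I do not anticipate a serious obstacle: the argument is the standard normalization of the bar construction for an augmented algebra, carried out in the stable $\infty$-categorical setting. The only point that requires mild care is to verify that the summand decomposition is compatible with the simplicial structure, so that the latching object may be correctly identified with the sum over the proper subsets of $\{1,\ldots,n\}$.
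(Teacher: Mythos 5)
Your proposal is correct and is essentially the paper's argument: the paper also realizes $M\otimes_A N$ as the geometric realization of the bar construction $M\otimes A^{\otimes\bullet}\otimes N$ and invokes the categorical Dold--Kan correspondence to identify the skeletal filtration's associated graded with the (shifted) normalized complex, i.e.\ $\Sigma^i M\otimes\overline{A}^{\otimes i}\otimes N$. Your explicit identification of the latching quotient via the splitting $A\simeq k\oplus\overline{A}$ coming from the augmentation is just a spelled-out version of the step the paper leaves implicit.
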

\begin{proof} The relative tensor product is computed 
by the geometric realization of the standard simplicial
object with $n$th term $M \otimes A^{\otimes n} \otimes N$
(\cite[4.4.2.8]{ha})
and hence, by
the categorical Dold-Kan correspondence (\cite[1.2.4.1]{ha}),
is also computed as the colimit of a filtered object with associated
graded corresponding to the normalized complex
(which can be computed in the homotopy category), as indicated.
\end{proof}

\begin{corollary}\label{prop:misc-graded-facts}
Let $A$ be a weight connected algebra
$L, N \in \mathsf{LMod}_A$ and $M \in \mathsf{RMod}_A$.
If $M\ge \alpha$ and $N\ge \beta$ then
	$M\otimes_{A} N \ge \alpha+\beta$ and
	$(M\otimes_AN)_{\alpha +\beta} = M_{\alpha}\otimes N_{\beta}$.
\end{corollary}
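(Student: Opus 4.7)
The plan is to derive both statements directly from the filtration constructed in the preceding lemma. Let me write $F_\bullet$ for that filtration on $M \otimes_A N$, with
\[
\mathrm{gr}_i(M \otimes_A N) \simeq \Sigma^i M \otimes \overline{A}^{\otimes i} \otimes N.
\]

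First I would record the key input: since $A$ is weight-connected, $\overline{A}$ is concentrated in weights $\ge 1$, and Day convolution is monotone in the sense that tensor products of modules bounded below in weight are again bounded below in weight, with bounds adding. Concretely, $\overline{A}^{\otimes i}$ is concentrated in weight $\ge i$, and more generally if $X \ge a$ and $Y \ge b$ then $X \otimes Y \ge a+b$ with $(X \otimes Y)_{a+b} = X_a \otimes Y_b$ (there is only one term in the convolution formula meeting the lower bound). This is pointwise in $\mathbb{Z}^{\mathrm{ds}}$, hence immediate.

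Applying this to the graded pieces, $\mathrm{gr}_i(M \otimes_A N) \ge \alpha + i + \beta$. In particular each graded piece is $\ge \alpha + \beta$, and for $i \ge 1$ each graded piece is $\ge \alpha + \beta + 1$. Next I would argue by induction on $i$ that $F_i \ge \alpha + \beta$: the base case $F_0 = M \otimes N \ge \alpha + \beta$ is immediate, and the inductive step uses the cofiber sequence $F_{i-1} \to F_i \to \mathrm{gr}_i$ together with the fact that $\mathsf{grMod}_k^{\mathrm{wt} \ge \alpha + \beta}$ is closed under extensions (being the essential image of a right adjoint, or simply because being zero in a given weight is preserved under cofibers). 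Since $\mathsf{grMod}_k^{\mathrm{wt} \ge \alpha + \beta}$ is also closed under filtered colimits (colimits of graded modules are pointwise), we conclude $M \otimes_A N = \colim F_i \ge \alpha + \beta$.

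For the identification of the lowest weight piece, the same cofiber sequences show that in weight exactly $\alpha + \beta$ the map $F_0 \to M \otimes_A N$ is an equivalence, because each $\mathrm{gr}_i$ for $i \ge 1$ vanishes in that weight. Hence
\[
(M \otimes_A N)_{\alpha + \beta} \simeq (M \otimes N)_{\alpha + \beta} \simeq M_\alpha \otimes N_\beta,
\]
the last equivalence again from the convolution formula and the weight bounds on $M$ and $N$. There is no real obstacle here; the only thing to be careful about is the closure of the relevant weight-bounded subcategory under extensions and filtered colimits, which is a straightforward pointwise check.
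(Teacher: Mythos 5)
Your argument is correct and is exactly the route the paper intends: the statement is a corollary of the preceding lemma's bar filtration, and since $\overline{A}\ge 1$ each graded piece $\Sigma^i M\otimes\overline{A}^{\otimes i}\otimes N$ lies in weights $\ge \alpha+\beta+i$, so only the $i=0$ layer contributes in weight $\alpha+\beta$. Your extra care about closure of the weight-bounded subcategory under extensions and filtered colimits is a fine (and correct) way to make the passage to $\colim F_i$ explicit.
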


We will now study a natural filtration on the category of
graded modules over a weight-connected algebra.
If $A$ is weight-connected, we denote by
	\[
	\mathsf{LMod}^{\mathrm{wt}\ge j}_A,
	\mathsf{LMod}^{\mathrm{wt}\le j}_A \subseteq
	\mathsf{LMod}_A
	\]
the full subcategories spanned by those modules
which are concentrated in weights at least $j$ and
at most $j$, respectively.

\begin{lemma}\label{lem:truncating} Let $A$ be a weight-connected
$\mathbb{E}_n$-$k$-algebra for some $1\le n\le \infty$.
	\begin{enumerate}[{\rm(i)}]
	\item The inclusion
	$\mathsf{LMod}_A^{\mathrm{wt}\ge j} \to
	\mathsf{LMod}_A$ admits a
	right adjoint, $(-)_{\ge j}$, computed as:
		\[
	(M_{\ge j})_i = \begin{cases} M_i & i\ge j\\
	0 & \text{else}.
	\end{cases}
		\]
	\item The inclusion $\mathsf{LMod}_A^{\mathrm{wt}\le j}
	\to \mathsf{LMod}_A$ admits a left adjoint, $(-)_{\le j}$,
	computed as $M \mapsto M/M_{\ge j+1}$. In particular,
		\[
		(M_{\le j})_i = \begin{cases}
		M_i & i\le j\\
		0 & \text{else}.
		\end{cases}
		\]
	\item The subcategory $\mathsf{LMod}_A^{\mathrm{wt}\ge0}$
	inherits an $\mathbb{E}_{n-1}$-monoidal structure.
	\item The localizations $(-)_{\le m}$ are compatible
	with the $\mathbb{E}_{n-1}$-monoidal structure on
	$\mathsf{LMod}_A^{\mathrm{wt}\ge 0}$.
	\item The tower
		\[
		A \to \cdots \to A_{\le m} \to A_{\le m-1} \to \cdots \to k
		\]
	of $\mathbb{E}_n$-$k$-algebras is a tower
	of square-zero extensions, i.e. we have pullback
	diagrams of $\mathbb{E}_n$-$k$-algebras:
		\[
		\xymatrix{
		A_{\le m}\ar[r]\ar[d] & k\ar[d]\\
		A_{\le m-1} \ar[r] & k\oplus
		\Sigma A_m(m)
		}
		\]
	\end{enumerate}
\end{lemma}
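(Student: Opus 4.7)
\medskip

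\noindent\textbf{Plan.} Parts (i)--(iv) will be essentially formal, while the content lies in (v).

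For (i), I would define $M_{\ge j}$ on underlying graded $k$-modules by the displayed formula; this is manifestly the right adjoint to the inclusion $\mathsf{grMod}_k^{\mathrm{wt}\ge j}\hookrightarrow\mathsf{grMod}_k$. Because $A$ is weight-connected, the multiplication $A_i\otimes M_\ell \to M_{i+\ell}$ cannot take an element of weight $\ge j$ out of weight $\ge j$ (indeed $i \ge 0$ ensures $i+\ell \ge \ell \ge j$). Hence $M_{\ge j}$ inherits an $A$-submodule structure, and the adjunction at the level of $k$-modules promotes to one at the level of $A$-modules, since any $A$-linear map from $N\in \mathsf{LMod}_A^{\mathrm{wt}\ge j}$ to $M$ factors uniquely through the subobject $M_{\ge j}$. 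Part (ii) is obtained by taking cofibers: set $M_{\le j}:=\mathrm{cofib}(M_{\ge j+1}\to M)$ and dualize the above argument.

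For (iii), by \cite[4.8.5.16]{ha} (equivalently the discussion in \S\ref{ssec:envelopes} for $B=A$), the category $\mathsf{LMod}_A$ is canonically $\mathbb{E}_{n-1}$-monoidal via $\otimes_A$. Corollary \ref{prop:misc-graded-facts} shows $\mathsf{LMod}_A^{\mathrm{wt}\ge 0}$ contains the unit and is closed under $\otimes_A$, so it inherits the $\mathbb{E}_{n-1}$-monoidal structure. For (iv), by \cite[2.2.1.9]{ha} it suffices to verify that the class of $(-)_{\le m}$-equivalences -- i.e.\ maps whose fiber lies in weight $\ge m+1$ -- is stable under tensoring with any object of $\mathsf{LMod}_A^{\mathrm{wt}\ge 0}$, which is again immediate from Corollary \ref{prop:misc-graded-facts}.

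The main obstacle is (v). I would first identify the fiber $I:=\mathrm{fib}(A_{\le m}\to A_{\le m-1})$, as a graded $k$-module, with $A_m(m)$ concentrated in weight $m$. The key observation is that the $A_{\le m}$-bimodule structure on $I$ factors through the augmentation $A_{\le m}\to k$: for $x \in (A_{\le m})_i$ with $i\ge 1$, the product $x\cdot I$ has weight $\ge m+1$ and thus vanishes in $A_{\le m}$ (symmetrically on the right). Equivalently, $I\otimes_{A_{\le m}} I \to A_{\le m}$ is null. I would then invoke the general principle, whose proof for $\mathbb{E}_\infty$-rings appears as \cite[7.4.1.28]{ha} and extends verbatim to $\mathbb{E}_n$-algebras in any presentably symmetric monoidal stable category: such a morphism of $\mathbb{E}_n$-algebras is canonically a square-zero extension. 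Unpacking this via the formalism of \S\ref{ssec:deform}, the extension is classified by a derivation $\eta\colon \mathbf{L}^{\mathbb{E}_n}_{A_{\le m-1}/k}\to \Sigma A_m(m)$; by Recollection \ref{recall:modules-over-sqzero} and Proposition \ref{prop:obstrn-theory}, the associated pullback square is precisely the one asserted.

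The hard part is extracting the derivation $\eta$ and verifying that the resulting pullback recovers $A_{\le m}$ on the nose rather than merely abstractly. In practice, naturality of the square-zero formalism means one first produces the obstruction class $o(A_{\le m-1})$ associated to deforming $A_{\le m-1}$ along $k\to k\oplus\Sigma A_m(m)$, observes that the given extension $A_{\le m}\to A_{\le m-1}$ provides a canonical nullhomotopy of this obstruction, and then uses this nullhomotopy to construct the comparison map to the pullback. Checking that this map is an equivalence can be done on underlying graded $k$-modules, where it is clear from the weight-by-weight description of the truncations.
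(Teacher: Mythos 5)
Parts (i)--(iv) of your argument are fine and essentially agree with the paper's proof: the adjoints exist formally (the paper gets the weight-by-weight formula by corepresenting $(M_{\ge j})_i$ via maps out of $A(i)$, which is a cleaner way to make your ``inherits a submodule structure'' step precise), and (iii), (iv) follow from Corollary \ref{prop:misc-graded-facts} exactly as you say.

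The gap is in (v). You reduce to the observation that the fiber $I \simeq A_m(m)$ multiplies to zero and then invoke ``the general principle, whose proof for $\mathbb{E}_\infty$-rings appears as \cite[7.4.1.28]{ha} and extends verbatim to $\mathbb{E}_n$-algebras in any presentably symmetric monoidal stable category.'' That is not a citation you can make: Lurie's results in \cite[\S 7.4.1]{ha} are stated and proved for connective $\mathbb{E}_\infty$-rings, with smallness hypotheses measured in the Postnikov direction (an $n$-small extension has fiber concentrated in a range of homotopical degrees), and the proof uses both of these. Here the smallness is in the \emph{weight} direction -- $A_m(m)$ sits in a single weight but in many Postnikov degrees -- so even in the $\mathbb{E}_\infty$ case the theorem does not apply off the shelf; and the passage from ``the fiber squares to zero'' to ``the map is a square-zero extension classified by a derivation $\mathbf{L}^{(n)}_{A_{\le m-1}}\to\Sigma A_m(m)$'' is precisely the nontrivial content for $\mathbb{E}_n$-algebras, not a formality. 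Your fallback (produce the obstruction class and observe that the given extension supplies a canonical nullhomotopy) is circular as stated: Proposition \ref{prop:obstrn-theory} and Lemma \ref{lem:alg-over-triv-sq-zero} classify algebras \emph{over} a base already known to be a square-zero extension; they cannot be used to show that $A_{\le m}\to A_{\le m-1}$ is one. What is actually needed -- and what the paper supplies -- is an explicit construction of the classifying derivation: factor the universal map through the relative cotangent complex, $\mathbf{L}^{(n)}_{A_{\le m-1}}\to\mathbf{L}^{(n)}_{A_{\le m-1}/A_{\le m}}\to\Sigma A_m(m)$, compute $\Sigma^{n}\mathbf{L}^{(n)}_{A_{\le m-1}/A_{\le m}}$ via the cofiber sequence $\mathcal{U}^{(n)}_{A_{\le m}}(A_{\le m-1})\to A_{\le m-1}\to\Sigma^{n}\mathbf{L}^{(n)}_{A_{\le m-1}/A_{\le m}}$ from \cite[7.5.3.1]{ha}, and use the recursive description of the enveloping algebra together with the weight estimate (if $A\to B$ has cofiber $C\ge j$ then $\mathrm{cofib}(B\otimes_A B\to B)\ge j$ with weight-$j$ piece $\Sigma C_j$, via the section of the multiplication and Corollary \ref{prop:misc-graded-facts}) to see that projection onto the lowest weight refines the boundary map $A_{\le m-1}\to\Sigma A_m(m)$. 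That identification is what makes the displayed square a pullback, and it is the step your proposal leaves unproven.
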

\begin{proof} The existence of these adjoints is immediate
since the inclusions preserve all limits and colimits.
To compute $M_{\ge j}$, observe that, for $i\ge j$,
the $A$-module $A(i)$ is in weights at least $j$, since
$A$ is weight-connected. The homogeneous component
$(M_{\ge j})_i$ is computed as the spectrum of maps of
$A$-modules from $A(i)$ to $M_{\ge j}$ which, by
the adjunction, is the same as the spectrum of maps
from $A(i)$ to $M$, which is $M_i$. This proves (i).

Claim (ii) follows formally from the observation that,
if $M \ge m+1$ and $N\le m$, then every map
$M \to N$ is zero. 

Claim (iii) follows, using \cite[2.2.1.1]{ha}, from the fact that 
$\mathsf{LMod}_A^{\mathrm{wt}\ge 0}$ contains the unit
and is closed under tensor products, by 
Corollary \ref{prop:misc-graded-facts}.

For claim (iv) we must show that, if
$M \to M'$ is an equivalence in weights at most $m$, then
so is $Z\otimes_A M \to Z\otimes_A M'$ and
$M\otimes_A Z \to M\otimes_A Z'$ for $Z\ge 0$. 
Let $F$ be the fiber of $M \to M'$ so that $F\ge m+1$, and
then the result follows from 
Corollary \ref{prop:misc-graded-facts} applied to
$Z\otimes_AF$ and $F\otimes_A Z$.

For the final claim, we need to produce a derivation, i.e. a map,
	\[
	\mathbf{L}^{(n)}_{A_{\le m-1}} \to \Sigma A_m(m)
	\]
refining the map $A_{\le m-1} \to \mathrm{cofib}(A_{\le m} \to A_{\le m-1})
= A_m(m)$. Here $\mathbf{L}^{(n)}$ denotes the 
$\mathbb{E}_n$-cotangent complex. We will produce this refinement
as a composite
	\[
	\mathbf{L}^{(n)}_{A_{\le m-1}} \to \mathbf{L}^{(n)}_{A_{\le m-1}/
	A_{\le m}} \to \Sigma A_m(m)
	\]
where the first map is the canonical one to the relative cotangent
complex and the second
is projection onto the first nonzero weight.
By \cite[7.5.3.1]{ha} applied to the $\mathbb{E}_n$-monoidal
category $\mathsf{Mod}^{\mathbb{E}_n}_{A_{\le m}}$ of
$\mathbb{E}_n$-$A_{\le m}$-modules, we can compute the relative
cotangent complex using the cofiber sequence:
	\[	
	 \mathcal{U}^{(n)}_{A_{\le m}}(A_{\le m-1})
	\to A_{\le m-1}\to
	\Sigma^{n}\mathbf{L}^{(n)}_{A_{\le m-1}/A_{\le m}}
	\]
of $\mathbb{E}_n$-$A_{\le m-1}$-modules. Using the recursive
construction of the enveloping algebra, we are reduced to
proving the following claim:
	\begin{itemize}
	\item[(*)] If $A \to B$ is a map of
	weight-connected $\mathbb{E}_1$-algebras
	with cofiber $C\ge j$, denote by $C'$
	the cofiber of $B\otimes_AB \to B$. Then $C'\ge j$
	and $C'_j = \Sigma C_j$. 
	\end{itemize}
To prove (*), observe that the multiplication map admits a section
so that $C' \simeq \Sigma B\otimes_A C$. The result now follows
from Corollary \ref{prop:misc-graded-facts}.
\end{proof}

\section{Spectral Sequences}\label{sec:sseq}

In the body of the paper, we use various spectral sequences
and maps of spectral sequences obtained
by applying certain functors and natural transformations to
towers. The purpose of this appendix is to check that these
maneuvers produce convergent spectral sequences
under certain conditions satisfied in the cases of interest.

\begin{convention} Throughout this section, $\mathcal{C}$
will denote a presentably symmetric monoidal stable
category with a $t$-structure. We will assume
that $\mathcal{C}$ satisfies the following properties
(all of which are satisfied, for example, by
modules over a connective $\mathbb{E}_{\infty}$-ring, equipped with
an action of a group):
	\begin{enumerate}[(i)]
	\item The $t$-structure is compatible with filtered colimits,
	i.e. $\mathcal{C}_{\le 0}$ is closed under filtered colimits.
	\item The $t$-structure is left and right complete, which in
	this case is equivalent to saying that 
		\[
		\colim_{n\to -\infty} \tau_{\le n}X = 0 =
		\lim_{n\to \infty} \tau_{\ge n}X.
		\]
	\item The $t$-structure is compatible with the
	symmetric monoidal structure; i.e. $\mathbf{1} \in 
	\mathcal{C}_{\ge 0}$ and 
	$X \otimes Y \in \mathcal{C}_{\ge n+m}$
	whenever $X \in \mathcal{C}_{\ge n}$ and 
	$Y \in \mathcal{C}_{\ge m}$.
	\end{enumerate}
\end{convention}

\subsection{Towers and convergence}\label{ssec:towers}

\begin{convention}\label{conv:grading-sseq}
Given a tower $\{X^{\ge s}\} \in \mathsf{Fun}(\mathbb{Z}^{\mathrm{op}}, \mathcal{C})$,
we index the associated spectral sequence so that
	\[
	E_2^{s,t} = \pi_{t-s}\mathrm{gr}^tX = \pi_{t-s}(\mathrm{cofib}(X^{\ge t+1}
	\to X^{\ge t})).
	\]
We write $X^{-\infty} := \colim X^{\ge s}$.
\end{convention}

\begin{warning} There is not a typo here: we mean $\pi_{t-s}\mathrm{gr}^t$
and not $\pi_{t-s}\mathrm{gr}^s$. The latter would have differentials as in
the $E_1$-term of a spectral sequence, whereas the former will behave
as an $E_2$-term.
\end{warning}

\begin{definition} Suppose $\{X^{\ge s}\}$ is a tower with associated
spectral sequence $\{E_r^{s,t}\}$. We say that $E_r$ 
\textbf{converges conditionally} to $\pi_*X^{-\infty}$ if 
$\lim X^{\ge s} = 0$. We say that $E_r$ \textbf{converges strongly}
if the associated filtration $F^s(\pi_{t-s}X^{-\infty}):=
\mathrm{im}(\pi_{t-s}X^{\ge 2t-s} \to \pi_{t-s}X^{-\infty})$ satisfies
	\[
	\holim_s F^s\pi_nX^{-\infty} = 0
	\]
\end{definition}

We will content ourselves below with
establishing general conditions under which
conditional convergence holds.
In the body of the paper, when we claim that some
spectral sequence actually 
converges strongly, it is because it also satisfies the conditions of
Boardman's theorem \cite[Theorem 7.1]{boardman}
for spectral sequences with
entering differentials:

\begin{theorem}[Boardman] Suppose that $E_r$
converges conditionally and that,
for each fixed $(s,t)$, there are only finitely many
nontrivial differentials entering with target in the $(s,t)$ spot.
Thus, we eventually have
$E_r^{s,t} \supseteq E_{r+1}^{s,t}$. Suppose further that
$\mathrm{lim}^1_rE_r^{s,t} = 0$ for each $(s,t)$. Then
$E_r$ converges strongly to $\pi_*X$.
\end{theorem}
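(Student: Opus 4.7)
The plan is to follow the framework laid out by Boardman in \cite{boardman}, specifically his analysis of whole-plane spectral sequences. The proof reduces to checking that the filtration $\{F^s \pi_n X^{-\infty}\}$ is Hausdorff, since its exhaustivity is formal (every element of $\pi_n X^{-\infty} = \pi_n \colim X^{\ge s}$ lifts to some $\pi_n X^{\ge s}$, modulo cofinal difficulties handled by conditional convergence) and the associated graded is tautologically $E_\infty$ under our hypothesis on incoming differentials.

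First I would set up the usual subquotient description. For each $(s,t)$, let $Z_r^{s,t} \subseteq E_2^{s,t}$ denote the subgroup of classes surviving to the $r$th page (i.e.\ permanent cycles through $d_{r-1}$), and $B_r^{s,t}$ the subgroup of boundaries hit by some $d_{r'}$ with $r' < r$. Then $E_r^{s,t} = Z_r^{s,t}/B_r^{s,t}$. The hypothesis that only finitely many incoming differentials are nontrivial at $(s,t)$ ensures $B_r^{s,t}$ stabilizes to some $B_\infty^{s,t}$, so $E_r^{s,t}$ eventually becomes a nested sequence of subgroups of $E_{r_0}^{s,t}/B_\infty^{s,t}$, with $E_\infty^{s,t} := \bigcap_r E_r^{s,t} = \lim_r E_r^{s,t}$.

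Next I would use the exact couple to identify $F^s \pi_n X^{-\infty}$ with $\mathrm{image}(\pi_n X^{\ge s+n} \to \pi_n X^{-\infty})$ and obtain short exact sequences comparing $F^s/F^{s+r}$ with a truncation of $Z_r/B_r$. Passing to inverse limits in $r$ and applying the Milnor $\lim^1$ sequence produces a six-term exact sequence in which the obstruction to $F^s/F^\infty \cong \lim_r F^s/F^{s+r}$ being the full filtration quotient is governed precisely by $\lim^1_r E_r^{s,t}$; see Boardman's Theorem 7.4 for this standard manipulation. The vanishing of $\lim^1_r E_r^{s,t}$ hypothesized in the theorem then forces $\bigcap_s F^s \pi_n X^{-\infty} = 0$ and more generally $\holim_s F^s = 0$, which is strong convergence.

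The main obstacle is the bookkeeping in the second step: one must separately track the tower of subgroups $Z_r$ (which is decreasing in $r$) and the tower of subgroups $B_r$ (which is increasing), and one must verify that conditional convergence exactly supplies the input needed so that $\lim^1_r (E_r)$ is the only remaining derived obstruction. Once this identification is made, the conclusion is a formal diagram chase using that $\lim^1$ of a tower of surjections vanishes, combined with the Milnor sequence for the homotopy limit of $\{X^{\ge s}\}$ which vanishes by conditional convergence.
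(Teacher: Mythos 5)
The paper does not actually prove this statement: it is quoted from Boardman and justified solely by the citation \cite[Theorem 7.1]{boardman}, and your sketch is in substance an outline of Boardman's own argument, so you and the paper are taking the same route. Two cautions about your write-up as a standalone proof: first, the claim that the associated graded of the filtration is ``tautologically $E_\infty$'' is an overstatement --- identifying $F^s/F^{s+1}$ with $E_\infty^{s,t}$ is itself part of what Boardman proves, not a formality --- though this is harmless here because the paper's definition of strong convergence asks only that $\holim_s F^s\pi_n X^{-\infty}=0$; second, at the decisive step (the passage from $\lim^1_r$ of the cycle/page groups, together with $\lim_s$ and $\lim^1_s$ of the tower $\{\pi_n X^{\ge s}\}$ vanishing by conditional convergence, to the vanishing of $\lim_s$ and $\lim^1_s$ of the image filtration) you defer to Boardman's Theorem 7.4 rather than carrying out the diagram chase, so your proposal, like the paper's treatment, is ultimately a pointer to Boardman rather than an independent proof. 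With that understood, the skeleton you describe (stabilization of $B_r^{s,t}$ from the finiteness of entering differentials, $E_\infty^{s,t}=\lim_r E_r^{s,t}$, and the Milnor sequence argument in which $\lim^1_r E_r^{s,t}=0$ is exactly the remaining obstruction) is the correct shape of Boardman's proof.
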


\subsection{Descent towers}\label{ssec:descent-sseq}

Let $B$ be a connective, commutative algebra object in
$\mathcal{C}$. Then we may form the descent tower
functor (see, e.g., \cite[\S B-C]{burklund-hahn-senger}):
	\[
	\mathrm{desc}_B: \mathcal{C} \to \mathsf{Fun}(\mathbb{Z}^{\mathrm{op}},
	\mathcal{C}),
	\]
which is lax symmetric monoidal, and specified by
	\[
	\mathrm{desc}_B^{\ge j}(X):=
	\lim_{\Delta}( \tau_{\ge j}(X \otimes B^{\otimes \bullet+1})).
	\]
When $X$ is bounded below,
this yields a conditionally convergent spectral sequence
	\[
	E_2^{s,t} = 
	H^s(\pi_t(X) \to \pi_t(X\otimes B) \to \cdots) \Rightarrow
	\pi_{t-s}X^{\wedge}_B,
	\]
where $X^{\wedge}_B = \lim (X\otimes B^{\otimes \bullet})$.
When $\pi_*(B\otimes B)$ is flat over $\pi_*B$, we can further
identify the $E_2$-term with $\mathrm{Ext}$ in the category
of comodules over the Hopf algebroid 
$(\pi_*B, \pi_*(B\otimes B))$.

\begin{remark}\label{rmk:descent-along} When $\mathcal{C} = \mathsf{Mod}_A$
and $B$ is regarded as a commutative algebra object through a map
$A \to B$ of commutative algebras,
we will often refer to the above procedure as `descent along
the map $A \to B$'.
\end{remark}

\begin{remark}\label{rmk:descent-weaker}
It is possible to make this construction
with much weaker hypotheses (at the cost of losing some
multiplicative structure). For example, the cobar construction
and construction of the descent tower
makes sense when $\mathcal{C} = \mathsf{Mod}_A$ for
a connective $\mathbb{E}_2$-ring $A$ and $B$
a connective $\mathbb{E}_1$-$A$-algebra.
\end{remark}

\begin{remark} The tower $\mathrm{desc}_B^{\ge *}(X)$
is not the usual Adams tower, but rather its \emph{d\'ecalage}
(compare \cite[1.3.3, 1.3.4]{deligne}
and \cite[\S II]{hedenlund}),
which is why its associated graded has homotopy groups
corresponding to the $E_2$-page of the Adams spectral
sequence rather than the $E_1$-page.
\end{remark}

\begin{warning} The descent tower shears the
filtration in the Adams spectral sequence. If we fix $t-s=n$, then
contributions to Adams filtration $s$ come from
$\mathrm{desc}^{\ge s+n}$. So, for example, a \emph{horizontal}
vanishing line on, say, the $E_2$-term of the Adams spectral sequence
would correspond to behavior in the descent filtration that is more
like a vanishing line of slope 1. Of course, if one is only interested in
a finite range of values of $n$, there is no difference.
\end{warning}

This story is especially well-behaved when $\mathrm{fib}(\mathbf{1} \to B)$
is $1$-connective.

\begin{proposition}\label{prop:desc-convergence} Suppose that
$I = \mathrm{fib}(\mathbf{1} \to B)$ lies in $\tau_{\ge 1}\mathcal{C}$.
Then, for any $d$-connective object $X$,
the descent tower has the
following properties:
	\begin{enumerate}[{\rm (a)}]
	\item The natural map $X \to X^{\wedge}_B$ is an equivalence.
	\item $E_2^{s,t}$ vanishes when $2s-t\ge d$. 
	\item $\pi_n\mathrm{desc}_B^{\ge j}(X)=0$ whenever
	$j\ge d+2n$.
	\item For each $k$, there exists an $N$ such that, for
	$j\ge N$, $\mathrm{desc}^{\ge j}_B(X)$ is $k$-connective.
	\end{enumerate}
\end{proposition}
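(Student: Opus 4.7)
The plan is to leverage the hypothesis that $I$ is $1$-connective, which forces the reduced cofiber $\overline{B} := \mathrm{cofib}(\mathbf{1} \to B) \simeq \Sigma I$ to be $2$-connective, so that the $s$-th term of the normalized cobar complex for $X \otimes B^{\otimes \bullet+1}$, namely $X \otimes \overline{B}^{\otimes s} \otimes B$, has connectivity at least $d + 2s$. This increasing connectivity will drive all four claims.

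I will first prove (d) and extract a useful side consequence. Since $\tau_{\ge j}$ is right adjoint to the inclusion $\mathcal{C}_{\ge j} \hookrightarrow \mathcal{C}$, it preserves all limits, and hence
\[
\mathrm{desc}^{\ge j}(X) \;\simeq\; \tau_{\ge j}\bigl(\lim_{\Delta} X \otimes B^{\otimes \bullet+1}\bigr) \;=\; \tau_{\ge j}(X^{\wedge}_B),
\]
which is automatically $j$-connective; taking $j = k$ gives (d). Applying this at $j = d$ and noting that $\tau_{\ge d}$ acts trivially on the already $d$-connective cobar terms yields $\tau_{\ge d}(X^{\wedge}_B) = X^{\wedge}_B$, so $X^{\wedge}_B$ is itself $d$-connective; more generally, $Y^{\wedge}_B$ is $e$-connective whenever $Y$ is.

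For (b), I will identify $E_2^{s,t}$ with the cohomology of the normalized cobar and read off the vanishing directly from the connectivity bound on its terms. Claim (c) then follows by downward induction on $j$ from the cofiber sequence $\mathrm{desc}^{\ge j+1}(X) \to \mathrm{desc}^{\ge j}(X) \to \mathrm{gr}^j(X)$, using the identification $\pi_n\mathrm{gr}^j(X) = E_2^{j-n,j}$ together with (b); the induction is initialized at large $j$ via (d).

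The main obstacle is (a). My plan is to compare the $I$-adic tower
\[
\cdots \to X \otimes I^{\otimes m+1} \to X \otimes I^{\otimes m} \to \cdots \to X \otimes I \to X
\]
with its levelwise $B$-completion. Since $B$-completion preserves fiber sequences, each step in either tower has cofiber $X \otimes I^{\otimes m} \otimes B$, which as a $B$-module is already $B$-complete; the two towers thus have matching cofibers. Moreover $X \otimes I^{\otimes m}$ is $(d+m)$-connective, and by the side consequence of (d) so is its completion. For any fixed bound $N$, choosing a base level $M > N - d$ ensures that $\pi_j$ vanishes for all $j \le N$ on both sides at level $M$, providing the base of the induction. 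A descending induction on $m$, applying the five lemma to the matching long exact sequences, then propagates the equivalence $\pi_j(\text{level } m) \simeq \pi_j((\text{level } m)^{\wedge}_B)$ through all $j \le N$ down to $m = 0$; since $N$ can be taken arbitrarily large, this establishes $\pi_j(X) \simeq \pi_j(X^{\wedge}_B)$ for every $j$. The delicate part will be carefully tracking the degree range so that the five-lemma inputs remain available at every stage of the induction.
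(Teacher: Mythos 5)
Your reduction collapses at its first step. From the fact that $\tau_{\ge j}$ is right adjoint to the inclusion $\mathcal{C}_{\ge j}\subseteq\mathcal{C}$ you cannot conclude that $\mathrm{desc}^{\ge j}_B(X)=\lim_{\Delta}\tau_{\ge j}(X\otimes B^{\otimes\bullet+1})$ agrees with $\tau_{\ge j}(X^{\wedge}_B)$. Right adjointness says that $\tau_{\ge j}$ carries limits in $\mathcal{C}$ to limits in $\mathcal{C}_{\ge j}$, and limits in $\mathcal{C}_{\ge j}$ are computed by forming the limit in $\mathcal{C}$ and truncating again; what you get is $\tau_{\ge j}(X^{\wedge}_B)\simeq\tau_{\ge j}\bigl(\lim_{\Delta}\tau_{\ge j}(X\otimes B^{\otimes\bullet+1})\bigr)$, which is not an identification of $\mathrm{desc}^{\ge j}_B(X)$ with $\tau_{\ge j}(X^{\wedge}_B)$, because a totalization of $j$-connective objects need not be $j$-connective. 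Moreover the claimed identification is genuinely false, not just unproven: it would make $\mathrm{gr}^t$ of the descent tower concentrated in degree $t$, hence would force $E_2^{s,t}=0$ for all $s>0$, i.e.\ every descent spectral sequence would be concentrated on the zero line. This contradicts the $E_2$-description in \S\ref{ssec:descent-sseq} (which is what part (b) is about) and, for instance, Proposition \ref{prop:descent-fp}, where under exactly these connectivity hypotheses the descent $E_2$ contains the classes $\sigma t_i$ in filtration $1$; the d\'ecalage tower has cobar cohomology, not Postnikov layers of the completion, as its associated graded. Consequently your proof of (d) proves a false statement, and the damage propagates: the ``side consequence'' that $Y^{\wedge}_B$ is $e$-connective whenever $Y$ is (the base case of your five-lemma induction in (a)) and the initialization of your downward induction for (c) both rest on it.

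The missing input is exactly what the paper invokes: the partial totalizations $\mathrm{Tot}^{\le s}(X\otimes B^{\otimes\bullet+1})$. The fiber of $\mathrm{Tot}^{\le s}\to\mathrm{Tot}^{\le s-1}$ is $\Omega^s$ of the conormalized term $X\otimes\overline{B}^{\otimes s}\otimes B$, hence is $(d+s)$-connective, and the fiber of the augmentation $X\to\mathrm{Tot}^{\le s}$ is $X\otimes I^{\otimes s+1}$, which is $(d+s+1)$-connective. These two facts give (a), give that $B$-completion preserves connectivity (note that this cannot follow from a bare adjunction argument: a limit of a cosimplicial diagram of $e$-connective objects is not $e$-connective in general — it is the extra connectivity of $\overline{B}^{\otimes s}$ that saves you), and give (b), since $E_2^{s,t}$ is a subquotient of $\pi_t(X\otimes\overline{B}^{\otimes s}\otimes B)$ and so vanishes for $t<d+2s$. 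With (b) in hand, (c) and (d) follow from $\lim_j\mathrm{desc}^{\ge j}_B(X)=0$ (right completeness of the $t$-structure), with no appeal to (d) for a base case. Your $I$-adic-tower-plus-five-lemma scheme for (a) is a reasonable alternative skeleton, and the inputs you cite there (exactness of $(-)^{\wedge}_B$, $B$-completeness of $B$-modules, identification of the successive cofibers with $X\otimes I^{\otimes m}\otimes B$) are correct; but it only becomes a proof once the connectivity of completions is established by the $\mathrm{Tot}^{\le s}$ argument rather than by the false commutation of $\tau_{\ge j}$ with the totalization.
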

\begin{proof} Since $\lim_j \mathrm{desc}^{\ge j}_B(X) = 0$, 
we can study the vanishing of the homotopy groups 
of each $\mathrm{desc}^{\ge j}_B(X)$ by establishing a
vanishing range in the associated graded.
Thus $(b) \Rightarrow (c) \Rightarrow (d)$,
so we need only establish (a) and (b). But these claims can be proven
using the usual construction of the descent spectral sequence,
via the tower $\{\mathrm{Tot}^{\le s}(B^{\bullet +1} \otimes X)\}$,
where the result is clear. 
\end{proof}

\subsection{Classical Adams spectral sequence}

The classical Adams spectral sequence, given by descent along
$S^0 \to \mathbb{F}_p$, has slightly more involved convergence issues
since the fiber of the unit map $S^0 \to \mathbb{F}_p$ is not
$1$-connective. We review the classical approach to getting around this
issue and leverage this to understand the convergence behavior
of the Tate fixed point spectral sequence below.

Throughout this section $\mathrm{desc}(-) = \mathrm{desc}_{\mathbb{F}_p}(-)$.

\begin{construction} Since $\mathrm{desc}(-)$ is lax symmetric monoidal,
every descent tower is a module over $\mathrm{desc}(S^0)$.
Recall that the element $p \in \pi_0(S^0)$ is detected in Adams filtration
$1$, and hence lifts to an element
$v_0 \in \pi_0\mathrm{desc}^{\ge 1}(S^0)$. Thus, given any
spectrum $X$, we have a natural map
	\[
	v_0: \mathrm{desc}(X)(1) \to \mathrm{desc}(X),
	\]
where, for a filtered spectrum $Y$, $Y(j)$ refers
to the filtered spectrum with $Y(j)^{\ge s} = Y^{\ge s-j}$.
\end{construction}

\begin{remark} The composite of the shift operator with $v_0$
is multiplication by $p$. It follows that $v_0$ induces multiplication
by $p$ on both $\colim X$ and $\lim X$.
\end{remark}

\begin{remark} There is a canonical identification
$\mathrm{desc}(X)/v_0 \simeq \mathrm{desc}(X/p)$.
However, when $k\ge 2$, $\mathrm{desc}(X)/v_0^k$
and $\mathrm{desc}(X/p^k)$ differ. The former
tower has $E_2$-term computed by the homotopy groups
of an object of the derived category of $\mathcal{A}_*$-comodules
which does not lie in the heart.
\end{remark}

\begin{proposition}\label{prop:adams-tower-connectivity} 
Let $X$ be $d$-connective. Then
$\mathrm{desc}(X)/v_0^m$ has the property that,
for each $k$, there is an $N$ such that,
for all $j\ge N$, $\mathrm{desc}^{\ge j}(X)/v_0^m$
is $k$-connective. Moreover, each term
$\mathrm{desc}^{\ge j}(X)/v_0^m$ is $d$-connective.
\end{proposition}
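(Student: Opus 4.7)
The plan is to induct on $m \ge 1$. For the inductive step, I apply the octahedral axiom to the factorization $v_0^m = v_0^{m-1}\circ v_0 \colon \mathrm{desc}(X)(m) \to \mathrm{desc}(X)$, producing a cofiber sequence of filtered spectra
\[
(\mathrm{desc}(X)/v_0)(m-1) \longrightarrow \mathrm{desc}(X)/v_0^m \longrightarrow \mathrm{desc}(X)/v_0^{m-1}.
\]
At filtration $j$, the outer terms become $(\mathrm{desc}(X)/v_0)^{\ge j-m+1}$ and $(\mathrm{desc}(X)/v_0^{m-1})^{\ge j}$. The inductive hypothesis (invoking the $m=1$ and $m-1$ cases) implies that each outer term is $d$-connective, and that for any prescribed $k$ both become $k$-connective once $j$ is sufficiently large. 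The standard connectivity estimate for a fiber sequence $A \to B \to C$ (that $B$ inherits $\min(a,c)$-connectivity from $A, C$) then transfers these bounds to $\mathrm{desc}(X)/v_0^m$.

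The substantive step is the base case $m=1$. Via the identification $\mathrm{desc}(X)/v_0 \simeq \mathrm{desc}(X/p)$ from the preceding remark, the problem reduces to analyzing the $\mathbb{F}_p$-based descent tower of $Y := X/p$, a $d$-connective $p$-torsion spectrum. Being annihilated by $p$, $Y$ is $\mathbb{F}_p$-nilpotently complete, so $\mathrm{desc}^{\ge -\infty}(Y) \simeq Y$ is $d$-connective. The graded pieces are computed by $\pi_n \mathrm{gr}^s \mathrm{desc}(Y) \cong \mathrm{Ext}^{s, n+s}_{\mathcal{A}_*}(\mathbb{F}_p, H_*(Y; \mathbb{F}_p))$, which vanishes for $s < 0$, so $\mathrm{desc}^{\ge j}(Y) \simeq Y$ for $j \le 0$, giving $d$-connectivity in that range for free.

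For $j \ge 1$, I plan to establish both claims using a uniform Milnor-type vanishing line: there exist constants $\alpha > 0$ and $\beta$ (depending on the connectivity of $Y$) such that the relevant Ext groups vanish whenever $t - s < \alpha s - \beta$ and $s > 0$. This forces $\mathrm{gr}^s \mathrm{desc}(Y)$ to be at least $\lceil\alpha s - \beta\rceil$-connective, which, combined with the observation that the Hurewicz-type map $\pi_d Y \to \pi_d \mathrm{gr}^0$ is an isomorphism for a $d$-connective spectrum (ensuring no connectivity loss is incurred at the bottom of the filtration), yields both the uniform $d$-connectivity and the arbitrarily high $k$-connectivity for $j \gg 0$. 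The principal obstacle is furnishing the uniform Milnor vanishing line with coefficients in the comodule $H_*(Y; \mathbb{F}_p)$ rather than in $\mathbb{F}_p$; I expect this to follow from the classical vanishing line for $\mathrm{Ext}_{\mathcal{A}_*}(\mathbb{F}_p, \mathbb{F}_p)$ together with a cellular resolution of $H_*(Y; \mathbb{F}_p)$ by bounded-below shifts of $\mathbb{F}_p$, exploiting that linear vanishing lines are preserved under such operations.
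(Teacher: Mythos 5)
Your overall skeleton is the same as the paper's: reduce to $m=1$ (your octahedral induction is just the paper's remark that the conclusion is stable under extensions), identify $\mathrm{desc}(X)/v_0 \simeq \mathrm{desc}(X/p)$, and then win by exhibiting a vanishing line of positive slope, together with the stem bound, on the $E_2$-page of the Adams spectral sequence for $X/p$. The problem is your proposed source for that vanishing line. You want to deduce it from the vanishing line for $\mathrm{Ext}_{\mathcal{A}_*}(\mathbb{F}_p,\mathbb{F}_p)$ by filtering $H_*(X/p;\mathbb{F}_p)$ by bounded-below shifts of the trivial comodule $\mathbb{F}_p$, ``exploiting that linear vanishing lines are preserved under such operations.'' But the sphere's Ext does \emph{not} have a vanishing line of the form you need: along the zero stem $t-s=0$ one has $h_0^s \neq 0$ for every $s$, so there are no constants $\alpha>0,\beta$ with $\mathrm{Ext}^{s,t}_{\mathcal{A}_*}(\mathbb{F}_p,\mathbb{F}_p)=0$ whenever $t-s<\alpha s-\beta$ and $s>0$. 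Consequently the formal transfer argument breaks: in a fixed stem $n$, the cells of $H_*(X/p)$ in dimension exactly $n$ each contribute an $h_0$-tower reaching arbitrarily high filtration on the $E_1$-page of the algebraic cell filtration, and showing these die in $\mathrm{Ext}_{\mathcal{A}_*}(\mathbb{F}_p, H_*(X/p))$ requires using that $H_*(X/p)\cong H_*(X)\otimes\Lambda(\overline{\tau}_0)$ is free over $\Lambda(\overline{\tau}_0)$ (equivalently, has vanishing $Q_0$-Margolis homology). That freeness is exactly the hypothesis in the classical vanishing theorems of Adams (at $p=2$) and Liulevicius (at odd $p$), and it is not a property that survives your cellular decomposition into trivial comodules.

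So the final step needs a different input: either cite the Adams/Liulevicius vanishing line for bounded-below (co)modules free over $\Lambda(\overline{\tau}_0)$ applied directly to $H_*(X)\otimes\Lambda(\overline{\tau}_0)$ — which is what the paper does — or, if you insist on a cell-by-cell argument, filter $X/p$ by cells of the form (cell of $X$)$\,\otimes\, S^0/p$, so the building block is $\mathrm{Ext}_{\mathcal{A}_*}(\mathbb{F}_p,\Lambda(\overline{\tau}_0))$; a genuine positive-slope vanishing line for the Moore spectrum does propagate through such a filtration, but that line is itself the nontrivial classical theorem, not something you can bootstrap from the sphere. The rest of your argument (the reduction to $m=1$, the identification with $\mathrm{desc}(X/p)$, the treatment of low filtrations, and the passage from the vanishing line to uniform $d$-connectivity and eventual $k$-connectivity, using conditional convergence of the tower) is sound, modulo a small indexing slip: with the paper's décalage convention the bottom nonvanishing graded piece of a $d$-connective $X/p$ sits in weight $d$, not weight $0$.
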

\begin{proof} The conclusion about the tower is stable under
extensions, so we are reduced to the case when $m=1$
and $\mathrm{desc}(X)/v_0 = \mathrm{desc}(X/p)$. 
Since the tower is conditionally convergent,
it suffices to establish
a vanishing line on the $E_2$-page, and to show this is concentrated
in stems starting in dimension $d$.
The $E_2$-page is computed by
$\mathrm{Ext}_{\mathcal{A}_*}^{s,t}(
H_*(X) \otimes \Lambda(\tau_0))$, which
classically has the desired vanishing line
(see \cite[Theorem 2.1]{adams-periodicity} at the prime 2,
and \cite[Proposition 2]{liulevicius} at odd primes.)
\end{proof}

\subsection{Fixed point spectral sequences}

Given a tower $X$ in the category of spectra with an
action of a group $G$, we can take homotopy fixed points,
orbits, or Tate fixed points levelwise and produce a new tower.
In this section we establish some criteria for the 
conditional convergence
of this tower.

\begin{proposition}\label{prop:hfpss-converge} Suppose $X\in
\mathsf{Fun}(\mathbb{Z}^{\mathrm{op}}, \mathsf{Fun}(\mathrm{BG}, \mathcal{C}))$
is conditionally convergent (i.e. $\lim X = 0$). Then so
is $X^{hG}$.
\end{proposition}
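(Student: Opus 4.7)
The plan is to observe that this is essentially a formal consequence of the fact that homotopy fixed points commute with limits. Specifically, the construction $(-)^{hG}$ is defined as the limit functor $\lim_{BG} \colon \mathsf{Fun}(BG, \mathcal{C}) \to \mathcal{C}$, and as a limit it is a right adjoint (to the constant diagram functor). Since taking $hG$ is performed levelwise on towers, we have $(X^{hG})^{\ge s} = (X^{\ge s})^{hG}$ for each $s \in \mathbb{Z}$.

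First I would use the commutation of limits with limits (Fubini) to compute
\[
\lim_{s} (X^{\ge s})^{hG} \;\simeq\; \lim_{s} \lim_{BG} X^{\ge s} \;\simeq\; \lim_{BG} \lim_{s} X^{\ge s}.
\]
By the conditional convergence hypothesis, $\lim_s X^{\ge s} \simeq 0$ in $\mathsf{Fun}(BG, \mathcal{C})$, so the right-hand side is $0^{hG} \simeq 0$. This yields $\lim (X^{hG}) = 0$, which is precisely the statement that $X^{hG}$ is conditionally convergent.

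There is essentially no obstacle here; the only thing to verify is that the two limits can be interchanged, which holds because limits commute with limits in any $\infty$-category, and $\mathcal{C}$ (being presentable) admits all small limits. No hypothesis on the $t$-structure, connectivity of $X$, or finiteness of $G$ is required for this particular statement — conditional convergence passes to homotopy fixed points in full generality. The analogous statement for homotopy orbits $X_{hG}$ would be more subtle (orbits do not commute with limits in general), but for $X^{hG}$ the argument is immediate.
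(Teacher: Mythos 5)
Your proof is correct and is exactly the paper's argument: $(-)^{hG}$ is a limit over $\mathrm{B}G$ applied levelwise, and limits commute with limits, so $\lim_s (X^{\ge s})^{hG} \simeq (\lim_s X^{\ge s})^{hG} \simeq 0$. The paper's proof is the one-line "Limits commute with limits," of which yours is simply a spelled-out version.
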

\begin{proof} Limits commute with limits.
\end{proof}

The analogous result for Tate fixed points requires a proposition.
We are grateful to the referee for pointing out the following result
which simplifies our earlier treatment of convergence in this section and the
next.

\begin{proposition}\label{prop:bounded-tate}
Let $Y$ be a filtered $G$-spectrum which
is uniformly bounded below.
Then \[(\lim Y)^{tG} \to \lim (Y^{tG})\] is an equivalence.
\end{proposition}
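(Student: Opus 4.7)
The plan is to reduce the claim about Tate constructions to the analogous claim about homotopy orbits, and then to handle that via a skeletal approximation of $BG$. First, I would apply the exact cofiber sequence $(-)_{hG} \to (-)^{hG} \to (-)^{tG}$ both to the object $\lim Y$ and levelwise to $Y$, then take $\lim_s$ in the latter. Since $(-)^{hG}$ is a right adjoint, it automatically commutes with limits, so the natural map in the middle column $(\lim Y)^{hG} \to \lim_s (Y^{\geq s})^{hG}$ is an equivalence. Comparing cofibers, the desired equivalence reduces to showing that
\[
(\lim Y)_{hG} \;\xrightarrow{\simeq}\; \lim_s (Y^{\geq s})_{hG}.
\]

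Say $Y$ is uniformly $d$-connective, so that each $(Y^{\geq s})_{hG}$ lies in $\mathcal{C}_{\geq d}$ (orbits of connective things are connective), and hence so do both sides of the displayed map. By left-completeness of the $t$-structure assumed on $\mathcal{C}$, it suffices to check that the map induces an isomorphism on $\pi_m$ for every $m$. To do this, I would choose a skeletal filtration of $BG$ (for $G=S^1$ the skeleta of $\mathbb{CP}^\infty$; for $G$ a finite $p$-group any finite-dimensional CW-model of the $n$-skeleton). This exhibits $(-)_{hG}$ as a filtered colimit of finite colimit functors $(-)_{hG,\leq n}$, each of which, being a finite colimit in the stable category $\mathcal{C}$, commutes with arbitrary limits. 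Moreover, the homotopy-orbits spectral sequence $H_p(BG;\pi_q X)\Rightarrow \pi_{p+q}(X_{hG})$ together with the connectivity of $\mathrm{sk}_n BG \hookrightarrow BG$ implies that for each $m$ there is an $N=N(m,d)$ such that for all $n\geq N$ and every $d$-connective $X$, the natural map $X_{hG,\leq n} \to X_{hG}$ is an isomorphism on $\pi_m$.

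Applying this, fix $m$ and choose $N$ large enough to ensure the agreement on both $\pi_m$ and $\pi_{m+1}$ uniformly over all $d$-connective objects. Then
\[
(\lim Y)_{hG,\leq N} \;\simeq\; \lim_s (Y^{\geq s})_{hG,\leq N}
\]
since $(-)_{hG,\leq N}$ is a finite colimit functor. Passing to $\pi_m$ and combining this with the Milnor $\lim^1$ sequence (which relates $\pi_m\lim$ to $\lim\pi_m$ and $\lim^1 \pi_{m+1}$), the agreement of the skeletal and full orbits on $\pi_m$ and $\pi_{m+1}$ shows that both $\pi_m (\lim Y)_{hG}$ and $\pi_m\lim_s (Y^{\geq s})_{hG}$ are computed by the skeletal version, proving the equivalence on $\pi_m$.

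The main obstacle is the uniform skeletal stabilization estimate: that for each fixed $m$ the natural transformation $(-)_{hG,\leq n} \to (-)_{hG}$ becomes an isomorphism on $\pi_m$ for $n$ large, \emph{uniformly} across the class of $d$-connective objects. This is precisely the point at which the hypothesis of \emph{uniform} bounded belowness enters (rather than bounded below pointwise, which would not suffice), and it reduces to standard cellular connectivity estimates for $BG$ once one has the orbits spectral sequence in hand.
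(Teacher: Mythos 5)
Your proof is correct and is essentially the paper's argument: both reduce via the orbits--fixed points--Tate cofiber sequence and the fact that $(-)^{hG}$ commutes with limits to the statement about homotopy orbits, then approximate $(-)_{hG}$ by finite skeletal stages (the paper uses bar-construction skeleta, you use CW skeleta of $\mathrm{B}G$, which is the same device) that commute with limits, and conclude from the uniform connectivity estimate supplied by uniform bounded-belowness. The only quibble is cosmetic: $\lim Y$, and hence $(\lim Y)_{hG}$ and $\lim_s (Y^{\ge s})_{hG}$, is only guaranteed to be $(d-1)$-connective because of a possible $\lim^1$ contribution, but this shifts your constants by one and changes nothing in the argument.
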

\begin{proof} Without loss of generality we may assume that
$Y$ is uniformly $0$-connective, and hence that $\lim Y$ is $(-1)$-connective.
If $Z$ is a $G$-spectrum, then
$Z_{hG}$ is computed as $\colim_{\Delta^{op}} (G_+^{\otimes \bullet} \otimes Z)$.
Denote by $\mathrm{sk}_r(Z_{hG})$ the colimit over
$\Delta_{\le r}^{op}$. If $Z$ is $(-1)$-connective,
then the cofiber $(Z_{hG})/(\mathrm{sk}_rZ_{hG})$ is $(r-1)$-connective.
We learn that, in the diagram
	\[
	\xymatrix{
	\mathrm{sk}_r(\lim Y)_{hG} \ar[r] \ar[d]& \lim \mathrm{sk}_rY_{hG}\ar[d]\\
	(\lim Y)_{hG} \ar[r] & \lim (Y_{hG})
	}
	\]
the vertical arrows are an equivalence in a range increasing with $r$, and
the upper horizontal arrow is always an equivalence since 
$\lim (-)$ commutes with finite colimits. It follows that
	\[
	(\lim Y)_{hG} \to \lim (Y_{hG})
	\]
is an equivalence, and hence so is $(\lim Y)^{tG} \to \lim (Y^{tG})$. 
\end{proof}

\begin{corollary} If $Y$ is a uniformly bounded below, conditionally
convergent tower, then $Y^{tG}$ is also conditionally convergent.
\end{corollary}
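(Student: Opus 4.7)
The plan is to deduce this corollary essentially immediately from the preceding Proposition \ref{prop:bounded-tate}. Conditional convergence of the tower $Y$ means by definition that $\lim Y = 0$, and conditional convergence of $Y^{tG}$ means $\lim (Y^{tG}) = 0$, so the only task is to relate these two limits.

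First I would observe that the hypothesis of Proposition \ref{prop:bounded-tate} is exactly the hypothesis here: $Y$ is uniformly bounded below. Applying that proposition gives a canonical equivalence
\[
(\lim Y)^{tG} \xrightarrow{\simeq} \lim (Y^{tG}).
\]
Since $\lim Y = 0$ by conditional convergence of $Y$, the left-hand side is $0^{tG} = 0$. Hence $\lim (Y^{tG}) = 0$, which is precisely the statement that $Y^{tG}$ is conditionally convergent.

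There is no real obstacle here; the whole content was packaged into Proposition \ref{prop:bounded-tate}, whose proof uses the uniform boundedness below to compare $\mathrm{sk}_r$ of $(\lim Y)_{hG}$ with $\lim \mathrm{sk}_r(Y_{hG})$ through a range increasing with $r$, and then passes to Tate via the cofiber sequence $Y_{hG} \to Y^{hG} \to Y^{tG}$ together with the fact that homotopy fixed points commute with limits (Proposition \ref{prop:hfpss-converge}). So the corollary is a one-line consequence, and no further ingredients beyond the two preceding results are required.
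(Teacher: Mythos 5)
Your proof is correct and is exactly the argument the paper intends: the corollary is stated as an immediate consequence of Proposition \ref{prop:bounded-tate}, and your deduction ($\lim Y = 0$ implies $(\lim Y)^{tG} = 0 \simeq \lim (Y^{tG})$) is the evident one-line proof. Nothing further is needed.
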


\subsection{Hochschild homology of filtered rings}\label{ssec:hh-of-filtered}

If $A$ is a filtered $\mathbb{E}_1$-ring,
one can construct a corresponding filtration of
$\mathrm{THH}(A)$ and spectral sequence
(see \cite{angelini-knoll-salch-maysseq}). We will need to
understand how this spectral sequence interacts with
the Tate-valued Frobenius, and for this we need
a construction of $\mathrm{THH}(A)$ as a filtered
cyclotomic object. We refer the reader to \cite[\S A]{ammn-fiber}
for details, and review the relevant definitions here.

\begin{definition} Let $L_p: 
\mathsf{Fun}(\mathbb{Z}^{\mathrm{op}}, \mathsf{Sp})
\to \mathsf{Fun}(\mathbb{Z}^{\mathrm{op}}, \mathsf{Sp})$
denote left Kan extension along multiplication by
$p$.
\end{definition}

\begin{proposition} (\cite[\S A]{ammn-fiber}) Let $A$ be a filtered
or graded
$\mathbb{E}_1$-ring. Then $\mathrm{THH}(A)$ admits a natural
$L_p$-twisted diagonal, i.e. an $S^1$-equivariant map
	\[
	\varphi: L_p\mathrm{THH}(A) \to \mathrm{THH}(A)^{tC_p}.
	\]
In the filtered case, this map is compatible with passage
to the associated graded and, in both cases, the map
is compatible with forgetting to underlying objects.
\end{proposition}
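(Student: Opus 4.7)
The plan is to construct the twisted diagonal directly inside the symmetric monoidal $\infty$-category $\mathsf{FilSp} := \mathsf{Fun}(\mathbb{Z}^{\mathrm{op}}, \mathsf{Sp})$ of filtered spectra (equipped with Day convolution), following the universal-property approach to cyclotomic structure from \cite{nikolaus-scholze}, but keeping careful track of how the Tate diagonal interacts with the Day convolution grading. The case of graded spectra is entirely analogous, replacing $\mathbb{Z}^{\mathrm{op}}$ by $\mathbb{Z}^{\mathrm{ds}}$. The key observation guiding the whole argument is that the $p$-fold tensor diagonal is not filtration-preserving in the naive sense: a class of $X$ in filtration $n$ maps to a class in filtration $pn$ of $X^{\otimes p}$, and this scaling is precisely what is recorded by the functor $L_p$.

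First, I would construct a Tate-valued diagonal $\Delta_p \colon L_p X \to (X^{\otimes p})^{tC_p}$ for every filtered spectrum $X$. Starting from the underlying Tate diagonal $X^{-\infty} \to ((X^{-\infty})^{\otimes p})^{tC_p}$, whose existence in any presentably symmetric monoidal stable category is formal (cf.\ \cite[III.3]{nikolaus-scholze}), one observes that the diagonal in $\mathsf{FilSp}$ sends $X^{\geq n}$ into $(X^{\otimes p})^{\geq pn}$, so by the universal property of left Kan extension along $\cdot p \colon \mathbb{Z}^{\mathrm{op}} \to \mathbb{Z}^{\mathrm{op}}$ it factors uniquely through $L_p X$. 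Taking $C_p$-Tate fixed points on the target (computed pointwise in each grading in $\mathsf{Sp}$, which is well-defined because $L_p$ only acts on the indexing variable) produces the desired $S^1$-equivariant refinement.

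Second, I would apply this construction to $\mathrm{THH}(A)$ for a filtered $\mathbb{E}_1$-algebra $A$. Since $\mathrm{THH}(A)$ is the geometric realization of the cyclic bar construction $[n] \mapsto A^{\otimes n+1}$ and $L_p$ commutes with colimits, the simplicial diagonal assembles into an $S^1$-equivariant map $\varphi \colon L_p\mathrm{THH}(A) \to \mathrm{THH}(A)^{tC_p}$; the cyclic structure of the bar construction, together with the functoriality of $\Delta_p$, provides the needed coherence. This reproduces the construction recorded in \cite[\S A]{ammn-fiber}.

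Finally, the compatibilities with passage to the associated graded and with forgetting to the underlying spectrum are formal consequences of the fact that $\mathrm{gr}(-) \colon \mathsf{FilSp} \to \mathsf{grSp}$ and the colimit functor $(-)^{-\infty} \colon \mathsf{FilSp} \to \mathsf{Sp}$ are symmetric monoidal: the former carries $L_p$ to its graded analogue (multiplication by $p$ on grading labels), and the latter carries $L_p$ to the identity. The main point to verify, and the only genuinely non-formal step, is the factorization of the diagonal through $L_p$ in the first paragraph; everything else is either universal-property bookkeeping or an immediate consequence of $L_p$ being a left adjoint.
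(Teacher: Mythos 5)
The paper does not actually prove this proposition; it is quoted from \cite[\S A]{ammn-fiber} and the text simply refers the reader there, so your sketch should be measured against that construction. Your overall architecture is the right one and matches it in outline: build a filtered Tate diagonal $L_pX \to (X^{\otimes p})^{tC_p}$ (Tate construction taken levelwise in $\mathsf{Fun}(\mathbb{Z}^{\mathrm{op}},\mathsf{Sp})$), feed it into the cyclic bar construction to get $\varphi$ on $\mathrm{THH}$, and deduce the compatibilities with $\mathrm{gr}$ and with the underlying object from the facts that $\mathrm{gr}$ and $\colim$ are symmetric monoidal and carry $L_p$ to its graded analogue and to the identity, respectively. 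That last paragraph of yours is fine.

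The gap is in the step you yourself single out as the only non-formal one. First, the parenthetical claim that the Tate diagonal exists ``in any presentably symmetric monoidal stable category'' is false: it fails already for $D(\mathbb{Z})$, and its existence for $\mathsf{Sp}$ rests on the exactness of $X\mapsto (X^{\otimes p})^{tC_p}$ together with the universal property of $\mathsf{Sp}$ (\cite[III.1]{nikolaus-scholze}); you only need the spectrum-level statement, but the misattribution matters because it hides where the real input lies. Second, the factorization argument is backwards: you cannot start from the underlying Tate diagonal $X^{-\infty}\to ((X^{-\infty})^{\otimes p})^{tC_p}$ and ``observe'' that it sends $X^{\ge n}$ into $(X^{\otimes p})^{\ge pn}$ and hence factors uniquely through $L_pX$. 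The target involves $(-)^{tC_p}$, which does not commute with the filtered colimits computing $X^{-\infty}$ and the Day convolution, so there is no filtered map in sight to factor, and no universal property of left Kan extension applies; ``uniquely'' is unjustified. The construction has to go the other way: apply the spectrum-level Tate diagonal levelwise, $X^{\ge n}\to ((X^{\ge n})^{\otimes p})^{tC_p}$, and compose with the $C_p$-equivariant Day-convolution structure map $(X^{\ge n})^{\otimes p}\to (X^{\otimes p})^{\ge pn}$; naturality in $n$ then yields $L_pX\to (X^{\otimes p})^{tC_p}$. Third, even granting the objectwise map, to produce the $S^1$-equivariant $\varphi$ on $\mathrm{THH}(A)$ one needs $L_p(-)\to ((-)^{\otimes p})^{tC_p}$ as a coherently lax symmetric monoidal transformation, so that the Nikolaus--Scholze construction of the cyclotomic Frobenius can be run verbatim in filtered spectra; ``functoriality of $\Delta_p$ together with the cyclic structure'' does not supply this coherence, and supplying it is exactly the technical content of the cited appendix.
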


\begin{remark} Since $L_p$ is adjoint to restriction along
multiplication by $p$, the Frobenius gives $S^1$-equivariant maps
	\[
	\varphi: \mathrm{THH}(A)^{\ge j} \to
	(\mathrm{THH}(A)^{tC_p})^{\ge jp}
	\]
for all $j$; and similarly for the graded case.
\end{remark}

In particular, this produces maps of spectral sequences
(which shear the gradings). We will be using these spectral
sequences in the case when we are filtering $A$ by
its descent tower for $S^0 \to \mathbb{F}_p$.
The following proposition guarantees convergence
(after $p$-completion) when $A$ is connective.

\begin{proposition}\label{prop:filtered-thh-converges} Let $A$ be a connective $\mathbb{E}_1$-ring.
Then, for each $1\le m\le \infty$, the tower
$\mathrm{THH}(\mathrm{desc}_{\mathbb{F}_p}(A))^{tC_{p^m}}$,
converges conditionally to $\mathrm{THH}(A)^{tC_{p^m}}$.
The tower $\mathrm{THH}(\mathrm{desc}_{\mathbb{F}_p}(A))^{\wedge}_{v_0}$ converges
conditionally to $\mathrm{THH}(A)^{\wedge}_p$. 
\end{proposition}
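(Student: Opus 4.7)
The plan is to first establish conditional convergence modulo $v_0^{\ell}$ for each fixed $\ell\ge 1$, and then to assemble the full result by passing to the inverse limit in $\ell$. The key technical tool will be Proposition~\ref{prop:bounded-tate}, which lets us commute inverse limits past $(-)^{tC_{p^m}}$ for uniformly bounded below towers.

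First, for each fixed $\ell\ge 1$, I would verify that the tower $\{\mathrm{THH}(\mathrm{desc}^{\ge j}_{\mathbb{F}_p}(A))/v_0^{\ell}\}_j$ is uniformly bounded below and that its inverse limit vanishes. By Proposition~\ref{prop:adams-tower-connectivity}, the filtered $\mathbb{E}_1$-ring $\mathrm{desc}_{\mathbb{F}_p}(A)/v_0^{\ell}$ has filtration pieces that are uniformly $0$-connective and become arbitrarily connective as $j\to\infty$. Passing this control through filtered $\mathrm{THH}$---most cleanly done by analyzing the associated graded, where $\mathrm{gr}^s\mathrm{THH}(\mathrm{desc}(A))/v_0^{\ell}$ is computed by the graded $\mathrm{THH}$ of $\mathrm{gr}\bigl(\mathrm{desc}(A)/v_0^{\ell}\bigr)$---shows that $\mathrm{THH}(\mathrm{desc}^{\ge j}(A))/v_0^{\ell}$ is uniformly bounded below in $j$ with connectivity tending to infinity. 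Hence $\lim_j\mathrm{THH}(\mathrm{desc}^{\ge j}(A))/v_0^{\ell}\simeq 0$, and applying Proposition~\ref{prop:bounded-tate} gives
\[
\lim_j\bigl(\mathrm{THH}(\mathrm{desc}^{\ge j}(A))/v_0^{\ell}\bigr)^{tC_{p^m}}\simeq 0.
\]

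Since quotients by $v_0^{\ell}$ commute with inverse limits in the stable setting (as cofibers of the self-map $v_0^{\ell}$), this rewrites as $(\lim_j\mathrm{THH}(\mathrm{desc}^{\ge j}(A))^{tC_{p^m}})/v_0^{\ell}\simeq 0$ for every $\ell$. The limit $\lim_j\mathrm{THH}(\mathrm{desc}^{\ge j}(A))^{tC_{p^m}}$ is itself $p$-complete, since for connective $A$ each individual term is $p$-complete by Nikolaus--Scholze and $p$-complete spectra are closed under limits. Because $v_0$ acts as multiplication by $p$ on the underlying, $p$-completeness coincides with $v_0$-completeness, and a $v_0$-complete spectrum whose reduction modulo every $v_0^{\ell}$ vanishes must itself vanish. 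This proves the first conditional convergence. The $v_0$-completion statement follows by rewriting
\[
\lim_j\mathrm{THH}(\mathrm{desc}^{\ge j}(A))^{\wedge}_{v_0}\simeq \lim_{\ell}\lim_j\mathrm{THH}(\mathrm{desc}^{\ge j}(A))/v_0^{\ell}\simeq 0,
\]
using the vanishing from the previous step.

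The principal technical obstacle is the connectivity bookkeeping for filtered $\mathrm{THH}$: while each simplex of the cyclic bar construction is uniformly $0$-connective, one must ensure that the geometric realization and the filtration interact correctly so that $\mathrm{THH}^{\ge j}/v_0^{\ell}$ truly becomes arbitrarily connective as $j\to\infty$ rather than merely remaining $0$-connective. Working at the associated graded reduces this to a statement about the graded $\mathrm{THH}$ of the graded ring built from the $E_2$-page of the Adams spectral sequence, where the explicit connectivity growth of the Adams filtration modulo $v_0^{\ell}$ (coming from Proposition~\ref{prop:adams-tower-connectivity}) makes the estimate manageable.
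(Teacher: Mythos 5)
You have identified the right crux---showing that $\lim_j$ of the filtered $\mathrm{THH}$ vanishes after killing a power of $v_0$---but the route you propose for this step does not work. The associated graded of a tower carries no information about its inverse limit: a constant tower has vanishing associated graded and nonzero limit. So knowing that the graded pieces of $\mathrm{THH}(\mathrm{desc}_{\mathbb{F}_p}(A))/v_0^{\ell}$ become arbitrarily connective cannot show that the filtration pieces themselves become arbitrarily connective, nor that $\lim_j$ vanishes; the implication ``graded pieces highly connective $\Rightarrow$ filtration pieces highly connective'' is only valid once one already knows the tower is complete, which is exactly the conditional convergence being proved. Thus the step you yourself flag as the ``principal technical obstacle'' is not resolved by passing to the associated graded---that passage is precisely where the needed information is lost. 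What is required is control of the actual filtration pieces, and this is how the paper argues: after using Proposition~\ref{prop:bounded-tate} to move the limit inside the Tate construction and reducing to a statement about the underlying filtered $\mathrm{THH}$ mod $v_0$, it filters the cyclic bar construction by skeleta, notes via Dold--Kan that the skeletal quotients are $r$-connective uniformly in the filtration index, and, since each skeleton is a finite colimit (hence commutes with $\lim_j$), reduces to conditional convergence of the finite Day-convolution products $\mathrm{desc}_{\mathbb{F}_p}(A)/v_0\otimes\mathrm{desc}_{\mathbb{F}_p}(A)^{\otimes n}$, where Proposition~\ref{prop:adams-tower-connectivity} applies to the pieces themselves. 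Some piece-level argument of this kind is unavoidable in your Step 1; once it is supplied, your remaining reductions (commuting $\lim_j$ past $(-)^{tC_{p^m}}$, the mod $v_0^{\ell}$ bookkeeping, and the $v_0$-completion statement) essentially re-run the paper's proof in a different order.

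A secondary issue: your endgame deduces vanishing of $\lim_j\bigl(\mathrm{THH}(\mathrm{desc}_{\mathbb{F}_p}(A))^{tC_{p^m}}\bigr)^{\ge j}$ from vanishing mod $p^{\ell}$ together with $p$-completeness of the terms. For finite $m$ this is fine, since for bounded below $X$ one has $X^{tC_{p^m}}\simeq (X^{tC_p})^{hC_{p^{m-1}}}$, which is $p$-complete; but for $m=\infty$ the claim is false in general, as $X^{tS^1}$ of a bounded below spectrum need not be $p$-complete (for instance $\pi_*\bigl(H\mathbb{Z}^{tS^1}\bigr)\cong\mathbb{Z}[t^{\pm 1}]$). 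So the case $m=\infty$ of the statement requires a separate treatment in your outline rather than the cited Nikolaus--Scholze completeness.
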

\begin{proof}
Since $\mathrm{THH}(\mathrm{desc}(A))$ is
uniformly bounded below, we have
	\[
	\lim \mathrm{THH}(\mathrm{desc}(A))^{tC_{p^m}}
	=
	(\lim \mathrm{THH}(\mathrm{desc}(A)))^{tC_{p^m}}
	\]
by virtue of Proposition \ref{prop:bounded-tate}.

On bounded below $C_{p^k}$-spectra $Z$
we have that $Z^{tC_{p^k}} = (Z^{\wedge}_p)^{tC_{p^k}}$
(see \cite[Lemma I.2.9, Lemma II.4.9]{nikolaus-scholze}). So to prove
conditional convergence it will
suffice to show that 
	\[
	(\lim \mathrm{THH}(\mathrm{desc}(A)))^{\wedge}_p = 0.
	\]
We have
	\[
	(\lim \mathrm{THH}(\mathrm{desc}(A)))/p
	= \lim (\mathrm{THH}(\mathrm{desc}(A))/v_0),
	\]
so it suffices to show that this vanishes. This, in turn,
will prove the second
claim that $\mathrm{THH}(\mathrm{desc}(A))^{\wedge}_{v_0}$
converges to $\mathrm{THH}(A)^{\wedge}_p$. 

We recall that $\mathrm{THH}(
\mathrm{desc}(A))$ is defined as
the geometric realization of a simplicial object with terms
$\mathrm{desc}(A)^{\otimes \bullet +1}$, and
therefore
$\mathrm{THH}(
\mathrm{desc}A)/v_0$ is computed as the
geometric realization of a simplicial object with terms
$\mathrm{desc}(A)/v_0 \otimes 
\mathrm{desc}(A)^{\otimes \bullet}$.
Observe that, if $Z_{\bullet}$ is any simplicial spectrum with each $Z_i$ connective,
then, by the Dold-Kan correspondence,
$\mathrm{sk}_r|Z_{\bullet}|/\mathrm{sk}_{r-1}|Z_{\bullet}|$ is a summand of
$\Sigma^rZ_r$ and hence must be $r$-connective. Thus, to check that
$\lim \mathrm{THH}(\mathrm{desc}(A))/v_0=0$ we need only check
that $\lim \mathrm{sk}_r\mathrm{THH}(\mathrm{desc}(A))/v_0 = 0$
for all $r$. Since this skeleton is a finite colimit, we are reduced to checking that
$\lim(\mathrm{desc}(A)/v_0 \otimes 
\mathrm{desc}(A)^{\otimes n})=0$ for all $n$. In fact, the terms in this
tower become increasingly connective by Proposition 
\ref{prop:adams-tower-connectivity}, so the result is proved.
\end{proof}

\bibliographystyle{amsalpha}
\bibliography{Bibliography}

\end{document}